\newtheorem{theorem}{Theorem}[subsection]
\newtheorem*{theorem*}{Theorem}
\newtheorem{lemma}[theorem]{Lemma}
\newtheorem{definition}[theorem]{Definition}
\newtheorem{corollary}[theorem]{Corollary}
\newtheorem{proposition}[theorem]{Proposition}
\newtheorem{remark}[theorem]{Remark}
\newtheorem{notation}[theorem]{Notation}
\newtheorem{example}[theorem]{Example}
\newcommand{\CC}{\mathbb{C}}
\newcommand{\QQ}{\mathbb{Q}}
\newcommand{\PP}{\mathbb{P}}
\newcommand{\ZZ}{\mathbb{Z}}
\newcommand{\TT}{\mathbb{T}}
\newcommand{\II}{\mathbb{I}}
\newcommand{\T}{\mathscr{T}}
\newcommand{\EE}{\mathbb{E}}
\newcommand{\LL}{\mathbb{L}}
\renewcommand{\AA}{\mathbb{A}}
\newcommand{\OO}{\mathscr{O}}
\newcommand{\csigma}{\mathcal{S}}
\renewcommand{\P}{\mathscr{P}}
\newcommand{\wP}[1]{\mathbb{P}^1_{a,1}}
\newcommand{\fX}{\mathfrak{X}}
\newcommand{\fY}{\mathfrak{Y}}
\newcommand{\fm}{\mathfrak{m}}
\newcommand{\cB}{\mathcal{B}}
\newcommand{\F}{\mathcal{F}}
\renewcommand{\sslash}{\mathord{/\mkern-6mu/}}
\newcommand{\omegabul}{\omega^\bullet}
\let\tinymatrix\smallmatrix
\patchcmd{\tinymatrix}{\scriptstyle}{\scriptscriptstyle}{}{}
\patchcmd{\tinymatrix}{\scriptstyle}{\scriptscriptstyle}{}{}
\patchcmd{\tinymatrix}{\vcenter}{\vtop}{}{}
\patchcmd{\tinymatrix}{\bgroup}{\bgroup\scriptsize}{}{}
\newcommand{\bmu}{\pmb{\mu}}
\newcommand{\bi}{\mathfrak{i}}
\newcommand{\bj}{\mathfrak{j}}
\newcommand{\rpic}{r_{\mathrm{Pic}}}
\newcommand{\rchi}{r_\chi}
\newcommand{\bdtilde}{{r_T}}
\newcommand{\bd}{r_G}
\newcommand{\tcover}[1]{S_{#1}(T)}
\newcommand{\cover}[2]{S_{#2}(#1)}
\newcommand{\cX}{\mathcal{X}}
\newcommand{\cZ}{\mathcal{Z}}
\newcommand{\cC}{\mathcal{C}}
\newcommand{\bx}{\mathbf{x}}
\renewcommand{\L}{\mathscr{L}}
\newcommand{\cT}{\mathcal{T}}
\newcommand{\cF}{\mathcal{F}}
\newcommand{\fU}{\mathfrak{U}}
\newcommand{\intalpha}{\underline{\tilde \alpha}}
\newcommand{\sE}{\mathscr{E}}
\newcommand{\sF}{\mathscr{F}}
\newcommand{\torus}{\Omega}
\newcommand{\aff}[2]{\Spec(\CC[#1]^{#2})}
\renewcommand{\AA}{\mathbb{A}}
\newcommand{\cY}{\mathcal Y}
\newcommand{\vir}{\mathrm{vir}}
\newcommand{\mov}{\mathrm{mov}}
\newcommand{\one}{\mathbf{1}}
\newcommand{\Res}{Res}
\DeclareMathOperator{\fix}{Fix}
\DeclareMathOperator{\Aut}{Aut}
\DeclareMathOperator{\Spec}{Spec}
\DeclareMathOperator{\Pic}{Pic}
\DeclareMathOperator{\Hom}{Hom}
\DeclareMathOperator{\Eff}{Eff}
\DeclareMathOperator{\Conj}{Conj}
\DeclareMathOperator{\spn}{span}
\DeclareMathOperator{\sgn}{sgn}
\newcommand{\NOTEoff}{\newcommand{\Commentn}[1]{}}
\newcommand\TODOon{\newcommand{\Comment}[1]{\noindent\color{red}{\texttt TODO: }##1\color{black}}}
\begin{document}

\NOTEoff
\TODOon

\title{Abelianization and quantum Lefschetz for orbifold quasimap $I$-functions}
\author{Rachel Webb}

\address[R. Webb]{Department of Mathematics\\
University of California, Berkeley\\
Berkeley, CA 94720-3840\\
U.S.A.}
\email{rwebb@berkeley.edu}

\date{\today}

\begin{abstract}
Let $Y$ be a complete intersection in an affine variety $X$, with action by a complex reductive group $G$. Let $T \subset G$ be a maximal torus. A character $\theta$ of $G$ defines GIT quotients $Y\sslash_\theta G$ and $X\sslash_\theta T$. We prove formulas relating the small quasimap I-function of $Y\sslash_\theta G$ to that of $X\sslash_\theta T$. When $X$ is a vector space, this provides a completely explicit formula for the small $I$-function of $Y\sslash_\theta G$.
\end{abstract}

\keywords{quasimaps, I-function, abelianization, orbifold Gromov-Witten theory, quantum Lefschetz}

\subjclass[2020]{14N35, 14D23}

\maketitle

\setcounter{tocdepth}{1}
\tableofcontents

\section{Introduction}
Let $\cX$ be a smooth proper Deligne-Mumford stack. An \textit{I-function} for $X$ is generating series with coefficients in the cohomology of the inertia stack of $X$ that is known to lie on the Lagrangian cone of $X$. This means that it is related in a precise sense to certain Gromov-Witten invariants of $\cX$. The papers \cite{orb-qmaps, CCIT15} provide an explicit formula for an $I$-function of $\cX$ when $\cX= X\sslash_\theta T$ is an orbifold GIT quotient of a vector space $V$ by a torus $T$. This formula has found numerous applications, including the crepant transformation conjecture \cite{CIJ}, Fano classification \cite{CGKS}, and mirror symmetry for orbifold del Pezzo surfaces \cite{akhtar}.

The main goal of the current paper is to extend the explicit formula of \cite{orb-qmaps, CCIT15} as far as possible. We accomplish this via the small $I$-function recipe in \cite{orb-qmaps} (proven to lie on the Lagrangian cone in \cite{yang}), which is given in terms of virtual localization residues on certain moduli spaces. Our task is to compute these residues in terms of Chern classes of line bundles on $X$. Our main result is a nonabelian quantum Lefschetz theorem (Theorem \ref{thm:ql}) without any convexity assumptions, which simultaneously strengthens the abelianization result of \cite{nonab1} to include orbifold targets and relates the $I$-function of a complete intersection to that of its ambient space. As part of this result we analyze the behavior of the inertia orbifold of a GIT quotient under abelianization (Section \ref{sec:inertia}).

In the spirit of \cite{CCIT19}, the paper \cite{SW1} with Nawaz Sultani explains how to use the formulas in this paper to compute small $I$-functions of specific targets.

\subsection{Setup and main theorems}\label{sec:setup}
Let $X$ be an l.c.i. affine variety over $\CC$ (not necessarily irreducible) and let $G$ be a connected complex reductive algebraic group acting on $X$.
Let $\chi(G)$ denote the character group and choose $\theta \in \chi(G)$. We define sets $X^s(G)$ and $X^{ss}(G)$ as in \cite[Def~2.1]{king} of stable and semi-stable points of $X$ with respect to $G$ and $\theta$ (we omit $\theta$ from the notation). We will always assume
\[
X^s(G) = X^{ss}(G).
\]

For any scheme $Y$ with a given locus of $G$-stable points, and any subgroup $H \subset G$, we define Deligne-Mumford stacks
\[X\sslash G := [X^s(G)/G] \quad \quad \quad X\sslash_G H := [X^s(G)/H].\]
with associated cyclotomic inertia stacks $I_\mu(X\sslash G)$ (resp. $I_\mu(X\sslash_G H)$ and rigidified cyclotomic inertia stacks $\overline{I}_\mu(X\sslash G)$ (resp. $\overline{I}_\mu(X\sslash_GH$). The (rigidified) cyclotomic inertia stack is defined in \cite{AGV08}. 

The \textit{small I-function} of $X\sslash G$ (see Definition \ref{def:ifunc}) is a formal series of the form
\begin{equation}\label{eq:Ifunc_shape}
I^{X\sslash G}(z) = \one_X+ \sum_{\beta\neq 0}q^\beta I^{X\sslash G}_\beta(z) \quad \quad I_\beta^{X\sslash G}(z) \in A_*(I_\mu(X\sslash G))\otimes_\QQ \QQ[z, z^{-1}].
\end{equation}
There is a closely related series $\overline{I}^{X\sslash G}(z)$ which has the same form as in \eqref{eq:Ifunc_shape} but takes values in the Chow group of $\overline{I}_\mu(X\sslash G)$. The series $\overline{I}^{X\sslash G}$ is notated $I(0, q, z)$ in \cite{orb-qmaps}, and 
the series $I^{X\sslash G}(z)$ is the pullback of $\overline{I}^{X\sslash G}(z)$ along the rigidification map---see Remark \ref{rmk:ifuncs}.

To state our formulas for ${I^{X\sslash G}}$ and $\overline{I}^{X\sslash G}$, we define some notation for line bundles on global quotients. Let $Y$ be any variety with a $G$-action. There is an inclusion $\chi(G) \rightarrow \Pic^G(Y)$ sending a character $\xi$ to 
\begin{equation}\label{eq:chi-to-pic}\L_\xi := Y \times \CC_{\xi}.\end{equation}
The bundle $\L_\xi$ descends to a line bundle on $[Y/G]$ which we also denote $\L_\xi$. 

\begin{example}\label{ex:chern-classes}
Let $G=T$ be a torus. We will see in Section \ref{sec:inertia1} that $I_\mu(X\sslash T)$ has a decomposition into open and closed substacks that are global quotients by $T$. Hence any character $\xi$ of $T$ defines a line bundle $\L_\xi$ on $I_\mu(X\sslash T)$. Likewise, $\overline{I}_\mu(X\sslash T)$  has a decomposition into open and closed substacks $(\overline{I}_\mu(X\sslash T))_t$ that are global quotients by groups of the form $T/\langle t \rangle$, where $\langle t \rangle$ is the subgroup of $T$ generated by some element $t \in T$ of finite order $r$. In this case ${r\xi}$ is a character of $T/\langle t \rangle$, and we adopt the notational convention
\[
c_1(\L_\xi) := \frac{1}{r}c_1(\L_{r\xi}) \in A_*((\overline{I}_\mu(X\sslash T))_t)_\QQ.
\]
\end{example}

We continue to work with a $G$-variety $Y$ but we now assume $[Y/G]$ is a Deligne-Mumford stack. For $\xi \in \chi(G)$ and $\beta \in \Hom(\Pic^G(Y), \QQ)$ we define operational Chow classes on $[Y/G]$ by
\begin{align*}
C^\circ(\beta, \xi) 
&:= \begin{cases}
\prod_{\beta(\xi) < k < 0,  \;k-\beta(\xi) \in \ZZ}^{}(c_1(\L_{\xi}) + kz) & \beta(\xi) \leq 0\\
\left[\prod_{0 < k \leq \beta(\xi),  \;k-\beta(\xi) \in \ZZ}^{}(c_1(\L_{\xi}) + kz)\right]^{-1} & \beta(\xi) >0.
\end{cases}\\
C(\beta, \xi) 
&:= \begin{cases}
c_1(\L_{\xi})C^\circ(\beta, \xi) & \beta(\xi) \in \ZZ_{< 0}\\
C^\circ(\beta, \xi) & else.
\end{cases}\\
\end{align*}
Here the empty product (when $\beta(\xi) = 0$) is defined to be 1, and for an operational Chow class $\alpha$ and rational number $k\neq 0$ we use the convention 
\begin{equation}\label{eq:invert}(\alpha + kz)^{-1} := \frac{1}{kz} \sum_{i=0}^\infty \left(\frac{-\alpha}{kz} \right) ^i,\end{equation} noting that this infinite sum is a well-defined operator on $A_*([Y/G])$ by \cite[Cor~5.3.2]{kresch}.

\begin{remark}
The convention \eqref{eq:invert} also allows us to define $C^\circ(\beta, \xi)^{-1}$, and hence $C(\beta, \xi)^{-1}$ when $\beta(\xi) \not \in \ZZ_{< 0}$. 
\end{remark}

\subsubsection{Abelianization}\label{sec:abel-setup}We adopt the notation of Section \ref{sec:setup}. Let $T \subset G$ be a maximal torus; by restriction, $\theta$ defines a character of $T$. We assume:
\begin{enumerate}
\item $X^s(G)=X^{ss}(G)$ and $X^s(T)=X^{ss}(T)$, and these are both smooth and nonempty
\item If $g \in G$ has a nontrivial fixed locus in $X^s(G)$, then 
\begin{enumerate}
\item $g$ is semi-simple (meaning it is contained in a maximal torus), and 
\item the centrilizer $Z_G(g)$ is connected.\footnote{
Example: If $G = PGL_2(\CC)$ and $g = \left( \begin{array}{cc} 1 & 0\\
0 & -1 \end{array}\right)$ then $Z_G(g) = \left( \begin{array}{cc} a & 0\\
0 & b \end{array}\right) \cup \left( \begin{array}{cc} 0 & c\\
d & 0 \end{array}\right)$, so neither $Z(g)$ nor $Z(g)/\langle g \rangle$ is connected.}
\end{enumerate}
\end{enumerate}
The first assumption ensures that quasimap theory is defined for the pairs $(X, G)$ and $(X,T)$, while the second is necessary to relate the inertia stacks of $X\sslash T$ and $X\sslash G$.

\begin{remark}
The condition (2b) follows from (2a) if $G$ is simply connected or a direct product of general linear groups.
\end{remark}

The abelianization theorem uses the following morphisms:
\begin{equation}\label{eq:key_diagram}
\begin{tikzcd}
X\sslash_G T \arrow[hook, r, "j"] \arrow[d, "\varphi"] & X\sslash T\\
X\sslash G &
\end{tikzcd}
\end{equation}
These induce morphisms, which we denote with the same letters, of the corresponding (rigidified) inertia stacks. Let $\rho_1, \ldots, \rho_m \in \chi(T)$ denote the roots of $G$ with respect to $T$.

\begin{theorem}\label{thm:main}
The $I$-functions of $X\sslash G$ and $X\sslash T$ satisfy
\begin{equation}\label{eq:main}
\varphi^* I^{X\sslash G}_{\beta}(z) =  \sum_{\tilde \beta \rightarrow \beta}  \left(\prod_{i=1}^m C(\tilde \beta, \rho_i)^{-1}\right)j^*I^{X\sslash T}_{\tilde \beta}(z),
\end{equation}
where the sum is over all $\tilde \beta$ mapping to $\beta$ under the natural map $\Hom(\Pic^T(X), \ZZ) \rightarrow \Hom(\Pic^G(X), \ZZ)$. This formula also holds with $I^{X\sslash G}_\beta$ and $I^{X\sslash T}_{\tilde \beta}$ replaced by $\overline{I}^{X\sslash G}_{\beta}$ and $\overline{I}^{X\sslash T}_{\tilde \beta}$, and these formulas uniquely determine ${I}^{X\sslash G}$ and $\overline{I}^{X\sslash G}$.
\end{theorem}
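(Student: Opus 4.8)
The plan is to reduce the statement to a computation of virtual localization residues on the moduli spaces that define the small $I$-function, following the recipe of \cite{orb-qmaps}. The key geometric input is the diagram \eqref{eq:key_diagram}: the moduli space of quasimaps to $X\sslash G$ with a $\Gm$-action (coming from the toric-like structure on the source curve $\PP^1_{a,1}$) has fixed loci indexed by the data of a degree $\beta$ together with a point of the inertia stack, and the abelianization procedure replaces $G$ by $T$ at the cost of introducing the roots. I would first recall the explicit description of $\overline{I}^{X\sslash T}_{\tilde\beta}(z)$ from \cite{orb-qmaps} as a sum of localization residues over the fixed components of the quasimap space for $(X,T)$; each such residue is a product over the $T$-weights of $X$ of factors of exactly the shape $C(\tilde\beta,\cdot)$, together with the Euler class of a normal bundle. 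The analogous description for $X\sslash G$ involves the same kind of residue but over quasimap spaces for $(X,G)$, whose fixed loci — by the analysis of Section \ref{sec:inertia} and the hypotheses (2a), (2b) — are torus quotients pulled back along $\varphi$.

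**Key steps, in order.** (1) Use the inertia-stack analysis of Section \ref{sec:inertia} to identify the fixed loci of the $\Gm$-action on the relevant moduli space for $X\sslash G$, and match them up, via $\varphi$ and $j$, with fixed loci for $X\sslash T$; this is where hypotheses (2a) (semisimplicity, so that fixed-point data is captured by torus elements) and (2b) (connectedness of centralizers, so the components don't fragment) are used. (2) Apply virtual localization on both sides and compare residue contributions component by component. (3) On the $(X,T)$ side the contribution is a product of $C(\tilde\beta,\xi)$ over all $T$-weights $\xi$ of the tangent/obstruction complex; on the $(X,G)$ side the tangent complex of $X\sslash G$ pulls back under $\varphi$ to that of $X\sslash_G T$ plus the adjoint representation $\mathfrak{g}/\mathfrak{t}=\bigoplus \CC_{\rho_i}$ contributed with a sign (it appears in the obstruction, not the deformation, direction, or vice versa depending on $\beta(\rho_i)$ — this is exactly the case distinction in the definition of $C$ versus $C^\circ$). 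Extracting the localization residue for the extra adjoint summand produces precisely $\prod_i C(\tilde\beta,\rho_i)^{-1}$, and summing over $\tilde\beta\to\beta$ accounts for the fact that a single $G$-fixed locus pulls back to a union of $T$-fixed loci indexed by the lifts of $\beta$. (4) The $\overline{I}$ version follows identically, working with the rigidified inertia stacks and the convention of Example \ref{ex:chern-classes} for fractional Chern classes; the two versions are compatible under pullback along rigidification as noted in Remark \ref{rmk:ifuncs}. (5) Finally, uniqueness: the map $\varphi^*\colon A_*(I_\mu(X\sslash G))\to A_*(I_\mu(X\sslash_G T))$ is injective (a quotient by a subgroup induces an injection on rational Chow after pullback, since $\varphi$ admits a section up to the relevant covering data, or more simply because $\varphi_*\varphi^*$ is multiplication by a nonzero rational), so \eqref{eq:main} determines $I^{X\sslash G}_\beta$ from the right-hand side; the same argument applies to $\overline{I}$.

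**Main obstacle.** I expect the real work to be in step (1) and the normal-bundle bookkeeping in step (3): precisely matching the $\Gm$-fixed loci of the $G$-quasimap space with those of the $T$-quasimap space requires knowing how the inertia stack behaves under abelianization (the content of Section \ref{sec:inertia}), and in particular that the "age" shifts and the weights appearing in the virtual normal bundle transform correctly — the subtlety is that $C$ and $C^\circ$ differ exactly when $\beta(\xi)$ is a negative integer, which is the boundary case where a weight-$\xi$ line either does or does not contribute a class $c_1(\L_\xi)$ to the fixed locus, and one must check this against the combinatorics of which sections of $\L_{\rho_i}$ on the orbicurve $\PP^1_{a,1}$ are $\Gm$-fixed. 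Getting the sign/inversion conventions \eqref{eq:invert} to line up with the localization residue (which naturally produces inverse Euler classes) is the kind of thing that is conceptually routine but error-prone, and it is the step I would write out most carefully.
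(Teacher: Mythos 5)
Your overall strategy---compare virtual localization residues on the fixed loci of the quasimap spaces for $(X,G)$ and $(X,T)$, with the adjoint representation $\mathfrak{g}/\mathfrak{t}$ producing the root factors---matches the paper's approach, and steps (2) and (4) are in the right place. But there is a genuine gap in step (3), where the real work happens, and the uniqueness argument in step (5) is not quite right.

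The gap in step (3) is that $\varphi$ does not simply ``match up'' the $\Gm$-fixed locus $F_\beta(X\sslash G)$ with a union of fixed loci for $(X,T)$. The paper's Proposition~\ref{prop:diagram} and Lemma~\ref{lem:nearly} show that the component $F_{\tilde\beta}(X\sslash G)$ is the image of $F^0_{\tilde\beta}(X\sslash T)$ under a quotient map $\psi_{\tilde\beta}$ by a \emph{parabolic} subgroup $P_{\tilde\alpha}\subset Z_G(g_{\tilde\beta})$, and the evaluation map to $I_\mu(X\sslash G)$ factors through a flag bundle $f\colon X^{g}\sslash_G P_{\tilde\alpha}\to X^{g}\sslash_G Z_G(g)$. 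Because of this, computing $\varphi^*(ev_\star)_*\Res^{X\sslash G}_\beta$ requires a Brion-type pullback formula, \eqref{eq:brion}, which introduces both a Weyl-group sum and a division by $\prod_{\rho\in R^-_{\tilde\alpha}}c_1(\L_\rho)$. Your proposal has no analogue of this step, and it is what supplies the difference between $C^\circ$ and $C$: the normal-bundle comparison (the paper's Lemma~\ref{lem:computation}, your step (3)) only yields $\prod_i C^\circ(\tilde\beta,\rho_i)$, and the extra $c_1(\L_\rho)$ factors in the denominator that promote $C^\circ$ to $C$ come from the flag-bundle geometry, not from a deformation-versus-obstruction split as you suggest. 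Without the parabolic/Weyl structure you cannot see why the sum in \eqref{eq:main} runs over \emph{all} $\tilde\beta\to\beta$ rather than over Weyl-orbit representatives, nor where the terms $C(\tilde\beta,\rho_i)^{-1}$ with $\tilde\beta(\rho_i)\in\ZZ_{<0}$ acquire their extra Chern-class factor.

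On step (5), the heuristic that $\varphi_*\varphi^*$ is ``multiplication by a nonzero rational'' is false here: $\varphi$ is proper and smooth of relative dimension $\dim(G/T)>0$ (restricted to each inertia component), so $\varphi_*\varphi^*$ shifts degree and cannot be a scalar. The correct input is the paper's Lemma~\ref{lem:abelianize-chow}, which, via Brion's theorem \cite[Thm~10]{Br98}, identifies $A_*(I_\mu(X\sslash G))$ with the $W$-invariants of $A_*(I_\mu(X\sslash_G T))$, from which injectivity of $\varphi^*$ follows. You should replace the $\varphi_*\varphi^*$ argument with a citation of this result (or the underlying Brion isomorphism).
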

See Example \ref{ex:chern-classes} for the definition of $c_1(\L_{\rho_i})$. The injectivity of $\varphi^*$ and the denominators appearing on the right hand side of \eqref{eq:main} are explained in Remark \ref{rmk:abel-explain}.

\begin{remark}
Theorem \ref{thm:main} is closely related to the main result of \cite{gonzalez}. However, in \cite{gonzalez} the authors only consider the untwisted sector, whereas our equality \eqref{eq:main} applies in all sectors. 
\end{remark}

\subsubsection{Quantum Lefschetz}\label{sec:ql-setup}
We extend the situation studied in Section \ref{sec:abel-setup} by adding the additional data of a vector bundle and section. That is, let $E$ be a $G$-representation, and let $s$ be a $G$-equivariant section of the vector bundle $E \times X \rightarrow X$ with $Y \subset X$ its zero locus. In addition to (1) and (2) in Section \ref{sec:abel-setup} we assume:
\begin{enumerate}
\setcounter{enumi}{2}
\item The section $s$ is regular, meaning that it is locally given by a regular sequence, and $X^s(G) \cap Y$ is smooth.
\end{enumerate}
Assumption (1) implies that $Y^s(G) = Y^{ss}(G)$ (see Lemma \ref{lem:stable-locus}) and (3) implies that $Y$ is l.c.i. with smooth stable locus, so quasimap theory is defined for $(Y,G)$. Observe that if $G$ is a torus then assumption (2) is automatic. 

We extend the diagram \eqref{eq:key_diagram} as follows. For $\tilde \delta \in \Hom(\Pic^T(X), \QQ)$, there there is a certain moduli stack of quasimaps $F_{\tilde \delta}(X\sslash T)$
(Section \ref{sec:lambda-action}) with a closed embedding $ev_\star$ to $I_\mu(X\sslash T)$ (Lemma \ref{lem:abelian}). The significance of these data is that the $I$-function coefficient $I^{X\sslash T}_{\tilde \delta}$  is defined by 
\[
I^{X\sslash T}_{\tilde \delta} = \iota_* ev_{\star} \Res^{X\sslash T}_{\tilde \delta}, 
\]where $\iota$ is the involution on the inertia stack $I_\mu(X\sslash T)$ and $\Res^{X\sslash T}_{\tilde \delta}$ is a certain element of the equivariant Chow group of $F_{\tilde \delta}(X\sslash T)$ (Section \ref{sec:loc-res} and Definition \ref{def:ifunc}). 
The closed embedding $ev_\star$ determines the following fibered diagram, where for the time being we define $F^0_{\tilde \delta}(Y\sslash T)$ and $F^0_{\tilde \delta}(X\sslash T)$ so that the diagram is fibered (but see Proposition \ref{prop:diagram2}).
\begin{equation}\label{eq:key-ql-diagram}
\begin{tikzcd}
F^0_{\tilde \delta} (Y\sslash T) \arrow[d, hookrightarrow, "ev_\star"] \arrow[r] & F^0_{\tilde \delta}(X\sslash T) \arrow[d, hookrightarrow]  \arrow[r, hookrightarrow, "j_F"] & F_{\tilde \delta}(X\sslash T) \arrow[d, hookrightarrow, "ev_\star"]\\
I_{\mu}(Y\sslash_G T) \arrow[r, "\bi"] \arrow[d, "\varphi"] & I_{\mu}(X\sslash_G T) \arrow[d] \arrow[r, hookrightarrow, "j"] & I_\mu(X\sslash T) \\
I_{\mu}(Y\sslash G) \arrow[r] & I_{\mu}(X\sslash G)
\end{tikzcd}
\end{equation}
    We show in Lemma \ref{lem:ql-inertia} that $\bi$ is a regular local immersion in the sense of \cite[Sec~3.1]{kresch}, and hence we have a Gysin map $\bi^!: A_*(I_{\mu}(X\sslash_G T))_\QQ \to A_*(I_{\mu}(Y\sslash_G T) )_\QQ$. On the other hand, in \eqref{eq:fiber-fixed} we define a refined Gysin map $0^!_{\tilde \delta}: A_*(F^0_{\tilde \delta}(X\sslash T))_\QQ \to A_*(F^0_{\tilde \delta}(Y\sslash T))_{\QQ}$. We will see in Theorem \ref{thm:ql} that $0^!_{\tilde \delta}$ is the correct morphism for comparing the $I$-functions of $X\sslash G$ and $Y\sslash G$ in general, while $\bi^!$ can be used if the class $\tilde \delta$ is \textit{I-nonnegative}.  

We define I-nonnegativity as follows. Denote the weights of $E$ with respect to $T$ by $\epsilon_j$, for $j=1, \ldots, r$. We say that $\tilde \delta$ is \textit{I-nonnegative} if the set
\[
\{ \epsilon \in \{\epsilon_j\}_{j=1}^r \mid \tilde \delta(\epsilon) \in \ZZ_{<0}\}
\]
is empty. We explain the reason for the term ``I-nonnegative'' in Remark \ref{rmk:reason}.

\begin{remark}\label{rmk:gysin}
The class $\tilde \delta$ is I-nonnegative if and only if $C(\tilde \delta, \epsilon_j) = C^\circ(\tilde \delta, \epsilon_j)$ for all $j=1, \ldots, r$. 
Moreover, we show in Lemma \ref{lem:compare-gysin} that if $\tilde \delta$ is I-nonnegative then the refined Gysin maps $\bi^!, 0^!_{\tilde \delta}: A_*(F^0_{\tilde \delta}(X\sslash T))_\QQ \to A_*(F^0_{\tilde \delta}(Y\sslash T))_{\QQ}$ agree. 
\end{remark}

\begin{theorem}\label{thm:ql}
For $\delta \in \Hom(\Pic^G(X), \QQ)$, we have 
\[
\sum_{\beta \mapsto \delta} \varphi^*I_\beta^{Y\sslash G}(z) = \sum_{\tilde \delta \mapsto \delta}
 I_{\tilde \delta}(z)\]
where the first sum is over all $\beta$ mapping to $\delta$ under the natural map $\Hom(\Pic^G(Y), \QQ) \to \Hom(\Pic^G(X),\QQ)$, the second sum is over all $\tilde \delta$ mapping to $\delta$ under the natural map $\Hom(\Pic^T(X), \QQ) \to \Hom(\Pic^G(X), \QQ)$, and we define \begin{equation}\label{eq:ql-main}
I_{\tilde \delta}(z) =
\left( \prod_{i=1}^m C(\tilde \delta, \rho_i)^{-1}\right)\left(
\prod_{j=1}^r  C^\circ(\tilde \delta, \epsilon_j)^{-1}\right) \iota_*(ev_{\star})_*0_{\tilde \delta}^!j_F^* Res^{X\sslash T}_{\tilde \delta}(z).
\end{equation}
 If $F^0_{\tilde \delta}(X\sslash T)$ is empty then $I_{\tilde \delta}(z)$ is defined to be zero. An analogous formula for $\overline{I}^{Y\sslash G}_\beta$ holds with $\iota_* (ev_\star)_*$ replaced by $(\widetilde{ev}_\star)_*$ (defined in Section \ref{sec:ifunc-def}).
\end{theorem}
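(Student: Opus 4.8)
The plan is to reduce Theorem \ref{thm:ql} to Theorem \ref{thm:main} applied to the complete intersection $Y$, combined with a compatibility statement between the localization residues $\Res^{Y\sslash T}_{\tilde\delta}$ for $Y$ and the Gysin-image of $\Res^{X\sslash T}_{\tilde\delta}$ for $X$. The starting point is the definition $I^{Y\sslash T}_{\tilde\delta} = \iota_* (ev_\star)_* \Res^{Y\sslash T}_{\tilde\delta}$ together with the abelianization formula \eqref{eq:main} for the pair $(Y,G)$: namely
\[
\varphi^* I^{Y\sslash G}_\beta(z) = \sum_{\tilde\beta \to \beta} \Bigl(\prod_{i=1}^m C(\tilde\beta,\rho_i)^{-1}\Bigr) j^* I^{Y\sslash T}_{\tilde\beta}(z).
\]
Summing over all $\beta \mapsto \delta$ and reindexing, the left side of the claimed identity becomes $\sum_{\tilde\delta\mapsto\delta}\bigl(\prod_i C(\tilde\delta,\rho_i)^{-1}\bigr) j^* I^{Y\sslash T}_{\tilde\delta}(z)$, where I use that the map $\Hom(\Pic^T(Y),\QQ)\to\Hom(\Pic^G(Y),\QQ)$ factors the composition through $\Hom(\Pic^G(Y),\QQ)$ compatibly with $\Hom(\Pic^T(X),\QQ)\to\Hom(\Pic^G(X),\QQ)$ — here one must check that restriction along $Y\hookrightarrow X$ induces an isomorphism (or at least the relevant identification) on the character-type lattices, which should follow from $\Pic^G(Y)$ being controlled by $\Pic^G(X)$ and $\chi(G)$. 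So it remains to identify $j^* I^{Y\sslash T}_{\tilde\delta}(z)$, a class on $I_\mu(Y\sslash_G T)$, with $\bigl(\prod_j C^\circ(\tilde\delta,\epsilon_j)^{-1}\bigr)\iota_*(ev_\star)_* 0^!_{\tilde\delta} j_F^* \Res^{X\sslash T}_{\tilde\delta}(z)$.

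The second and central step is therefore a quantum-Lefschetz statement for the torus $T$: I must show that the localization residue for $Y\sslash T$ is obtained from that for $X\sslash T$ by applying the refined Gysin map $0^!_{\tilde\delta}$ and multiplying by the Euler-type factor $\prod_j C^\circ(\tilde\delta,\epsilon_j)^{-1}$. This is exactly the kind of computation that the abelian orbifold quasimap $I$-function formula of \cite{orb-qmaps, CCIT15} makes explicit when $X$ is a vector space, and here it is encoded in the fibered diagram \eqref{eq:key-ql-diagram}. Concretely, $F_{\tilde\delta}(X\sslash T)$ carries the $T$-fixed quasimap data, the section $s$ of $E\times X$ induces a section of an induced bundle on $F_{\tilde\delta}(X\sslash T)$ whose zero locus is $F^0_{\tilde\delta}(Y\sslash T)$ via $ev_\star$, and the refined Gysin map $0^!_{\tilde\delta}$ implements "intersecting with the zero section" of that bundle. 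Meanwhile the $\lambda$-equivariant virtual normal bundle of the fixed locus for $Y$ differs from that for $X$ precisely by the moving part of the bundle $E$, whose $\lambda$-equivariant Euler class — after taking residues — contributes the product $\prod_j C^\circ(\tilde\delta,\epsilon_j)^{-1}$. I would make this precise by unwinding the definition of $\Res^{X\sslash T}_{\tilde\delta}$ in Section \ref{sec:loc-res}, tracking how the obstruction-theory contribution of $E$ factors through the weights $\epsilon_j$, and invoking the excess-intersection / self-intersection formula relating $0^!_{\tilde\delta} j_F^*$ to the residue for $Y$. The compatibility of $ev_\star$ with $\bi$ in \eqref{eq:key-ql-diagram}, together with the fact that $\bi^!$ agrees with $0^!_{\tilde\delta}$ in the I-nonnegative case (Lemma \ref{lem:compare-gysin}), provides a useful consistency check and in fact handles the I-nonnegative case outright.

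The remaining steps are bookkeeping: pushing forward along $\iota_*(ev_\star)_*$ commutes past the operational classes $C(\tilde\delta,\rho_i)^{-1}$ and $C^\circ(\tilde\delta,\epsilon_j)^{-1}$ because those are pulled back from $I_\mu(X\sslash G)$ (the roots $\rho_i$ and weights $\epsilon_j$ are characters of $G$, hence their associated line bundles descend), so one may use the projection formula freely; and the case $F^0_{\tilde\delta}(X\sslash T) = \emptyset$ is vacuous on both sides. The statement for $\overline{I}^{Y\sslash G}_\beta$ follows the identical argument with $\iota_*(ev_\star)_*$ replaced by $(\widetilde{ev}_\star)_*$, using that $I^{X\sslash G}$ is the rigidification pullback of $\overline{I}^{X\sslash G}$ (Remark \ref{rmk:ifuncs}) and that all the Chern-class factors are defined compatibly on the rigidified inertia stack via the convention in Example \ref{ex:chern-classes}. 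I expect the main obstacle to be the second step — precisely matching the refined Gysin map $0^!_{\tilde\delta}$ and the Euler factor $\prod_j C^\circ(\tilde\delta,\epsilon_j)^{-1}$ against the definition of the residue for $Y$ — since this requires careful handling of the $\lambda$-equivariant virtual structure on the fixed loci, the potential non-properness that forces the use of residues rather than honest integrals, and the subtlety (flagged in the text before Definition \ref{def:ifunc} and in Remark \ref{rmk:gysin}) that $0^!_{\tilde\delta}$ and $\bi^!$ genuinely differ when $\tilde\delta$ is not I-nonnegative.
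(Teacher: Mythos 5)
Your plan hinges on two ingredients that the theorem's hypotheses do not supply: you propose to apply Theorem \ref{thm:main} to the pair $(Y,G)$ and to work with $I^{Y\sslash T}_{\tilde\delta} = \iota_*(ev_\star)_*\Res^{Y\sslash T}_{\tilde\delta}$, i.e.\ with the quasimap theory of $Y\sslash T$. Both require that $Y^s(T)=Y^{ss}(T)$ is smooth and nonempty (condition (1) of Section \ref{sec:abel-setup} with $Y$ in place of $X$), but the hypotheses of Theorem \ref{thm:ql} only give smoothness of $Y^s(G)$; there is nothing controlling $Y^{ss}(T)$. This is precisely the obstacle flagged in Remark \ref{rmk:need-ql}: one cannot in general go $X\sslash T \rightsquigarrow Y\sslash T \rightsquigarrow Y\sslash G$ because the middle target may not have a defined quasimap theory. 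Your "central step" is therefore comparing objects one of which may not exist.

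The paper's proof avoids this by never invoking $\Res^{Y\sslash T}$ or the $(Y,T)$-moduli. It starts from the \emph{intermediate} identity \eqref{eq:step7} in the proof of Theorem \ref{thm:main} applied to $(Y,G)$ — namely $\sum_{\beta\mapsto\delta}\varphi^*\iota_*I^{Y\sslash G}_\beta = \sum_{\tilde\beta\mapsto\delta}(\prod c_1(\L_{\rho_i}))^{-1}(ev_\star)_*\psi_{\tilde\beta}^*\Res^{Y\sslash G}_{\tilde\beta}$ — which is formulated entirely in terms of $\Res^{Y\sslash G}$ pulled back along $\psi_{\tilde\beta}$ to the stack $F^0_{\tilde\beta}(Y\sslash T)$. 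That stack is defined using only $G$-stable points of $Y$ (Proposition \ref{prop:diagram2} uses $Y^s(G)$, not $Y^{ss}(T)$), so no smoothness of $Y^{ss}(T)$ is needed. The key quantum-Lefschetz comparison is then Lemma \ref{lem:computation2}, which relates $0^!_{\tilde\delta}\psi^*_{\tilde\delta}\Res^{X\sslash G}_{\tilde\delta}$ directly to $\psi^*_{\tilde\delta}\Res^{Y\sslash G}_{\tilde\delta}$ (a $G$-side comparison, not a $T$-side one), producing the factor $\prod_j C^\circ(\tilde\delta,\epsilon_j)$. Only afterwards does Lemma \ref{lem:computation} convert $\psi^*\Res^{X\sslash G}$ to $\Res^{X\sslash T}$. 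Your outline has the right pieces (abelianization, a refined Gysin/Euler-class comparison, projection formula, rigidification bookkeeping), but the order of operations must be rearranged to keep all intermediate objects on the $X\sslash T$ or $Y\sslash G$ side, never touching $Y\sslash T$.
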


See Example \ref{ex:chern-classes} for the definition of $c_1(\L_{\rho_i})$ and $c_1(\L_{\epsilon_j})$. When $\tilde \delta$ is I-nonnegative, we can permute the closed embedding $ev_\star$ and the refined Gysin map $0_{\tilde \delta}^!$ (equal to the refined Gysin map $\bi^!$ in this case, see Remark \ref{rmk:gysin}) and obtain the following formula.

\begin{corollary}\label{cor:ql-convex}
In Theorem \ref{thm:ql}, for I-nonnegative classes $\tilde \delta$ the formula \eqref{eq:ql-main} may be replaced by
\[
I_{\tilde \delta}(z) =
\left( \prod_{i=1}^m C(\tilde \delta, \rho_i)^{-1}\right)\left(
\prod_{j=1}^r  C(\tilde \delta, \epsilon_j)^{-1}\right) \bi^!j^* I^{X\sslash T}_{\tilde \delta}(z).
\]
This formula also holds with $I^{Y\sslash G}_\beta$ and $I^{X\sslash T}_{\tilde \delta}$ replaced by $\overline{I}^{Y\sslash G}_{\beta}$ and $\overline{I}^{X\sslash T}_{\tilde \delta}$, respectively.
\end{corollary}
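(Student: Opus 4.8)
The plan is to deduce Corollary \ref{cor:ql-convex} formally from Theorem \ref{thm:ql}, with no geometric input beyond what the excerpt has already set up: one only needs to rewrite the right-hand side of \eqref{eq:ql-main} under the extra hypothesis that $\tilde\delta$ is I-nonnegative.

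First I dispose of the coefficients. By the first sentence of Remark \ref{rmk:gysin}, I-nonnegativity of $\tilde\delta$ is exactly the statement that $C(\tilde\delta,\epsilon_j)=C^\circ(\tilde\delta,\epsilon_j)$ for every $j$, so $\prod_{j}C^\circ(\tilde\delta,\epsilon_j)^{-1}=\prod_{j}C(\tilde\delta,\epsilon_j)^{-1}$ as operators on $A_*(I_\mu(Y\sslash_G T))_\QQ$, while the factor $\prod_i C(\tilde\delta,\rho_i)^{-1}$ is common to \eqref{eq:ql-main} and to the asserted formula. Hence the corollary reduces to the identity
\[
\iota_*(ev_\star)_*\,0^!_{\tilde\delta}\,j_F^*\,\Res^{X\sslash T}_{\tilde\delta}(z)\;=\;\bi^!\,j^*\,I^{X\sslash T}_{\tilde\delta}(z)
\]
in $A_*(I_\mu(Y\sslash_G T))_\QQ$, where on the right $\bi^!$ is the Gysin map $A_*(I_\mu(X\sslash_G T))_\QQ\to A_*(I_\mu(Y\sslash_G T))_\QQ$ of Lemma \ref{lem:ql-inertia} and $I^{X\sslash T}_{\tilde\delta}(z)=\iota_*(ev_\star)_*\Res^{X\sslash T}_{\tilde\delta}(z)$ by definition of the $I$-function.

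The heart of the matter is a commutation of $ev_\star$ past the Gysin operation, carried out in four steps along the two fibered squares of \eqref{eq:key-ql-diagram}. (i) By Lemma \ref{lem:compare-gysin} (quoted in Remark \ref{rmk:gysin}), when $\tilde\delta$ is I-nonnegative $0^!_{\tilde\delta}$ equals the refined Gysin map $\bi^!$ attached to the left fibered square of \eqref{eq:key-ql-diagram}, whose bottom arrow $\bi$ is a regular local immersion by Lemma \ref{lem:ql-inertia}. (ii) The compatibility of refined Gysin maps with proper pushforward \cite{kresch}, applied to that square --- whose vertical arrows $p\colon F^0_{\tilde\delta}(X\sslash T)\hookrightarrow I_\mu(X\sslash_G T)$ and $ev_\star\colon F^0_{\tilde\delta}(Y\sslash T)\hookrightarrow I_\mu(Y\sslash_G T)$ are closed embeddings --- gives $(ev_\star)_*\bi^!=\bi^!\,p_*$, the second $\bi^!$ now being the inertia-level Gysin map of Lemma \ref{lem:ql-inertia}. (iii) In the right fibered square of \eqref{eq:key-ql-diagram} the bottom arrow $j$ is an open immersion, since $X^s(G)\subseteq X^s(T)$ is open (a $G$-invariant section of a power of $\L_\theta$ is in particular $T$-invariant); hence $j$ is flat on inertia stacks and flat base change gives $j^*(ev_\star)_*=p_*\,j_F^*$. (iv) The cyclotomic inertia involution is natural in $j$ and $\bi$, and refined Gysin maps and flat pullbacks are compatible with the isomorphisms $\iota$, so $\iota_*\,\bi^!\,j^*=\bi^!\,j^*\,\iota_*$. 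Chaining (i)--(iv),
\begin{align*}
\iota_*(ev_\star)_*\,0^!_{\tilde\delta}\,j_F^*\,\Res
&=\iota_*(ev_\star)_*\,\bi^!\,j_F^*\,\Res
=\iota_*\,\bi^!\,p_*\,j_F^*\,\Res\\
&=\iota_*\,\bi^!\,j^*(ev_\star)_*\,\Res
=\bi^!\,j^*\,\iota_*(ev_\star)_*\,\Res,
\end{align*}
and the last expression is $\bi^!\,j^*\,I^{X\sslash T}_{\tilde\delta}(z)$, as required. The statement for $\overline{I}^{Y\sslash G}_\beta$ follows by running the identical chain with $\iota_*(ev_\star)_*$ replaced by $(\widetilde{ev}_\star)_*$ throughout, every pushforward, base-change and Gysin compatibility being equally valid for the proper map to the rigidified cyclotomic inertia stack.

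I do not anticipate a genuine obstacle internal to this corollary: the geometric content lies in Theorem \ref{thm:ql}, Lemma \ref{lem:compare-gysin} and Lemma \ref{lem:ql-inertia}, which precede it, and what is left is bookkeeping. The points that need care are that each square invoked is genuinely fibered (guaranteed by the construction of $F^0_{\tilde\delta}(X\sslash T)$ and $F^0_{\tilde\delta}(Y\sslash T)$), that the closed-embedding and flatness hypotheses needed for the pushforward and base-change identities hold in the Artin-stack setting of \cite{kresch}, and that the two maps both written $\bi^!$ --- the one between the $F^0$'s used in steps (i)--(ii) and the inertia-level one --- are matched up correctly when passing from (ii) onward. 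The case of empty $F^0_{\tilde\delta}(X\sslash T)$ is the degenerate instance of the same argument.
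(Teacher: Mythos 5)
Your argument is correct and is essentially the paper's proof. The paper's version is more compressed --- it explicitly invokes only Lemma \ref{lem:compare-gysin} for $0^!_{\tilde\delta}=\bi^!$ (your step (i)) and \cite[Thm~2.1.12.ix]{kresch} for commuting $(ev_\star)_*$ past $\bi^!$ (your step (ii)), leaving the flat base change along $j$ and the $\iota$-commutation implicit, and for the rigidified case cites \cite[Prop~B.18]{BS} for the commutation of the Gysin map with $\varpi_*$ --- but your four-step chain is exactly the bookkeeping the paper is performing.
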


When $X$ is a vector space we can make the formula even more explicit.

\begin{corollary}\label{cor:ql}
If $X$ is a vector space with $T$-weights $\xi_1, \ldots, \xi_n$, then in Theorem \ref{thm:ql} the formula \eqref{eq:ql-main} may be replaced by
\begin{equation}\label{eq:ql-cor}
I_{\tilde \delta}(z)=  \left( \prod_{i=1}^m C(\tilde \delta, \rho_i)^{-1}\right)
\left(\prod_{j=1}^r C^\circ(\tilde \delta, \epsilon_j)^{-1}\right)
\left(\prod_{\ell=1}^{n}C^\circ(\tilde \delta, \xi_\ell)\right) \iota_*
(ev_\star)_*0^!_{\tilde \delta}[F^0_{\tilde \delta}(X\sslash T)]
\end{equation}
The same formula holds for $\overline{I}_\beta^{Y\sslash T}$. 
 Moreover, \eqref{eq:ql-cor} simplifies further in two special cases:
 \begin{enumerate}
 \item If $\tilde \delta \in \Hom(\chi(T), \QQ)$ is I-nonnegative, we have
 \begin{equation}\label{eq:ql-cor-convex}
I_{\tilde \delta}(z) = \left( \prod_{i=1}^m C(\tilde \delta, \rho_i)^{-1}\right)
\left(\prod_{j=1}^r C(\tilde \delta, \epsilon_j)^{-1}\right)
\left(\prod_{\ell=1}^{n}C(\tilde \delta, \xi_\ell)\right)
\one_{g_{\tilde \delta}^{-1}}
\end{equation}
where $\one_{g_{\tilde \delta}^{-1}}$ is the fundamental class of the component $(I_\mu(Y\sslash_G T))_{g_{\tilde \delta}^{-1}}$ of the inertia stack (see Section \ref{sec:inertia1}).
 \item If the embedding $F^0_{\tilde \delta}(Y\sslash T) \to F^0_{\tilde \delta}(X\sslash T)$ in \eqref{eq:key-ql-diagram} is regular and the excess bundle \eqref{eq:def-excess} for the fiber square \eqref{eq:fiber-fixed} defining $0^!_{\tilde \delta}$ is trivial, then 
 \[\iota_*(ev_\star)_*0^!_{\tilde \delta}[F^0_{\tilde \delta}(X\sslash T)] = [F^0_{\tilde \delta}(Y\sslash T)].\]
 \end{enumerate}

\end{corollary}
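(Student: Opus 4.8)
The plan is to reduce everything to making the class $j_F^* \Res^{X\sslash T}_{\tilde\delta}(z)$ completely explicit when $X = V$ is a vector space, then to unwind \eqref{eq:ql-main} and specialize. Recall from Section~\ref{sec:loc-res} and Definition~\ref{def:ifunc} that $\Res^{X\sslash T}_{\tilde\delta}(z)$ is a virtual localization residue on the graph-type moduli space whose relevant $\Gm$-fixed locus is $F_{\tilde\delta}(X\sslash T)$. For a vector-space target this moduli space, the fixed locus, and the closed embedding $ev_\star$ all admit the explicit description underlying the toric quasimap mirror theorem of \cite{orb-qmaps, CCIT15}; the first task is to extend that description to all twisted sectors. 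Concretely, a $\Gm$-fixed quasimap of class $\tilde\delta$ carrying its orbifold marking is rigid apart from a ``leading value'' constrained to lie in the $T$-invariant linear subspace $\bigoplus_{\ell\,:\,\tilde\delta(\xi_\ell)\in\ZZ_{\geq 0}} V_{\xi_\ell}$, which identifies $F_{\tilde\delta}(X\sslash T)$ with the associated global quotient and its base change $F^0_{\tilde\delta}(X\sslash T)$ along the open immersion $j$ with a coordinate-subspace quotient of the inertia component $(I_\mu(X\sslash_G T))_{g_{\tilde\delta}}$, with $ev_\star$ the evident closed immersion. The virtual normal bundle of $F_{\tilde\delta}(X\sslash T)$ in the graph space splits $T$-equivariantly into pieces indexed by the weights $\xi_1,\dots,\xi_n$ of $V$, and the weight-$\xi_\ell$ bookkeeping of $H^0$ versus $H^1$ on the orbifold domain curve (the abelian-with-orbifold analogue of the computation in \cite{orb-qmaps, CCIT15}, now retaining the fractional degrees imposed by $g_{\tilde\delta}$) shows that this piece contributes the operator $C^\circ(\tilde\delta,\xi_\ell)$. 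I therefore expect
\[
j_F^* \Res^{X\sslash T}_{\tilde\delta}(z) \;=\; \left( \prod_{\ell=1}^n C^\circ(\tilde\delta,\xi_\ell) \right) [F^0_{\tilde\delta}(X\sslash T)].
\]
Since each $C^\circ(\tilde\delta,\xi_\ell)$ is, via \eqref{eq:invert}, an operator polynomial in $c_1(\L_{\xi_\ell})$, and the line bundles $\L_{\xi_\ell}$ restrict compatibly along all of $0^!_{\tilde\delta}$, $(ev_\star)_*$ and $\iota_*$, the projection formula and compatibility of Chern classes with refined Gysin maps let me pull $\prod_\ell C^\circ(\tilde\delta,\xi_\ell)$ outside $\iota_*(ev_\star)_* 0^!_{\tilde\delta}$ in \eqref{eq:ql-main}; this is \eqref{eq:ql-cor}. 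The empty case and the rigidified version (with $\iota_*(ev_\star)_*$ replaced by $(\evstar)_*$) are as in Theorem~\ref{thm:ql}.

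For item (1): when $\tilde\delta$ is I-nonnegative, Remark~\ref{rmk:gysin} gives $C^\circ(\tilde\delta,\epsilon_j)=C(\tilde\delta,\epsilon_j)$ and $0^!_{\tilde\delta}=\bi^!$, while $C(\tilde\delta,\xi_\ell)=c_1(\L_{\xi_\ell})\,C^\circ(\tilde\delta,\xi_\ell)$ precisely when $\tilde\delta(\xi_\ell)\in\ZZ_{<0}$ and equals $C^\circ(\tilde\delta,\xi_\ell)$ otherwise. So, comparing \eqref{eq:ql-cor} with \eqref{eq:ql-cor-convex}, it suffices to prove
\[
\iota_*(ev_\star)_* \bi^! [F^0_{\tilde\delta}(X\sslash T)] \;=\; \left( \prod_{\ell\,:\,\tilde\delta(\xi_\ell)\in\ZZ_{<0}} c_1(\L_{\xi_\ell}) \right) \one_{g_{\tilde\delta}^{-1}}.
\]
Using compatibility of $\bi^!$ with proper pushforward along the Cartesian left square of \eqref{eq:key-ql-diagram}, I can move $\bi^!$ past $(ev_\star)_*$ and reduce to computing $(ev_\star)_*[F^0_{\tilde\delta}(X\sslash T)]$ in $A_*(I_\mu(X\sslash_G T))$. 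By the description above, $F^0_{\tilde\delta}(X\sslash T)$ sits inside the inertia component $(I_\mu(X\sslash_G T))_{g_{\tilde\delta}}$ as the vanishing of the coordinate functions on $V$ indexed by $\{\ell:\tilde\delta(\xi_\ell)\in\ZZ_{<0}\}$---a regular embedding, those coordinates forming a regular sequence---with normal bundle $\bigoplus_{\ell\,:\,\tilde\delta(\xi_\ell)\in\ZZ_{<0}}\L_{\xi_\ell}$; hence $(ev_\star)_*[F^0_{\tilde\delta}(X\sslash T)]=\left( \prod_{\ell\,:\,\tilde\delta(\xi_\ell)\in\ZZ_{<0}} c_1(\L_{\xi_\ell}) \right)[(I_\mu(X\sslash_G T))_{g_{\tilde\delta}}]$. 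Applying $\bi^!$ (which, by Lemma~\ref{lem:ql-inertia} and the analysis of Section~\ref{sec:inertia}, sends the fundamental class of this component to $\one_{g_{\tilde\delta}}$ on $I_\mu(Y\sslash_G T)$ and commutes with the $c_1(\L_{\xi_\ell})$) and then $\iota_*$ (which swaps the $g_{\tilde\delta}$- and $g_{\tilde\delta}^{-1}$-sectors, see Section~\ref{sec:inertia1}) gives the claim. Item (2) is then immediate from the construction of $0^!_{\tilde\delta}$ in \eqref{eq:fiber-fixed}: a regular embedding with trivial excess bundle \eqref{eq:def-excess} makes $0^!_{\tilde\delta}$ carry the fundamental class of the ambient fixed locus to that of $F^0_{\tilde\delta}(Y\sslash T)$, and one applies $\iota_*(ev_\star)_*$.

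The hard part will be the explicit localization analysis in the first paragraph---running the $\Gm$-fixed point computation on the graph space for a general vector-space target and matching, weight by weight, the (inverse) virtual normal bundle with the operators $C^\circ(\tilde\delta,\xi_\ell)$, while correctly handling the orbifold/twisted-sector combinatorics (fractional degrees, the order and conjugacy class of $g_{\tilde\delta}$, ages). The companion point, identifying $F^0_{\tilde\delta}(X\sslash T)$ as a coordinate-subspace quotient of the inertia so that $ev_\star$ is a regular closed immersion with the stated normal bundle, is where the factors $\prod_{\ell\,:\,\tilde\delta(\xi_\ell)\in\ZZ_{<0}} c_1(\L_{\xi_\ell})$ that distinguish \eqref{eq:ql-cor} from \eqref{eq:ql-cor-convex} come from, and also deserves care.
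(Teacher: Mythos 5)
Your derivation of \eqref{eq:ql-cor} and of item (2) matches the paper's. For \eqref{eq:ql-cor} the crux in both cases is making $j_F^*\Res^{X\sslash T}_{\tilde\delta}$ explicit and then commuting the $C^\circ(\tilde\delta,\xi_\ell)$ factors past $\iota_*(ev_\star)_*0^!_{\tilde\delta}$ via Lemma~\ref{lem:Gysin-euler} and the projection formula; the only difference is that the paper simply cites \cite[Prop~5.3]{orb-qmaps} for the residue formula, whereas you propose to re-run the $\CC^*_\lambda$-localization from scratch, a substantial computation that you rightly flag as the ``hard part'' and that would essentially reproduce the cited reference. Item (2) is identical: regularity of $\bj_T$ plus trivial excess bundle gives $0^!_{\tilde\delta}=\bj_T^!$ (Corollary~\ref{cor:excess}), and a Gysin pullback along a regular embedding carries the fundamental class to the fundamental class.

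For item (1) you take a genuinely different route. The paper plugs the toric $I$-function formula \eqref{eq:toric-ifunc} (adapted from \cite[Thm~5.4]{orb-qmaps}) into Corollary~\ref{cor:ql-convex} and then checks that $\bi^!j^*$ acts as the identity on the notation, whereas you compare \eqref{eq:ql-cor} with \eqref{eq:ql-cor-convex} directly, reduce to $\iota_*(ev_\star)_*\bi^![F^0_{\tilde\delta}(X\sslash T)]=\bigl(\prod_{\tilde\delta(\xi_\ell)\in\ZZ_{<0}}c_1(\L_{\xi_\ell})\bigr)\one_{g_{\tilde\delta}^{-1}}$, and evaluate $(ev_\star)_*[F^0_{\tilde\delta}(X\sslash T)]$ by the self-intersection formula for a coordinate-subspace regular embedding. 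Both routes rest on the same geometric input (the explicit coordinate description of $F^0_{\tilde\delta}(X\sslash T)$ from \cite[Sec~5.3]{orb-qmaps}); yours avoids Corollary~\ref{cor:ql-convex} at the cost of explicitly verifying that $ev_\star$ is a regular closed immersion into $(I_\mu(X\sslash_G T))_{g_{\tilde\delta}}$ with normal bundle $\bigoplus_{\tilde\delta(\xi_\ell)\in\ZZ_{<0}}\L_{\xi_\ell}$. One further point needs handling: when $F^0_{\tilde\delta}(X\sslash T)$ is empty (so $I_{\tilde\delta}$ is zero by convention) you must still check that the right side of \eqref{eq:ql-cor-convex} vanishes; the paper flags this for \eqref{eq:toric-ifunc} and settles it using the explicit description of $X^{ss}(T)$ and its cohomology ring, and your argument would need the analogous verification since the Euler-class computation only gives a priori that $(ev_\star)_*[F^0_{\tilde\delta}(X\sslash T)]=0$ in that case, not that the full operator $\prod_\ell C(\tilde\delta,\xi_\ell)$ kills $\one_{g_{\tilde\delta}^{-1}}$.
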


We note that in examples, the moduli stacks $F^0_{\tilde \delta}(X\sslash T)$ and $F^0_{\tilde \delta}(Y\sslash T)$ can be described explicitly as closed substacks of $I_\mu(X\sslash T)$: a formula for $F^0_{\tilde \delta}(X\sslash T)$ can be found in \cite[Sec~5.3]{orb-qmaps}, and $F^0_{\tilde \delta}(Y\sslash T)$ is the intersection of $I_\mu(Y\sslash T)$ and  $F^0_{\tilde \delta}(X\sslash T)$ in $I_\mu(X\sslash T)$ (Proposition \ref{prop:diagram2}).

Formula \eqref{eq:ql-cor} appears in \cite{wang} when $G=T$ is abelian. The following remark explains why we need to derive this formula for nonabelian $G$ even in light of Theorem \ref{thm:main}. 

\begin{remark}\label{rmk:need-ql}
To compute the $I$-function of $(Y, G)$ with $G$ nonabelian, one might hope to first compute the $I$-function of $(Y, T)$ using the abelian version of Theorem \ref{thm:ql} and then apply Theorem \ref{thm:main} to $(Y, T)$. Unfortunately it is possible that $Y^{ss}(G)$ is smooth but $Y^{ss}(T)$ is not clearly smooth, so that quasimap theory for the intermediate target $Y\sslash T$ may not be defined. This is the case for the example considered in \cite[Sec~5.6]{SW1}.
\end{remark}

\subsubsection{Equivariant formulations and bigger $I$-functions}\label{sec:intro-equivariance}
Let $R$ be a torus with an action on $X$ that commutes with the action of $G$. The \emph{R-equivariant small I-function} of $X\sslash G$ is a formal series $I^{X\sslash G, R}(z)$ of the same shape as \eqref{eq:Ifunc_shape} (see Remark \ref{rmk:def-equivariant}). Suppose $E$ is a representation of $G \times R$, and $s$ is a $(G \times R)$-equivariant section of $E \times X \to X$ with zero locus $Y \subset X$. We assume that conditions (1)--(3) of Sections \ref{sec:abel-setup} and \ref{sec:ql-setup} hold. By allowing regular sequences of length zero, we include the non complete intersection case in our discussion (i.e., set $E=0$). Note that if $\xi$ is a character of $G \times R$, then the line bundle $\L_\xi$ defined in \eqref{eq:chi-to-pic} is descends to an $R$-linearized line bundle on $[Y/G]$.

\begin{theorem}\label{thm:equivariant}
Let $\rho_1, \ldots, \rho_m$ denote the weights of the Lie algebra of $G$ as a $(T \times R)$-representation, where $T$ acts by the adjoint representation and $R$ acts trivially. Let $\epsilon_1, \ldots, \epsilon_r$ denote the weights of $E$ as a $(T\times R)$-representation. Then for $\delta \in \Hom(\Pic^G(X), \QQ)$, the equality \eqref{eq:ql-main} holds after replacing $I^{Y\sslash G}_\beta(z)$ by $I^{Y\sslash G, R}_\beta(z)$, replacing $I^{X\sslash G}_{\tilde \delta}(z)$ by $I^{X\sslash G, R}_{\tilde \delta}(z)$, and replacing the Chern classes $c_1(\L_\xi)$ in the definition of $C(\beta, \xi)$ with their $R$-equivariant counterparts $c_1^R(\L_\xi)$. Also, the analogous result holds for rigidified $I$-functions.
\end{theorem}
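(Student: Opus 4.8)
The plan is to rerun the proof of Theorem~\ref{thm:ql}, and of the lemmas it depends on, in $R$-equivariant Chow groups. The key structural observation is that because the $R$-action on $X$ commutes with the $G$-action (and the $(G\times R)$-action on $E$ cuts out $Y$ compatibly), every geometric object built in that proof carries a canonical $R$-action and every morphism between them is $R$-equivariant. Explicitly: $R$ preserves the stable loci $X^s(G)$, $X^s(T)$, $Y^s(G)$, hence acts on all the stacks in \eqref{eq:key-ql-diagram} and on the quasimap moduli stacks $F_{\tilde\delta}(X\sslash T)$, $F^0_{\tilde\delta}(X\sslash T)$, $F^0_{\tilde\delta}(Y\sslash T)$; the $\lambda$-action defining the localization residue commutes with $R$, so $\Res^{X\sslash T}_{\tilde\delta}(z)$ refines to a $(\lambda\times R)$-equivariant class; and the maps $\varphi$, $j$, $\bi$, $j_F$, $ev_\star$, together with the regular immersions underlying the refined Gysin maps $0^!_{\tilde\delta}$ and $\bi^!$, are all $R$-equivariant. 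Thus the chain of identities producing \eqref{eq:ql-main} takes place between $R$-equivariant classes from the outset.

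To turn this into an actual proof I would work with the Totaro--Edidin--Graham finite-dimensional approximation of $R$-equivariant Chow groups: choose a representation $V_N$ of $R$ with an invariant open $U_N\subseteq V_N$ on which $R$ acts freely and with $V_N\setminus U_N$ of large codimension, replace each stack $\mathfrak Z$ above by the mixed quotient $(\mathfrak Z\times U_N)/R$, and replace each $c_1(\L_\xi)$ by the first Chern class of the line bundle that $\L_\xi$, with its $R$-linearization, descends to on the mixed quotient; in the limit $N\to\infty$ this is $c_1^R(\L_\xi)$. Proper pushforward, l.c.i.\ pullback, the refined Gysin maps, and the virtual localization formula defining $\Res^{X\sslash T}_{\tilde\delta}$ all commute with this construction in the relevant range of degrees, so \eqref{eq:ql-main} holds on each mixed quotient and passes to the limit. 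The bundle identifications used in the non-equivariant argument are identities of $R$-equivariant bundles: $R$ acts trivially on $\mathrm{Lie}(G)$, so the relative tangent bundle of $\varphi$ is $\bigoplus_i\L_{\rho_i}$ with the canonical $R$-linearization, which is exactly $\L_{\rho_i}$ for $\rho_i$ regarded as a $(T\times R)$-weight with zero $R$-component; and since $E$ is a genuine $G\times R$-representation, the obstruction contribution of the section $s$ is governed by $\bigoplus_j\L_{\epsilon_j}$ with the $R$-weights $\epsilon_j$ prescribed in the statement.

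The only point requiring a separate verification is that the inverted classes $C(\tilde\delta,\rho_i)^{-1}$ and $C^\circ(\tilde\delta,\epsilon_j)^{-1}$, defined by the geometric series \eqref{eq:invert}, still act as well-defined operators $R$-equivariantly. On each fixed mixed quotient $(\mathfrak Z\times U_N)/R$, a finite-type Deligne--Mumford stack, this is \cite[Cor~5.3.2]{kresch} exactly as in the non-equivariant case; and because cap product with a first Chern class strictly raises cohomological degree, only finitely many terms of the series contribute in each total degree, so these operators are compatible with the transition maps between approximations and descend to $A^R_*$. This is the same finiteness already used to define the $R$-equivariant small $I$-function (Remark~\ref{rmk:def-equivariant}), so no new input is needed. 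Granting it, the $R$-equivariant refinements of Lemma~\ref{lem:compare-gysin}, Lemma~\ref{lem:ql-inertia} and the other inputs to the proof of Theorem~\ref{thm:ql} are immediate, and the rigidified statement follows by substituting $(\widetilde{ev}_\star)_*$ for $\iota_*(ev_\star)_*$ verbatim as before; the corresponding equivariant forms of Corollaries~\ref{cor:ql-convex} and~\ref{cor:ql} then come for free.

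I expect the only real friction to be bookkeeping rather than new mathematics: one must confirm that the $R$-linearizations arising organically in the localization and excess-intersection computations---on the virtual normal bundles, on the obstruction bundle, and on the relative tangent bundle of $\varphi$---are precisely the linearizations carried by $\L_{\rho_i}$ and $\L_{\epsilon_j}$ when $\rho_i$ and $\epsilon_j$ are read as $(T\times R)$-weights, so that $c_1$ on the approximation spaces genuinely converges to $c_1^R(\L_{\rho_i})$ and $c_1^R(\L_{\epsilon_j})$ rather than to an $R$-twist of them. Once the proof of Theorem~\ref{thm:ql} is organized so as to track equivariant structures throughout, this is automatic.
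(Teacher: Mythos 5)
Your proposal is correct and follows essentially the same route the paper takes: the paper does not write out a self-contained proof of Theorem~\ref{thm:equivariant} but instead scatters remarks throughout Sections~2--5 pointing out that all the relevant constructions ($R$-action on the inertia stacks and on $QG_\beta$, the $R$-equivariant localization residue, the $R$-equivariant pushforward $(ev_\star)_*$ via finite-dimensional approximations, the $R$-equivariant versions of Lemmas~\ref{lem:computation} and~\ref{lem:computation2} with $\rho_i$ and $\epsilon_j$ read as $(T\times R)$-weights) go through verbatim once $G$ is replaced by $G\times R$ and $T$ by $T\times R$, which is exactly your Totaro--Edidin--Graham mixed-quotient argument together with the linearization bookkeeping you flag at the end.
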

The proof of Theorem \ref{thm:equivariant} is essentially the same as the proof of Theorem \ref{thm:ql}. For simplicity, this paper is written with the assumption that $R$ is trivial. Throughout the text, remarks explain what changes when $R$ is not trivial. 

Using the equivariant small $I$-function we can also compute ``bigger'' $I$-functions.
Let $N$ be a subset of $\chi(G)$, and for each $\eta \in N$ let $x_\eta$ be a formal variable.
Let $\{p_i(\bx)\}_{i=1}^K$ be a set of polynomials in the variables $x_\eta$,
 and let $\{t_i\}_{i=1}^K$ be corresponding formal variables. Define the bigger $I$-function
\begin{equation}\label{eq:bigi}
\II^{X\sslash G, R}(z) =  \sum_{\beta}q^\beta  \exp\left( \frac{1}{z} \sum_{i \in 1}^K t_{i}p_i(c_{1}(\L_{\eta})+\beta({\eta})z)\right) I_\beta^{X\sslash G, R}(z)
\end{equation}
where $p_i(c_{1}(\L_{\eta})+\beta({\eta})z)$ is the polynomial $p_i(\bx)$ with $x_\eta$ replaced by $c_1(\L_{\eta}) + \beta(\eta)z$, for each $\eta \in N$. As usual, the sum is over all $I$-effective classes $\beta$, and the rigidification $\overline{\II}^{X\sslash G, R}(z)$ is defined analogously with $\overline I^{X\sslash G, R}_\beta$ in place of $I^{X\sslash G, R}_\beta$.

By \cite[Thm~4.2]{orb-qmaps}, the image of the function $\overline{\II}^{X\sslash G, R}(z)$ in equivariant Chen-Ruan cohomology lies on the Lagrangian cone when the action of $R$ on the coarse moduli space of $X\sslash G$ has isolated fixed points and isolated 1-dimensional orbits. It is conjectured to lie on the cone without these assumptions.

\begin{remark}
One can take $N$ to be a set of $W$-invariant characters of $T$, unioned with the identity character. If we set the $p_i(\bx)$ to be equal $x_\eta$, one for each $\eta \in N$, then the exponential term in \eqref{eq:bigi} is $\exp(z^{-1}\sum_{\eta \in N}t_\eta(c_1(\L_\eta) + \beta(\eta)z))$. Together with Corollary \ref{cor:ql}, this produces a formula for the series $\varphi^*\II^{X\sslash G, R}(z)$ when $X$ is a complete intersection in a vector space.
\end{remark}

\subsection{Conventions and notation}
\label{sec:conventions}
All stacks (and schemes) are defined over the base field $\CC$. Because the definition of the $I$-function has denominators, we will always use Chow groups tensored with $\QQ$. From now on, if $X$ is a stack, we will write $A_*(X) := A_*(X)_\QQ = A_*(X)\otimes_{\ZZ}\QQ$, where on the right hand side $A_*(X)$ is the Chow group of \cite{kresch}. Also, if $X$ is a stack and $S$ is a scheme we will write $X_S := X\times S.$

If $g \in G$ then $(g)$ denotes the conjugacy class of $g$ and $Z_G(g)$ denotes the centrilizer. When there is no danger of confusion we write $Z(g)$ for $Z_G(g)$. If $T$ is a maximal torus of $G$ then we write $N_G(T)$ for the normalizer and $W = N_G(T)/T$ for the Weyl group.

\subsection{Acknowledgements}
This project started when Elana Kalashnikov pointed out the open conjecture in \cite{OP}. Thanks are especially due to Nawaz Sultani for many helpful discussions, and for pointing out errors in my initial attempts at a quantum Lefschetz formula. I am also grateful to Martin Olsson and Yang Zhou for useful discussions and the proofs of Lemmas \ref{lem:open} and \ref{lem:stable-locus}, respectively. An anonymous referee identified a critical mistake in an earlier draft of this paper. The author was partially supported by an NSF Postdoctoral Research Fellowship, award number 200213.

\section{Abelianization and inertia stacks}\label{sec:inertia} We analyze the behavior of the inertia stack under abelianization.
\subsection{Inertia stacks as global quotients}\label{sec:inertia1}
Let $X$ be any variety with an action $a: G \times X \rightarrow X$ by a complex reductive group $G$. For any subscheme $H \subset G$, define $\cover{H}{X}$ to be the fiber product
\begin{equation}\label{eq:inertia-square}
\begin{tikzcd}
\cover{H}{X} \arrow[r] \arrow[d] & H \times X \arrow[d, "{(a, pr_2)}"] \\
X \arrow[r, "\Delta"] & X \times X
\end{tikzcd}
\end{equation}
where $\Delta$ is the diagonal. Informally, we have $\cover{H}{X}$ consists of pairs of elements $(h, x)$ in $H \times X$ such that $h$ fixes $x$. If $H$ is a subgroup, then $\cover{H}{X}$ is the stabilizer group algebraic space in \cite[Tag~0448]{tag}. 
Observe that $G$ acts on $H \times X$ by the rule $g\cdot (h, x) =  (ghg^{-1}, gx)$ for $g \in G, h \in H$, and $x \in X$. This action makes $(a, pr_2)$ $G$-equivariant and hence the same rule defines an action of $G$ on $\cover{H}{X}$.
These observations let us realize the (rigidified) cyclotomic inertia stacks (defined in \cite[Sec~3]{AGV08}) as global quotients. 

\begin{remark}\label{rmk:inertia1}
 By \cite[Tag~06PB]{tag}, we have
\[
I_\mu(X\sslash G) = [\cover{G}{X^s(G)}/G]. 
\]
Likewise, if $T$ is a maximal torus of $G$, we have $I_\mu(X\sslash_G T)  = [\tcover{X^s(G)}/T]$.
\end{remark}

It follows from Remark \ref{rmk:inertia1} that $I_\mu(X\sslash G)$ has a stratification indexed by conjugacy classes $(g)$ of $G$ (resp. elements of $T$), which we write as
\begin{equation}\label{eq:components}
I_\mu(X\sslash G) = \bigsqcup_{(h) \in \Conj(G)} (I_\mu(X\sslash G))_{(h)} \quad \quad \quad (I_\mu(X\sslash G))_{(h)}:= \cover{(h)}{X}\sslash G .
\end{equation}
When $G$ is abelian, conjugacy classes of $G$ are of course simply group elements, so in the case when $T \subset G$ is a maximal torus we have formulae
\[
\begin{gathered}
I_\mu(X\sslash T) = \bigsqcup_{t \in T} (I_\mu(X\sslash T))_t := \bigsqcup_{t \in T} (X^t\sslash T)\\
I_\mu(X\sslash_G T) = \bigsqcup_{t \in T} (I_\mu(X\sslash_G T))_t := \bigsqcup_{t \in T} (X^t\sslash_G T).
\end{gathered}
\]
where $X^t\subset X$ is the fixed locus of the group element $t$. Since conjugacy classes of semisimple elements are closed \cite[Sec~18.2]{humphreys}, assumption (2) in Section \ref{sec:abel-setup} implies that each stratum above is closed. By Lemma \ref{lem:finite} below, there are finitely many strata, so in fact these are decompositions into open and closed substacks of the inertia stacks. There are analogous decompositions of the rigidifications, also into open and closed substacks.

\begin{lemma}\label{lem:finite}
There are finitely many $t \in T$ (resp. conjugacy classes $(h) \in \Conj(G)$) such that the stratum $(I_\mu(X\sslash_G T))_t$ (resp. $(I_\mu(X\sslash G))_{(h)}$) is nonempty.
\end{lemma}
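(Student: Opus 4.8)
The plan is to reduce both claims to a single input: the order of the stabilizer group $\Stab_G(x)$ is bounded uniformly over $x\in X^s(G)$. I would first note that $X\sslash G=[X^s(G)/G]$ is a Deligne--Mumford stack of finite type over $\CC$, because $X^s(G)$ is an open subscheme of the affine variety $X$ and $G$ is a finite-type algebraic group; here the Deligne--Mumford property is precisely the statement that the fibres of $\cover{G}{X^s(G)}\to X^s(G)$ (which are the stabilizer groups) are finite. By Remark \ref{rmk:inertia1} the inertia stack equals $[\cover{G}{X^s(G)}/G]$, and $\cover{G}{X^s(G)}\subset G\times X^s(G)$ is a closed subscheme, hence a finite-type $\CC$-scheme, quasi-finite (indeed unramified) and separated over $X^s(G)$. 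By Zariski's main theorem such a morphism factors through a finite morphism, whose fibres have bounded cardinality; alternatively one invokes generic finiteness of quasi-finite morphisms together with Noetherian induction on $X^s(G)$. Either way there is an integer $M$ with $|\Stab_G(x)|\le M$ for all $x\in X^s(G)$; in particular every $g\in G$ with $X^g\cap X^s(G)\neq\emptyset$ has order at most $M$.

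For the torus statement, the stratum $(I_\mu(X\sslash_G T))_t=X^t\sslash_G T=[(X^t\cap X^s(G))/T]$ is nonempty if and only if $X^t\cap X^s(G)\neq\emptyset$, i.e. if and only if $t\in\Stab_G(x)$ for some stable $x$. By the bound above this forces $\mathrm{ord}(t)\le M$, so $t$ lies in the finite torsion subgroup $\{t\in T : t^{M!}=1\}$ of $T$. Hence only finitely many $t$ give a nonempty stratum.

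For the conjugacy-class statement, $(I_\mu(X\sslash G))_{(h)}=\cover{(h)}{X}\sslash G$ is nonempty if and only if $X^h\cap X^s(G)\neq\emptyset$; since $X^s(G)$ is $G$-invariant and $X^{ghg^{-1}}=gX^h$, this condition does not depend on the chosen representative $h$ of the class. Such an $h$ fixes a stable point, hence has finite order $\le M$, and is therefore semisimple (by assumption (2a), or simply because a finite-order element of an algebraic group over a field of characteristic zero is semisimple), so $h$ is conjugate to an element $t\in T$ — again with $\mathrm{ord}(t)\le M$. Thus $t\mapsto (t)$ maps the finite set $\{t\in T : t^{M!}=1,\ X^t\cap X^s(G)\neq\emptyset\}$ onto the set of conjugacy classes $(h)$ with nonempty stratum, which is therefore finite.

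The only nontrivial ingredient is the uniform bound $M$ in the first paragraph; I expect the main (mild) difficulty to be phrasing this cleanly. The finite-type-plus-quasi-finite argument via Zariski's main theorem does the job without invoking any properness or separatedness of the stack; one could equally cite the general fact that a Deligne--Mumford stack of finite type over a field has bounded automorphism orders. Everything after that is routine bookkeeping with torsion subgroups of $T$ and with conjugating semisimple elements into a maximal torus.
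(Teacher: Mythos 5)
Your proof is correct and follows essentially the same strategy as the paper: bound the orders of elements fixing stable points uniformly by exploiting quasi-compactness and quasi-finiteness of the stabilizer group scheme over $X^s(G)$, then use finiteness of torsion subgroups of $T$ of bounded order, and finally pass from $T$ to $G$ by conjugating semisimple elements into the maximal torus. The paper works with $\cover{T}{X^s(G)}\to X^s(G)$ and cites the ``universally bounded'' tag, whereas you work with $\cover{G}{X^s(G)}\to X^s(G)$ and invoke Zariski's main theorem; both routes establish the same uniform bound, so this difference is cosmetic. The one genuine improvement in your argument is the observation that the bound on the orders of fixing elements already forces semisimplicity in characteristic zero, so you do not need to invoke assumption (2a) of Section~\ref{sec:abel-setup}; the paper does use that assumption at this point. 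Your version is therefore slightly more self-contained, which is worth noting even though the statement is only applied in a context where (2a) holds anyway.
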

\begin{proof}
By Remark \ref{rmk:inertia1} we have $I_\mu(X\sslash_G T) = [\cover{T}{X^s(G)}/T]$ with $\cover{T}{X^s(G)}$ a closed subscheme of $T \times X^s(G)$. We claim there is a finite subset $\cT \subset T$ such that $\cover{T}{X^s(G)} \subset \cT \times X^s(G)$. This implies the Lemma for $I_\mu(X\sslash_G T)$. By our assumptions in Section \ref{sec:abel-setup}, if $(h)$ is a conjugacy class such that $I_\mu(X\sslash G)_{(h)}$ is not empty, then there is some $t \in (h)$ with $t \in \cT$, so the Lemma holds for $I_\mu(X\sslash G)$ as well.

To prove the claim we show there exists $N \in \ZZ$ such that if $X^t \neq \emptyset$ then the order of $t$ is less than or equal to $N$. Granting this, we can choose $\cT \subset T$ to be the set of elements of order at most $N$. To find $N$ we use two facts: first, $X^s(G)$ is quasicompact, being an open subscheme of the Noetherian scheme $X$, so $\cover{T}{X^s(G)} \subset T \times X^s(G)$ is also quasicompact. Second, by \eqref{eq:inertia-square} and \cite[Tag~02XE]{tag} and the identification $T \times X^s(G)  = X^s(G) \times_{X\sslash_G T} X^s(G)$ there is a fiber diagram
\[
\begin{tikzcd}
\cover{T}{X^s(G)} \arrow[r] \arrow[d] &\arrow[d] T \times X^s(G)  \arrow[r] & X\sslash_G T \arrow[d]\\
X^s(G) \arrow[r] & X^s(G) \times X^s(G) \arrow[r] & X\sslash_G T \times X\sslash_G T
\end{tikzcd}
\]
The diagonal morphism $X\sslash_G T \rightarrow X\sslash_G T \times X\sslash_G T$ is quasicompact and unramified, hence quasifinite (see \cite[Tag~02VF]{tag}), so $\cover{T}{X^s(G)} \rightarrow X^s(G)$ is also quasifinite. Now by \cite[Tag~03JA]{tag} the map $\pi:\cover{T}{X^s(G)} \rightarrow X^s(G)$ is universally bounded, meaning that there exists $N \in \ZZ$ such that every fiber of $\pi$ has size at most $N$. If $X^t \neq \emptyset$, then there is a closed point $x \in X$ such that $tx=x$; i.e., $t$ is in the fiber of $\pi$ at $x$, which we have seen is a group of order at most $N$. Hence the order of $t$ is at most $N$.
\end{proof}

\begin{remark}
When the torus $R$ is not trivial, we get an action of $R$ on the diagram \eqref{eq:inertia-square}: indeed, $R$ acts on $H \times X$ via the action on the second factor. The maps $\Delta$ and $(a, pr_2)$ in \eqref{eq:inertia-square} are equivariant, and hence $\cover{H}{X}$ has a $G\times R$ action. This means that $R$ acts on $I_\mu(X\sslash G)$ and preserves the decomposition into components in \eqref{eq:components}.
\end{remark}
\subsection{Weyl action}
The stacks $I_\mu(X\sslash T)$ and $\overline{I}_\mu(X\sslash T)$ both have an action by the Weyl group $W$ making the rigidification $\varpi: I_\mu(X\sslash T) \rightarrow \overline{I}_\mu(X\sslash T)$ equivariant. For a scheme $S$, an object of $\overline{I}_\mu(X\sslash T)$ is a gerbe $\mathcal{G} \rightarrow S$ with a representable map $\mathcal{G} \rightarrow X\sslash T$, so we can define $w \in N_G(T)$ to act on the map $\mathcal{G} \rightarrow X\sslash T = [X^{s}(T)/T]$ as in \cite[Sec~2.2.1]{nonab1}. Objects of $I_\mu(X\sslash T)$ are the same except that $\mathcal{G}$ is required to be trivial, so we can define the action in the same way.

The morphism $\varphi: X\sslash_G T \rightarrow X\sslash G$ induces maps $I_\mu(X\sslash_G T) \rightarrow I_\mu(X\sslash G)$ and $\overline{I}_\mu(X\sslash_G T) \rightarrow \overline{I}_\mu(X\sslash G)$, which we also call $\varphi$. These maps are $W$-invariant. For every $h \in T$ with $X^h$ nonempty, we have a commuting diagram where the right square is fibered.
 \begin{equation}\label{eq:inertia4}
 \begin{tikzcd}
 X^h \sslash_G T \arrow[d] \arrow[r, hook, "\eta_T"] \arrow[d, "\varphi_h"]& \bigsqcup_{t \in (h)\cap T} (I_\mu(X\sslash_G T))_t \arrow[d] \arrow[r, hook] & I_{\mu}(X \sslash_G T) \arrow[d, "\varphi"]\\
 X^h\sslash_G Z_G(h) \arrow[r, "\sim"', "\eta_G"] & (I_\mu(X\sslash G))_{(h)} \arrow[r, hook] & I_{\mu}(X\sslash G)
 \end{tikzcd}
 \end{equation}
 The map $\eta_T$ is an open and closed embedding while $\eta_G$ is an isomorphism. Note that $\varphi_h$ is flat. Since $(I_\mu(X\sslash G))_{(h)}$ is an open and closed substack of $I_{\mu}(X\sslash G)$, and $X^h \sslash_G T = (I_\mu(X\sslash_G T))_h$ is an open and closed substack of $I_\mu(X\sslash_G T)$, and for every nonempty component $(I_\mu(X\sslash G))_{(h)}$ of $I_{\mu}(X\sslash G)$ there is a semisimple representative $h \in (h)$, we see that $\varphi$ is flat. Hence we may consider the induced pullback morphism $\varphi^*$ on Chow groups.

\begin{lemma}\label{lem:abelianize-chow}
The pullback $\varphi^*$ induces isomorphisms
\[
A_*(I_\mu(X\sslash G)) \xrightarrow{\sim} (A_*(I_\mu(X\sslash_G T)))^W \quad \quad \quad A_*(\overline{I}_\mu(X\sslash G)) \xrightarrow{\sim} (A_*(\overline{I}_\mu(X\sslash_G T)))^W.
\]
\end{lemma}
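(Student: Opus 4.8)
The plan is to decompose both inertia stacks into the components of \eqref{eq:components}, match the $W$-orbits of components on the torus side with the components on the $G$ side, and on each one recognize $\varphi$ as a flag bundle and invoke abelianization for Chow groups. Concretely: the pullback $\varphi^*$ and the $W$-action both respect the decomposition $I_\mu(X\sslash_G T)=\bigsqcup_{t\in T}X^t\sslash_G T$, with $w\in W$ carrying $X^t\sslash_G T$ isomorphically onto $X^{wtw^{-1}}\sslash_G T$; moreover by the left square of \eqref{eq:inertia4} the preimage of $(I_\mu(X\sslash G))_{(h)}$ is exactly $\bigsqcup_{t\in(h)\cap T}X^t\sslash_G T$, so $\varphi^*$ sends $A_*\big((I_\mu(X\sslash G))_{(h)}\big)$ into that sum, and since $\varphi$ is $W$-invariant its image lands in the $W$-invariants. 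Because $X^s(G)=X^{ss}(G)$, every group element with nonempty fixed locus is semisimple, so every nonempty component $(I_\mu(X\sslash G))_{(h)}$ has a representative $h\in T$; and for a connected group two elements of $T$ are $G$-conjugate iff they are $W$-conjugate (conjugate the maximal tori $T$ and $gTg^{-1}$ inside the connected group $Z_G(h)^\circ$). Hence $(h)\cap T=W\cdot h$ is a single $W$-orbit, the nonempty torus-components fall into $W$-orbits in bijection with the nonempty components $(I_\mu(X\sslash G))_{(h)}$, and (using Lemma~\ref{lem:finite} so that $A_*$ of the disjoint union is the direct sum) it suffices to prove, for each such $h\in T$, that $\varphi^*$ restricts to an isomorphism
\[
A_*\big(X^h\sslash_G Z_G(h)\big)\ \xrightarrow{\ \sim\ }\ \Big(A_*\big(\textstyle\bigsqcup_{t\in(h)\cap T}X^t\sslash_G T\big)\Big)^W .
\]

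Next I would compute the right-hand side. By assumption (2b) and the reductivity of centralizers of semisimple elements, the stabilizer of the component $X^h\sslash_G T$ in $W$ is $(N_G(T)\cap Z_G(h))/T=N_{Z_G(h)}(T)/T$, the Weyl group $W_h:=W(Z_G(h),T)$ of the connected reductive group $Z_G(h)$. A $W$-invariant tuple of classes on $\bigsqcup_{t\in(h)\cap T}X^t\sslash_G T$ is determined by its $X^h$-component $\alpha_h$ (the component over $X^{whw^{-1}}\sslash_G T$ being $w\cdot\alpha_h$), and such an $\alpha_h$ occurs if and only if it is $W_h$-invariant; thus restriction to the $X^h$-component is an isomorphism from the invariants onto $A_*(X^h\sslash_G T)^{W_h}$. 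Tracing diagram \eqref{eq:inertia4}, this restriction composed with $\varphi^*$ is $\varphi_h^*$, so the lemma reduces to the assertion that
\[
\varphi_h^*\colon A_*\big(X^h\sslash_G Z_G(h)\big)\ \longrightarrow\ A_*\big(X^h\sslash_G T\big)^{W_h}
\]
is an isomorphism.

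This last assertion is the abelianization theorem for Chow groups applied to the pair $(Z_G(h),X^h)$, and I would deduce it from \cite{nonab1} (or prove it directly). With $U:=X^h\cap X^s(G)$, smooth as the fixed locus of the finite diagonalizable group $\langle h\rangle$ on the smooth scheme $X^s(G)$, the morphism $\varphi_h\colon [U/T]\to[U/Z_G(h)]$ is the flag bundle of the $Z_G(h)$-torsor $U\to[U/Z_G(h)]$, with fiber the flag variety $Z_G(h)/T$. Choosing a Borel $B\subset Z_G(h)$, the map $Z_G(h)/T\to Z_G(h)/B$ is an affine-space bundle and $Z_G(h)/B$ carries a Bruhat cell decomposition, so the splitting principle, the projective bundle formula, and affine-bundle invariance for Kresch Chow groups \cite{kresch} make $A_*([U/T])$ a free $A_*([U/Z_G(h)])$-module with fiberwise basis $A_*(Z_G(h)/T)$; the $W_h$-action is $A_*([U/Z_G(h)])$-linear, and on the fiber $A_*(Z_G(h)/T)$ is the regular representation of $W_h$, whose space of $\QQ$-invariants is one-dimensional, spanned by the fundamental class $[Z_G(h)/T]=\varphi_h^*(1)$ restricted to a fiber. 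Taking $W_h$-invariants therefore collapses the module onto $A_*([U/Z_G(h)])\cdot\varphi_h^*(1)$, i.e.\ $\varphi_h^*$ is an isomorphism onto the invariants. For the rigidified statement, the corresponding components are $[U/(Z_G(h)/\langle h\rangle)]$ and $[U/(T/\langle t\rangle)]$, and since $\langle h\rangle$ is central and finite in $Z_G(h)$ the quotient $Z_G(h)/\langle h\rangle$ is again connected reductive with maximal torus $T/\langle h\rangle$ and Weyl group $W_h$, so the argument above applies verbatim.

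I expect the main obstacle to be making this last step fully rigorous in Kresch's Chow theory with $\QQ$-coefficients: beyond the Leray--Hirsch decomposition, one must establish the $W_h$-equivariant identification of invariants, and here the permutation action of $W_h$ does not preserve the Schubert basis of $A_*(Z_G(h)/T)$, so one must argue through the regular-representation (equivalently, coinvariant-algebra) structure of $A_*(Z_G(h)/T)$, or through a transfer/averaging argument valid over $\QQ$, rather than basis element by basis element. The remaining ingredients — the reduction to components, the conjugacy facts for semisimple elements of $T$, and the induced-module bookkeeping — are routine given \eqref{eq:components} and \eqref{eq:inertia4}.
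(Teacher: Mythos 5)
Your reduction to components, the identification of $(h)\cap T$ with a single $W$-orbit via connectedness of $Z_G(h)$ (assumption (2b)), the induced-module bookkeeping that sends $W$-invariant tuples to $W_h$-invariants on the $X^h$-summand, and the treatment of the rigidified case via $T/\langle h\rangle \subset Z_G(h)/\langle h\rangle$ all match the paper's proof (the paper packages the induced-module step as the explicit inverse \eqref{eq:inertia6} to $\eta_T^*$, and spells out the character-group argument for the rigidified Weyl group, but the content is the same).

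Where you diverge is the last step. The paper disposes of the assertion
\[
\varphi_h^*\colon A_*\bigl(X^h\sslash_G Z_G(h)\bigr)\xrightarrow{\ \sim\ } A_*\bigl(X^h\sslash_G T\bigr)^{W_h}
\]
in one line by citing \cite[Thm~10]{Br98}: Brion's abelianization theorem for equivariant Chow groups with $\QQ$-coefficients, applied to the connected reductive group $Z_G(h)$ acting on the smooth variety $X^h\cap X^s(G)$. You instead sketch a direct proof of this assertion via the flag bundle $[U/T]\to[U/Z_G(h)]$, Leray--Hirsch, and the fact that $A^*(Z_G(h)/T;\QQ)$ is the regular representation of $W_h$. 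That argument is workable over $\QQ$, but the point you flag — that the Leray--Hirsch identification $A_*([U/T])\cong A_*([U/Z_G(h)])\otimes A_*(Z_G(h)/T)$ must be made $W_h$-equivariantly, which the Schubert-cell basis does not give you for free — is precisely the technical content of Brion's theorem, and filling it in carefully amounts to re-proving that result. So rather than a gap to close, this is a redundancy to remove: invoke \cite[Thm~10]{Br98} (as the paper does), or at minimum structure the flag-bundle argument as a transfer/averaging argument so the $W_h$-equivariance is built in, rather than relying on a basis-dependent Leray--Hirsch splitting.
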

\begin{proof}
We first prove the statement for unrigidified inertia.

The left square in \eqref{eq:inertia4} induces a commuting diagram of pullbacks
\begin{equation}\label{eq:inertia5}
\begin{tikzcd}
(A_*(X^h\sslash_G T))^{W_{Z(h)}} & \arrow[l, "\eta_T^*"'] (\bigoplus_{t \in (h)\cap T} A_*(X^t\sslash_G T))^W \\
A_*(X^h\sslash_G Z_G(h)) \arrow[u, "\varphi_h^*"] & A_*(I_\mu(X\sslash G)_{(h)})\arrow[l, "\sim", "\eta_G^*"'] \arrow[u, "\varphi^*"]
\end{tikzcd}
\end{equation}
where $W$ (resp. $W_{Z(h)}$) is the Weyl group of $G$ (resp. $Z_G(h)$) with respect to $T$. In the top row, $\bigoplus_{t \in (h)\cap T} A_*(X^t\sslash_G T)$ is a direct sum of vector spaces with an action by the finite group $W$, and $W_{Z(h)}$ is the stabilizer of the summand $A_*(X^h\sslash_G T)$. The map $\eta_T^*$ is induced by projection to this summand. By \cite[Thm~10]{Br98} the map $\varphi_h^*$ is an isomorphism. We will show that $\eta_T^*$ is also an isomorphism. The lemma statement follows.

To show that $\eta_T^*$ is an isomorphism we claim that
\begin{equation}\label{eq:inertia6}
(\eta_T^*)^{-1}(\delta) := \sum_{w \in W/W_{Z(h)}} (w^{-1})^* \delta\end{equation}
is an inverse, where the sum is over a set of coset representatives and $w^{-1}: X^{w\cdot h}\sslash_G T \rightarrow X^h\sslash_G T$ is induced by the $G$-action. Indeed suppose we have an element 
\[(\delta_t)_{t \in (h) \cap T} \in (\bigoplus_{t \in (h)\cap T}A_*(X^t\sslash_G T))^W.\] 
If $t \in (h) \cap T$, then there is some $w \in W$ such that $t = w \cdot h,$ so $W$-invariance determines each $\delta_t$ from $\delta_h$ by the rule $\delta_t = (w^{-1})^*\delta_h.$ This shows that $(\eta_T^*)^{-1}\circ \eta_T^*$ is the identity. To see that $\eta_T^*\circ(\eta_T^*)^{-1}$ is the identity, note that $W_{Z(h)}$ is the subgroup of $W$ that sends the component $X^h\sslash_G T$ of $\bigsqcup_{t \in (h)\cap T} X^t \sslash_G T$ to itself.

Now we prove the statement for rigidified inertia. A diagram analogous to \eqref{eq:inertia4} holds for rigidified inertia stacks, where the left column is replaced by $\overline{\varphi}_h: X^h \sslash_G( T/\langle h \rangle) \rightarrow X^h_G \sslash (Z_G(h)/\langle h \rangle )$. We must check that $T/\langle h \rangle \subset Z_G(h)/\langle h \rangle$ is a maximal torus with Weyl group isomorphic to $W_{Z(h)}$. Granting this, the rest of the proof follows as before.

Let $k$ be the rank of $T$. To see that $T/\langle h \rangle $ is a torus and maximal in $Z_G(h)/\langle h \rangle$ we use the theory of diagonalizable groups in \cite[Sec~9.1]{milne-reductive}. Namely, the quotient $T/\langle h \rangle $ corresponds to the kernel   $K$ of a map $\ZZ^k \rightarrow \langle h \rangle$, which is a free abelian group of rank $k$, so $T/\langle h \rangle $ is a torus. If $Z_G(h)$ has rank $m > k$ then there is a torus $T' \subset Z_G(h)$ of rank $m$ with a surjection $T' \rightarrow T/\langle h \rangle$. In the category of finitely generated abelian groups this corresponds to an inclusion $K \hookrightarrow \ZZ^m$, and the subgroup of $Z_G(h)$ with quotient $T'$ corresponds to the pushout of the diagram
\[
\begin{tikzcd}
K \arrow[r, hook] \arrow[d, hook] & \ZZ^k \arrow[d, dashrightarrow] \arrow[r] & \langle h \rangle \\
\ZZ^m \arrow[r, dashrightarrow] & (\ZZ^k \oplus \ZZ^m) / K
\end{tikzcd}
\]
which is a free abelian group of rank $m$. In particular $Z_G(h)$ has rank $m>k$, a contradiction.
It is straightforward to check that $N_G(T)/\langle h \rangle = N_{G/\langle h \rangle}(T/\langle h \rangle)$ and hence the Weyl groups are isomorphic.
\end{proof}

\begin{remark}
When the torus $R$ is not trivial,
we define $A^R_*(X\sslash G) := A_*([(X\sslash G)/R])$, observing that
\[
A^R_*(X\sslash G) = A_*([(X\sslash G)/R]) = A_*([X^s(G)/(G\times R)]) = A_*^{G\times R}(X^s(G)).
\]
Now Lemma \ref{lem:abelianize-chow} holds $R$-equivariantly, and we may use the same proof after replacing $G$ with $G \times R$ and $T$ with $T \times R$. Observe that this replacement does not change the Weyl group (up to isomorphism).
\end{remark}

The discussion in the remainder of this section is not critical to the paper, but it enables us to interpret the right hand side of \eqref{eq:main} as an element of $A_*(X\sslash_G T)[z,z^{-1}]$ directly (i.e., without going through the proof of Theorem \ref{thm:main}). This is analogous to \cite[Sec~5.4]{nonab1}.

Let $\{h_j\}_{j \in J}\subset T$ be a set of elements such that $I_\mu(X\sslash G) = \bigsqcup_{j \in J} I_\mu(X\sslash G)_{(h_j)}$ (in particular, the $h_j$ belong to distinct conjugacy classes in $G$). We recall (see e.g. \cite[Sec~1]{ellingsrud}) the sign function $\sgn: W \to \{\pm 1\}$, and that
an element $\alpha$ of some $W$-module is said to be $W$-anti-invariant if 
$w \cdot \alpha = (-1)^{\sgn(w)}\alpha$ for every $w \in W$. For each $j$, the pullback
\[
\eta_{j, T}^* \colon \bigoplus_{t \in (h_j)\cap T} A_*(X^t\sslash_G T) \to A_*(X^{h_j}\sslash_G T)
\]
sends $W$-anti-invariant classes to $W_{Z(h_j)}$-anti-invariant classes. This map of anti-invariant classes has an inverse given by
\[
(\eta_{j, T}^*)^{-1}_a(\delta) = \sum_{w \in W/W_{Z(h_j)}} \sgn(w) (w^{-1})^{*}\delta
\]
where the sum is over a set of coset representatives (one can check that $(\eta_{j, T}^*)^{-1}_a$ does not depend on the choice of representatives). 

Let $R$ be the set of roots of $G$ with respect to $T$ and fix $R^+$ a system of positive roots. Since $h_j \in T$, the roots of $Z(h_j)$ are naturally a subset of $R$, and we denote these by $R_{Z(h_j)}$ and the positive roots by $R^+_{Z(h_j)}$. Recall the fundamental $W_{Z(h_j)}$-anti-invariant class $\Delta_j \in A^*(X^{h_j}\sslash_G T)$ defined by $\Delta_j := \prod_{\rho \in R^+_{Z(h_j)}}c_1(\L_\rho)$. Now define
\[
\Delta := \sum_{j \in J} (\eta_{j, T}^*)^{-1}_a(\Delta_j) \in A^*(I_\mu(X\sslash_G T))
\]
and observe that by construction, $\Delta$ is $W$-anti-invariant.

\begin{lemma}\label{lem:divide}
If $\alpha \in A_*(I_\mu(X\sslash_G T))$ is $W$-anti-invariant, then there exists a unique $\beta \in A_*(I_\mu(X\sslash_G T))$ such that $\Delta \cap \beta = \alpha. $ Moreover, $\beta$ is $W$-invariant.
\end{lemma}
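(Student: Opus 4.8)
The plan is to split the equation $\Delta\cap\beta=\alpha$ into one equation on each $W$-orbit of components of $I_\mu(X\sslash_G T)$ and reduce each of these, via the diagram \eqref{eq:inertia4}, to the classical fact that the product of the positive roots generates the anti-invariants of the coinvariant algebra. By \eqref{eq:components} and Lemma \ref{lem:finite} we have $A_*(I_\mu(X\sslash_G T))=\bigoplus_{t\in T}A_*(X^t\sslash_G T)$, and I would regroup the summands into the blocks $B_j:=\bigoplus_{t\in(h_j)\cap T}A_*(X^t\sslash_G T)$, $j\in J$. Since the $j$-th term $(\eta_{j,T}^*)^{-1}_a(\Delta_j)$ of $\Delta$ is supported on $B_j$, both capping with $\Delta$ and the $W$-action preserve each block, so it suffices to solve $\Delta\cap\beta^{(j)}=\alpha|_{B_j}$, and to establish uniqueness and $W$-invariance of the solution, separately on each $B_j$.

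On a block, the projection $\eta_{j,T}^*\colon B_j\to A_*(X^{h_j}\sslash_G T)$ restricts---as recalled just before the statement---to an isomorphism of $B_j^W$ onto $A_*(X^{h_j}\sslash_G T)^{W_{Z(h_j)}}$ with inverse $(\eta_{j,T}^*)^{-1}$ (the $j$-indexed analogue of \eqref{eq:inertia6}), and of the subgroup of $W$-anti-invariants of $B_j$ onto the subgroup of $W_{Z(h_j)}$-anti-invariants of $A_*(X^{h_j}\sslash_G T)$ with inverse $(\eta_{j,T}^*)^{-1}_a$. Comparing these inverses with the block form of $\Delta$---using that $(\eta_{j,T}^*)^{-1}_a(\Delta_j)$ restricts to $\Delta_j$ on the component $X^{h_j}\sslash_G T$, and that $(\eta_{j,T}^*)^{-1}\gamma$ is $W$-invariant with restriction $\gamma$---yields $\eta_{j,T}^*\big(\Delta\cap(\eta_{j,T}^*)^{-1}\gamma\big)=\Delta_j\cap\gamma$ for every $W_{Z(h_j)}$-invariant $\gamma$. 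So the whole lemma reduces to showing that capping with $\Delta_j$ maps $A_*(X^{h_j}\sslash_G T)^{W_{Z(h_j)}}$ isomorphically onto the subgroup of $W_{Z(h_j)}$-anti-invariant classes in $A_*(X^{h_j}\sslash_G T)$.

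I expect this step to be the main obstacle. Put $Z=Z_G(h_j)$; by assumption (2b) it is connected, with $T$ a maximal torus, Weyl group $W_{Z(h_j)}$, and $R^+_{Z(h_j)}$ among its positive roots. Fix a Borel $T\subset B'\subset Z$ and factor $\varphi_{h_j}$ as $X^{h_j}\sslash_G T\to X^{h_j}\sslash_G B'\to X^{h_j}\sslash_G Z$. The first map is an iterated affine bundle with fibre $B'/T$, hence an isomorphism on Chow groups \cite{kresch}, compatibly with the $W_{Z(h_j)}$-action and the classes $c_1(\L_\rho)$; the second is the flag bundle with fibre $Z/B'$. For the latter, Brion's theorem \cite[Thm~10]{Br98} (used already in Lemma \ref{lem:abelianize-chow}) identifies $A_*(X^{h_j}\sslash_G Z)$ with $A_*(X^{h_j}\sslash_G T)^{W_{Z(h_j)}}$ by pullback, and the Borel/Leray--Hirsch presentation of the Chow ring of a flag bundle realizes $A_*(X^{h_j}\sslash_G T)_\QQ$ as the relative coinvariant algebra over $A_*(X^{h_j}\sslash_G Z)_\QQ$. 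Over $\QQ$ the anti-invariant part of the coinvariant algebra is one-dimensional and spanned by the product of the positive roots, so the $W_{Z(h_j)}$-anti-invariant submodule of $A_*(X^{h_j}\sslash_G T)_\QQ$ is free of rank one over $A_*(X^{h_j}\sslash_G Z)_\QQ$, generated by $\Delta_j=\prod_{\rho\in R^+_{Z(h_j)}}c_1(\L_\rho)$; hence $a\mapsto a\cap\Delta_j$ is the required isomorphism. The delicate part is to make this Borel presentation precise for the flag bundle $X^{h_j}\sslash_G B'\to X^{h_j}\sslash_G Z$ over a base that is merely a smooth Deligne--Mumford stack, the passage to $\QQ$-coefficients being exactly what clears the index of the root lattice inside the character lattice of $Z$ (compare the $\mathrm{PGL}_2$ phenomenon noted in Section \ref{sec:abel-setup}).

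To finish, for a $W$-anti-invariant $\alpha$ let $\beta_j\in A_*(X^{h_j}\sslash_G T)^{W_{Z(h_j)}}$ be the unique class with $\Delta_j\cap\beta_j=\eta_{j,T}^*(\alpha|_{B_j})$, and set $\beta:=\sum_{j\in J}(\eta_{j,T}^*)^{-1}\beta_j$, which is $W$-invariant. From $\eta_{j,T}^*(\Delta\cap\beta|_{B_j})=\Delta_j\cap\beta_j=\eta_{j,T}^*(\alpha|_{B_j})$ and the injectivity of $\eta_{j,T}^*$ on the $W$-anti-invariants of $B_j$ one gets $\Delta\cap\beta=\alpha$, and uniqueness of $\beta$ follows from the injectivity of $\Delta_j\cap-$ on $W_{Z(h_j)}$-invariant classes. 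The rigidified statement is proved identically, using the $W$-module description of $\overline{I}_\mu(X\sslash_G T)$ from the proof of Lemma \ref{lem:abelianize-chow}---where one checks that $T/\langle h_j\rangle$ is a maximal torus of $Z_G(h_j)/\langle h_j\rangle$ with Weyl group $W_{Z(h_j)}$---so the flag-bundle argument goes through verbatim.
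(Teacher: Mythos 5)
Your proposal follows essentially the same strategy as the paper: decompose $A_*(I_\mu(X\sslash_G T))$ into $W$-stable blocks, pass to a single representative component $A_*(X^{h_j}\sslash_G T)$ via $\eta_{j,T}^*$, divide by $\Delta_j$ there, and reassemble with $(\eta_{j,T}^*)^{-1}$; your final formula for $\beta$ coincides with the paper's. The one divergence is the crucial divisibility step---that capping with $\Delta_j$ carries $W_{Z(h_j)}$-invariant classes isomorphically onto $W_{Z(h_j)}$-anti-invariant ones---which the paper dispatches by citing \cite[Lem~5.4.1]{nonab1}, with a footnote conceding that the cited result is stated for schemes and pointing to an equivariant-cohomology version of the argument; you instead sketch a direct proof via the Borel/Leray--Hirsch presentation of the flag bundle $X^{h_j}\sslash_G B'\to X^{h_j}\sslash_G Z_G(h_j)$ and the classical description of anti-invariants in the rational coinvariant algebra. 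The gap you flag---making Leray--Hirsch precise over a DM-stack base---is exactly what the paper's footnote absorbs: expressing the relevant stacks as $T$-, $B'$-, and $Z_G(h_j)$-equivariant Chow groups of the scheme $X^{h_j}\cap X^s(G)$ puts the computation in the Edidin--Graham setting, where the flag-bundle presentation is standard. With that substitution your argument closes and coincides, modulo a citation versus a sketch of that citation, with the paper's proof.
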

\begin{proof}
Since $\eta_{j, T}^*\alpha$ is $W_{Z(h_j)}$-anti-invariant, by \cite[Lem~5.4.1]{nonab1}\footnote{The reference only applies when $X\sslash G$ is a scheme, but the same proof works using equivariant cohomology when $X\sslash G$ is an orbifold. For example, compare the proofs of \cite[Thm~10]{Br98} and \cite[Prop~2.4.1]{nonab1}.} there exists $\beta_j$ such that $\eta_{j, T}^*\alpha = \Delta_j \cap \beta_j$. Applying $(\eta_{j,T}^*)^{-1}_a$ to both sides, we get that the projection of $\alpha$ to the components of $A_*(X\sslash_G T)$ indexed by elements of $(h_j)\cap T$ is equal to \[
\sum_{w \in W/W_{Z(h_j)}} \sgn(w) (w^{-1})^*(\Delta_j \cap \beta_j).
\]
Hence we may take $\beta = \sum_{j \in J} \sum_{w \in W/W_{Z(h_j)}} (w^{-1})^*\beta_j$.
\end{proof}

\begin{remark}\label{rmk:abel-explain}
Symbolically, one may show as in \cite[Lem~5.4.2]{nonab1} that $\Delta$ capped with the right hand side of \eqref{eq:main} is a $W$-anti-invariant class. 
Hence, using Lemma \ref{lem:divide}, we can define the right hand side of \eqref{eq:main} to be the unique ($W$-invariant) class that, when capped with $\Delta$, is equal to $\sum_{\tilde \beta \to\beta} B_{\tilde \beta}(z)$.
Since $\varphi^*$ is injective (by Lemma \ref{lem:abelianize-chow}), the formula \eqref{eq:main} completely determines $I_\mu(X\sslash G)$. An analogous discussion (including the statement of Lemma \ref{lem:divide}) holds for $\overline{I}_\mu(X\sslash G)$.
\end{remark}

\section{The quasimap $I$-function}
We recall the definition of the quasimap $I$-function in \cite{orb-qmaps} and explain how the construction can be made more explicit in our situation.

\subsection{Preliminaries}
We call a representable morphism $\pi: \P\rightarrow C$ of algebraic stacks a principal $G$-bundle if $\pi$ is faithfully flat and locally finitely presented and we are given an action $\mu:G \times\P\rightarrow \P$ leaving $\pi$ invariant such that the map
\[
G \times \P \xrightarrow{(\mu, pr_2)} \P \times_C\P
\]
is an isomorphism.
If $Z$ is an affine variety with left $G$-action, then the product $\P\times Z$ also has a left $G$-action defined by $g\cdot(p, z) = (gp, gz)$, and we define the \textit{mixing space} to be the quotient
\[
\P \times_G Z = (\P\times Z)/G.
\]
If $T \subset G$ is any subgroup and $\T\rightarrow C$ is a principal $T$-bundle, we define the \textit{associated G-bundle} to be
\begin{equation}\label{eq:principal2}
G \times_T \T = (G \times \T)/T \quad \quad \text{where}\;t\cdot(g, s) = (gt^{-1}, ts)\;\text{for}\;t \in T, \;(g,s) \in G \times \T.
\end{equation}
The quotient $G \times_T \T$ is a (left) principal $G$-bundle with the same transition function as $\T$.

\subsection{Quasimaps}

Let $a\geq1$ be an integer. Define
\[
\wP{a} = \frac{\CC^2_{u,v} \setminus\{0\}}{(u,v) \simeq (t^au, tv)} \quad \quad t \in \CC^*
\]
with fixed projective coordinates $[u:v]$, and let $\star = [1:0]$ denote the orbifold point. For this paper we make the following definition, setting $(\wP{a})_S := \wP{a}\times S$ for a scheme $S$.

\begin{definition}\label{def:qmap}
A \textit{stable graph quasimap} to $X\sslash G$ over a base scheme $S$ is a tuple $((\wP{a})_S, \P, \sigma)$ where
\begin{itemize}
\item $\P$ is an algebraic space and $\P \rightarrow (\wP{a})_S$ is a principal $G$-bundle
\item $\sigma$ is a section of $\P \times_G Z$
\end{itemize}
such that, for every geometric point $s \in S$, the inverse image of the unstable locus $X^{us}_\theta$ is finite and disjoint from the orbifold point $\star$.
\end{definition}

We will simply refer to the objects in Definition \ref{def:qmap} as quasimaps. Isomorphisms of quasimaps are defined as usual. The following lemma shows that our quasimaps are precisely the objects parametrized by the moduli stack $Q_{\wP{a}}(X, \beta)$ defined in \cite[Sec~4.2]{orb-qmaps}. 

\begin{lemma}\label{lem:qmap-equivalences}
Let $C$ be a Deligne-Mumford stack. The following categories are equivalent:
\begin{enumerate}
\item The category of representable morphisms $C \rightarrow [X/G]$
\item The category whose objects are principal $G$-bundles $\P$ on $C$ such that $\P$ is an algebraic space, together with a morphism $\P \rightarrow X$
\item The category whose objects are principal $G$-bundles $\P$ on $C$ such that $\P$ is an algebraic space, together with a section of $\P \times_G X\rightarrow C$.
\end{enumerate}
\end{lemma}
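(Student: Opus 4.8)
The plan is to deduce the three-way equivalence from the universal property of the quotient stack $[X/G]$ together with the standard translation between equivariant maps and sections of associated bundles; the content is formal, so I will only indicate the steps.

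First I would prove $(2)\simeq(3)$. Given a principal $G$-bundle $\P\to C$ with $\P$ an algebraic space, I would pull the associated bundle $\P\times_G X\to C$ back along $\P\to C$ and use the defining isomorphism $\P\times_C\P\cong G\times\P$ of a principal bundle to identify $(\P\times_G X)\times_C\P\to\P$ with $\P\times X\to\P$. By fppf descent along $\P\to C$, a section of $\P\times_G X\to C$ is then the same datum as a $G$-invariant section of $\P\times X\to\P$ (for the diagonal $G$-action), i.e.\ the same as a $G$-equivariant morphism $\P\to X$, obtained as the second projection of the section. These constructions are mutually inverse and natural in isomorphisms of the bundle, so they give an equivalence of categories, and the hypothesis that $\P$ be an algebraic space is identical on the two sides.

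Next I would prove $(1)\simeq(2)$. By the universal property of the quotient stack, for any algebraic stack $C$ the groupoid of morphisms $C\to[X/G]$ is equivalent to the groupoid of pairs $(\P,f)$ with $\P\to C$ a principal $G$-bundle and $f\colon\P\to X$ a $G$-equivariant morphism, the universal such pair being $(X\to[X/G],\ \mathrm{id}_X)$; in particular $\P\cong C\times_{[X/G]}X$ whenever $C\to[X/G]$ is classified by $(\P,f)$. Since $X\to[X/G]$ is a smooth surjection with $X$ a scheme, and representability of a morphism of algebraic stacks can be checked after base change along such an atlas, the morphism $C\to[X/G]$ is representable (by algebraic spaces) if and only if $C\times_{[X/G]}X=\P$ is an algebraic space. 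Hence the representable morphisms $C\to[X/G]$ correspond exactly to the pairs $(\P,f)$ with $\P$ an algebraic space, which is category $(2)$; composing with the equivalence of the previous paragraph yields $(1)\simeq(2)\simeq(3)$.

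I expect no serious obstacle here. The one point requiring care is the reduction, in the previous paragraph, of ``$C\to[X/G]$ is representable'' to ``$\P$ is an algebraic space'', which rests on representability being fppf-local on the target applied to the atlas $X\to[X/G]$. Everything else is bookkeeping: one must check that each identification above is compatible with isomorphisms, so that the result is an equivalence of groupoids rather than a bijection on isomorphism classes, and use that $C$ is Deligne--Mumford to ensure that all the stacks and fibre products in sight are algebraic.
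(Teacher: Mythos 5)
Your proof is correct and takes essentially the same route as the paper: for $(1)\Leftrightarrow(2)$ both use the universal property of $[X/G]$ together with the fact that representability can be tested against the atlas $X\to[X/G]$ (the paper cites \cite[Tag~04ZP]{tag} for exactly this). The only difference is that where the paper cites \cite[Ex~4.2]{survey} for the remaining equivalence with $(3)$, you spell out the standard descent argument identifying sections of $\P\times_G X$ with $G$-equivariant maps $\P\to X$ directly.
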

\begin{proof}
Morphisms $q: C \rightarrow [X/G]$ are in bijection with $G$-torsors $\P$ on $C$ and equivariant morphisms to $X$. It follows from \cite[Tag~04ZP]{tag} that $q$ is representable if and only if $\P$ is an algebraic space. This shows $(i) \Leftrightarrow (ii).$ The equivalence $(i)\Leftrightarrow(iii)$ is \cite[Ex~4.2]{survey}.
\end{proof}

If $((\wP{a})_S, \P, \sigma)$ is a quasimap, we will use $q: (\wP{a})_S \rightarrow [Z/G]$ to denote the associated map of stacks and $\tilde q: \P \rightarrow X$ to denote the map of principal bundles defined in Lemma \ref{lem:qmap-equivalences}. We will also refer to the quasimap object with the letter $q$, writing $q=((\wP{a})_S, \P, \sigma)$.

We will often work with a certain class of quasimaps which we now describe. Let
\[\begin{gathered}U_S := \AA^1 \times S \xrightarrow{(u,s) \mapsto ([u:1],s)} \wP{a} \times S \quad \quad \quad \quad V_S := \AA^1\times S\xrightarrow{(v,s) \mapsto ([1:v], s)}  \wP{a}\times S\\
\torus_S:= U_S \times_{(\wP{a})_S}V_S.\end{gathered}\]
Observe that the left map is an open embedding while the right is an \'etale map of degree $a$. The right map is also invariant under the standard action of $a^{th}$ roots of unity on $\AA^1_S$, which we denote $\bmu_a$. The image of $V_S$ is an open substack of $(\wP{a})_S$ which we will denote $[V/\bmu_a] \times S$; likewise we denote the image of $\torus_S$ by $[\torus/\bmu_a]\times S$. Note that $\torus_S\simeq \CC^*_S$.
We fix projection morphisms
\[
\kappa_U:\torus_S \xrightarrow{(w,s) \mapsto (w^a,s)} U_S \quad \quad \quad \quad \kappa_V:\torus_S \xrightarrow{( w,s) \mapsto (w^{-1},s)} V_S.
\]
A transition function $\tau: \torus_S \rightarrow G$ determines a principal bundle on $\wP{a}$ by gluing the pullbacks of the trivial bundles on $U_S$ and $V_S$ via the morphism
\[
(w, g) \rightarrow (w, g\tau^{-1}(w)) \quad \quad w \in \torus_S \quad g \in G.
\]
We denote the resulting bundle by $\P_\tau$. The principal bundle $\P_\tau$ and the associated fiber bundle $\P_{\tau}\times_G X$ may be written as a global quotients, and we have the following equivalent descriptions of the latter:
\begin{equation}\label{eq:global}
\begin{aligned}
\frac{(\CC^2_S \setminus\{0\})\times G \times X}{(u, v, g, x) \sim (t^au, tv, \gamma g \tau(t), \gamma x)} &= \frac{(\CC^2_S \setminus\{0\})\times X}{(u, v, x) \sim (t^au, tv, \tau(t)^{-1}x)} \quad \quad (t, \gamma) \in \CC^*\times G\\
(u, v, g, x, t, \gamma) &\mapsto (u, v, g^{-1}x, t)\\
(u, v, 1, x, t, \tau^{-1}(\gamma)) &\mapsfrom (u, v, x, t)
\end{aligned}
\end{equation}
A quasimap $((\wP{a})_S, \P_\tau, \sigma)$ defines two functions  $\sigma_U: U_S \rightarrow X$ and $\sigma_V: V_S \rightarrow X$, where $\sigma_U$ is the composition
\[
U_S \xrightarrow{\sigma|_{U_S}} (\P_\tau \times_G X)|_{U_S} = U_S \times X \xrightarrow{pr_2} X
\]
and $\sigma_V$ is defined similarly. Hence we have
\begin{equation}\label{eq:coord1}
\tau\cdot(\sigma_U\circ \kappa_U) = \sigma_V\circ \kappa_V \quad \quad\text{on}\; \torus_S,
\end{equation}
and conversely a pair of morphisms $\sigma_U: U_S \rightarrow X$ and $\sigma_V: V_S \rightarrow X$ satisfying \eqref{eq:coord1} define a section of $\P_\tau \times_G X$. 
Two quasimaps $((\wP{a})_S, \P_\tau, \sigma)$ and $((\wP{a})_S, \P_\omega, \rho)$ are isomorphic if and only if there are functions $\phi_U: U_S \rightarrow G$ and $\phi_V: V_S \rightarrow G$ such that
\begin{equation}\label{eq:coordinateiso}\begin{gathered}
(\phi_V\circ \kappa_V)\tau = \omega(\phi_U\circ \kappa_U) \quad \quad \text{as maps}\; \torus_S \rightarrow G\\
\phi_U\cdot\sigma_U = \rho_U \quad \quad \text{as maps}\;U_S \rightarrow Z\\
\phi_V\cdot\sigma_V = \rho_V \quad \quad \text{as maps}\;V_S \rightarrow Z.\\
\end{gathered}\end{equation}
If $((\wP{a})_S, \P_\tau, \sigma)$ is a quasimap and $\phi$ is an automorphism, the local morphisms $\sigma_V$ and $\phi_V$ defined above are twisted-equivariant for the action of $\bmu_a$ on $V$, in the following sense: for $v \in V$ and $\mu \in \bmu_a$, we have relationships
\begin{equation}\label{eq:bmu-equivariance}
\sigma_V(\mu v) = \tau(\mu)^{-1}\sigma_V(v) \quad \quad \text{and} \quad \quad \phi_V(\mu v) = \tau(\mu)^{-1}\phi_V(v)\tau(\mu).
\end{equation}
These equations follow from \eqref{eq:coord1} and \eqref{eq:coordinateiso}, respectively. 

\begin{remark}\label{rmk:coord2}If $k$ is an algebraically closed field, then since every principal $G$-bundle is trivial on $U_k = V_k = \AA^1_k$ (see e.g. \cite{RagRam}) we see that any $k$-quasimap is isomorphic to one of the form $((\wP{a})_k, \P_\tau, \sigma)$ for some transition function $\tau$. 
\end{remark}

\begin{example}\label{ex:cohomology}
For future reference, we record the cohomology of line bundles on $\wP{a}$ in terms of the above coordinates on $\wP{a}$. For an integer $d$ we define 
\[\OO_{\wP{a}}(d):= \frac{(\CC^2 \setminus\{0\})\times \CC}{(u, v, x) \sim (t^au, tv, t^dx)} \quad \quad t \in \CC^*.
\]
Our computation is analogous to that outlined in \cite[Sec~18.3]{vakil} when $a=1$. Specifically, we identify 
\[\Gamma([\torus/\bmu_a], \OO_{\wP{a}}(d)) = \spn\{u^mv^n \mid m, n \in \ZZ, am+n=d\}\]
by viewing $u^mv^n$ in the above set as $\CC^*$-equivariant maps $(\CC^*)^2 \rightarrow \CC$.  With this identification, with the convention $\spn(\emptyset)=0$, we have 
\[
\begin{gathered}
H^0(\wP{a}, \OO_{\wP{a}}(d)) = \spn\{u^mv^n \mid am+n=d\;\text{and}\;m, n \geq 0\} \subset \Gamma([\torus/\bmu_a], \OO_{\wP{a}}(d))\\
H^1(\wP{a}, \OO_{\wP{a}}(d)) = \spn\{u^mv^n \mid am+n=d\;\text{and}\;m, n < 0\} \subset \Gamma([\torus/\bmu_a], \OO_{\wP{a}}(d)).
\end{gathered}
\]
\end{example}

Let $k$ be an algebraically closed field. Consider the degree-$a$ cover of $\wP{a}$ by $\PP^1$, given in homogeneous coordinates by
\begin{equation}\label{eq:cover}
\begin{aligned}
\nu:(\PP^1)_k &\rightarrow (\wP{a})_k\\
[u:v] &\mapsto[u^a:v].
\end{aligned}
\end{equation}
We define the \textit{class} of a quasimap $q=((\wP{a})_k, \P, \sigma)$ 
to be the homomorphism $\beta \in \Hom(\Pic^G(X), \QQ)$ given by
    \[
    \beta(\L) = \deg_{\wP{a}}(q^*\L) =\frac{1}{a}\deg_{\PP^1}(\nu^*q^*\L)\quad \quad \L \in \Pic([X/G]).
    \]
A family of quasimaps $((\wP{a})_S, \P, \sigma)$ has class $\beta$ if each of its geometric fibers over $S$ has class $\beta$. We define the \textit{I-effective classes} $\Eff_I(X, G, \theta) \subset \Hom(\Pic^G(X), \ZZ)$ to be those morphisms that are equal to the class of some stable quasimap.

\begin{remark}
The set $\Eff_I(X, G, \theta)$ is in general a proper subset of the effective classes as defined in \cite[Def~2.2]{orb-qmaps}.
\end{remark}

Let ${QG}_{a, \beta}(X\sslash G)$ denote the groupoid of stable class-$\beta$ quasimaps to $Z \sslash G$ with source curve $\wP{a}$. 
\begin{remark}\label{rmk:a}
By \cite[Lem~4.6]{orb-qmaps}, given a class $\beta$, there is a unique integer $a$ for which $QG_{a, \beta}(X\sslash G)$ is nonempty. 
\end{remark}

We will always assume that $a$ is the integer determined by $\beta$ as in Remark \ref{rmk:a} and omit it from the notation. Furthermore, we will omit the target $X\sslash G$ when it is understood. The space ${QG}_{\beta}$ denoted $Q_{\PP_{a,1}}(X, \beta)$ in \cite[Sec~4.2]{orb-qmaps}. In particular it has a map to $\aff{X}{G}$.

\begin{theorem}[{\cite[Prop~4.5]{orb-qmaps}}]\label{thm:qg}
The moduli space $QG_\beta(X\sslash G)$ is a Deligne-Mumford stack of finite type, proper over $\aff{X}{G}$.
\end{theorem}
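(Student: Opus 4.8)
The plan is to realize $QG_\beta$ as an open substack of a finite-type algebraic stack fibered over a moduli of $G$-bundles, and then to verify the Deligne--Mumford property and properness over $\aff{X}{G}$ by hand. Since $G$ is reductive and $X$ affine, fix a $G$-equivariant closed embedding $X\hookrightarrow V$ into a finite-dimensional $G$-representation. By Lemma \ref{lem:qmap-equivalences}, a class-$\beta$ quasimap to $X\sslash G$ is a principal $G$-bundle $\P$ on the fixed orbifold curve $\wP{a}$ together with a section of $\P\times_G X$, and the latter is the same as a section of the vector bundle $\P\times_G V$ satisfying the (polynomial, $G$-invariant) equations cutting out $X\subset V$. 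Sending this datum to $\P$ gives a morphism to the algebraic stack $\mathrm{Bun}_G(\wP{a})$ of $G$-bundles on $\wP{a}$, which is locally of finite type. Fixing the class $\beta$ bounds the topological type of $\P$: pulling back along the degree-$a$ cover $\nu$ of \eqref{eq:cover} bounds $\deg\nu^*\P$, and the $\bmu_a$-descent datum at the orbifold point $\star$ lies in a fixed finite set by Remark \ref{rmk:a}; so the image lands in a quasi-compact open substack $\mathrm{Bun}_G^\beta$.

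\textbf{Finite type.} The fibre of the groupoid of (not-necessarily-stable) class-$\beta$ quasimaps over a bundle $\P$ is a closed subscheme --- cut out by the equations of $X$ --- of $H^0(\wP{a},\P\times_G V)$, the space of sections of a vector bundle on the proper one-dimensional Deligne--Mumford stack $\wP{a}$ whose Euler characteristic is fixed by $\beta$. Choosing a relative twist $\mathcal{O}_{\wP{a}}(N)$ that makes $H^1(\wP{a},\P\times_G V(N))$ vanish over the bounded family, these sections are cut by linear conditions out of the vector bundle of sections $\pi_*(\P\times_G V(N))$ over $\mathrm{Bun}_G^\beta$; hence the whole groupoid is a scheme affine and of finite type over $\mathrm{Bun}_G^\beta$, in particular algebraic and of finite type. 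The stability condition --- for each geometric fibre the preimage of $X^{us}_\theta$ is finite and disjoint from $\star$ --- is open on this stack by upper-semicontinuity of fibre dimension together with properness of $\wP{a}$ over the base field, so $QG_\beta$ is an open substack, still algebraic and of finite type.

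\textbf{Deligne--Mumford.} An automorphism of $((\wP{a})_S,\P,\sigma)$ is a global automorphism of $\P$, i.e. a section of the adjoint group bundle, that fixes $\sigma$. By stability, on a dense open of each geometric fibre the section lands in $[X^s(G)/G]$, where stabilizers are finite. Then an infinitesimal automorphism fixing $\sigma$ is a section of the adjoint Lie algebra bundle vanishing on that dense open, hence zero, and a section of the affine automorphism scheme of $\P$ over the connected stack $\wP{a}$ is determined by its restriction to that dense open. Thus the automorphism group schemes are finite and unramified over $S$, so $QG_\beta$ is Deligne--Mumford.

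\textbf{Properness over $\aff{X}{G}$.} This is the heart of the matter, and I expect it to be the main obstacle. Separatedness and existence of limits both follow from the valuative criterion: given a discrete valuation ring $R$ with fraction field $K$, a stable class-$\beta$ quasimap over $\Spec K$, and an extension of the induced map $\Spec K\to\aff{X}{G}$ to $\Spec R$, one extends the $G$-bundle over $\wP{a}\times\Spec R$ away from finitely many closed points of the central fibre, extends the section as a rational map to $[X/G]$ whose composite to $\aff{X}{G}$ is the given morphism, and at each of the finitely many indeterminacy points performs an elementary modification of the bundle. The stability condition singles out a \emph{unique} such modification, yielding both existence and uniqueness of the limiting stable quasimap; the delicate point is precisely this bookkeeping of modifications at the bad points and the verification that stability pins the limit down. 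This is the orbifold analogue of the completeness argument of Ciocan-Fontanine, Kim, and Maulik for quasimap moduli, and is carried out in \cite[Prop~4.5]{orb-qmaps}.
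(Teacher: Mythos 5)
The paper does not prove this statement: the theorem environment carries the citation \cite[Prop~4.5]{orb-qmaps} in its header, and the result is imported wholesale from that reference. There is therefore no ``paper's own proof'' to compare against. That said, your sketch is worth examining on its own merits, and it has one substantive gap.

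\textbf{Finite type.} You claim that fixing $\beta$ bounds the topological type of $\P$ --- correct, since $\beta$ fixes $\deg\nu^*\P \in \pi_1(G)$ and the $\bmu_a$-descent datum at $\star$ --- and you then conclude that the image in $\mathrm{Bun}_G(\wP{a})$ lands in a quasi-compact open substack. This inference is false without further input: fixing the topological type does \emph{not} bound the family of $G$-bundles. Already for $G=SL_2$ on $\PP^1$, every bundle has degree $0$, yet $\OO(n)\oplus\OO(-n)$ for $n\geq 0$ are pairwise non-isomorphic and do not form a bounded family. What saves the quasimap moduli is the \emph{stability condition}: a stable quasimap sends a dense open of $\wP{a}$ into $[X^s(G)/G]$, and this constrains the Harder--Narasimhan type of $\P$ in terms of $\beta$ alone. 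This is precisely the content of Ciocan-Fontanine--Kim--Maulik's boundedness theorem (\cite[Thm~3.2.5]{stable_qmaps}, which the present paper invokes in Remark~\ref{rmk:finite} for a related finiteness), and you cannot avoid appealing to it (or reproving it) in the step where you assert quasi-compactness. Your subsequent argument --- that over a bounded base the sections form an affine finite-type scheme after twisting to kill $H^1$, and stability cuts out an open --- is fine once boundedness is in hand.

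\textbf{Properness.} You acknowledge that this is the heart of the matter and that the modification/bookkeeping argument is carried out in \cite[Prop~4.5]{orb-qmaps}. That is exactly what the paper does: it cites the result rather than reproving it. Your sketch of the valuative criterion (extend the bundle off finitely many points of the special fibre, extend the section as a rational map compatible with the given map to $\aff{X}{G}$, perform the unique stabilizing modification at each bad point) correctly describes the strategy, but as written it is a description of the reference rather than a proof.

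The Deligne--Mumford step --- infinitesimal automorphisms are sections of the adjoint bundle vanishing on a dense open, hence zero, and likewise for group-level automorphisms --- is a reasonable argument and in line with the standard approach.
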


 Let $T \subset G$ be a maximal torus. From the morphisms $\chi(G) \rightarrow \chi(T)$ and $\Pic^G(X) \rightarrow \Pic^T(X)$ and the morphism \eqref{eq:chi-to-pic}, we have the following diagram:
\begin{equation}\label{eq:degree_diagram}
\begin{tikzcd}
\Hom(\Pic^T(X),\QQ)\arrow[r,"\rpic"]\arrow[d, "\bdtilde"]&\Hom(\Pic^G(X), \QQ)\arrow[d,"\bd"]\\
\Hom(\chi(T),\QQ)\arrow[r, "\rchi"] & \Hom(\chi(G),\QQ))
\end{tikzcd}
\end{equation}

\begin{remark}\label{rmk:finite}
When $\rpic$ is restricted to $I$-effective classes in both the source and target, it has finite fibers. Indeed, if $\beta \in \Hom(\Pic^G(X), \ZZ)$ is $I$-effective, one may argue as in \cite[Lem~3.1.10]{nonab1} that $\rpic^{-1}(\beta)$ contains finitely many $I$-effective classes (in particular, \cite[Thm~3.2.5]{stable_qmaps} only requires the representation $G \rightarrow GL(V)$ to have \textit{finite} kernel). For general $I$-effective $\beta \in \Hom(\Pic^G(X), \QQ)$, one uses the fact that $a\beta\in \Hom(\Pic^G(X), \ZZ)$ is also $I$-effective, as can be shown by precomposing a quasimap of class $\beta$ with the cover \eqref{eq:cover}.
\end{remark}

\subsection{Perfect obstruction theory}
Let $\pi:\wP{a}\times QG_{\beta}(X\sslash G) \rightarrow QG_{\beta}(X\sslash G)$ be the universal curve and $q: \wP{a}\times QG_{\beta}(X\sslash G) \rightarrow [Z/G]$ be the universal map. Then the virtual cycle constructed in \cite[Prop~4.5]{orb-qmaps} is induced by an absolute perfect obstruction theory
\begin{equation}\label{eq:pot}
\phi: \EE_{QG_{\beta}}:=R \pi_*(q^*\LL_{[X/G]}\otimes \omega^\bullet) \rightarrow \LL_{QG_{\beta}}
\end{equation}
where $\omega^{\bullet}$ is the relative dualizing complex for $\pi$, $\LL_{[X/G]}$ is the cotangent complex, and $\phi$ is defined as in \cite[(18)]{cjw} for the tower of morphisms 
\begin{equation}\label{eq:tower}
[X/G]\times \wP{a} \to \wP{a} \to \bullet.
\end{equation}
(By Remark \ref{rmk:a}, $a$ is determined by $\beta$.) For a proof of these statements, see \cite[Sec~3.2]{nonab1}.

\begin{remark}
If $R$ is not trivial, then \eqref{eq:tower} is $R$-equivariant (we define the $R$-action on $\wP{a}$ to be trivial). By \cite[Sec~A.3]{cjw} this induces an $R$-action on $QG_\beta(X\sslash G)$ and an $R$-equivariant structure on the perfect obstruction theory \eqref{eq:pot}.
\end{remark}
\subsection{I-function}
We define each of the ingredients needed to write down the quasimap $I$-function.

\subsubsection{$\CC^*_\lambda$ action}\label{sec:lambda-action}
Let $\CC^*_\lambda$ act on $\wP{a}$
by
\begin{equation}\label{eq:action1}
\lambda \cdot [u:v] = [\lambda u:v], \quad \quad\quad\quad \lambda \in \CC^*.
\end{equation}
This action is chosen so that the coarse moduli map $\wP{a}\rightarrow \PP^1$ is equivariant for the $\CC^*$-action on $\PP^1$ given by \cite[(28)]{nonab1}. This action induces an action on $QG_{\beta}$ via 
\begin{equation}\label{eq:action3}
\lambda \cdot (\wP{a}\times S, \P, \sigma) = (\wP{a}\times S, (\lambda^{-1})^*\P,\sigma\circ \lambda^{-1}).
\end{equation}

\subsubsection{$\CC^*_\lambda$-fixed locus}
For any integer $d \geq 1$, let $\CC^*_{\lambda^{1/d}}$ denote the torus with coordinate $\lambda^{1/d}$. We let $\CC^*_{\lambda^{1/d}}$ act on $\wP{a}$ via the $d^{th}$ power map $\CC^*_{\lambda^{1/d}} \to \CC^*_\lambda$ and the action \eqref{eq:action1}. In coordinates, we have
\[
\lambda^{1/d}\cdot[u:v] = [(\lambda^{1/d})^d u:v] \quad \quad \lambda^{1/d} \in \CC^*.
\]

We define the $\CC^*_\lambda$-\textit{fixed locus} to be a closed substack of $QG_{\beta}$ as in \cite[Sec~3]{CKL17}---it follows from \cite[Prop~5.20]{AHR} that this definition is equivalent to \cite[Def~5.25]{AHR}. That is, a quasimap $q=((\wP{a})_S, \P, \sigma)$ is fixed if there is some $d \geq 1$ such that $\CC^*_{\lambda^{1/d}}$ fixes $q$; i.e., 
$q:(\wP{a})_S \rightarrow [Z/G]$ factors through $[(\wP{a})_S/ \CC^*_{\lambda^d}]$. This factorization implies that the basepoints of a fixed quasimap, if any, are concentrated at $[0:1]$ and $[1:0]$. Let $F_{ \beta}(X\sslash G)$ be the component of the fixed locus with all its basepoints at $[0:1]$. When the target is clear we will write $F_\beta$ for $F_\beta(X\sslash G)$.

\subsubsection{Localization residue}\label{sec:loc-res}
We now define the localization residue 
\begin{equation}\label{eq:res}
\Res^{X\sslash G}_\beta := e_{\CC^*_\lambda}(N^\vir_{F_\beta/QG_\beta})^{-1}\cap [F_\beta]^{\vir}
\end{equation}
as an element of $A_*([F_\beta/\CC^*_\lambda])$. This class is derived from the morphism $\EE_{QG_\beta}|_{F_\beta} \to \LL_{F_\beta}$ obtained by restricting \eqref{eq:pot} to $F_\beta$ and composing with the canonical map $\LL_{QG_\beta}|_{F_\beta} \to \LL_{F_\beta}$. We will use an alternative description of this morphism: by the proof of \cite[Lem~A.3.5]{cjw} it is quasi-isomorphic to the morphism 
\begin{equation}\label{eq:almost-pot}
\phi_{F_\beta}: \EE_{F_\beta} \to \LL_{F_\beta}
\end{equation}
defined directly via \cite[(55)]{cjw} with $B_W=\mathfrak{U}$ equal to a point, $K_W=W=\mathcal{C} := \wP{a}$, $Z=\wP{a}\times [X/G]$, $B_Z=F_\beta$, and $K_Z=F_\beta \times \wP{a}$. 

We explain how \eqref{eq:almost-pot} is $\CC^*_\lambda$-equivariant. Since $\CC^*_\lambda$ acts on the tower of morphisms $[X/G]\times \wP{a} \to \wP{a} \to \bullet$, we see from \cite[Sec~A.3]{cjw} that there is an induced action on $QG_\beta$ and that its perfect obstruction theory \eqref{eq:pot} is $\CC^*_\lambda$-equivariant (meaning that it is the pullback of a morphism in the derived category of $[QG_\beta / \CC^*_\lambda]$). By \cite[Lem~A.3.3]{cjw} the morphism \eqref{eq:almost-pot} is also equivariant, equal to the pullback of a morphism in the derived category of $[F_\beta/\CC^*_\lambda]$ that we will notate $\phi_{[F_\beta/\CC^*_\lambda]/B\CC^*_\lambda}: \EE_{[F_\beta/\CC^*_\lambda]} \to \LL_{[F_\beta/\CC^*_\lambda]/B\CC^*_\lambda}$.

To define the localization residue, the following remark is helpful.

\begin{remark}\label{rmk:perfect}The complex $\EE_{[F_\beta/\CC^*_\lambda]}$ on $[F_\beta/\CC^*_\lambda]$ has a global resolution by vector bundles.
One way to see this uses Lemmas \ref{lem:nearly} and \ref{lem:group-action}, which together imply that $F_\beta$, and hence $[F_\beta/\CC^*_\lambda]$, is the quotient of a variety with an equivariant family of ample line bundles. By \cite[Prop~2.1]{totaro} the stack $[F_\beta/\CC^*_\lambda]$ has the resolution property. 
Now observe that $\EE_{[F_\beta/\CC^*_\lambda]}$ is perfect because its pullback is the restriction of a perfect complex $\EE_{F_\beta} \simeq \EE_{QG_\beta}|_{F_\beta}$, and being perfect is a flat-local property by \cite[Lem~4.1]{HR17}. Moreover it may be represented by a complex of quasi-coherent sheaves on $[F_\beta/\CC^*_\lambda]$ by \cite[Thm~1.2]{HNR} and \cite[Thm~B]{HR17}. Replacing this complex by its truncation we may assume it is bounded below. Finally, to represent $\EE_{[F_\beta/\CC^*_\lambda]}$ by a complex of vector bundles one may argue as in \cite[Tag~0F8E]{tag}, replacing tags 08CQ and 08DN with tags 08G8 and 08FX, respectively.
\end{remark}

The above remark allows us to decompose the complex $\EE_{[F_\beta/\CC^*_\lambda]}$ and the morphism $\phi_{[F_\beta/\CC^*_\lambda]/B\CC^*_\lambda}$ into fixed and moving parts such that these decompositions are compatible with pullbacks to $F_\beta$. In general, if $E$ is a vector bundle on $[F_\beta/\CC^*_\lambda]$ and $\tilde E$ is its pullback to $F_\beta$, we may write $E=E^{\fix}\oplus E^{\mov}$ such that the pullback of this decomposition to $F_\beta$ is equal to the decomposition $\tilde E=\tilde E^{\fix}\oplus \tilde E^{\mov}$ given in \cite[p.979]{CKL17}.

The fixed part $\phi_{F_\beta}^{\fix}$ is a perfect obstruction theory for $F_\beta$ by \cite[Lem~A.3.5]{cjw} and \cite[Lem~3.3]{CKL17}, and hence it defines a virtual fundamental class $[F_\beta]^\vir \in A_*(F_\beta)$. It follows from the constructions that $\phi_{[F_\beta/\CC^*_\lambda]/B\CC^*_\lambda}^{\fix}$ is a relative perfect obstruction theory (it pulls back to $\phi_{F_\beta}^{\fix}$) and the associated virtual class in $A_*([F_\beta/\CC^*_\lambda])$ pulls back to $[F_\beta]$. Hence we write $[F_\beta]^\vir \in A_*([F_\beta/\CC^*_\lambda])$.

Finally, the virtual normal bundle 
\[
N^{\vir}_{F_{\beta}(X\sslash G)} := (\EE_{F_\beta}^{\mov})^\vee
\]
is the pullback of a complex of vector bundles on $[F_\beta/\CC^*_\lambda]$ by Remark \ref{rmk:perfect}. We define the invertible operational Chow class $e_{\CC^*_\lambda}(N^{\vir}_{F_{ \beta}(X\sslash G)}): A_*([F_\beta/\CC^*_\lambda]) \rightarrow A_*([F_\beta/\CC^*_\lambda])$ as in \cite[(57)]{nonab1}, using the definition in \cite[Def~2.4.2]{kresch} for the Euler class of a vector bundle on $[F_\beta/\CC^*_\lambda]$.

\begin{remark}
If $R$ is not trivial, then the $R$-action on \eqref{eq:tower} commutes with the $\CC^*_\lambda$-action, which means that $R$ acts on $F_\beta$ and \eqref{eq:almost-pot} is $\CC^*_\lambda \times R$-equivariant. Hence the localization residue defined in this section descends to a class in $A_*([F_\beta/(\CC^*_\lambda \times R)])$ simply by replacing $\CC^*_\lambda$ with $\CC^*_\lambda \times R$ in the preceeding discussion.
\end{remark}

\subsubsection{Evaluation map}
Let $\star$ denote the gerbe $B\bmu_a \subset \wP{a}$. Recall that $\beta$ determines the integer $a$ (Remark \ref{rmk:a}). We define $ev_\star: F_{ \beta} \rightarrow I_\mu(X\sslash G)$ to send a quasimap $q:\wP{a}\times S \rightarrow [Z/G]$ to its restriction $B\bmu_a \times S \rightarrow X\sslash G$, an object over $S$ in the \textit{unrigidified} cyclotomic inertia stack $I_{\mu}(X\sslash G)$ (see \cite[Prop~3.2.3]{AGV08}). 

To define the $I$-function, we need to define a map $(ev_\star)_*: A_*([F_\beta/\CC^*_\lambda]) \to A_*(I_\mu(X\sslash G))[z, z^{-1}]$. Since the action of $\CC^*_\lambda$ on $F_\beta$ is only trivial after reparametrization, the morphism $ev_\star: F_\beta \to I_\mu(X\sslash G)$ might not be $\CC^*_\lambda$-equivariant, and it is not immediately clear how to define $(ev_\star)_*$ on $A_*([F_\beta/\CC^*_\lambda])$. 
Our solution is to apply Lemma \ref{lem:almost-trivial} to write $\alpha \in A_*([F_\beta/\CC^*_\lambda])$ as an element of
$A_*(F_\beta)[z, z^{-1}]$, where $z$ is the Euler class of the line bundle on $[F_\beta/\CC^*_\lambda]$ induced by the identity character of $\CC^*_\lambda$, and then we apply $ev_\star$ to the coefficients of this series.\footnote{Suppose $F_\beta$ is fixed by
$\CC^*_{\lambda^{1/a}}$, which acts via $\CC^*_{\lambda^{1/a}} \xrightarrow{\mu\mapsto \mu^a}
\CC^*_\lambda$. The formula described here for computing $(ev_\star)_*\alpha$ is equivalent to first lifting $\alpha$ to a class in
$A_*([F_\beta/\CC^*_{\lambda^{1/a}}])$ using the canonical isomorphism $[F_\beta/\CC^*_{\lambda^{1/a}}] \simeq [F_\beta/\CC^*_\lambda]$, then applying
$(ev_\star)_*: A_*([F_\beta/\CC^*_{\lambda^{1/a}}]) \rightarrow A_*([I_\mu(X\sslash G)/\CC^*_{\lambda^{1/a}}])$,
next applying the canonical isomorphism $[I_{\mu}(X\sslash G)/\CC^*_{\lambda^{1/a}}] \simeq [I_{\mu}(X\sslash G)/\CC^*_{\lambda}]$, and finally applying Lemma \ref{lem:almost-trivial}. This follows from the proof of Lemma \ref{lem:almost-trivial}.}

\begin{remark}
If $R$ is not trivial, then we need to define a map
\begin{equation}\label{eq:R1}
(ev_\star)_*: A_k([F_\beta/(\CC^*_\lambda \times R)]) \to A_k([I_\mu(X\sslash G)/R])[z,z^{-1}].
\end{equation}
Since $ev_\star$ is $R$-equivariant by definition, we have  
\begin{equation}\label{eq:R2}
(ev_\star)_*: A^R_k(F_\beta) \to A^R_k(I_\mu(X\sslash G))
\end{equation}
where the groups $A_*^R$ are defined in Section \ref{sec:equivariant}. If $U$ is an open subset of a representation of $V$ where $R$ acts freely, such that the complement of $U$ in $V$ has codimension greater than $\dim(X)-k$, then by Lemmas \ref{lem:equivariant-chow} and \ref{lem:almost-trivial} we have
\begin{align}
A_k([F_\beta/({\CC^*_\lambda\times R})]) \simeq A_k^{\CC^*_\lambda\times R}(F_\beta) &\simeq A^{\CC^*_\lambda}_{k+\dim V - \dim R}(F_\beta \times_R U)\label{eq:R3}\\
&\simeq (A_*([F_\beta \times_R U])[z,z^{-1}])_{k+\dim V - \dim R}\notag
\end{align}
where the subscript on the rightmost term indicates the corresponding summand of the graded $\QQ$-vector space. We define \eqref{eq:R1} by first applying the isomorphism \eqref{eq:R3} and then applying \eqref{eq:R2} to the coefficients of $z^i$.
\end{remark}
\subsubsection{$I$-function definition}\label{sec:ifunc-def}

 Let $\iota$ be the involution of $I_\mu(X\sslash G)$ induced by the inversion automorphism of $B\bmu_a$ (see \cite[Sec~3.5]{AGV08}). Finally let $\varpi: I_\mu(X\sslash G) \rightarrow \overline{I}_\mu(X\sslash G)$ be the rigidification and define
\[
\widetilde{ev}_{\star}: F_{\beta} \rightarrow \overline{I}_\mu(X\sslash G) \quad \quad \quad \widetilde{ev}_\star = a\circ\varpi\circ \iota \circ ev_\star
\]
where $a$ as a function is multiplication by the integer $a$ (which is determined by $\beta$).
Now we can define the $I$-function of $X\sslash G$ as a formal power series in the $q$-adic completion of the semigroup ring generated by the effective classes. 
\begin{definition}\label{def:ifunc}
The (unrigidified) \textit{small $I$-function} of $(X,G,\theta)$ is
\begin{equation}\label{eq:Ifunc}
I^{X\sslash G}(z) = \one_X+ \sum_{\beta\neq 0}q^\beta I^{X\sslash G}_\beta(z) \quad \quad \text{where} \quad \quad I^{X\sslash G}_\beta(z) = (\iota \circ ev_\star)_*\Res^{X\sslash G}_\beta. 
\end{equation}
If $F_\beta$ is empty then $I^{X\sslash G}_\beta$ is defined to be zero.
The coefficient $I^{X\sslash G}_\beta(z)$ is an element of $A_*(I_\mu(X\sslash G))[z, z^{-1}]$.
The rigidified small $I$-function $\overline{I}^{X\sslash G}(z)$ is defined analogously with $\widetilde{ev}_\star$ in place of $\iota \circ ev_\star$.
\end{definition}
\begin{remark}\label{rmk:ifuncs}
Directly from the definitions, we have $\mathbf{a}\varpi_*I^{X\sslash G}(z) = \overline{I}^{X\sslash G}(z),$ where $\overline{I}^{X\sslash G}(z)$ agrees with the series notated $I(0,q,z)$ in \cite{orb-qmaps} and $\mathbf{a}$ denotes the locally constant function equal to multiplication by $a$ on $\overline{I}_{\mu_a}(X\sslash G).$ 
Since $\mathbf{a}\varpi^*\varpi_*$ is equal to the identity, it is equivalent to write $I^{X\sslash G}(z) = \varpi^*\overline{I}^{X\sslash G}(z)$. Hence, the series \cite[(1.6)]{yang} is equal to $\mathbf{a}I^{X\sslash G}(z)$. Multiplying both sides of the equality in \cite[Thm~1.12.2]{yang} by $\mathbf{a}$ shows that $I^{X\sslash G}(z)$ lies on the Lagrangian cone of $X\sslash G$. See also \cite[App~A.1]{SW1}.

 \end{remark}

\begin{remark}\label{rmk:def-equivariant}
When $R$ is not trivial, we define the equivariant small $I$-function $I^{X\sslash G, R}{z}$ via the same formula as \eqref{eq:Ifunc}, replacing $e_{\CC^*_\lambda}$ with $e_{\CC^*_\lambda \times R}$ and replacing $(\iota \circ ev_\star)_*$ with the equivariant pushforward. Since the equality in \cite[Thm~1.12.2]{yang} also holds $R$-equivariantly by essentially the same proof \cite{yang-email}, we see as in Remark \ref{rmk:ifuncs} that $I^{X\sslash G, R}(z)$ is on the $R$-equivariant Lagrangian cone of $X\sslash G$.
\end{remark}

\section{Abelianization for $I$-functions}

Our first goal is to prove the following.

\begin{proposition}\label{prop:diagram}
Let $\beta \in \Hom(\Pic^G(X),\QQ)$ be $I$-effective. For every $\tilde \beta \in \rpic^{-1}(\beta)$, setting $\tilde \alpha = \bdtilde(\tilde \beta)$, there is
\begin{itemize}
\item an open substack $F^0_{\tilde \beta}(X\sslash T)$ of $F_{\tilde \beta}(X\sslash T)$
\item a group element $g_{\tilde \beta} \in G$ and a parabolic subgroup $P_{\tilde \alpha}$ of $Z_G(g_{\tilde \beta})$, and
\item a morphism $\psi_{\tilde \beta}: F^0_{\tilde \beta}(X\sslash T) \rightarrow F_\beta(X\sslash G)$ whose image we denote $F_{\tilde \beta}(X\sslash G)$,
\end{itemize}
fitting into the following commutative diagram with fibered squares:
\begin{equation}\label{eq:big_diagram}
\begin{tikzcd}
 F_{\tilde \beta}(X\sslash G) \arrow[d,hook,"i"]  & F^0_{\tilde \beta}(X\sslash T) \arrow[l,"\psi_{\tilde \beta}"'] \arrow[d,hook]\arrow[r,hook,"h"]& F_{\tilde \beta}(X\sslash T)\arrow[d,hook, "ev_\star"]\\
 X^{g_{\tilde \beta}}\sslash_G P_{\tilde \alpha}  \arrow[dr, "f"]& X^{g_{\tilde \beta}} \sslash_G T \arrow[r,hook,"j"]\arrow[d, "\varphi_{g_{\tilde \beta}}"]\arrow[l,"p"]&  X^{g_{\tilde \beta}} \sslash T\\
& X^{g_{\tilde \beta}} \sslash_G Z_G(g_{\tilde \beta})
\end{tikzcd}
\end{equation}
where the map $\varphi_{g_{\tilde \beta}}$ was defined in \eqref{eq:inertia4}. Moreover, the vertical arrows in the top row are all closed embeddings, and the composition $\eta_G \circ f\circ i$ is the evaluation map $ev_\star$.
\end{proposition}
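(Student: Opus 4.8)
The plan is to analyze the $\CC^*_\lambda$-fixed locus $F_{\tilde\beta}(X\sslash T)$ explicitly in the local coordinates on $\wP{a}$ developed in Section 3.2, and then bootstrap to $X\sslash G$. First I would use Remark \ref{rmk:coord2} to write any $\CC^*_\lambda$-fixed quasimap to $X\sslash T$ in the form $((\wP{a})_S, \P_\tau, \sigma)$ for a transition function $\tau\colon \torus_S \to T$. The $\CC^*_\lambda$-fixed condition says $q$ factors through $[(\wP{a})_S/\CC^*_{\lambda^d}]$; combined with the basepoint-at-$[0:1]$ condition defining $F_{\tilde\beta}$, this pins down $\tau$ up to isomorphism as a cocharacter of $T$ and forces $\sigma_V$ to be (twisted-)equivariant with a single value at the orbifold point, so $ev_\star$ lands in a single component $(I_\mu(X\sslash T))_{t}$ of the inertia stack; the relevant $t$, together with the cocharacter data, determines a canonical lift $g_{\tilde\beta} \in G$ (a representative in $T$). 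The degree $\tilde\alpha = \bdtilde(\tilde\beta)$ records exactly the cocharacter, which is why $P_{\tilde\alpha}$ should depend only on $\tilde\alpha$: I would define $P_{\tilde\alpha} \subset Z_G(g_{\tilde\beta})$ as the parabolic whose Lie algebra is spanned by the Cartan plus the root spaces $\rho$ with $\tilde\alpha(\rho)\ge 0$ (this is where the condition $Z_G(g)$ connected from Section \ref{sec:abel-setup}(2b) is used, to ensure $P_{\tilde\alpha}$ is an honest connected parabolic). The open substack $F^0_{\tilde\beta}(X\sslash T)$ is cut out by the semistability condition for $G$ (rather than $T$) at the evaluation point and by requiring the principal-bundle reduction to extend to the $P_{\tilde\alpha}$-bundle picture.

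Next I would construct the maps. The map $h\colon F^0_{\tilde\beta}(X\sslash T) \hookrightarrow F_{\tilde\beta}(X\sslash T)$ is just the open inclusion. The map $\psi_{\tilde\beta}\colon F^0_{\tilde\beta}(X\sslash T) \to F_\beta(X\sslash G)$ sends a $T$-quasimap to its associated $G$-quasimap via $\P_\tau \rightsquigarrow G\times_T \P_\tau$ as in \eqref{eq:principal2}; one checks this $G$-quasimap is still $\CC^*_\lambda$-fixed with basepoint at $[0:1]$ (the degree computation lands $\beta = \rpic(\tilde\beta)$ by construction of \eqref{eq:degree_diagram}), and the $G$-semistability we imposed to define $F^0$ is exactly what makes the image land in $F_\beta(X\sslash G)$ rather than just a coarser stack. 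For the bottom portion: the evaluation $ev_\star$ on $F_{\tilde\beta}(X\sslash T)$ lands in $X^{g_{\tilde\beta}}\sslash T$ (this is the component computed in the coordinate analysis, matching the formula for $F^0_{\tilde\beta}(X\sslash T)$ in \cite[Sec~5.3]{orb-qmaps}); the map $j$ is \eqref{eq:key_diagram} restricted to that component; $\varphi_{g_{\tilde\beta}}$ is the flat map from \eqref{eq:inertia4}; and $f,p$ come from the $P_{\tilde\alpha}$-reduction together with $\eta_G$ identifying $X^{g_{\tilde\beta}}\sslash_G Z_G(g_{\tilde\beta})$ with the component $(I_\mu(X\sslash G))_{(g_{\tilde\beta})}$. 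The identity $\eta_G\circ f\circ i = ev_\star$ is then a matter of tracing the orbifold-point restriction through these identifications.

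The two fibered-square claims require the most care and I expect the \emph{right} square (the one with $h$, $\psi_{\tilde\beta}$ on top over $j$, $\varphi_{g_{\tilde\beta}}$) to be the main obstacle. One must show that the defining conditions of $F^0_{\tilde\beta}(X\sslash T)$ are precisely the pullback of the conditions on $F_{\tilde\beta}(X\sslash T)$ under base change along $\varphi_{g_{\tilde\beta}}$, i.e. that a $T$-quasimap in $F_{\tilde\beta}(X\sslash T)$ lies in $F^0$ if and only if its evaluation point and reduction data are compatible with the $G$-structure; the subtlety is that $F_\beta(X\sslash G)$ parametrizes $G$-quasimaps whose fixed-point reduction is only a $P_{\tilde\alpha}$-reduction (a Harder--Narasimhan-type phenomenon), so the fiber of $\psi_{\tilde\beta}$ over a point of $F_\beta(X\sslash G)$ is the space of reductions of a fixed parabolic bundle to its Borel along $\star$ — this is what the fibered square with $X^{g_{\tilde\beta}}\sslash_G T \to X^{g_{\tilde\beta}}\sslash_G Z_G(g_{\tilde\beta})$ encodes, and matching it to the moduli-theoretic fiber of $\psi_{\tilde\beta}$ is the technical heart of the argument. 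The left square (with $i$, $f$) and the top-row closed embeddings are comparatively routine once the coordinate description is in place: closedness of the top vertical arrows follows from properness of $F_\beta$ over $\aff{X}{G}$ (Theorem \ref{thm:qg}) together with the closedness of semisimple conjugacy classes noted after \eqref{eq:components}, and the left square is fibered essentially by construction of $P_{\tilde\alpha}$ and the map $p$.
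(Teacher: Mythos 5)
Your high-level picture matches the paper: the same cocharacter $\tau_{\intalpha}$ and representative $g_{\tilde\beta}\in T$, the same dynamical parabolic $P_{\tilde\alpha}\subset Z_G(g_{\tilde\beta})$ (the roots with $\tilde\alpha(\rho)\ge 0$), the same definition of $\psi_{\tilde\beta}$ via the associated $G$-bundle $\P_\tau\rightsquigarrow G\times_T\P_\tau$, and the same Harder--Narasimhan-type identification of $F_{\tilde\beta}(X\sslash G)$ with $[X^0_{\tilde\beta}/P_{\tilde\alpha}]$. That much is correct. However, there are several genuine gaps in how you propose to carry it out.

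First, your definition of $F^0_{\tilde\beta}(X\sslash T)$ has a spurious extra clause. In the paper it is simply the preimage under $ev_\star$ of the $G$-stable locus, $F_{\tilde\beta}(X\sslash T)\cap X^{g_{\tilde\beta}}\sslash_G T$ inside $X^{g_{\tilde\beta}}\sslash T$; there is no additional condition about ``the reduction extending to $P_{\tilde\alpha}$.'' That extension always exists for fixed quasimaps (this is exactly what makes $P_{\tilde\alpha}$ act on the data, as in Lemma \ref{lem:auts}), so it is not a condition to be imposed. Imposing it would muddy the fibered-square verifications.

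Second, you have the difficulty inverted. The right-hand square (with $h$ over $j$) is the one that is fibered essentially \emph{by definition}, once one knows that $ev_\star: F_{\tilde\beta}(X\sslash T)\to X^{g_{\tilde\beta}}\sslash T$ is a closed embedding --- for then $F^0_{\tilde\beta}$, being defined as a preimage, is automatically a fiber product. The genuine work is in the \emph{left} square (with $\psi_{\tilde\beta}$ over $p$), where one must show that the map $[X^0_{\tilde\beta}/P_{\tilde\alpha}]\to F_\beta(X\sslash G)$ induced by $\psi_{\tilde\beta}$ is a closed embedding; this is Lemma \ref{lem:nearly}, and it requires checking $P_{\tilde\alpha}$-invariance of $X^0_{\tilde\beta}$ and a full-faithfulness argument comparing automorphism groups of quasimaps with elements of $P_{\tilde\alpha}$.

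Third, and most importantly, your claimed route to closedness of the vertical arrows --- properness of $F_\beta$ over $\aff{X}{G}$ plus closedness of semisimple conjugacy classes --- does not establish that $ev_\star$ is a closed embedding. Properness gives you a proper morphism, but a proper morphism is a closed embedding only if it is also a monomorphism. Establishing the monomorphism property is where the real content lies: the paper proves that a fixed quasimap is completely determined by its restriction to $B\bmu_a\subset\wP{a}$ (Lemma \ref{lem:factors}), and that $ev_\star$ induces bijections on automorphism groups at geometric points (Lemma \ref{lem:abelian-main-2}); only together do these yield the closed embedding. Relatedly, you implicitly assume that the ``correct'' power of $\CC^*_\lambda$ fixing $q$ is $d=a$, but this needs a nontrivial argument about the image of the isotropy group $\bmu_a$ at $\star$ under a representable map to the DM locus (Lemma \ref{lem:fixed-by-a}). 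Without these two pieces, your coordinate description does not descend to a moduli-theoretic statement about $F_{\tilde\beta}(X\sslash T)$ as a stack, and the fibered-square claims are left hanging.
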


 Note that the stacks $X^{g_{\tilde \beta}}\sslash_G Z_G(g^{\tilde \beta})$, $X^{g_{\tilde \beta}}\sslash_G T$, and $X^{g_{\tilde \beta}}\sslash T$ are (isomorphic to) open and closed substacks of the inertia stacks $I_\mu(Z\sslash G)$, $I_{\mu}(Z\sslash_G T)$, and $I_{\mu}(Z\sslash T)$, respectively (see Section \ref{sec:inertia1}).

\begin{remark}
If $R$ is not trivial, the diagram \eqref{eq:big_diagram} is $R$-equivariant. For most of the diagram this follows from the definitions in the preceeding section. The map $\psi_{\tilde \beta}$ is also clearly equivariant from its definition in \eqref{eq:defpsi}.
\end{remark}
\subsection{Preliminaries}\label{sec:defs}

Let $(\wP{a})_S \rightarrow [X/T]$ be a family of quasimaps of class $\tilde \beta$, and let $\tilde \alpha = \bdtilde(\tilde \beta)$. By \cite[Lem~4.6]{orb-qmaps} the subscript $a$ is the minimal postive integer such that $a  \tilde \alpha$, a priori a morphism to $\QQ$, is in $ \Hom(\chi(T), \ZZ)$. Let $\intalpha = a\tilde \alpha$ and define $\tau_{\intalpha}$ to be the cocharacter of $T$ associated to $\intalpha$ via the rule
\begin{equation}
\xi(\tau_{\intalpha}(t)) = t^{- \intalpha(\xi)} \quad\quad \text{for any}\quad \quad\xi \in \chi(T).
\end{equation} We write $\T_{\intalpha} := \T_{\tau_{\intalpha}}$ and $\P_{\intalpha} := \P_{\tau_{\intalpha}}$ for the associated principal bundles on $(\wP{a})_S$. We define
\[
g_{\tilde \beta} := \tau_{{\intalpha}}(e^{2 \pi i/a})^{-1}.
\]

\begin{remark}\label{rmk:k-quasimaps}
Let $k$ be an algebraically closed field. It follows from \cite[Thms~2.4, 2.7]{MT} and Remark \ref{rmk:coord2} that every $k$-point of $QG_\beta$ can be represented by a quasimap of the form $((\wP{a})_k, \P_{\intalpha}, \sigma)$, where $\intalpha \in \Hom(\chi(T), \ZZ)$ has the property that $\rchi(\intalpha/a) = \bd(\beta)$. The class $\intalpha$ is unique up to the action of $W$.
\end{remark}

The parabolic subgroup $P_{\tilde \alpha}$ of $Z_G(g_{\tilde \beta})$ and its canonical Levi subgroup $L_{\tilde \alpha}$ are defined using the dynamic method as in \cite[Sec~4.1]{nonab1}. For any $\CC$-scheme $S'$, we have
\[
P_{\tilde \alpha}(S') = \{g \in Z_G(g_{\tilde \beta})(S') \; | \; \tau_{\intalpha}^{-1}g\tau_{\intalpha}:\torus_{S'}\rightarrow Z_G(g_{\tilde \beta})\;\text{extends to a morphism}\;\AA^1_{S'}\rightarrow Z_G(g_{\tilde \beta})\}
\]
and $L_{\tilde \alpha} \subset P_{\tilde \alpha}$ is the centrilizer of the image of $\tau_{\intalpha}$. The extensions mentioned in the definition of $P_{\tilde \alpha}$ are unique if they exist.

\begin{lemma}\label{lem:auts}
Let $S$ be a $\CC$-scheme. The group $P_{\tilde \alpha}$ has a natural inclusion into $\Aut(G \times_T \T_{\intalpha})$, where $\T_{\intalpha}$ is a principal bundle on $(\wP{a})_S$. The embedding sends $g \in P_{\tilde \alpha}$ to the automorphism defined as in \eqref{eq:coordinateiso} by setting $\phi_V(v)=g$.
\end{lemma}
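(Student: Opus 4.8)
The plan is to give a direct construction. Fix a $\CC$-scheme $S$ and write $\T = \T_{\intalpha}$ for the principal $T$-bundle on $(\wP{a})_S$ determined by the cocharacter $\tau_{\intalpha}$, presented via the transition function $\tau_{\intalpha}: \torus_S \to T$ as in the gluing data of Section~\ref{sec:defs}. The associated $G$-bundle is $G\times_T \T$, and an automorphism of $G\times_T \T$ over $(\wP{a})_S$ is exactly a pair $(\phi_U, \phi_V)$ of morphisms $\phi_U: U_S \to G$, $\phi_V: V_S \to G$ satisfying the cocycle condition $(\phi_V\circ \kappa_V)\tau_{\intalpha} = \tau_{\intalpha}(\phi_U\circ \kappa_U)$ on $\torus_S$ (the first equation of \eqref{eq:coordinateiso} with $\omega = \tau = \tau_{\intalpha}$), together with the $\bmu_a$-equivariance constraint $\phi_V(\mu v) = \tau_{\intalpha}(\mu)^{-1}\phi_V(v)\tau_{\intalpha}(\mu)$ from \eqref{eq:bmu-equivariance}. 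So the content of the lemma is: for $g \in P_{\tilde\alpha}$, the assignment $\phi_V \equiv g$ (constant) extends to such a pair and defines a group homomorphism $P_{\tilde\alpha} \to \Aut(G\times_T \T)$.

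First I would unwind what the cocycle condition forces. Setting $\phi_V \equiv g$ and using $\kappa_U(w) = w^a$, $\kappa_V(w) = w^{-1}$, the condition reads $g\,\tau_{\intalpha}(w) = \tau_{\intalpha}(w)\,\phi_U(w^a)$ on $\torus_S$, i.e. $\phi_U(w^a) = \tau_{\intalpha}(w)^{-1} g\, \tau_{\intalpha}(w)$. Since $a\tilde\alpha = \intalpha$ is integral, the map $w \mapsto \tau_{\intalpha}(w)^{-1} g\,\tau_{\intalpha}(w)$ on $\torus_S$ is exactly the conjugation morphism appearing in the dynamic definition of $P_{\tilde\alpha}$; by that definition, for $g \in P_{\tilde\alpha}$ it extends (uniquely) to a morphism $\AA^1_S \to Z_G(g_{\tilde\beta}) \subset G$, and after the reparametrization $w \mapsto w^a$ this produces the required $\phi_U: U_S = \AA^1_S \to G$. (The $a$-th power reparametrization is harmless: $\AA^1 \to \AA^1$, $w \mapsto w^a$ is dominant, so if $\tau_{\intalpha}(w)^{-1}g\tau_{\intalpha}(w)$ extends over $w = 0$ then so does the pulled-back expression in the $U$-coordinate, and the extension is unique by separatedness of $G$.) Then I would check the $\bmu_a$-equivariance constraint on $\phi_V$: since $\phi_V$ is constant and $g \in Z_G(g_{\tilde\beta})$ with $g_{\tilde\beta} = \tau_{\intalpha}(e^{2\pi i/a})^{-1}$ a generator of the relevant $\bmu_a$-image of $\tau_{\intalpha}$, conjugation of $g$ by $\tau_{\intalpha}(\mu)$ for $\mu \in \bmu_a$ is trivial — this is precisely where $P_{\tilde\alpha} \subset Z_G(g_{\tilde\beta})$ is used. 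So $(\phi_U,\phi_V)$ is a legitimate automorphism of $G\times_T\T$.

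It remains to verify that $g \mapsto (\phi_U,\phi_V)$ is an injective group homomorphism. Composition of automorphisms corresponds to multiplying the $\phi_V$'s (constants multiply to constants) and multiplying the $\phi_U$'s; since the extension $\phi_U$ is uniquely determined by $\phi_V \equiv g$, the product of the automorphisms attached to $g_1, g_2$ is the automorphism attached to $g_1 g_2$, giving the homomorphism property; injectivity is immediate since $\phi_V \equiv g$ recovers $g$. I expect the only genuinely delicate point to be bookkeeping the $a$-th power reparametrization between the $U$- and $\torus$-coordinates and confirming that the extension property defining $P_{\tilde\alpha}$ in $\torus_{S'}$-coordinates transfers correctly to an extension over the orbifold chart $U_S$; everything else is a direct translation of the gluing description \eqref{eq:coordinateiso}--\eqref{eq:bmu-equivariance} and the dynamic definition of $P_{\tilde\alpha}$. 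One should also note the uniqueness of the extension (stated after the definition of $P_{\tilde\alpha}$) is what makes the assignment well-defined, so the homomorphism is canonical and independent of choices.
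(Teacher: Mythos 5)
Your proof follows the same strategy as the paper's: use the dynamic definition of $P_{\tilde\alpha}$ to extend $w \mapsto \tau_{\intalpha}(w)^{-1}g\,\tau_{\intalpha}(w)$ from $\torus_S$ to a morphism $\phi_U'$ on $\AA^1_S$, then produce $\phi_U$ on $U_S$. However, the step ``after the reparametrization $w\mapsto w^a$ this produces the required $\phi_U$'' with the parenthetical appeal to dominance and separatedness is mis-reasoned. Dominance and separatedness of $G$ only give \emph{uniqueness} of an extension over $\CC^*$-dense opens; they do not by themselves produce a morphism $\phi_U: U_S \to G$ with $\phi_U\circ\kappa_U = \phi_U'$, because $\kappa_U$ is an $a$-to-$1$ map, not an open immersion, and descent through it is a nontrivial step that fails without $\bmu_a$-invariance.

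The ingredient that actually makes this descent work is that $\phi_U'$ is invariant under the $\bmu_a$-action on $\AA^1_S$, which follows from $g\in Z_G(g_{\tilde\beta})$ (so $g$ commutes with $\tau_{\intalpha}(\bmu_a)$); a $\bmu_a$-invariant morphism from $\AA^1_S$ to the affine scheme $G$ then descends through the quotient map $\AA^1 \to \AA^1/\bmu_a$. You do verify exactly this condition, but you frame it as an independent ``$\bmu_a$-equivariance constraint on $\phi_V$'' (via \eqref{eq:bmu-equivariance}) to be checked \emph{after} $\phi_U$ is already in hand, rather than recognizing it as the hypothesis that lets $\phi_U'$ factor through $\kappa_U$. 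The two conditions are equivalent ($\phi_U'(\mu w)=\phi_U'(w)$ unwinds to $\tau_{\intalpha}(\mu)^{-1}g\,\tau_{\intalpha}(\mu)=g$, which is precisely the equivariance of the constant $\phi_V\equiv g$), so the proof can be repaired by simply reordering: observe $\bmu_a$-invariance of $\phi_U'$ first (that is the paper's one-sentence argument), then conclude the descent, and treat the $\phi_V$-equivariance remark as a rephrasing rather than a separate check. The group-homomorphism and injectivity observations at the end are fine and are a welcome addition — the paper leaves them implicit.
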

\begin{proof}
By definition, the morphism $\tau_{\intalpha}^{-1}(t)g\tau_{\intalpha}(t): \torus_S \rightarrow Z_G(g_{\tilde \beta})$ has a unique extension to $\AA^1_S$. Call this extension $\phi_U'$. Since $g \in Z_G(g_{\tilde \beta})$, the morphism $\phi_U'$ is invariant under the action of $\bmu_a$ on $\AA^1_S$, hence factors through a morphism $\phi_U:U_S \rightarrow Z_G(g_{\tilde \beta})$. 
\end{proof}

The remainder of this subsection is dedicated to the proof of the following lemma.
\begin{lemma}\label{lem:fixed-by-a}
If a quasimap $((\wP{a})_S, \P, \sigma)$ is in $F_\beta$, then it is fixed by the $a^{th}$ power of the action \eqref{eq:action1}.
\end{lemma}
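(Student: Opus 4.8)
The plan is to trace through what it means for a quasimap $q = ((\wP{a})_S, \P, \sigma)$ to lie in $F_\beta$, using the definition of the $\CC^*_\lambda$-fixed locus given in Section~\ref{sec:lambda-action}. By definition, $q$ is fixed means there exists some $d \geq 1$ such that $\CC^*_{\lambda^{1/d}}$ fixes $q$; equivalently, $q: (\wP{a})_S \to [Z/G]$ factors through $[(\wP{a})_S / \CC^*_{\lambda^d}]$, where $\CC^*_{\lambda^d}$ acts on $\wP{a}$ via \eqref{eq:action1}. I want to show that in fact $q$ is fixed by the $a^{th}$ power of \eqref{eq:action1}, i.e.\ we may take $d = a$.

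First I would reduce to the case where $S$ is the spectrum of an algebraically closed field: whether $q$ is fixed by a given torus action is a condition that can be checked on geometric points (the fixed locus is a closed substack), and the integer $a$ is determined by the class $\beta$ uniformly over $S$, so it suffices to treat $k$-points. By Remark~\ref{rmk:k-quasimaps}, any $k$-point of $QG_\beta$ (via the restriction-to-torus description $QG_\beta(X\sslash G)$, after passing to the associated $T$-structure) can be represented in the form $((\wP{a})_k, \P_{\intalpha}, \sigma)$, where $\intalpha \in \Hom(\chi(T), \ZZ)$ with $\rchi(\intalpha/a) = \bd(\beta)$; more directly, since $a$ is the integer attached to $\beta$, Remark~\ref{rmk:coord2} lets us present $q$ with a transition function $\tau: \torus_k \to G$. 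The $a^{th}$ power of the action \eqref{eq:action1} is $\lambda \cdot [u:v] = [\lambda^a u : v]$, and on the chart $U = \AA^1_u$ this is the substitution $u \mapsto \lambda^a u$, while on $V = \AA^1_v$ it is $v \mapsto \lambda^{-1} v$ (so that $[u^a : v]$ scales correctly); but note that on the \'etale-degree-$a$ chart $V$, the relevant reparametrization of the uniformizer $w$ with $w^a = u^{-1}$... actually the cleanest formulation: $\lambda^a$ acting on $u$ is the same as $\lambda$ acting on $w = u^{-1/a}$-type coordinate near $\star$, so the $a^{th}$ power action is precisely the one that acts trivially on the orbifold structure at $\star$ and lifts to the cover.

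The key computational step: given that $q$ is $\CC^*_{\lambda^{1/d}}$-fixed for some $d$, I would use the action formula \eqref{eq:action3}, namely $\lambda \cdot (\wP{a}\times S, \P, \sigma) = (\wP{a}\times S, (\lambda^{-1})^*\P, \sigma\circ\lambda^{-1})$, together with the isomorphism criterion \eqref{eq:coordinateiso}. Fixedness under $\CC^*_{\lambda^{1/d}}$ gives, for each $\lambda^{1/d}$, functions $\phi_U^{\lambda}, \phi_V^{\lambda}$ relating the pulled-back transition function to the original. Writing everything in the coordinates of \eqref{eq:global} and \eqref{eq:coord1}, the pullback of $\tau$ under the scaling differs from $\tau$ by the conjugating cocharacter $\tau_{\intalpha}$ evaluated at a power of $\lambda^{1/d}$, and the twisted $\bmu_a$-equivariance \eqref{eq:bmu-equivariance} of $\sigma_V$ forces the scaling parameter to interact with $\bmu_a$ through $\tau_{\intalpha}$. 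The point is that the section $\sigma_V$ near $\star$ has a well-defined expansion in the local coordinate, and the $\CC^*_\lambda$-weights appearing in this expansion are all integer multiples of $1/a$ (because the bundle $\P_{\intalpha}$ has monodromy of order dividing $a$ at $\star$ and the class is $\beta$); hence $\lambda^a$ acts on every coordinate by an integer power and the induced automorphism is trivial. So $q$ is fixed by $\lambda \mapsto \lambda^a$.

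\textbf{Main obstacle.} I expect the genuine difficulty to be the bookkeeping in the last step: making precise the claim that all $\CC^*_\lambda$-weights occurring in a class-$\beta$ quasimap near $\star$ lie in $\frac{1}{a}\ZZ$, and that the $a^{th}$ power therefore acts by honest integer weights so that the resulting automorphism \eqref{eq:coordinateiso} is the identity rather than merely an automorphism. This requires carefully combining (i) the description \eqref{eq:global} of $\P_{\intalpha}\times_G X$ as a global quotient, (ii) the monodromy/twisting relation \eqref{eq:bmu-equivariance}, and (iii) the fact from Remark~\ref{rmk:a} that $a$ is minimal with $a\tilde\alpha \in \Hom(\chi(T),\ZZ)$, so that no smaller power of the action already fixes $q$ but the $a$th power does. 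Everything else is a direct unwinding of the definitions of the $\CC^*_\lambda$-action \eqref{eq:action3} and the fixed locus.
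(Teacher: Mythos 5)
Your opening reduction to $S = \Spec(\bar k)$ is the gap. You argue that because the $\CC^*_{\lambda^{1/a}}$-fixed locus is a closed substack of $QG_\beta$, it suffices to check the required factorization on geometric points of $S$; but an $S$-point lands in a closed substack if and only if its scheme-theoretic preimage is all of $S$, which is strictly stronger than all geometric points of $S$ mapping into the substack — nilpotents in $S$ are invisible pointwise, and $F_\beta$ is not a priori reduced. This distinction is exactly what drives the paper's proof. The divisibility step (any $n$ for which $q$ is fixed by $\CC^*_{\lambda^{1/n}}$ must satisfy $a\mid n$) \emph{does} legitimately reduce to a geometric point of $S$, because the conclusion is a statement about the integer $n$ rather than about $q$ over $S$; this is broadly what your weight/isotropy sketch is reaching for, and the paper carries it out by embedding $\bmu_a$ into the isotropy of $[1:0]$ in the auxiliary stack $\fX_n$ and using representability of $q$ to force $\bmu_a\hookrightarrow\bmu_d$.

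However, after knowing $a\mid n$ there remains the genuinely hard step, which your proposal has no replacement for: given that $q$ is fixed by $\CC^*_{\lambda^{1/\ell a}}$ over an arbitrary (possibly non-reduced, disconnected) $S$, show that $q'\colon \fX_{\ell a}\times S\to [X/G]$ factors through the rigidification $\fX_{\ell a}\times S\to \fX_a\times S$ along $1\times\bmu_\ell$. The paper handles this by verifying that the substack of $\fX_{\ell a}\times S$ where the induced map on automorphism groups kills $1\times\bmu_\ell$ is closed, is supported on every connected component (a geometric-point check), and — this is the crucial ingredient — is \emph{open}, via Lemma~\ref{lem:open} and a Noetherian induction. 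Openness is what upgrades the pointwise observation to the scheme-theoretic factorization needed over general $S$. Your ``main obstacle'' paragraph locates the difficulty in the weight bookkeeping, but that bookkeeping only gives the divisibility; the actual obstacle is the descent over non-reduced base, and a pure weight argument at geometric points cannot produce it.
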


\begin{proof}
We first show that if $q: \wP{a}\times S \rightarrow [X/G]$ is fixed by an $n^{th}$ power of the action \eqref{eq:action1}, then $a|n$. It suffices to show this for $S = \Spec(\bar k)$ where $\bar k$ is an algebraically closed field. We use $(\cdot, \cdot)$ to denote the gcd of two integers; let $d = (a, n)$. Furthermore write $d = d_a d_n$ with $d_a, d_n \in \mathbb{N}$ such that 
\begin{equation}\label{eq:fixed-by-a1}(d_a, d_n) = (d_a, n/d) = (d_n, a/d) = 1
\end{equation}(such a factorization is not unique).

If $q$ is invariant under the $n^{th}$-power of the action \eqref{eq:action1}, then it factors through the quotient $\pi: \wP{a} \rightarrow \fX_n$, where
\[
\wP{a} = [(\CC^2\setminus\{0\} )\big/_{(a \; 1)} \CC^*] \quad \text{and} \quad \fX_n = [(\CC^2\setminus\{0\}) \big/_{\left(\begin{smallmatrix}
a&1\\
n&0\\
\end{smallmatrix}\right)} (\CC^*)^2 ]
\]
and the subscripts indicate the weights (charge matrix) for each quotient. The group $\CC^* \times \bmu_d$ embeds into the isotropy group of $[1:0]$ in $\fX_n$ via the map
\begin{align}\label{eq:fixed-by-a2}
\CC^* \times \bmu_d \simeq \CC^* \times \bmu_{d_a} \times \bmu_{d_n} &\longrightarrow \CC^* \times \CC^*\\
(s, \xi_1, \xi_2) &\mapsto (\xi_2s^{n/d}, \xi_1s^{-a/d}).
\end{align}
The image of this embedding contains the subgroup $\bmu_a \times 1$ (the image under $\pi$ of the isotropy group at $[1:0]$ in $\wP{a}$). To see this, write $\mu_a \simeq \mu_{d_n} \times \mu_{a/d_n}$ using \eqref{eq:fixed-by-a1}. If $\mu \in \bmu_a$ equals $(\mu_1, \mu_2)$ under this factorization, then $(\mu, 1)$ is equal to the image of $(\sqrt[n/d]{\mu_2}, (\sqrt[n/d]{\mu_2})^{a/d}, \mu_1)$ under the embedding \eqref{eq:fixed-by-a2}. Here $\sqrt[n/d]{\mu_2}$ denotes the unique element of $\bmu_{a/d_n}$ whose $n/d^{th}$ power is $\mu_2$.  

We investigate the map induced by $q$ on the isotropy group $\bmu_a$ at $[1:0]$ in $\wP{a}$. Since $q$ is representable and sends $[1:0]$ to the Deligne-Mumford locus of $[X/G]$, this is an embedding $\bmu_a \rightarrow H$ for some finite group $H \subset G$. But the factorization through $\pi$ implies that $\bmu_a \rightarrow H$ factors through $\CC^* \times \bmu_d$, so we must have an embedding $\bmu_a \rightarrow \bmu_d$. This implies $a|d$, so $d=a$ and $a|n$ as desired.

For the second (last) part of the proof, we let $S$ be arbitrary and we assume that $q$ is fixed by the $\ell a^{th}$ power of the action \eqref{eq:action1}. By assumption $q$ factors through a morphism $q': \fX_{\ell a}\times S \rightarrow [X/G]$; we will show that $q'$ factors through the rigidification
\[
\fX_{\ell a}\times S \rightarrow \fX_a \times S
\]
along the subgroupscheme $1 \times \bmu_\ell \subset (\CC^*)^2$. Let $\fX = \fX_{\ell a}\times S$. We use \cite[Thm~5.1.5(2)]{ACV}: for a test scheme $R$, we say that $\xi \in \fX(R)$ has property $P$ if the map $\Aut_R(\xi) \rightarrow \Aut_R(q'(\xi))$ induced by $q'$ contains $1\times \bmu_\ell$ in its kernel. This defines a presheaf on $\fX$ which we must show is represented by $\fX$ itself. 

First we show that the sheaf of objects with property $P$ is represented by a closed substack of $\fX$. Let $I([X/G])$ be the inertia stack of $[X/G]$ and define $\fY$ to be the fiber product of the diagram
\[
\begin{tikzcd}
\fY \arrow[r] \arrow[d] & q'^*(I([X/G])) \arrow[d, "\Delta"] \\
(\bmu_{\ell})_\fX \arrow[r] \arrow[r, "{(q', e)}"] & q'^*(I([X/G]))\times_{\fX} q'^*(I([X/G]))
\end{tikzcd}
\]
where $\Delta$ is the diagonal and $e$ is the projection to $\fX$ followed by the identity section of $q'^*(I([X/G])).$ Since $I([X/G]) \rightarrow [X/G]$ is separated and representable, $\Delta$ and hence $\fY \rightarrow (\bmu_\ell)_\fX$ are closed embeddings. But $(\bmu_\ell)_\fX= \bmu_\ell \times \fX$ is a disjoint union of $\ell$ copies of $\fX$, so this embedding defines $\ell$ closed substacks of $\fX$. The intersection of these $\ell$ substacks is a closed substack that represents our presheaf.

Next we check that the substack of $\fX$ of objects with property $P$ is supported on every connected component of $\fX$. For this, let $x \in \fX(\bar k)$ be a geometric point  whose image in $(\fX_{\ell a})(\bar k)$ is $[1:0]$. If $n:=\ell a$ then we can set $d_a=1$ and $d_n=d=a$ in \eqref{eq:fixed-by-a1}, and \eqref{eq:fixed-by-a2} becomes an embedding
\begin{align*}
\CC^* \times \bmu_a &\rightarrow \CC^* \times \CC^*\\
(s, \xi) &\mapsto (\xi s^\ell, s^{-1})
\end{align*}
into the isotropy group of $x$. Observe that the image of this embedding contains $(1, \mu)$ for $\mu \in \bmu_\ell$ by setting $s = \mu^{-1}$ and $\xi=1$---in other words, $1 \times \bmu_\ell$ is a subgroup of $\CC^* \times 1 \subset \CC^* \times \bmu_a$ in $\Aut_{\bar k}(x)$. Since $q'$ maps $x$ to the Deligne-Mumford locus, the subgroup $\CC^* \times 1$ must be in the kernel of $\Aut_{\bar{k}}(x) \rightarrow \Aut_{\bar k}(q'(x))$, hence $1 \times \bmu_\ell$ is in the kernel.

By Lemma \ref{lem:open} below and a standard Noetherian induction argument, the substack of objects with property $P$ is open. This completes the proof.

\end{proof}

The proof of the following lemma was explained to me by Martin Olsson.
\begin{lemma}\label{lem:open}
Let $f: X \rightarrow Y$ be a morphism of group schemes over a Noetherian scheme $R$ with $X = R \times G$ for a finite group $G$. If $r \in R$ is a closed point such that the fiber $f_r: X_r \rightarrow Y_r$ is the trivial group homomorphism, then there is a Zariski-open subset $U \subset R$ such that $f|_U$ is also trivial. 
\end{lemma}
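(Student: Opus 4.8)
The plan is to reduce the statement to a question about equalizers of sections of $Y\to R$. Since $G$ is finite and we work over $\CC$, the group scheme $X=R\times G$ is the disjoint union $\bigsqcup_{g\in G}R_g$ of $|G|$ copies of $R$, so a morphism of group schemes $f\colon X\to Y$ over $R$ is the same data as a tuple of sections $s_g\colon R\to Y$ of $Y\to R$ (the restriction of $f$ to $R_g$) satisfying $s_e=e_Y$ and $s_{gh}=s_g\cdot s_h$, where $e_Y$ is the identity section. The fiber $f_{r'}\colon G\to Y_{r'}$ over $r'\in R$ is the trivial homomorphism exactly when $s_g(r')=e_Y(r')$ for every $g\in G$. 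Hence it suffices to produce, for each $g$, an open subset $U_g\subseteq R$ containing $r$ on which $s_g$ and $e_Y$ agree; then $U:=\bigcap_{g\in G}U_g$ is open (a finite intersection), contains $r$, and $f|_U$ is trivial.

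So the task is to show that the equalizer $U_g$ of the two sections $s_g,e_Y\colon R\to Y$, i.e.\ the fiber product $R\times_{(s_g,e_Y),\,Y\times_RY,\,\Delta_{Y/R}}Y$, is open and contains $r$. It contains $r$ because $f_r$ is trivial. For openness, the clean case is when $Y\to R$ is unramified: then $\Delta_{Y/R}$ is an open immersion, so $U_g=(s_g,e_Y)^{-1}\bigl(\Delta_{Y/R}(Y)\bigr)$ is open in $R$, and we are done immediately. This is the situation in the applications, where $Y$ is pulled back from the inertia stack of a Deligne--Mumford stack and hence has unramified diagonal over its base. In general over $\CC$ one can dispense with the unramifiedness hypothesis by an infinitesimal argument: the trivial homomorphism $G\to Y_r$ has no nonzero first-order deformations, since these are classified by $H^1\bigl(G,\operatorname{Lie}(Y_r)\bigr)=\Hom\bigl(G,\operatorname{Lie}(Y_r)\bigr)=0$ (as $G$ is finite and $\operatorname{Lie}(Y_r)$ is a $\CC$-vector space), and there are no obstructions since $H^2(G,-)$ vanishes likewise; thus $f$ is trivial over $\Spec\widehat{\mathcal O}_{R,r}$. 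Since each $U_g\hookrightarrow R$ is a locally closed immersion (the diagonal of a morphism of schemes is always an immersion), the locus $\bigcap_g U_g$ where $f$ is trivial is locally closed, cut out near $r$ by a coherent ideal that vanishes after completion at $r$, hence (using that $R$ is Noetherian) vanishes in a Zariski neighborhood of $r$.

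The only real content is the openness of $U_g$ — equivalently, that a family of homomorphisms out of a fixed finite group which is trivial at one point is trivial on a neighborhood of that point — and this is where the hypotheses bite: it genuinely fails in positive characteristic (for instance $\ZZ/p\times\AA^1\to\GG_a$ sending the generator to the coordinate $t$ is trivial only over the origin), so either the unramifiedness of $Y\to R$ or the characteristic-zero vanishing $H^{\ge 1}(G,-)=0$ is essential. I expect the main obstacle, if one wants the fully general argument, to be setting up the deformation theory of homomorphisms of group schemes over a not-necessarily-smooth $Y$ precisely enough to invoke the group-cohomology vanishing; the unramified route sidesteps this entirely and is almost certainly all that is needed for the applications in this paper.
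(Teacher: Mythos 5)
Your overall strategy---formal local triviality at $r$, then propagate to a Zariski neighborhood using coherence and the faithful flatness / injectivity of completion for a Noetherian local ring---is exactly the skeleton of the paper's proof. However there are two gaps as written.

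First, the unramified shortcut does not cover the application in this paper. In the proof of Lemma \ref{lem:fixed-by-a}, where Lemma \ref{lem:open} is invoked, the target group scheme $Y$ is pulled back from the inertia of $[X/G]$, and $[X/G]$ is a quotient of an affine variety by a (typically positive-dimensional) reductive group, hence not Deligne--Mumford. Quasimaps have basepoints mapping outside the DM locus $X\sslash G$, so $Y\rightarrow R$ can have positive-dimensional fibers. Its diagonal is therefore only an immersion, not an open immersion, and the ``clean case'' does not apply. Second, the general argument is where all the content lies, and your group-cohomology sketch is---as you note yourself---not fully set up: one needs a deformation theory of homomorphisms into a family of group schemes that need not be smooth over $R$, so identifying first-order deformations with $H^1(G,\operatorname{Lie}(Y_r))$ and obstructions with $H^2$ requires justification that you do not supply.

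The paper sidesteps the deformation-theoretic machinery by a direct induction on the Artinian quotients $\OO_{R,r}/\fm^i$. Because $X\rightarrow R$ is a product with a finite constant group, the restriction $f_i$ of $f$ to $\Spec(\OO_{R,r}/\fm^i)$ is a group homomorphism $G\rightarrow Y(\OO_{R,r}/\fm^i)$ on global sections. If $f_{i-1}$ is trivial, then $f_i$ factors through $\ker\bigl(Y(\OO_{R,r}/\fm^i)\rightarrow Y(\OO_{R,r}/\fm^{i-1})\bigr)$, which is naturally a module over the residue field $\OO_{R,r}/\fm=\CC$ and hence torsion-free; since $G$ is finite, $f_i$ is trivial. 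This is precisely the ``no torsion in characteristic zero'' principle you want to invoke via $H^1(G,-)=H^2(G,-)=0$, but it is carried out directly on groups of points, with no need to identify the kernel with a Lie algebra or to control higher-order obstructions, so it works without any smoothness hypothesis on $Y\rightarrow R$.
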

\begin{proof}
We claim that the restriction of $f$ to the completed local ring $\Spec(\hat\OO_{R, r})$ is trivial. Granting this, since $\OO_{R, r}$ embeds in its completion by \cite[Tag~00IP]{tag}, we see that the restriction of $f$ to $\Spec(\OO_{R, r})$ is also trivial, and hence $f$ is trivial in a neighborhood of $r$ as claimed.

To prove the claim, let $\fm$ be the maximal ideal of $\OO_{R, r}$. Since $\hat \OO_{R, r} = \displaystyle\lim_{\longleftarrow} \OO_{R, r}/\fm^i$ it suffices to show that the restriction of $f$ to each $\Spec(\OO_{R, r}/\fm^i)$ is trivial, and since $X \rightarrow R$ is a product it suffices to check global sections. By assumption this is true when $i=1$. In general there is a commuting diagram of groups
\[
\begin{tikzcd}
X(\Spec(\OO_{R, r}/\fm^i)) \arrow[d] \arrow[r, "f_i"] & Y(\Spec(\OO_{R, r}/\fm^i)) \arrow[d, "\pi"] \\
X(\Spec(\OO_{R, r}/\fm^{i-1})) \arrow[r, "f_{i-1}"] & Y(\Spec(\OO_{R, r}/\fm^{i-1}))
\end{tikzcd}
\]
where $f_i$ and $f_{i-1}$ are restrictions of $f$. If by induction we assume $f_{i-1}$ is the trivial homomorphism, then $f_i$ factors through the kernel of $\pi$. This kernel is naturally a module for the vector space $\OO_{R, r}/\fm$, and hence as a group it has no nontrivial elements of finite order. Since $X \rightarrow R$ is finite, $f_i$ must also be trivial.
\end{proof}

\subsection{The abelian case}
The goal of this section is to describe the moduli spaces $F_{\tilde \beta}(X\sslash T)$ and their universal families; however, we begin with two results that are not special to the abelian setting.

\begin{lemma}\label{lem:extend}
Let $\cX$ and $\cY$ be algebraic stacks over a scheme $\mathcal{B}$ such that $\cY\to \mathcal{B}$ has finite diagonal. Any two morphisms $f,g:\cX \times \AA^1_{\CC}\rightarrow \cY$ over $\mathcal{B}$ that agree on $\cX \times (\AA^1_{\CC}\setminus\{0\})$ (via a given 2-morphism) agree on all of $\cX \times \AA^1_{\CC}$. Moreover, if $f$ and $g$ are representable then the 2-morphism between the morphisms $f,g:\cX \times \AA^1_{\CC}\rightarrow \cY$ restricts to the given 2-morphism between the restrictions $f,g:\cX \times (\AA^1_{\CC}\setminus\{0\}) \to \cY.$
\end{lemma}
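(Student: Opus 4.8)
The plan is to interpret a $2$-isomorphism $f\Rightarrow g$ as a section of the equalizer morphism and to extend sections across the divisor $\cX\times\{0\}$, using that this morphism is finite and unramified.

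Write $P:=\cX\times\AA^1_\CC$ and $U:=\cX\times(\AA^1_\CC\setminus\{0\})$, and form the equalizer $\mathcal E:=P\times_{(f,g),\,\cY\times_{\mathcal B}\cY,\,\Delta_{\cY/\mathcal B}}\cY$. Since diagonals of algebraic stacks are representable, $\mathcal E\to P$ is representable, and since $\Delta_{\cY/\mathcal B}$ is finite, so is $\mathcal E\to P$. Over $\CC$, the finiteness of $\Delta_{\cY/\mathcal B}$ forces $\cY$ to be Deligne--Mumford (automorphism group schemes are finite, hence étale in characteristic $0$), so $\Delta_{\cY/\mathcal B}$, and therefore $\mathcal E\to P$, is also unramified. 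By the universal property of the equalizer, a section of $\mathcal E\to P$ over a stack $T\to P$ is exactly a $2$-isomorphism between the two induced morphisms $T\to\cY$; in particular the given $2$-morphism over $U$ is a section $s_0\colon U\to\mathcal E$ over $U$, and the lemma reduces to the claim that $s_0$ extends to a section over $P$, uniquely.

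Uniqueness is easy: $U$ is schematically dense in $P$ (multiplication by the coordinate $t$ is injective on $\OO_P$, as one checks on a smooth atlas), and $\mathcal E\to P$ is separated, so the locus where two sections $P\to\mathcal E$ agree is a closed substack of $P$ containing $U$, hence all of $P$. In particular any extension of $s_0$ restricts to $s_0$ over $U$ (a section of $\mathcal E\to P$ over $U$ \emph{is} a $2$-morphism, so equality of sections is equality of $2$-morphisms, data included), which takes care of the final assertion of the lemma once existence is known.

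For existence I would take $Z\subseteq\mathcal E$ to be the scheme-theoretic image of $s_0$; this is a well-defined closed substack since $s_0$, being a section of the separated morphism $\mathcal E\times_P U\to U$, is a locally closed immersion, and then $Z\times_P U=s_0(U)\cong U$. Thus $Z\to P$ is finite (closed in $\mathcal E$) and unramified (closed in something unramified over $P$), it restricts to an isomorphism over the schematically dense open $U$, and $Z\times_P U$ is schematically dense in $Z$. The crux, which I expect to be the main obstacle, is to deduce that $Z\to P$ is an isomorphism. I would prove this by reducing---through a smooth atlas of $\cX$ and an étale cover of $P$ over which the finite unramified stack $\mathcal E$ splits as a finite disjoint union of closed immersions into the base---to the commutative algebra statement: if $A$ is a $\CC$-algebra and $A[t]\subseteq B\subseteq A[t,t^{-1}]$ with $B$ module-finite over $A[t]$ and $\Omega_{B/A[t]}=0$, then $B=A[t]$. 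There, module-finiteness forces any element of $B$ involving a genuine negative power of $t$ to have nilpotent leading (negative-degree) coefficient, and then the vanishing of $\Omega_{B/A[t]}$ together with the $t$-torsion-freeness encoded in $B\subseteq A[t,t^{-1}]$ rules out such elements. Granting this, $Z\xrightarrow{\sim}P$ and the composite $P\xrightarrow{\sim}Z\hookrightarrow\mathcal E$ is the required section, which by the uniqueness above restricts to $s_0$ over $U$.
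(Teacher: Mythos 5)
Your proof follows the same skeleton as the paper's: form the equalizer of $f,g$ as a fiber product along the (finite) diagonal of $\cY$, interpret the given $2$-morphism as a section over $U := \cX\times(\AA^1_\CC\setminus\{0\})$, take the scheme-theoretic image, and reduce to the affine statement that a ring $B$ with $A[t]\subseteq B\subseteq A[t,t^{-1}]$ and $B$ module-finite over $A[t]$ must equal $A[t]$. The substantive difference is in how you close that last algebraic step, and it is a genuine improvement. The paper argues that a negative-degree Laurent polynomial in $B$, raised to powers, yields elements of arbitrarily negative degree, contradicting finiteness; but this only works when the leading negative coefficient is not nilpotent. For instance with $A=\CC[x]/(x^2)$, the ring $B=A[t]+\CC\, xt^{-1}$ is module-finite over $A[t]$, sits in $A[t,t^{-1}]$, and $(xt^{-1})^2=0$, so powers never escape degree $-1$. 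You fill this gap by noting that over $\CC$ the finite-diagonal hypothesis forces $\cY$ to be Deligne--Mumford (Cartier), so the equalizer is not merely finite but also unramified over $\cX\times\AA^1_\CC$, giving $\Omega_{B/A[t]}=0$; one checks that in the example above $\Omega_{B/A[t]}\neq 0$, so it is excluded. (Your concluding sketch---nilpotent leading coefficient plus $\Omega_{B/A[t]}=0$ plus $t$-torsion-freeness rules out negative degrees---is a bit telegraphic; the cleanest way to finish is probably to use the \'etale-local structure of finite unramified morphisms as disjoint unions of closed immersions, combined with schematic density of $D(t)\subseteq\Spec B$, rather than manipulating differentials directly.) Your handling of the ``moreover'' clause also differs: you invoke uniqueness of sections via separatedness and schematic density, together with the identification $Z\times_P U=s_0(U)$, whereas the paper invokes representability of $(f,g)$ to reduce the relevant Hom-category to a set. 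Both routes work.
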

\begin{proof}
Let $f,g:\cX \times \AA^1_{\CC}\rightarrow \cY$ be the given morphisms. The assumptions imply that we have a commuting diagram as below with the square fibered.
\[
\begin{tikzcd}
& Eq \arrow[r] \arrow[d]&\cY \arrow[d, "\Delta"]\\
\cX \times (\AA^1_{\CC}\setminus\{0\}) \arrow[ur] \arrow[r, hook] & \cX \times \AA^1_{\CC} \arrow[r, "{(f,g)}"] & \cY \times_\mathcal{B} \cY
\end{tikzcd}
\]
Let $\cZ \subset Eq$ be the scheme-theoretic image of $\cX \times (\AA^1_{\CC}\setminus\{0\}) \to Eq$, and let $h:\cZ \to \cX\times \AA^1_{\CC}$ be the projection (observe that $h$ is finite). To find a 2-morphism between $f$ and $g$, it is enough to show that $h$ is an isomorphism.

Since formation of $\cZ$ commutes with flat pullback (using \cite[Tags~0CMK, 050Y]{tag}), we may assume $\cX$ is an affine scheme $\Spec(A)$. In this case (since $h$ is finite) we know $\cZ=\Spec(B)$ for $B$ a finite $A[t]$-module. We have a commuting diagram
\[
\begin{tikzcd}
& B\arrow[dl, hook] \\
A[t, t^{-1}]  \arrow[r, hookleftarrow]&  A[t] \arrow[u, "h^\#"']
\end{tikzcd}
\]
where $B \to A[t, t^{-1}]$ is injective because it corresponds to a scheme-theoretic closure. Hence the ring map $h^\#$ determined by $h$ is also injective. To see that it is surjectve, suppose for contradiction that $B$ contains an element of $A[t, t^{-1}]$ of negative degree. Then (taking powers of this Laurent polynomial) $B$ has elements of arbitrarily negative degrees. These cannot be generated over $A[t]$ by any finite subset of $A[t, t^{-1}]$. This completes the proof that $h$ is an isomorphism.

To see that the 2-morphism extends, observe that the 2-morphism between $f$ and $g$ (or their restrictions to $\AA^1_{\CC} \setminus \{0\}$) is part of the data of the induced map to $Eq$. So our problem amounts to showing that the diagram
\[
\begin{tikzcd}
& \cZ \\
\cX \times (\AA^1_{\CC}\setminus\{0\}) \arrow[ur]\arrow[r, hook]& \cX \times \AA^1_{\CC} \arrow[u, "h^{-1}"']
\end{tikzcd}
\]
strictly commutes. This is true when $f$ and $g$ are representable because the diagonal $(f,g)$ is also representable, so both $\cZ$ and $\cX \times (\AA^1_{\CC}\setminus\{0\})$ are representable over $\cY \times \cY$, and the category $\Hom_{\cY\times \cY}(\cX\times (\AA^1_{\CC}\setminus\{0\}, \cZ)$ is equivalent to a set.
\end{proof}

 The following lemma lets us determine quasimaps from just some of their data. To unpack the statement, it is helpful to note that for a principal $G$-bundle $\P$ on a Deligne-Mumford stack $\cB$, the category of sections of the representable morphism $\P \times_G X \to \cB$ is equivalent to a set. That is, if an 2-morphism exists between two sections, it is unique.

\begin{lemma}\label{lem:factors}
Let $q=((\wP{a})_S, \P, \sigma)$ be a quasimap to $[X/G]$.
\begin{enumerate}
\item The section $\sigma: (\wP{a})_S \to \P \times_G X$ is completely determined by its restriction to $[V/\bmu_a] \times S$. In particular,
if $\P = \P_\tau$ for some $\tau$, then $q$ is completely determined by the data $(\tau, \sigma_V)$.\label{lem:uniquemap}
\item If moreover $q$ is fixed, then $\sigma$ is completely determined by its restriction to $B\bmu_a \times S$. In fact, the restriction $q|_{[V/\bmu_a]\times S} \to [X/G]$ factors through the restriction $ev_\star(q): B\bmu_a \times S \to [X/G]$.
\end{enumerate}
\end{lemma}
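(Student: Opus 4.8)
The plan is to work locally in the coordinates introduced in Section~3.2, exploiting the gluing description of $\P_\tau$ and the pair of local maps $(\sigma_U,\sigma_V)$ satisfying the cocycle relation \eqref{eq:coord1}. For part (1), recall that a section of $\P\times_G X$ is the same data as a pair $\sigma_U\colon U_S\to X$, $\sigma_V\colon V_S\to X$ with $\tau\cdot(\sigma_U\circ\kappa_U)=\sigma_V\circ\kappa_V$ on $\torus_S$. The key point is that the right-hand map $\kappa_V\colon \torus_S\to V_S$ is dominant (indeed surjective and \'etale of degree $a$), and $\kappa_U$ is a power map into $U_S=\AA^1_S$. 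Since $\sigma_V$ is defined on all of $V_S\simeq\AA^1_S$, the relation \eqref{eq:coord1} determines $\sigma_U\circ\kappa_U$ on $\torus_S$, hence determines $\sigma_U$ on the dense open $\kappa_U(\torus_S)=\torus_S\subset U_S$; because $U_S$ is reduced (even a scheme) and $X$ is separated, a morphism out of $U_S$ is determined by its restriction to a dense open, so $\sigma_U$ is determined, and therefore so is $\sigma$. Restriction of $\sigma$ to $[V/\bmu_a]\times S$ is precisely the data of the $\bmu_a$-twisted-equivariant map $\sigma_V$ (via \eqref{eq:bmu-equivariance}), so this proves the first sentence; when $\P=\P_\tau$, the pair $(\tau,\sigma_V)$ then recovers $(\tau,\sigma_U,\sigma_V)$ and hence $q$.

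For part (2), suppose $q$ is fixed, so by Lemma~\ref{lem:fixed-by-a} it is fixed by the $a$th power of the action \eqref{eq:action1}; equivalently $q\colon(\wP{a})_S\to[X/G]$ factors through $[(\wP{a})_S/\CC^*_{\lambda^a}]$. Concretely, after trivializing as $\P_\tau$, the factorization means that on $V_S=\AA^1_S$ the map $\sigma_V$ is invariant under the residual $\CC^*$-scaling of the $v$-coordinate (the action \eqref{eq:action1} fixes $[1:0]$ and scales $v$), i.e. $\sigma_V$ is constant along the $\AA^1$-direction, hence factors through the origin $\{0\}\times S\hookrightarrow V_S$, which is exactly $B\bmu_a\times S$ after taking the $\bmu_a$-quotient. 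In other words $\sigma_V$ is pulled back from $ev_\star(q)\colon B\bmu_a\times S\to[X/G]$. Combined with part~(1), $\sigma$ (and the factorization of $q|_{[V/\bmu_a]\times S}$) is determined by $ev_\star(q)$.

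\textbf{Main obstacle.} The delicate point is not the combinatorics of the coordinates but making the ``invariance implies constant along $\AA^1$'' step precise at the level of stacks and for an arbitrary base $S$: one must check that the $\CC^*_{\lambda^a}$-fixedness in the sense of \cite{CKL17}/\cite{AHR} really does force the local section $\sigma_V$ to be pulled back from the origin, rather than merely fixed up to isomorphism. Here I would use Lemma~\ref{lem:extend} (with $\cX=\{0\}\times S$-neighborhood data and $\AA^1$ the scaling direction) together with Lemma~\ref{lem:factors}(1) itself: the two maps $V_S\to[X/G]$ given by $\sigma_V$ and by its ``value at the origin'' agree on $\torus_S$ by the fixedness factorization, and agree as representable morphisms over the appropriate base, so Lemma~\ref{lem:extend} upgrades this to an equality on all of $V_S$. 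The only other care needed is the unique-2-morphism remark preceding the lemma statement, which guarantees that ``determined by'' is unambiguous; this is handled by the cited fact that sections of $\P\times_G X\to\cB$ form a set-like category, so no coherence bookkeeping is required.
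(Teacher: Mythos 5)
Your plan for part (1) is in the right spirit but the stated justification has a bug: you assert that $U_S$ is reduced, but $U_S = \AA^1\times S$ is \emph{not} reduced when $S$ is an arbitrary base scheme (and the lemma must hold over arbitrary, in particular non‑reduced, $S$ for the moduli‑theoretic applications). The correct fact is that $\torus_S \subset U_S$ is \emph{schematically} dense (the localization $R[t]\hookrightarrow R[t,t^{-1}]$ is injective for any ring $R$), so the equalizer argument still works; but as written the proof appeals to a false premise. The paper instead proves part (1) by applying Lemma~\ref{lem:extend} directly, which was established precisely so that no reducedness hypothesis is needed and so that it applies to sections of $\P\times_G X$ without first trivializing $\P$ (your coordinate argument tacitly assumes $\P=\P_\tau$ even for the first sentence of (1)). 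Once you replace ``reduced'' by ``schematically dense'' and handle general $\P$ (e.g.\ by invoking Lemma~\ref{lem:extend} on the restriction of the section to $U_S$), this part is fine.

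For part (2), you recognize the key obstacle and the need for Lemma~\ref{lem:extend}, but the sketch conflates equivariance with equality. Fixedness does \emph{not} give you two maps $V_S\to[X/G]$ that agree on $\torus_S$; it gives a 2-morphism (equivariance datum) between $q_V\circ pr_{23}$ and $q_V\circ m$ as maps $\CC^*_{\lambda^{1/a}}\times[V/\bmu_a]\times S\to[X/G]$. The paper's argument then observes that the $\CC^*$-action $m$ on $V_S$ extends to a multiplicative‑monoid action $\AA^1_{\lambda^{1/a}}\times[V/\bmu_a]\times S\to[V/\bmu_a]\times S$ (contracting to the origin at $\lambda=0$), that both composites factor through the separated $X\sslash G$, and then applies Lemma~\ref{lem:extend} on the $\AA^1_{\lambda^{1/a}}$‑direction. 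Restricting the resulting equality to the fiber $\lambda=0$ gives $q_V = q_V(0)$, i.e.\ $q_V$ is pulled back from $ev_\star(q)$. Your phrase ``$\sigma_V$ is invariant under the residual $\CC^*$-scaling, hence constant'' elides exactly this step; the passage from invariance to constancy is the non-trivial content, supplied by the monoid extension and Lemma~\ref{lem:extend}, and it would need to be made explicit.
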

\begin{proof}
To prove \eqref{lem:uniquemap}, let $[\Omega /\bmu_a] \subset \wP{a}$ denote the complement of $[0:1]$ and $[1:0]$ and note that $[\Omega /\bmu_a] \simeq \AA^1_{\CC}\setminus\{0\}$. Now apply Lemma \ref{lem:extend} with $\cX=\cB=S$ and $\cY = (\P\times_G X)|_{[\Omega/\bmu_a]\times S}:$ if two sections of $\P \times G X$ agree on $\AA^1_{\CC}\setminus\{0\} \simeq [\Omega/\bmu_a] \subset[V/\bmu_a]$, then they agree on $\AA^1_{\CC} \simeq U $.

To prove (2), note that by Lemma \ref{lem:fixed-by-a}, the quasimap $q$ defines a $\CC^*_{\lambda^{1/a}}$-equivariant morphism $[V/\bmu_a]\times S \xrightarrow{[1:v]} (\wP{a})_S \xrightarrow{q} [X/G]$ which we will call $q_V$. Define maps $pr_{23}, m: \CC^*_{\lambda^{1/a}} \times [V/\bmu_a] \times S \to [V/\bmu_a] \times S$ where $pr_{23}$ is given by projection and $m$ is the $\CC^*_{\lambda^{1/a}}$-action. The fact that $q_V$ is equivariant means that there is a natural transformation between the two maps $q_V \circ pr_{23}, q_V \circ m: \CC^*_{\lambda^{1/a}} \times [V/\bmu_a] \times S \to [X/G].$
The map $m$ extends to a morphism $m: \AA^1_{\lambda^{1/a}} \times [V/\bmu_a]$ given on $V$ by the rule $\lambda^{1/a} \cdot v = (\lambda^{1/a})^{-1}v$ (it may help to observe that $[(\lambda^{1/a})^au: v]= [u:(\lambda^{1/a})^{-1}v]$ in $\wP{a}$). So we again have two morphisms $q_V \circ pr_{23}, q_V \circ m: \AA^1_{\lambda^{1/a}} \times [V/\bmu_a] \times S \to [X/G],$ and moreover these factor through the separated substack $X\sslash G \subset [X/G]$. By Lemma \ref{lem:extend} we have $q_V \circ pr_{23} = q_V \circ m$ on the larger domain. Restricting to the fiber $\lambda=0$ we get $q_V=q_V(0)$ as desired.

\end{proof}

We can now begin to analyze $F_{\tilde \beta}(X\sslash T)$.
\begin{lemma}\label{lem:abelian}
The map $ev_\star: F_{\tilde \beta}(X\sslash T) \rightarrow I_\mu(X\sslash T)$ is a closed embedding, and it factors through the component  $(I_{\mu}(X\sslash T))_{g_{\tilde \beta}}.$
\end{lemma}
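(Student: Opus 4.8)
The plan is to use the explicit coordinate description of $\CC^*_\lambda$-fixed quasimaps together with Lemma \ref{lem:factors}(2) to pin down the image of $ev_\star$ and to exhibit $ev_\star$ as a closed embedding by constructing an inverse on its image. First I would recall from Section \ref{sec:defs} that, by Remark \ref{rmk:k-quasimaps}, every geometric point of $F_{\tilde \beta}(X\sslash T)$ is represented by a quasimap of the form $((\wP{a})_k, \P_{\intalpha}, \sigma)$ with $\intalpha = a\tilde\alpha$, and that $\P_{\intalpha} = \P_{\tau_{\intalpha}}$ is built from the cocharacter $\tau_{\intalpha}$ of $T$. The key computation is to identify the image of such a quasimap under $ev_\star$: restricting to $\star = B\bmu_a$, the $\bmu_a$-action on $V$ near $[1:0]$ is twisted by $\tau_{\intalpha}$ according to \eqref{eq:bmu-equivariance}, so the monodromy of the restricted map at the orbifold point is precisely $\tau_{\intalpha}(e^{2\pi i/a})^{-1} = g_{\tilde\beta}$. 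Hence $ev_\star$ lands in the component $(I_\mu(X\sslash T))_{g_{\tilde\beta}} = X^{g_{\tilde\beta}}\sslash T$ of the decomposition in Section \ref{sec:inertia1}.

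Next I would prove that $ev_\star$ is a closed embedding. Since $F_{\tilde\beta}(X\sslash T)$ is a closed substack of the proper (over the affine base) Deligne-Mumford stack $QG_{\tilde\beta}(X\sslash T)$ by Theorem \ref{thm:qg}, and $I_\mu(X\sslash T)$ is separated, the morphism $ev_\star$ is proper; so it suffices to show it is a monomorphism, i.e.\ fully faithful on $S$-points for every scheme $S$. This is exactly where Lemma \ref{lem:factors}(2) does the work: for a fixed quasimap $q$, the restriction $q|_{[V/\bmu_a]\times S}$ factors through $ev_\star(q)\colon B\bmu_a\times S \to [X/G]$, and by Lemma \ref{lem:factors}(1) the whole section $\sigma$ (equivalently $q$) is determined by its restriction to $[V/\bmu_a]\times S$. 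Therefore a fixed quasimap is completely recovered from $ev_\star(q)$ together with the combinatorial datum $\tilde\beta$ (which fixes $a$ and $\intalpha$ up to $W$, but the component of the fixed locus $F_{\tilde\beta}$ pins down $\intalpha$ itself). Spelling this out: given two fixed quasimaps $q, q'$ over $S$ of class $\tilde\beta$ with an isomorphism $ev_\star(q)\cong ev_\star(q')$ in $I_\mu(X\sslash T)$, this isomorphism extends uniquely to an isomorphism $q\cong q'$ by the factorization and the rigidity statement in Lemma \ref{lem:extend} (the category of sections of a principal bundle's associated fiber bundle is discrete). Conversely any isomorphism $q\cong q'$ restricts to one on $ev_\star$, so $ev_\star$ induces a bijection on isomorphism classes and on automorphisms over each $S$, i.e.\ it is a monomorphism. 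A proper monomorphism of Deligne-Mumford stacks is a closed immersion, giving the claim.

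The main obstacle I anticipate is making the ``$q$ is recovered from $ev_\star(q)$'' argument precise in families rather than just on geometric points: Lemma \ref{lem:factors} is stated for a fixed quasimap over a base $S$, but one must be careful that the local model $\P_{\intalpha}$ — and hence the identification of $ev_\star(q)$'s monodromy with $g_{\tilde\beta}$ — is available Zariski- or \'etale-locally on $S$, not just over algebraically closed fields (Remark \ref{rmk:coord2} and Remark \ref{rmk:k-quasimaps} are pointwise statements). I would handle this by working \'etale-locally on $F_{\tilde\beta}(X\sslash T)$, where Lemma \ref{lem:factors}(1) puts the quasimap in the form $(\tau_{\intalpha}, \sigma_V)$ with $\sigma_V$ the data of a map $B\bmu_a\times S \to [X/G]$ twisted by $\tau_{\intalpha}$ as in \eqref{eq:bmu-equivariance}; then $ev_\star$ simply extracts $\sigma_V$, and injectivity on objects and morphisms is immediate from the displayed formulas \eqref{eq:coord1} and \eqref{eq:coordinateiso} specialized to $\torus_S$ being the punctured disk, since a morphism of bundles that is the identity on $\star$ is the identity. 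Gluing these local statements, using that being a closed immersion is \'etale-local on the target, completes the proof.
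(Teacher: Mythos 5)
Your overall strategy — identify the component of the inertia stack the evaluation map lands in, then prove properness and monomorphism and conclude closed embedding — is a reasonable plan and shares the key ingredient with the paper, namely Lemma \ref{lem:factors}. The identification of the monodromy as $\tau_{\intalpha}(e^{2\pi i/a})^{-1}=g_{\tilde\beta}$ matches the paper's computation. However, your monomorphism step has a genuine gap that you partially flag but do not resolve.

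The issue is your claim that ``given two fixed quasimaps $q,q'$ over $S$ with an isomorphism $ev_\star(q)\cong ev_\star(q')$ in $I_\mu(X\sslash T)$, this isomorphism extends uniquely to an isomorphism $q\cong q'$ by the factorization and the rigidity statement in Lemma \ref{lem:extend}.'' That does not follow directly from the stated lemmas. An arrow in $I_\mu(X\sslash T)(S)$ is a priori only an isomorphism of the restricted data over $B\bmu_a\times S$; in particular it is an automorphism of a principal bundle over $(B\bmu_a)_S$, i.e.\ a morphism $(B\bmu_a)_S\to T$. To lift this to an isomorphism of the entire quasimap one needs to (i) observe that this morphism factors through the coarse moduli space $S$, (ii) pull the resulting $S\to T$ back along $(\wP{a})_S\to S$ to get a candidate extension $\Phi$, (iii) use Lemma \ref{lem:factors} to show the sections then agree everywhere, and (iv) prove uniqueness of $\Phi$ by noting it factors through the finite subscheme of $T$ consisting of elements with nontrivial isotropy (Lemma \ref{lem:finite}) and invoking connectedness of $[V/\bmu_a]$ together with Lemma \ref{lem:extend}. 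This is precisely what the paper isolates as Lemma \ref{lem:abelian-main-2}, and none of it is automatic from ``discreteness of the category of sections.'' Your proposal also does not address how to reduce to the case where $q$ and $q'$ are built on the same underlying principal bundle, which is the hypothesis Lemma \ref{lem:abelian-main-2} works under.

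Your final paragraph correctly identifies the families-versus-pointwise subtlety but the proposed fix does not work: Lemma \ref{lem:factors}(1) does not put a quasimap over a general base $S$ in the form $(\tau_{\intalpha},\sigma_V)$ — it only says that \emph{if} $\P=\P_\tau$ then the datum $(\tau,\sigma_V)$ determines the quasimap, which is weaker. Remark \ref{rmk:k-quasimaps} gives the normal form only over algebraically closed fields, and there is no reason the bundle trivializes étale-locally on $F_{\tilde\beta}(X\sslash T)$ in the way you claim. Moreover, ``closed immersion is étale-local on the target'' would require you to work on an étale cover of $I_\mu(X\sslash T)$, not of $F_{\tilde\beta}(X\sslash T)$. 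The paper's route avoids these issues entirely by working with an arbitrary $\T$ in Lemma \ref{lem:abelian-main-2}, using that lemma plus Remark \ref{rmk:k-quasimaps} and \cite[Tag~04YY]{tag} to establish representability, and then following the closed-embedding argument of \cite[Lem~4.2.1]{nonab1}.
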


\begin{proof}
First we show that $ev_\star$ factors through $(I_{\mu}(X\sslash T))_{g_{\tilde \beta}}$. This may be checked at geometric points of $F_{\tilde \beta}{(X\sslash T)}$. Let $k$ be an algebraically closed field. By Remark \ref{rmk:k-quasimaps}, a $k$-quasimap $q: (\wP{a})_k \rightarrow [X/T]$ of degree $\tilde \beta$ is given by data $(\wP{a}, \T_{\intalpha}, \sigma)$. Now $ev_{\star}(q)$ is a $k$-point of $[X/T]$, and since $T$ is abelian, there is a canonical identification of $\Aut_k(ev_{\star}(q))$ with a subgroup of $T$.  It follows from \eqref{eq:global} that the composition $\Aut_k(\star)\rightarrow \Aut_k(ev_{\star}(q))\hookrightarrow T$ is precisely equal to the restriction of $\tau_{\intalpha}^{-1}$ to $\bmu_a$,
 so it follows from the definitions that this quasimap lands in the $g_{\tilde \beta}$-component of $I_{\mu}(X\sslash T)$.

To show that $ev_\star$ is a closed embedding, the argument of \cite[Lem~4.2.1]{nonab1} works with the following adjustments.

The proof of \cite[Lem~4.2.2]{nonab1} shows that its statement holds when $S$ and $S'$ are Deligne-Mumford stacks and the principal bundles are algebraic spaces (so that the corresponding maps to $[\bullet/G]$ are representable). Moreover, the analog of \cite[Lem~4.2.3]{nonab1} holds when $X,Y$ are Deligne-Mumford stacks if one also assumes that $\pi$ induces surjections of automorphism groups at $\CC$-points (see \cite[Lem~3.6.2]{dissertation}).

We use Lemma \ref{lem:factors} in place of \cite[Lem~4.2.4]{nonab1}.

We replace \cite[Lem~4.2.5]{nonab1} with the following. Note that, in conjunction with Remark \ref{rmk:k-quasimaps}, this lemma implies that $ev_\star$ induces bijections of automorphism groups at geometric points.
\begin{lemma}\label{lem:abelian-main-2}
If $q_i = ((\wP{a})_S, \T, \sigma_i)$ are fixed quasimaps and $f$ is an arrow in $I_\mu(X\sslash T)(S)$ from $ev_\star(q_1)$ to $ev_\star(q_2)$, then there is a unique isomorphism of $q_1$ and $q_2$ that maps to $f$ under $ev_\star$.
\end{lemma}

\begin{proof}

Recall that an automorphism of a principal $T$-bundle on a scheme $\cB$ is given by a morphism $\cB \to T$. Hence, the identification $f$ is given by an automorphism of $\T|_{(B\bmu_a)_S}$, or equivalently a morphism $\phi:(B\bmu_a)_S \rightarrow T$. The latter morphism must factor through the coarse moduli space, and hence is equal to the pullback of some morphism $S \rightarrow T$. This induces a morphism $\Phi:(\wP{a})_S \rightarrow T$ by pullback, and hence an automorphism of $\T$ that restricts to the automorphism of $\T|_{(B\bmu_a)_ S}$ determined by $f$.
Now $\Phi$ defines an automorphism of $\T\times_T X$, which we also denote $\Phi$, and by construction the sections $\sigma_2$ and $\Phi \circ \sigma_1$ agree on $B\bmu_a \times S$. By Lemma \ref{lem:factors} they agree everywhere.

To see that $\Phi$ is the unique morphism extending $\phi$, note that $\Phi|_{[V/\bmu_a]\times S}$ factors through the finite subscheme of $T$ consisting of group elements with nontrivial isotropy on $X^s(T)$ (see Lemma \ref{lem:finite}). Since $[V/\bmu_a]$ is connected, $\Phi|_{[V/\bmu_a]\times S}$ is determined by $\phi$. Now $\Phi$ is determined by $\Phi|_{[V/\bmu_a]\times S}$ by Lemma \ref{lem:extend}.
\end{proof}

It follows from \cite[Tag~04YY]{tag} and Lemma \ref{lem:abelian-main-2} that $ev_\star$ is representable. The proof of \cite[Lem~4.2.1]{nonab1} now applies to show that $ev_\star$ is a closed embedding.

\end{proof}

By Lemma \ref{lem:abelian} we have a closed embedding $ev_\star: F_{\tilde \beta}(X\sslash T) \rightarrow I_{\mu}(X\sslash T)$. Define
\[
X_{\tilde \beta} = \cover{T}{X^s(T)}\times_{I_{\mu}(X\sslash T)} F_{\tilde\beta}(X\sslash T),
\]
where the cover $S_{X^s(T)}(T)\rightarrow I_{\mu}(X\sslash T)$ was defined in \eqref{eq:inertia-square} (see also Remark \ref{rmk:inertia1}).
By Lemma \ref{lem:abelian}, the composition
\[
X_{\tilde \beta} \rightarrow S_{X^s(T)}(T) \hookrightarrow T\times X^s(T) \rightarrow X^s(T)
\] is a closed embedding into $X^{g_{\tilde \beta}}\cap X^s(T)$. From the definitions, $\tau_{\intalpha}(\bmu_a)$ acts trivially on $X^{g_{\tilde \beta}}$; hence, there is an action of $T/\tau_{\intalpha}(\bmu_a)$ on $X^{g_{\tilde \beta}}$ (the rigidification). We restrict the action of $T/\tau_{\intalpha}(\bmu_a)$ to the quotient of the subgroup of $T$ defined by the cocharacter $\tau_{\intalpha}$. That is, we define an action $\cdot_{rig}: \CC^* \times X^{g_{\tilde \beta}} \rightarrow X^{g_{\tilde \beta}}$ by identifying the group $\CC^*$ with $\CC^*/\bmu_a \subset T/\tau_{\intalpha}(\bmu_a)$ via $\tau_{\intalpha}$. This action is characterized by the fact that the following diagram commutes, where we have labeled the arrows with the images of $(\omega, z) \in \CC^* \times X^{g_{\tilde \beta}}$.
\begin{equation}\label{eq:rig}
\begin{tikzcd}[column sep=5em]
& \CC^* \times X^{g_{\tilde \beta}} \arrow[d, "{(\omega^a, z)}"'] \arrow[r, "\tau_{\intalpha}(\omega)\cdot z"]& X^{g_{\tilde \beta}} \\
&\CC^* \times X^{g_{\tilde \beta}} \arrow[ur, "\omega\cdot_{rig}z"']
\end{tikzcd}
\end{equation}
\begin{proposition}\label{prop:ufam}
The universal family on $F_{\tilde\beta}(X\sslash T)$ has fiber bundle $\cZ$ on $\wP{a}\times{F_{\tilde\beta}(X\sslash T)}$ and section $\csigma$ defined as follows:
\begin{equation}\label{eq:ufam1}
\cZ = \frac{X_{\tilde\beta} \times \CC^2 \times X}{(x, u,v, y) \sim (tx, s^au, sv, \tau_{\intalpha}(s)^{-1}ty)} \quad \quad \csigma(x,u,v) = (x, u,v, u^{-1}\cdot_{rig}x)
\end{equation}
where  $(t, s) \in T \times \CC^*$, and the formula for $\csigma$ holds on the open locus where $u\neq 0$.
\end{proposition}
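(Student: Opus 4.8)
\emph{Proof plan.} The plan is to exhibit $(\cZ,\csigma)$ as a bona fide family of stable quasimaps lying in $F_{\tilde\beta}(X\sslash T)$, to extract the resulting classifying morphism $\Phi\colon F_{\tilde\beta}(X\sslash T)\to F_{\tilde\beta}(X\sslash T)$, and then to show $\Phi=\mathrm{id}$ by comparing with the evaluation map. The three inputs are: Remark \ref{rmk:k-quasimaps} (on geometric fibers the universal principal bundle has the form $\P_{\intalpha}$); Lemma \ref{lem:factors}(2) (a fixed quasimap is determined by its restriction to $\star=B\bmu_a$, and its restriction to $[V/\bmu_a]$ is the pullback of that restriction); and Lemma \ref{lem:abelian} (the map $ev_\star\colon F_{\tilde\beta}(X\sslash T)\to I_\mu(X\sslash T)$ is a closed embedding, in particular a monomorphism). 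Recall also that, being an open-and-closed component of the $\CC^*_\lambda$-fixed locus of the moduli stack $QG_{\tilde\beta}(X\sslash T)$, the stack $F_{\tilde\beta}(X\sslash T)$ represents families of $\CC^*_\lambda$-fixed stable class-$\tilde\beta$ quasimaps with basepoints at $[0:1]$.

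First I would verify that $(\cZ,\csigma)$ is such a family over $F_{\tilde\beta}(X\sslash T)$. Dropping the $X$-factor in \eqref{eq:ufam1} exhibits $\cZ$ as the associated fiber bundle $\P\times_TX$ of the principal $T$-bundle $\P$ on $\wP{a}\times F_{\tilde\beta}(X\sslash T)$ obtained by twisting $\P_{\intalpha}$ by the $T$-torsor $X_{\tilde\beta}\to F_{\tilde\beta}(X\sslash T)$; the quotient presentation in \eqref{eq:ufam1} then matches \eqref{eq:global}. Well-definedness of $\csigma$ reduces to checking that $(x,u,v,u^{-1}\cdot_{rig}x)$ is compatible with the equivalence relation of \eqref{eq:ufam1}, which, using the defining diagram \eqref{eq:rig} for $\cdot_{rig}$, the commutativity of $T$, and the fact (noted just before \eqref{eq:rig}) that $\tau_{\intalpha}(\bmu_a)$ acts trivially on $X^{g_{\tilde\beta}}$, comes down to the identity $\tau_{\intalpha}(s)^{-1}t\cdot(u^{-1}\cdot_{rig}x)=(s^a u)^{-1}\cdot_{rig}(tx)$. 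Since the formula for $\csigma$ is valid only where $u\neq0$, the only basepoint in each fiber lies at $[0:1]$, so the family sits in the correct component; the actions \eqref{eq:action1}, \eqref{eq:action3} fix the family (up to the $\CC^*_{\lambda^{1/a}}$-reparametrization of Lemma \ref{lem:fixed-by-a}) because $\tau_{\intalpha}$ is a cocharacter, so reparametrizing $u\mapsto\lambda u$ changes $\P_{\intalpha}$ and $\csigma$ only by the constant automorphism given by $\tau_{\intalpha}(\lambda^{1/a})\in T$; and the class is $\tilde\beta$ by a degree computation on line bundles (equivalently by Remark \ref{rmk:k-quasimaps}).

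This produces $\Phi\colon F_{\tilde\beta}(X\sslash T)\to F_{\tilde\beta}(X\sslash T)$ with $\Phi^*(\text{universal family})\cong(\cZ,\csigma)$, so it remains to show $\Phi=\mathrm{id}$. For that I would restrict $(\cZ,\csigma)$ to $\star\times F_{\tilde\beta}(X\sslash T)=B\bmu_a\times F_{\tilde\beta}(X\sslash T)$: setting $u=1$ in \eqref{eq:ufam1}, the section becomes the tautological closed immersion $X_{\tilde\beta}\hookrightarrow X^{g_{\tilde\beta}}\cap X^s(T)$, while the underlying $T$-torsor is $X_{\tilde\beta}$ with $\bmu_a$-action the restriction of $\tau_{\intalpha}^{-1}$ (read off from \eqref{eq:global}, exactly as in the proof of Lemma \ref{lem:abelian}). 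This is precisely the object classified by the morphism $ev_\star\colon F_{\tilde\beta}(X\sslash T)\to I_\mu(X\sslash T)$ used to define $X_{\tilde\beta}=\cover{T}{X^s(T)}\times_{I_\mu(X\sslash T)}F_{\tilde\beta}(X\sslash T)$; that is, $ev_\star\circ\Phi=ev_\star$. Since $ev_\star$ is a monomorphism (Lemma \ref{lem:abelian}), $\Phi=\mathrm{id}$, and hence the universal family is $(\cZ,\csigma)$. The main obstacle is the bookkeeping concentrated in the second paragraph: checking that $\csigma$ descends and identifying $ev_\star$ of the constructed family require carefully threading together the definitions of $g_{\tilde\beta}$, the cocharacter $\tau_{\intalpha}$, the rigidified action \eqref{eq:rig}, and the twisted $\bmu_a$-equivariance \eqref{eq:bmu-equivariance}, while tracking the ($a$-th-root) ambiguities that are harmless only because $\tau_{\intalpha}(\bmu_a)$ acts trivially on $X^{g_{\tilde\beta}}$.
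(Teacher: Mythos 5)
Your proposal takes a genuinely different route from the paper. The paper works with the tautological family on $X_{\tilde\beta}$ (the pullback of the universal family), shows its underlying $T$-torsor restricted to $\tilde\star\times X_{\tilde\beta}$ is trivial, deduces triviality on $V\times X_{\tilde\beta}$ from Lemma~\ref{lem:factors} and a Picard-group descent argument for $\kappa_U$, and thereby identifies the torsor with $\T_{\intalpha}$ and $\csigma_V$ with the projection. You instead want to \emph{construct} $(\cZ,\csigma)$, extract a classifying map $\Phi$, and invoke the monomorphism $ev_\star$ of Lemma~\ref{lem:abelian} to force $\Phi=\mathrm{id}$. That ``uniqueness via $ev_\star$'' trick is a nice idea.

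However there is a genuine gap at the very first step. To obtain a classifying morphism $\Phi\colon F_{\tilde\beta}(X\sslash T)\to F_{\tilde\beta}(X\sslash T)$ you must first exhibit $(\cZ,\csigma)$ as an actual object of $QG_{\tilde\beta}(X\sslash T)$ over $F_{\tilde\beta}(X\sslash T)$, and in particular $\csigma$ must be a section of $\cZ$ over \emph{all} of $\wP{a}\times F_{\tilde\beta}(X\sslash T)$. But the formula $\csigma(x,u,v)=(x,u,v,u^{-1}\cdot_{rig}x)$ is given only on $\{u\neq 0\}$, and the limit of $u^{-1}\cdot_{rig}x=\tau_{\intalpha}(u^{-1/a})\cdot x$ as $u\to 0$ need not exist for an arbitrary $T$-fixed-by-$g_{\tilde\beta}$ point $x$ (its existence depends on a sign condition on the weights $\xi$ appearing in $x$ relative to $\intalpha$). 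That the limit \emph{does} exist for $x\in X_{\tilde\beta}$ is precisely the content encoded in the moduli-theoretic definition of $F_{\tilde\beta}(X\sslash T)$, i.e.\ it is what you are trying to prove. The paper flags exactly this issue in the remark preceding the proof (``We do not know a formula for $\csigma$ on all of its domain''), and its proof avoids it by \emph{starting} from the universal section, which exists by construction of the moduli space, and then matching it with the formula on $\{u\neq 0\}$ using Lemma~\ref{lem:extend} for uniqueness. Your sentence ``Since the formula for $\csigma$ is valid only where $u\neq0$, the only basepoint in each fiber lies at $[0:1]$'' conflates ``the formula is not given at $[0:1]$'' with ``the section exists everywhere and has its basepoint at $[0:1]$''; a quasimap's section must be everywhere defined, with basepoints referring to values in the unstable locus. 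To repair the argument you would need an independent proof that $X_{\tilde\beta}$ lands in the locus where the $\tau_{\intalpha}$-flow extends over $u=0$ (e.g.\ via the explicit description of the fixed locus as in the cited reference), at which point your approach becomes closer in spirit to the paper's.
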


\begin{remark}
Proposition \ref{prop:ufam} only gives a formula for $\csigma$ on $[V/\bmu_a] \times F_{\tilde \beta}(X\sslash T) $, but by Lemma \ref{lem:extend} this formula uniquely determines $\csigma$. We do not know a formula for $\csigma$ on all of its domain.
\end{remark}
We define the \textit{tautological family} on $X_{\tilde \beta}$ to be the pullback of the universal family along the map $X_{\tilde \beta} \rightarrow F_{\tilde \beta}(X\sslash T)$.
\begin{remark}\label{rmk:tfam}
Proposition \ref{prop:ufam} is equivalent to the statement that the tautological family is given by the formulae \eqref{eq:ufam1} after setting $t=1$ (i.e., before dividing by the $T$-action). In other words, by Lemma \ref{lem:factors}.\eqref{lem:uniquemap}, to prove Proposition \ref{prop:ufam} it is equivalent to prove that the tautological family is given by the principal bundle $\T_{\intalpha}$ (constant across fibers of $(\wP{a})_{X_{\tilde \beta}}$) and a section $\csigma$ with $\csigma_V(v,x)=v$ for $(v,x) \in V_{X_{\tilde \beta}}$. One can check directly using commutativity of \eqref{eq:rig} that the formula for $\csigma$ given in \eqref{eq:ufam1} is the unique section of $\cZ$ with the required $\csigma_V$.
\end{remark}
\begin{proof}[Proof of Proposition \ref{prop:ufam}]
 The proof of \cite[Prop~4.2.6]{nonab1} works with minor modifications as follows. Let $((\wP{a})_{X_{\tilde \beta}}, \T, \csigma)$ be the tautological family. Let $\tilde \star\times X_{\tilde \beta}$ be the subscheme of $V_{X_{\tilde \beta}}$ equal to the inverse image of $B\bmu_a \times X_{\tilde \beta}  = \star\times X_{\tilde \beta} \in (\wP{a})_{X_{\tilde \beta}}$. 
Then $\T|_{\tilde \star\times X_{\tilde \beta}}$ is isomorphic to 
$X_{\tilde \beta}\times_{F_{\tilde \beta}}X_{\tilde \beta}$ with the structure map to $X_{\tilde \beta}$ equal to the first projection. So $\T|_{\tilde \star\times X_{\tilde \beta}}$ is trivial (it has the diagonal section), and it follows from Lemma \ref{lem:factors} that 
$\T|_{V\times X_{\tilde \beta}}$ is trivial.

Recall the map $\kappa_U: \torus\times X_{\tilde \beta} \rightarrow (U\setminus 0) \times X_{\tilde \beta}$ given by taking the $a^{th}$ power. We claim that $\kappa_U^*$ is injective on Picard groups; in fact, we claim that if $\cF \in \Pic(\CC^* \times X_{\tilde \beta})$, then canonical morphism $\cF \rightarrow (\kappa_{U, *}\kappa_U^*\cF)^{\bmu_a}$ is an isomorphism. This may be checked on an affine scheme $\Spec(R) \rightarrow \CC^*\times X_{\tilde \beta}$ where $\F$ is trivial as follows. The morphism $\kappa$ restricts to a $\bmu_a$-torsor $\Spec(S) \rightarrow \Spec(R)$; let $\alpha: S \rightarrow S \times \bmu_a$ denote the ring map inducing the action. Consider the following diagram:
\[
\begin{tikzcd}
R \arrow[r] & S \arrow[r, shift right=1] \arrow[r, shift left=1] \arrow[d, equal]& S \otimes_R S \arrow[d]\\
S^{\bmu_a} \arrow[r] & S \arrow[r, shift right=1, "{(id, \alpha)}"'] \arrow[r, shift left=1, "{(id, 1)}"] & S\times \bmu_a
\end{tikzcd}
\]
Faithfully flat descent implies that the top row is an equalizer diagram, while the torsor property implies that the right vertical arrow is an isomorphism. Since the equalizer of the bottom maps $S \rightrightarrows S \times \bmu_a$ can be explicitly computed to equal the ring of invariants $S^{\bmu_a}$, the canonical map of equalizers $R \rightarrow S^{\bmu_a}$ must be an isomorphism.

Since $\kappa_U^*$ is injective and $\T|_{\CC^*\times X_{\tilde \beta}}$ is trivial, it follows that
$\T|_{(U\setminus\{0\})\times{X_{\tilde \beta}}}$ is trivial.
Now the argument in \cite[Prop~4.2.6]{nonab1} shows that $\T \simeq \T_{\intalpha}$ and $\csigma_V(v,x)=x$ for $(v,x) \in  V\times X_{\tilde \beta}$. 
We conclude using Remark \ref{rmk:tfam}.
\end{proof}
\subsection{Proof of Proposition \ref{prop:diagram}}
Define
\[
\begin{gathered}F^0_{\tilde \beta}(X\sslash T) := F_{\tilde \beta}(X\sslash T) \cap X^{g_{\tilde \beta}}\sslash_G T \quad \quad \text{in}\; X^{g_{\tilde \beta}} \sslash T,\\
X_{\tilde \beta}^0 := X_{\tilde \beta} \times_{F_{\tilde \beta}(X\sslash T)} F^0_{\tilde \beta}(X\sslash T).
\end{gathered}\] 
Also let $\beta = \rpic(\tilde \beta)$ and define
\begin{equation}\label{eq:defpsi}
\begin{aligned}
\psi_{\tilde \beta}: F^0_{\tilde \beta}(X\sslash T) &\rightarrow F_{\beta}(X\sslash G)\\
((\wP{a})_S, \T, \sigma) &\mapsto ((\wP{a})_S, G \times_T \T, \sigma).
\end{aligned}\end{equation}

The proof of Proposition \ref{prop:diagram} is completed by the following lemma. 
\begin{lemma}\label{lem:nearly}
The substack $X^0_{\tilde \beta}\subset X^{g_{\tilde \beta}}$ is invariant under the action of $P_{\tilde \alpha}$ and the composition
\begin{equation}\label{eq:psi-invariant1}
X_{\tilde\beta}^0 \rightarrow F^0_{\tilde \beta}(X\sslash T) \xrightarrow{\psi_{\tilde \beta}} F_{\beta}(X\sslash G)
\end{equation}
descends to a closed embedding
\begin{equation}\label{eq:induced}
[X_{\tilde\beta}^0/P_{\tilde\alpha}] \rightarrow F_{\beta}(X\sslash G).
\end{equation}
\end{lemma}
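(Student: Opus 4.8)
The plan is to follow the argument for the corresponding statement in \cite{nonab1}, where the source curve is $\PP^1$, substituting two tools that must be adapted to the orbifold source $\wP{a}$: the explicit description of the tautological family (Proposition~\ref{prop:ufam} and Remark~\ref{rmk:tfam}) and the rigidity statement Lemma~\ref{lem:factors}, which together reconstruct a $\CC^*_\lambda$-fixed quasimap from its restriction to $\star$. First I would prove that $X^0_{\tilde\beta}$---a $T$-invariant subscheme of $X^{g_{\tilde\beta}}\cap X^s(G)$, closed in the latter---is invariant under $P_{\tilde\alpha}$. Fix $g\in P_{\tilde\alpha}$ and a point $x\in X^0_{\tilde\beta}$. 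By Remark~\ref{rmk:tfam} the tautological quasimap $\csigma|_x$ over $x$ has bundle $\T_{\intalpha}$ and section with $\sigma_V$ constant equal to $x$ and $\sigma_U(u)=u^{-1}\cdot_{rig}x$. Since $g\cdot x$ again lies in $X^{g_{\tilde\beta}}\cap X^s(G)\subseteq X^{g_{\tilde\beta}}\cap X^s(T)$, the same formulas over $g\cdot x$ define a section on the $V$-chart, and on the $U$-chart this section equals $u\mapsto\bigl(\tau_{\intalpha}(u^{-1/a})\,g\,\tau_{\intalpha}(u^{1/a})\bigr)\cdot\sigma_U(u)$, whose first factor extends over $[0:1]$ exactly because $g\in P_{\tilde\alpha}$ (it is the morphism $\phi_U$ of Lemma~\ref{lem:auts}), while the second factor extends because $\csigma|_x$ is a quasimap. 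The resulting quasimap $q'=(\wP{a},\T_{\intalpha},\sigma')$ is $\CC^*_\lambda$-fixed by the homogeneity of its formula, has class $\tilde\beta$, and $ev_\star(q')$ has underlying point $g\cdot x\in X^s(G)$; since a $\CC^*_\lambda$-fixed quasimap whose value at $\star$ lies in the open stable locus has all its basepoints at $[0:1]$, $q'$ is stable, lies in $F_{\tilde\beta}(X\sslash T)$, and in fact in $F^0_{\tilde\beta}(X\sslash T)$ because $ev_\star(q')\in X^{g_{\tilde\beta}}\sslash_GT$. Tracing the definitions, $q'$ is the tautological quasimap over $g\cdot x$, so $g\cdot x\in X^0_{\tilde\beta}$.

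Next I would show that the composition \eqref{eq:psi-invariant1} is $P_{\tilde\alpha}$-invariant. For $g\in P_{\tilde\alpha}$, applying the bundle automorphism $\phi_g$ of Lemma~\ref{lem:auts} to $\psi_{\tilde\beta}(\csigma|_x)$ produces a $\CC^*_\lambda$-fixed quasimap with the same $G$-bundle $G\times_T\T_{\intalpha}$ and with $ev_\star$ equal to the object of $I_\mu(X\sslash G)$ determined by $\tau_{\intalpha}^{-1}|_{\bmu_a}$ and the point $g\cdot x$; this is also $ev_\star(\psi_{\tilde\beta}(\csigma|_{g\cdot x}))$, so Lemma~\ref{lem:factors} forces $\phi_g\cdot\psi_{\tilde\beta}(\csigma|_x)=\psi_{\tilde\beta}(\csigma|_{g\cdot x})$. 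As $g\mapsto\phi_g$ is a homomorphism, these isomorphisms assemble into a $P_{\tilde\alpha}$-invariance of \eqref{eq:psi-invariant1}, which therefore descends to a morphism \eqref{eq:induced}. This morphism is proper: $[X^0_{\tilde\beta}/P_{\tilde\alpha}]$ is a closed substack of $X^{g_{\tilde\beta}}\sslash_GP_{\tilde\alpha}$, which is proper over $\aff{X}{G}$ since it maps with partial-flag-variety fibers onto the open and closed substack $(I_\mu(X\sslash G))_{(g_{\tilde\beta})}$ of $I_\mu(X\sslash G)$, which in turn is proper over $\aff{X}{G}$; and $F_\beta(X\sslash G)$ is proper over $\aff{X}{G}$ as a closed substack of $QG_\beta$; a morphism over $\aff{X}{G}$ between stacks proper over it is proper.

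It then remains to check that \eqref{eq:induced} is a monomorphism. An isomorphism $\psi_{\tilde\beta}(\csigma|_{x_1})\cong\psi_{\tilde\beta}(\csigma|_{x_2})$ is an automorphism $\phi$ of $G\times_T\T_{\intalpha}$ carrying $\sigma_{x_1}$ to $\sigma_{x_2}$; its value $z$ at $\star$ satisfies $z\cdot x_1=x_2$ and lies in $P_{\tilde\alpha}$, because any automorphism of the $G$-bundle attached to $\tau_{\intalpha}$ takes values in $P_{\tilde\alpha}$ when evaluated at a point of $\wP{a}$. Hence $x_1$ and $x_2$ lie in a single $P_{\tilde\alpha}$-orbit, giving injectivity on points. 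Similarly an automorphism $\phi$ of $\psi_{\tilde\beta}(\csigma|_x)$ has value $p\in\Stab_{P_{\tilde\alpha}}(x)$ at $\star$, and then $\phi\circ\phi_p^{-1}$ fixes $\sigma_x$ and is trivial at $\star$; since $\sigma_x$ carries the $V$-chart (the complement of $[0:1]$) into $X^s(G)$, whose points have finite stabilizers, a connectedness argument shows $\phi\circ\phi_p^{-1}=\mathrm{id}$, so $\Aut_{F_\beta(X\sslash G)}(\psi_{\tilde\beta}(\csigma|_x))=\Stab_{P_{\tilde\alpha}}(x)$ and \eqref{eq:induced} is injective on automorphism groups. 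A proper monomorphism of Deligne--Mumford stacks is a closed immersion, which finishes the proof.

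The hard part will be the assertion in the last paragraph that an arbitrary automorphism of $G\times_T\T_{\intalpha}$ evaluates into $P_{\tilde\alpha}$ at $\star$: the full automorphism group of this bundle is strictly larger than $P_{\tilde\alpha}$, since it also contains higher-order gauge transformations valued in the unipotent radical, and one must argue that restriction to the fiber over a point collapses exactly these; for the stacky point $\star=B\bmu_a$ this also requires interpreting ``value at $\star$'' $\bmu_a$-equivariantly as in \eqref{eq:bmu-equivariance}. A secondary technical burden, present throughout, is transporting the $\PP^1$ arguments of \cite{nonab1} over $\star$, for which Lemmas~\ref{lem:auts} and~\ref{lem:factors} are the designated substitutes.
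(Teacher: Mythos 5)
Your overall plan matches the paper's: adapt the argument of \cite[Sec~4.3]{nonab1}, replacing the $\PP^1$-specific tools with their orbifold analogues (Lemma~\ref{lem:auts}, Lemma~\ref{lem:factors}, Proposition~\ref{prop:ufam} and Remark~\ref{rmk:tfam}). The $P_{\tilde\alpha}$-invariance of $X^0_{\tilde\beta}$ and the properness of \eqref{eq:induced} are handled in essentially the same way as in the paper (though you rederive \cite[Lem~4.3.1,~4.3.2]{nonab1} directly from Proposition~\ref{prop:ufam} and Lemma~\ref{lem:factors} rather than quoting them, which is a fine and arguably more self-contained variant).

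There is, however, a genuine gap in the monomorphism step, and it is precisely the step you flag at the end as ``the hard part.'' Your argument hinges on the claim that \emph{any} automorphism of the bare $G$-bundle $G\times_T\T_{\intalpha}$, evaluated at a point of $\wP{a}$, lands in $P_{\tilde\alpha}$. You assert this but do not prove it; it is not an immediate consequence of Lemma~\ref{lem:auts}, which only constructs automorphisms from elements of $P_{\tilde\alpha}$, not conversely. The paper's argument sidesteps this claim entirely by exploiting the \emph{section} data from the very start: given $\phi_V, \phi_U$ satisfying the two compatibility equations \eqref{eq:outerref}, the first equation says $\phi_V(v,s)$ carries $a_1(s)$ to $a_2(s)$ for every $v$, so $\phi_V(v,s)\phi_V(v',s)^{-1}$ stabilizes a point of $X^s(G)$; since those stabilizers are finite and $V$ is connected, $\phi_V$ factors through $S\xrightarrow{p}G$. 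Once $\phi_V$ is known to be the constant $p$, the second equation \emph{is} the statement that $\tau_{\intalpha}^{-1}(u)\,p\,\tau_{\intalpha}(u)$ extends over $u=0$, which is the definition of $p\in P_{\tilde\alpha}$ (after also restricting to $(\bmu_a)_S$ to see $p\in Z_G(g_{\tilde\beta})$). In other words, the parabolic membership is an easy consequence of constancy, and constancy comes from the section, not from bundle-automorphism considerations alone. Your proposal instead tries to get the parabolic constraint first, from the bundle structure, and this is both unproven and unnecessary; I would restructure the monomorphism step to mirror the paper's order of operations. As a secondary remark, your argument for fully faithfulness is organized as ``injective on isomorphism classes'' plus ``bijective on automorphism groups,'' which is fine for groupoids, but you should make explicit that the element $z=\phi_V(0)$ you produce in the first part actually maps to the given arrow $\phi$ under the prestack functor---this again requires the constancy/finite-stabilizer argument you only invoke in the automorphism case, so there is no extra saving in keeping the two cases separate.
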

The image of \eqref{eq:induced}, which is equal to the image of $\psi_{\tilde \beta}$, is the closed substack of $F_\beta(X\sslash G)$ that we denote $F_{\tilde \beta}(X\sslash G)$.
\begin{proof}
The argument of \cite[Sec~4.3]{nonab1} works with minor modifications, as follows. First, the proof of \cite[Lem~4.3.1]{nonab1} shows that $X^0_{\tilde \beta}$ is a $P_{\tilde \alpha}$-invariant subscheme of $X^{g_{\tilde \beta}} \cap X^s(G).$ 

To show that \eqref{eq:psi-invariant1}
is invariant under the action of $P_{\tilde\alpha}$, we need an explicit description of the element of $\wp \in \Aut(\cZ)$ determined by $p \in P_{\tilde \alpha}$ as in Lemma \ref{lem:auts}, where $\cZ$ is the fiber bundle for the tautological family on $X_{\tilde \beta}^0$. By definition we have $\phi_V(v)=p$, and from \eqref{eq:coordinateiso} we have
\[
\phi_U(u^a)=\tau_{\intalpha}^{-1}(u)p\tau_{\intalpha}(u) \quad \quad \text{for}\;u \in \torus\times X^0_{\tilde \beta}.
\]
Hence, in the homogeneous coordinates of \eqref{eq:ufam1}, we have that $\wp$ is given by 
\[
(x, u, v, y) \mapsto (x, u, v, u^{-1}\cdot_{rig}(p\cdot(u\cdot_{rig} y))).
\]
With this formula, the argument in \cite[Lem~4.3.2]{nonab1} shows that \eqref{eq:psi-invariant1} is invariant as claimed.

Finally, we follow the argument of \cite[Lem~4.3.3]{nonab1} to show that \eqref{eq:induced} is a closed embedding.
It suffices to show that \eqref{eq:induced} is a proper monomorphism. Since \eqref{eq:induced} is induced by the tautological family, it commutes with projections to $\aff{X}{G}$. But $[X_{\tilde \beta}^0/P_{\tilde \alpha}]$ is proper over $\aff{X}{G}$, so \eqref{eq:induced} is proper.
To check that \eqref{eq:induced} is a monomorphism, it is enough to check that the map from the prestack $[X^0_{\tilde \beta}/P_{\tilde \alpha}]^{pre}$ defined in \cite[Prop~2.6]{romagny} is a monomorphism, i.e., fully faithful (see also \cite[Thm~4.1]{romagny}). This prestack has objects equal to the objects of $X^0_{\tilde \beta}$ and arrows coming from the action of $P_{\tilde \alpha}$. Let $a_i: S \rightarrow X_{\tilde \beta}^0$ be two objects of $[X^0_{\tilde \beta}/P_{\tilde \alpha}]^{pre}(S)$. As in \cite[Lem~4.3.3]{nonab1}, an arrow between the images of these objects under \eqref{eq:induced} implies the existence of morphisms $\phi_V: V_S \rightarrow G$ and $\phi_U: U_S \rightarrow G$ satisfying
\begin{equation}\label{eq:outerref}
\begin{gathered}
\phi_V\cdot(a_1\circ pr_2) = a_2\circ pr_2 \quad \quad \text{as maps}\;V_S \rightarrow Z\\
(\phi_V\circ \kappa_V) \tau_{\intalpha} = \tau_{\intalpha}(\phi_U\circ \kappa_U)\quad \quad \text{as maps}\;\torus_S \rightarrow G.
\end{gathered}
\end{equation}
The first equation shows that $\phi_V$ factors as $V\times S \xrightarrow{pr_2} S \xrightarrow{p} G$ for some $p \in G(S)$ sending $a_1$ to $a_2$. Restricting the second equation to the closed subscheme $\mathbf{1}_S \subset \torus_S$ defined by the identity shows that $\phi_U|_{\mathbf{1}_S}=p$, and then restricting the same equation to $(\bmu_a)_S$ shows that $p \in Z_G(g_{\tilde \beta})$. Finally, the second equation also shows that $p \in P_{\tilde \alpha}$, since the desired extension of $\tau_{\intalpha}^{-1}(u)p\tau_{\intalpha}(u)$ to a morphism  $\AA^1_S\rightarrow G$ is $\phi_U\circ \kappa_U$. The equations \eqref{eq:outerref} uniquely determine $p$.
\end{proof}

\subsection{Computing the nonabelian $I$-function}
\subsubsection{Weyl group action}
Define
\[
 F^0_\beta(X\sslash T) = \bigsqcup_{\tilde \beta \rightarrow \beta} F^0_{\tilde \beta}(X\sslash T)
\]
and let $\psi: F^0_\beta(X\sslash T) \rightarrow F_\beta(X\sslash G)$ be defined to equal $\psi_{\tilde \beta}$ on $F_{\tilde \beta}^0(X\sslash T)$. 

\begin{lemma}\label{lem:surjective}
The map $\psi:F^0_\beta(X\sslash T) \rightarrow F_\beta(X\sslash G)$ is surjective.
\end{lemma}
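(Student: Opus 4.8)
The plan is to reduce to geometric points and then unwind the standard presentation of a $\CC^*_\lambda$-fixed quasimap. First, since $F^0_\beta(X\sslash T)=\bigsqcup_{\tilde\beta\to\beta}F^0_{\tilde\beta}(X\sslash T)$ is a finite disjoint union of finite-type stacks --- the sum runs over the finitely many $I$-effective $\tilde\beta$ in $\rpic^{-1}(\beta)$, by Remark~\ref{rmk:finite} --- the map $\psi$ is a morphism of finite-type algebraic stacks over $\CC$, so it suffices to check that every $\CC$-point of $F_\beta(X\sslash G)$ is hit. Moreover $\psi$ restricts on $F^0_{\tilde\beta}(X\sslash T)$ to $\psi_{\tilde\beta}$, which by construction (Proposition~\ref{prop:diagram}, Lemma~\ref{lem:nearly}) surjects onto the closed substack $F_{\tilde\beta}(X\sslash G)\subseteq F_\beta(X\sslash G)$; so the content of the lemma is the equality $F_\beta(X\sslash G)=\bigcup_{\tilde\beta\to\beta}F_{\tilde\beta}(X\sslash G)$, which I would deduce by showing every geometric point of $F_\beta(X\sslash G)$ lies in some $F_{\tilde\beta}(X\sslash G)$.

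So let $k$ be algebraically closed and let $q=((\wP{a})_k,\P,\sigma)$ be a $k$-point of $F_\beta(X\sslash G)$. By Remark~\ref{rmk:k-quasimaps} I may assume $\P=\P_{\intalpha}$ for the cocharacter $\tau_{\intalpha}$ attached to some $\intalpha\in\Hom(\chi(T),\ZZ)$ with $\rchi(\intalpha/a)=\bd(\beta)$. Since $\tau_{\intalpha}$ is valued in $T$ we have $\P_{\intalpha}=G\times_T\T_{\intalpha}$ (see the discussion after \eqref{eq:principal2}), and under the canonical identification $(G\times_T\T_{\intalpha})\times_GX=\T_{\intalpha}\times_TX$ the section $\sigma$ becomes a section of $\T_{\intalpha}\times_TX$; write $\tilde q=((\wP{a})_k,\T_{\intalpha},\sigma)$ for the resulting object and let $\tilde\beta$ be its class. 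Then $q$ is the composite of $\tilde q$ with the morphism $[X/T]\to[X/G]$ induced by $T\hookrightarrow G$, so once $\tilde q$ is known to be a $k$-point of $F^0_{\tilde\beta}(X\sslash T)$ we obtain $q=\psi_{\tilde\beta}(\tilde q)$ from the definition \eqref{eq:defpsi} of $\psi_{\tilde\beta}$.

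It remains to verify that $\tilde q\in F^0_{\tilde\beta}(X\sslash T)(k)$ and $\rpic(\tilde\beta)=\beta$. For the degree bookkeeping, $\tilde q^*(\L|_T)=q^*\L$ for every $\L\in\Pic^G(X)$, so $\rpic(\tilde\beta)=\beta$, and $\bdtilde(\tilde\beta)=\intalpha/a$ directly from the definition of $\tau_{\intalpha}$; thus $g_{\tilde\beta}=\tau_{\intalpha}(e^{2\pi i/a})^{-1}$ is exactly the element of Proposition~\ref{prop:diagram} attached to $\tilde q$. For stability and the basepoint condition, one has $X^{us}_\theta(T)\subseteq X^{us}_\theta(G)$ --- immediate from King's description of (semi)stability \cite{king}, since the restriction to the $T$-action of a $\theta^n$-semi-invariant $G$-function on $X$ is a $\theta^n$-semi-invariant $T$-function, and in any case implicit in the existence of $j$ in \eqref{eq:key_diagram} --- and, as $X^{us}_\theta(G)$ is $G$-invariant, $\tilde q^{-1}(X^{us}_\theta(T))\subseteq\tilde q^{-1}(X^{us}_\theta(G))=q^{-1}(X^{us}_\theta(G))$, which is finite and disjoint from $\star$; hence $\tilde q$ is a stable quasimap to $X\sslash T$ whose basepoints all lie at $[0:1]$. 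For $\CC^*_\lambda$-invariance, I would check directly that $\tilde q$ is fixed by an $a$-th power of \eqref{eq:action1} as in Lemma~\ref{lem:fixed-by-a}, using that the bundle $\T_{\intalpha}$ carries the $\CC^*_\lambda$-equivariant structure arising because \eqref{eq:action1} scales the gluing coordinate on $\torus$ and thereby alters $\tau_{\intalpha}$ only by a constant element of $T$, while the section of $\tilde q$ is determined (Lemma~\ref{lem:factors}) by $ev_\star(\tilde q)$, which is supported on the $\CC^*_\lambda$-fixed point $\star$. This gives $\tilde q\in F_{\tilde\beta}(X\sslash T)$. Finally, by Lemma~\ref{lem:abelian} the point $ev_\star(\tilde q)$ lies in $(I_\mu(X\sslash T))_{g_{\tilde\beta}}$, and the point of $X$ underlying it equals the one underlying $ev_\star(q)=q|_{B\bmu_a}$, which lies in $X^s(G)$; hence $ev_\star(\tilde q)$ factors through $X^{g_{\tilde\beta}}\sslash_G T\subseteq X^{g_{\tilde\beta}}\sslash T$, the condition cutting out $F^0_{\tilde\beta}(X\sslash T)$ inside $F_{\tilde\beta}(X\sslash T)$. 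Therefore $q=\psi(\tilde q)$, as desired.

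I expect the main obstacle to be the $\CC^*_\lambda$-invariance of $\tilde q$: reducing the structure group of a $G$-bundle need not respect an equivariant structure, so one must either arrange the presentation $\P\cong\P_{\intalpha}$ of the fixed quasimap $q$ to be $\CC^*_\lambda$-equivariant from the outset, or verify fixedness of $\tilde q$ by hand from the explicit form of $\T_{\intalpha}$ and of its section, as sketched. The remaining verifications are routine once the inclusion $X^{us}_\theta(T)\subseteq X^{us}_\theta(G)$ and the identity $\P_{\intalpha}=G\times_T\T_{\intalpha}$ are in place.
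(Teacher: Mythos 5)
Your reduction to $\CC$-points, your degree bookkeeping, and your treatment of stability via $X^{us}_\theta(T)\subseteq X^{us}_\theta(G)$ are all sound, and you correctly identify the $\CC^*_\lambda$-fixedness of $\tilde q$ as the dangerous step. But the argument you sketch for that step has a genuine gap, in fact exactly the one you flagged: the natural reduction of a $\CC^*_\lambda$-fixed $G$-quasimap $q=((\wP{a})_k,\P_{\intalpha},\sigma)$ to the $T$-quasimap $\tilde q=((\wP{a})_k,\T_{\intalpha},\sigma)$ is in general \emph{not} $\CC^*_\lambda$-fixed, and there is no way to ``check it directly.'' The fixing isomorphism for $q$ is given by gauge transformations $\Psi_V(\mu,\cdot), \Psi_U(\mu,\cdot)$ valued in $G$, and there is no reason for them to be $T$-valued. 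Concretely: take $G=GL_2$, $T$ the diagonal torus, $X=\CC^2$, $a=1$, $\tau_{\intalpha}(t)=\diag(t,1)$, and $\sigma_V(v)=(1+v,1)$. One checks $q$ is fixed via $\Psi_V(\mu,v)=\left(\begin{smallmatrix}\mu & 1-\mu\\ 0&1\end{smallmatrix}\right)$, which is not in $T$; and there is no $T$-valued function carrying $(1+v,1)$ to $(1+\mu v,1)$. So the direct reduction $\tilde q$ fails to be fixed here. Your appeal to Lemma~\ref{lem:factors}(2) to say ``the section of $\tilde q$ is determined by $ev_\star(\tilde q)$'' is also circular: part (2) of that lemma has $\tilde q$ fixed as a hypothesis, which is what you are trying to establish. (Applying it to the $G$-quasimap $q$, which is fixed, tells you that $q|_{[V/\bmu_a]}$ factors through $ev_\star(q)$ in $[X/G]$, but this factorization uses gauge freedom in $G$ and does not descend to a factorization of $\tilde q|_{[V/\bmu_a]}$ in $[X/T]$.)

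The paper's proof addresses precisely this by first replacing $q$ by an isomorphic $G$-quasimap $q'=((\wP{a})_k,\P_{\intalpha},\rho)$ with $\rho_V$ \emph{constant}. That isomorphism is extracted from the $\CC^*_{\lambda^{1/a}}$-equivariance data of $q$ via an extension/Hartogs argument (producing $\Phi_U,\Phi_V$ over $\AA^1_{\lambda^{1/a}}\times U$ and $\AA^1_{\lambda^{1/a}}\times V$, then specializing at $\lambda=0$), and in general it does \emph{not} preserve the canonical $T$-structure of $\P_{\intalpha}$ --- in the example above it would be $\phi_V(v)=\left(\begin{smallmatrix}1 & -v\\ 0&1\end{smallmatrix}\right)$. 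Once $\rho_V$ is constant, one observes directly (this is the paper's preliminary ``claim'') that $((\wP{a})_k,\T_{\intalpha},\rho)$, written in homogeneous coordinates as $(u,v)\mapsto u^{-1}\cdot_{rig}\rho_V$, is fixed by $\CC^*_{\lambda^{1/a}}$ as a $T$-quasimap. So the upshot is that the point $q\in F_\beta(X\sslash G)(k)$ is in the image of $\psi_{\tilde\beta}$, but not applied to the naive reduction of $\sigma$: you must apply it to the constant section $\rho$ produced by the gauge transformation. That gauge-fixing step is exactly what your proposal is missing.
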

\begin{proof}
We modify the proof of \cite[Lem~5.1.1]{nonab1} as follows. The image of $\psi$ is equal to the union of $F_{\tilde \beta}(X\sslash G) \subset F_\beta(X\sslash G)$ over all $\tilde \beta$ mapping to $\beta$. In particular it is closed, so by \cite[Tag~06G2]{tag} it suffices to show that $\psi$ is surjective on $\CC$-points.
We claim first that if $((\wP{a}), \T_{\tilde \alpha}, \sigma)$ has $\sigma_V$ equal to a constant function, then it is fixed by $\CC^*_{\lambda^{1/a}}$. Indeed, such a quasimap is given in homogeneous coordinates by
\[
(u,v) \mapsto u^{-1}\cdot_{rig} \sigma_V
\]
which one may directly check is invariant under $\CC^*_{\lambda^{1/a}}$.
So it suffices to show that if $q=((\wP{a}), \P_{\intalpha}, \sigma)$ is in $F_\beta(X\sslash G)$, it is isomorphic to a quasimap $((\wP{a}), \P_{\intalpha}, \rho)$ with $\rho_V$ a constant function. 

Since $q$ is $\CC^*_{\lambda^{1/a}}$-fixed, from \eqref{eq:action3} and \eqref{eq:coordinateiso} we have an isomorphism of quasimap families on $\CC^*_{\lambda^{1/a}}$ determined by morphisms $\Psi_V: \CC^*_{\lambda^{1/a}}\times V \to G$ and $\Psi_U: \CC^*_{\lambda^{1/a}}\times U \to G$ satisfying the following (note that we write $\mu:= \lambda^{1/a}$ for the parameter on $\CC^*_{\lambda^{1/a}}$):
\begin{align}
\Psi_V(\mu, w^{-1})\tau_{\intalpha}(w) &= \tau_{\intalpha}(\mu^{-1}w)\Psi_U(w^a)  & \mu \in \CC^*_{\lambda^{1/a}}, &\; w \in \Omega \label{eq:surj1}\\
\Psi_U(u)\sigma_U(u) &= \sigma_U(\mu^{-a}u) & \mu \in \CC^*_{\lambda^{1/a}}, &\;u \in U \notag\\
\Psi_V(v)\sigma_V(v) &= \sigma_V(\mu v) & \mu \in \CC^*_{\lambda^{1/a}}, &\;v \in V. \notag
\end{align}
On the other hand, restricting the quasimap to $[V/\bmu_a] \subset \wP{a}$, we get a $\CC^*_{\lambda^{1/a}}$-equivariant family of maps $[V/\bmu_a] \to X\sslash G$. By Lemma \ref{lem:extend} (using that $q$ is representable) the morphism $\Psi_V$ extends to $\Phi_V: \AA^1_{\lambda^{1/a}} \times V \to G$ satisfying
\begin{equation}\label{eq:surj2}
\Phi_V(\mu, v) \sigma_V(v) = \sigma_V(\mu v) \quad \quad \text{for all}\;\mu \in \AA^1_{\lambda^{1/a}},\; v \in V.
\end{equation}
Here, $\AA^1_{\lambda^{1/a}}$ contains $\CC^*_{\lambda^{1/a}}$ as the complement of the origin.
Now we define $\overline \Omega \simeq \AA^1$ to contain $\Omega$ as the complement of the origin, and we define $\Phi_{\Omega}': (\AA^1_{\lambda^{1/a}} \times \overline \Omega) \setminus\{(0,0)\} \to G$ by
\[
\Phi_{\Omega}' = \left\{ \begin{array}{ll}
\tau_{\intalpha}(\mu^{-1})\Psi_U(\mu,  w^a) & \mu \in \CC^*_{\lambda^{1/a}},\; w \in \overline \Omega\\
\tau_{\intalpha}(w)^{-1}\Phi_V(\mu, w^{-1})\tau_{\intalpha}(w) & \mu \in \AA^1_{\lambda^{1/a}},\;w \in \Omega. \end{array}\right.
\]
The pieces of $\Phi_\Omega'$ agree on their common domain of definition by \eqref{eq:surj1}. By Hartog's theorem we can extend $\Phi_\Omega'$ to $\Phi_\Omega: \AA^1_{\lambda^{1/a}} \times \overline \Omega \to G$. Next we compute that $\Phi_\Omega'$ and hence $\Phi_\Omega$ is invariant under the action of $\bmu_a$ on $\overline \Omega$ and hence descends to a function $\Phi_U: \AA^1_{\lambda^{1/a}} \times U \to G$. When $\lambda \neq 0$ this is a direct computation, and when $w \neq 0$ it follows from \eqref{eq:bmu-equivariance}.

We have constructed $\Phi_U: \AA^1_{\lambda^{1/a}} \times U \to G$ and $\Phi_V: \AA^1_{\lambda^{1/a}} \times V \to G$ such that (by definition)
\begin{equation}\label{eq:number4}
\Psi_V(\mu, w^{-1})\tau_{\intalpha}(w) = \tau_{\intalpha}(w)\Psi_U(w^a)  \quad \quad \quad \mu \in \AA^1_{\lambda^{1/a}}, \; w \in \Omega
\end{equation}
We set $\phi^0_U = \Phi_U(0,u)$ and $\phi^0_V = \Phi_V(0,v)$. The restriction of \eqref{eq:number4} says that this defines an automorphism of $\P_{\tilde \alpha}$. Let $\rho := \phi^0 \circ \sigma$; by \eqref{eq:surj2} we have $\rho_V = \sigma_V(0 \cdot v) = \sigma_V(0)$ a constant function, as desired.

\end{proof}

Define $ev_\star: F^0_{\beta}(X\sslash T) \rightarrow I_{\mu}(X\sslash_G T)$ to equal $ev_\star$ on each component. Since $F^0_\beta(X\sslash T)$ is a stack of maps to $[X/T]$ it has a natural action by the Weyl group $W$ leaving
 $\psi$ invariant and making $ev_\star$ equivariant (see \cite[(12)]{nonab1}). For $\tilde \alpha \in \Hom(\chi(T), \QQ)$ we set $W_{\tilde \alpha} := N_{L_{\tilde \alpha}}(T)/T$, the Weyl group of $T \subset L_{\tilde \alpha}$ (the group $L_{\tilde \alpha}$ was defined in Section \ref{sec:defs}).

\begin{lemma}\label{lem:group-action}
Let $\tilde \beta_i \in \Hom(\Pic^T(X), \QQ)$ be a full set of representatives of distinct $W$-orbits on $\rpic^{-1}(\beta)$. Then the following is a decomposition into open and closed substacks:
\begin{equation}\label{eq:decompose1}
F_\beta(X\sslash G) = \bigsqcup_i F_{\tilde \beta_i}(X\sslash G) \quad \quad \text{where} \; F_{\tilde \beta_i}(X\sslash G) = \psi(F_{\tilde \beta_i}^0(X\sslash T)).
\end{equation}
Moreover, if $\tilde \alpha_i = \bdtilde(\tilde \beta_i)$, the stabilizer of $\tilde \beta_i$ is $W_{\tilde \alpha_i}$.
\end{lemma}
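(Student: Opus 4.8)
The plan is to follow the proof of the corresponding statement in \cite[Sec~5.1]{nonab1}, with the adjustments forced by the orbifold source curve: Remark~\ref{rmk:k-quasimaps} replaces Grothendieck's classification of $G$-bundles on $\PP^1$, and one must keep track of the twist $g_{\tilde\beta}$. Since $\psi$ is surjective (Lemma~\ref{lem:surjective}) and $F^0_\beta(X\sslash T) = \bigsqcup_{\tilde\beta\to\beta} F^0_{\tilde\beta}(X\sslash T)$, we immediately get $F_\beta(X\sslash G) = \bigcup_{\tilde\beta\to\beta} F_{\tilde\beta}(X\sslash G)$. The Weyl action on $F^0_\beta(X\sslash T)$ sends a $T$-quasimap of class $\tilde\beta$ to one of class $w\cdot\tilde\beta$, so it permutes the summands $F^0_{\tilde\beta}(X\sslash T)$ according to the $W$-action on $\rpic^{-1}(\beta)$; as $\psi$ is $W$-invariant we conclude $F_{w\tilde\beta}(X\sslash G) = F_{\tilde\beta}(X\sslash G)$, hence $F_\beta(X\sslash G) = \bigcup_i F_{\tilde\beta_i}(X\sslash G)$, a finite union by Remark~\ref{rmk:finite}. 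Each $F_{\tilde\beta_i}(X\sslash G)$ is closed by Lemma~\ref{lem:nearly}.

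\textbf{Disjointness.} It suffices to show the $F_{\tilde\beta_i}(X\sslash G)$ have no common geometric point. Let $k$ be algebraically closed and $q\in F_{\tilde\beta_i}(X\sslash G)(k)$. Using the isomorphism $F_{\tilde\beta_i}(X\sslash G)\simeq[X^0_{\tilde\beta_i}/P_{\tilde\alpha_i}]$ of Lemma~\ref{lem:nearly} together with $[X^0/P](k)=X^0(k)/P(k)$ (all $P_{\tilde\alpha_i}$-torsors over $\Spec k$ are trivial), we lift $q$ to a point of $F^0_{\tilde\beta_i}(X\sslash T)(k)=[X^0_{\tilde\beta_i}/T](k)$, i.e.\ a fixed $T$-quasimap $q'_i$ of class $\tilde\beta_i$ with $\psi_{\tilde\beta_i}(q'_i)\simeq q$. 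By the abelian case of Remark~\ref{rmk:k-quasimaps} (where the relevant Weyl group is trivial) the $T$-bundle of $q'_i$ is $\T_{\intalpha}$ with $\intalpha=a\,\bdtilde(\tilde\beta_i)$ (this value of $\intalpha$ is pinned down by computing $\deg_{\wP{a}}(\T_{\intalpha}\times_T\CC_\xi)$ for $\xi\in\chi(T)$), so the $G$-bundle of $q$ is $G\times_T\T_{\intalpha}=\P_{\intalpha}$. Since Remark~\ref{rmk:k-quasimaps} says the $G$-bundle of a fixed quasimap of class $\beta$ determines $\intalpha$ up to $W$, if $q$ lay in $F_{\tilde\beta_i}(X\sslash G)\cap F_{\tilde\beta_j}(X\sslash G)$ then $\bdtilde(\tilde\beta_i)$ and $\bdtilde(\tilde\beta_j)$ would be $W$-conjugate.

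\textbf{The injectivity input, and conclusion of the decomposition.} I claim $\bdtilde$ is injective on $\rpic^{-1}(\beta)$; equivalently $\ker\rpic\cap\ker\bdtilde=0$ in $\Hom(\Pic^T(X),\QQ)$, equivalently $\Pic^T(X)\otimes\QQ$ is spanned by the images of $\Pic^G(X)$ and of $\chi(T)$ under the maps in \eqref{eq:degree_diagram} and \eqref{eq:chi-to-pic}. This holds because $[X/T]\to[X/G]$ is a $G/T$-bundle: a line bundle restricting trivially to the projective rational fiber $G/T$ descends to $[X/G]$, while the classes $\L_\xi$, $\xi\in\chi(T)$, span $\Pic(G/T)\otimes\QQ$. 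Since $\bdtilde$ is $W$-equivariant and $\rpic$ is $W$-invariant, the conclusion of the previous step forces $\tilde\beta_i$ and $\tilde\beta_j$ into the same $W$-orbit, i.e.\ $i=j$. Thus the $F_{\tilde\beta_i}(X\sslash G)$ are pairwise disjoint closed substacks of the finite-type stack $F_\beta(X\sslash G)$ (Theorem~\ref{thm:qg}) covering it, so each is also open and we get the asserted decomposition into open and closed substacks.

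\textbf{The stabilizer.} By the injectivity of $\bdtilde$ on the $W$-stable set $\rpic^{-1}(\beta)$ and its $W$-equivariance, $\Stab_W(\tilde\beta_i)=\Stab_W(\bdtilde(\tilde\beta_i))$, which is the stabilizer of the cocharacter $\tau_{\intalpha}$ with $\intalpha=a\bdtilde(\tilde\beta_i)$. Writing $w=nT$ with $n\in N_G(T)$, one has $w\cdot\tau_{\intalpha}=\tau_{\intalpha}$ iff $n$ centralizes $\im\tau_{\intalpha}$; since $T\subseteq Z_G(\im\tau_{\intalpha})$, this stabilizer is $N_{Z_G(\im\tau_{\intalpha})}(T)/T$. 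Finally $Z_G(\im\tau_{\intalpha})=L_{\tilde\alpha_i}$: by definition $L_{\tilde\alpha_i}$ is the centralizer of $\im\tau_{\intalpha}$ inside $Z_G(g_{\tilde\beta_i})$, but $g_{\tilde\beta_i}=\tau_{\intalpha}(e^{2\pi i/a})^{-1}\in\im\tau_{\intalpha}$, so $Z_G(\im\tau_{\intalpha})\subseteq Z_G(g_{\tilde\beta_i})$ and the two centralizers agree. Hence $\Stab_W(\tilde\beta_i)=N_{L_{\tilde\alpha_i}}(T)/T=W_{\tilde\alpha_i}$. The subtlest new ingredient is the injectivity of $\bdtilde$ on $\rpic^{-1}(\beta)$ — essentially the Picard-group computation for the flag bundle $[X/T]\to[X/G]$ — together with the bookkeeping matching the abstract class $\intalpha$ of Remark~\ref{rmk:k-quasimaps} with $a\,\bdtilde(\tilde\beta_i)$.
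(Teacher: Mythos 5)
Your proof takes essentially the same route as the paper, whose own proof consists of the single line "the statement and proof of [nonab1, Lem 5.1.2] carry over verbatim," together with the observation you reproduce that $L_{\tilde\alpha_i}$ is the centralizer of $\tau_{\intalpha_i}$ in $G$ itself (because $g_{\tilde\beta_i}$ lies in the image of $\tau_{\intalpha_i}$). The structure of your argument — surjectivity from Lemma \ref{lem:surjective}, $W$-invariance of $\psi$, closedness from Lemma \ref{lem:nearly}, finiteness from Remark \ref{rmk:finite}, disjointness reduced to an injectivity statement about the degree maps, and the identification of $\Stab_W(\tilde\alpha_i)$ with $W_{\tilde\alpha_i}$ via centralizers — is exactly the argument being invoked, adapted as you note to the twisted source curve $\wP{a}$ with the bookkeeping $\intalpha = a\,\bdtilde(\tilde\beta_i)$ and $g_{\tilde\beta_i} = \tau_{\intalpha}(e^{2\pi i/a})^{-1}$.

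The one place where you go beyond what the paper (or its citation) makes explicit is the justification of the key injectivity statement, that $\ker\rpic \cap \ker\bdtilde = 0$, equivalently that $\Pic^G(X)\otimes\QQ + \chi(T)\otimes\QQ$ surjects onto $\Pic^T(X)\otimes\QQ$. Your see-saw sketch for $[X/T]\to[X/G]$ is the right mechanism, but as written it is compressed: the morphism $[X/T]\to[X/G]$ is proper with connected fibers, but it is a morphism of Artin stacks whose fibers over non-semistable points are quotients of $G/T$ by positive-dimensional stabilizers rather than $G/T$ itself, and $X$ is only assumed lci affine (not normal or smooth), so the standard line-bundle descent / Mumford-linearizability results do not apply off the shelf. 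This is a real gap to paper over if one wanted a self-contained argument; the published version sidesteps it by deferring to the cited lemma, which operates under analogous hypotheses. Everything else — the $W$-conjugacy of the two $\intalpha$'s via Remark \ref{rmk:k-quasimaps}, the passage from disjoint finite closed cover to open-and-closed, and the identification $Z_G(\im\tau_{\intalpha}) = L_{\tilde\alpha_i}$ — is correct.
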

\begin{proof}
The statement and proof of \cite[Lem~5.1.2]{nonab1} carry over verbatim. For \cite[Lem~5.1.2(3)]{nonab1}, it may help to note that $L_{\tilde \alpha}$, a priori the centrilizer of $\tau_{\intalpha}$ in $Z_G(g_{\tilde \beta})$, is also the centrilizer of $\tau_{\intalpha}$ in $G$, since any element of $G$ that commutes with $\tau_{\intalpha}$ necessarily commutes with $g_{\tilde \beta}=\tau_{\intalpha}(e^{-2\pi i/a}).$ The cited lemma, together with Lemma \ref{lem:surjective}, shows that the decomposition \eqref{eq:decompose1} holds and that the stabilizers are as described. The components $F_{\tilde \beta_i}(X\sslash G)$ are all closed by Lemma \ref{lem:nearly}; there are finitely many of them by Remark \ref{rmk:finite}.
\end{proof}

\subsubsection{Proof of Theorem \ref{thm:main}}
Recall that we have defined $F_{\tilde \beta}(X\sslash G) = \psi_{\tilde \beta}(F_{\tilde \beta}^0(X\sslash T))$, a closed substack of $F_{\beta}(X\sslash G)$.
\begin{lemma}\label{lem:computation}
We have the following relationships on $[F_{\tilde \beta}^0(X\sslash T)/\CC^*_\lambda]$:
\begin{align*}
\psi_{\tilde \beta}^*[F_{\tilde \beta}(X\sslash G)]^{\vir}&=[F_{\tilde \beta}^0(X\sslash T)]^{\vir} \\
\psi_{\tilde \beta}^*e_{\CC^*_\lambda}(N^{\vir}_{F_{\tilde \beta}(X\sslash G)})&=\left(\prod_{i=1}^{m}C^\circ(\tilde \beta, \rho_i)\right) e_{\CC^*_\lambda}(N^{\vir}_{F^0_{\tilde \beta(X\sslash T)}}).
\end{align*}
Here, $z$ is the Euler class of the line bundle on $[F^0_{\tilde \beta}(X\sslash T)/\CC^*_\lambda]$ determined by the identity character of $\CC^*_\lambda$.
\end{lemma}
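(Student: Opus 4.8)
The plan is to follow the proof of the corresponding statement in \cite{nonab1}, inserting the modifications forced by the orbifold source curve $\wP{a}$. First I would set up both obstruction theories explicitly. By the discussion surrounding \eqref{eq:almost-pot}, the $\CC^*_\lambda$-equivariant perfect obstruction theory of $F_\beta(X\sslash G)$ that defines $[F_\beta]^{\vir}$ and $N^{\vir}_{F_\beta}$ is $\EE_{F_\beta}=R\pi_*(q^*\LL_{[X/G]}\otimes\omega^\bullet)$ for the universal map $q\colon\wP{a}\times F_\beta(X\sslash G)\to[X/G]$. Since $\psi_{\tilde\beta}$ sends a $T$-quasimap to its associated $G$-quasimap, restricting $\EE_{F_\beta}$ to the open and closed component $F_{\tilde\beta}(X\sslash G)$ (Lemma \ref{lem:group-action}) and pulling back along $\psi_{\tilde\beta}$ gives $\psi_{\tilde\beta}^*\EE_{F_{\tilde\beta}(X\sslash G)}=R\pi_*(q_T^*\kappa^*\LL_{[X/G]}\otimes\omega^\bullet)$, where $\kappa\colon[X/T]\to[X/G]$ is the natural map and $q_T\colon\wP{a}\times F^0_{\tilde\beta}(X\sslash T)\to[X/T]$ is the universal family described by Proposition \ref{prop:ufam}. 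On the other side, $\EE_{F^0_{\tilde\beta}(X\sslash T)}=R\pi_*(q_T^*\LL_{[X/T]}\otimes\omega^\bullet)$ is the restriction of the obstruction theory of $F_{\tilde\beta}(X\sslash T)$ to the open substack $F^0_{\tilde\beta}(X\sslash T)$.

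The engine is the transitivity triangle for $\kappa$. Because $[X/T]=[X/G]\times_{BG}BT$ and $BT\to BG$ is smooth with relative cotangent complex the bundle associated to $(\mathfrak{g}/\mathfrak{t})^\vee$ (a sum of line bundles with weights the roots), pulling back $\kappa^*\LL_{[X/G]}\to\LL_{[X/T]}\to\LL_{[X/T]/[X/G]}$ along $q_T$ and applying $R\pi_*(-\otimes\omega^\bullet)$ produces a distinguished triangle on $[F^0_{\tilde\beta}(X\sslash T)/\CC^*_\lambda]$,
\[
\psi_{\tilde\beta}^*\EE_{F_{\tilde\beta}(X\sslash G)}\ \longrightarrow\ \EE_{F^0_{\tilde\beta}(X\sslash T)}\ \longrightarrow\ \mathcal{D}\ \longrightarrow,
\qquad
\mathcal{D}=\bigoplus_{i=1}^{m}R\pi_*(\mathcal{N}_i\otimes\omega^\bullet),
\]
with $\mathcal{N}_i=q_T^*\L_{-\rho_i}$. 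By Proposition \ref{prop:ufam}, the restriction of $\mathcal{N}_i$ to a fiber $\wP{a}\times\{\mathrm{pt}\}$ is a line bundle of degree $-\tilde\beta(\rho_i)$ and its restriction to $\{\mathrm{pt}\}\times F^0_{\tilde\beta}(X\sslash T)$ is $\L_{-\rho_i}$. I would then split $\mathcal{D}=\mathcal{D}^{\fix}\oplus\mathcal{D}^{\mov}$; using relative Serre duality along $\pi$ together with the explicit bases of $H^0$ and $H^1$ of line bundles on $\wP{a}$ in Example \ref{ex:cohomology}, each monomial $u^mv^n$ contributes a line summand isomorphic to $\L_{\rho_i}$ with $\CC^*_\lambda$-weight governed by $m$ (here one must track the $a$-fold reparametrization identifying $[F^0_{\tilde\beta}(X\sslash T)/\CC^*_\lambda]$ with $[F^0_{\tilde\beta}(X\sslash T)/\CC^*_{\lambda^{1/a}}]$).

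For the first equality, only the weight-zero monomials survive into $\mathcal{D}^{\fix}$; since $F^0_{\tilde\beta}(X\sslash T)=[X^0_{\tilde\beta}/T]$ and $F_{\tilde\beta}(X\sslash G)=[X^0_{\tilde\beta}/P_{\tilde\alpha}]$ by Lemma \ref{lem:nearly}, and the roots contributing a nonzero fixed part are exactly the roots of $P_{\tilde\alpha}$, one finds $\mathcal{D}^{\fix}\simeq\LL_{\psi_{\tilde\beta}}$, the cotangent complex of the smooth morphism $\psi_{\tilde\beta}\colon[X^0_{\tilde\beta}/T]\to[X^0_{\tilde\beta}/P_{\tilde\alpha}]$. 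Taking fixed parts of the triangle then exhibits $\EE_{F^0_{\tilde\beta}(X\sslash T)}^{\fix}$ as the modification of $\psi_{\tilde\beta}^*\EE_{F_{\tilde\beta}(X\sslash G)}^{\fix}$ by $\LL_{\psi_{\tilde\beta}}$, and compatibility of virtual classes with smooth pullback gives $\psi_{\tilde\beta}^*[F_{\tilde\beta}(X\sslash G)]^{\vir}=[F^0_{\tilde\beta}(X\sslash T)]^{\vir}$. For the second equality, $N^{\vir}=(\EE^{\mov})^\vee$ and the Euler class is multiplicative on distinguished triangles, so $\psi_{\tilde\beta}^*e_{\CC^*_\lambda}(N^{\vir}_{F_{\tilde\beta}(X\sslash G)})$ equals $e_{\CC^*_\lambda}(N^{\vir}_{F^0_{\tilde\beta}(X\sslash T)})$ divided by $e_{\CC^*_\lambda}((\mathcal{D}^{\mov})^\vee)$, and the monomial bookkeeping of Example \ref{ex:cohomology} identifies $e_{\CC^*_\lambda}((\mathcal{D}^{\mov})^\vee)^{-1}$ with $\prod_{i=1}^{m}C^\circ(\tilde\beta,\rho_i)$: when $\tilde\beta(\rho_i)\le 0$ the relevant cohomology lies in a single degree and gives the polynomial $\prod_{\tilde\beta(\rho_i)<k<0}(c_1(\L_{\rho_i})+kz)$, and when $\tilde\beta(\rho_i)>0$ it lies in the complementary degree and gives its inverse (the absence of an extra $c_1(\L_{\rho_i})$ factor, i.e.\ the ``$C^\circ$'' rather than ``$C$'' normalization, reflecting that the weight-zero contributions have been siphoned off into $\mathcal{D}^{\fix}$). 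The hard part will be the identification $\mathcal{D}^{\fix}\simeq\LL_{\psi_{\tilde\beta}}$ with exactly the roots of $P_{\tilde\alpha}$ appearing, and upgrading the resulting numerical identities to an equivalence of obstruction theories robust enough to force the virtual-class equality --- on top of the ubiquitous $\CC^*_{\lambda^{1/a}}$-versus-$\CC^*_\lambda$ reparametrization bookkeeping.
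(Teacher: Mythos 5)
Your proposal follows essentially the same route as the paper's proof: it builds the distinguished triangle from the transitivity triangle of $[X/T]\to[X/G]$, computes the moving part via Example \ref{ex:cohomology} to get $\prod_iC^\circ(\tilde\beta,\rho_i)$, and argues the virtual-class equality by identifying the fixed part of the third term with $\LL_{\psi_{\tilde\beta}}$ and invoking compatibility of virtual classes with a smooth morphism. The place you correctly flag as ``the hard part'' is exactly what the paper supplies with machinery you could not have guessed: \cite[Lem~A.2.3]{cjw} produces the whole \emph{morphism} of distinguished triangles (not just the top row you built) so that the right vertical arrow is automatically the relative obstruction-theory map; then \cite[Lem~A.3.5]{cjw} identifies the fixed parts of the left two columns with the canonical POTs, the vanishing of $(R^1\pi_*q^*\TT_{[X/T]/[X/G]})^{\fix}$ makes the right vertical arrow a quasi-isomorphism, and \cite[Cor~4.9]{manolache} delivers the virtual-class equality.
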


\begin{proof}
Apply \cite[Lem~A.2.3]{cjw} with $B_V = \fU=pt$, $V=\cC=\wP{a}$, $W=[X/T]$, $Z=[X/G]$, $B_W = F_{\tilde \beta}(X\sslash G)$, $B_Z = F^0_{\tilde \beta}(X\sslash T)$, and $\mu_B = \psi_{\tilde \beta}$. We get a morphism of (equivariant) distinguished triangles
\begin{equation}\label{eq:abel-dt}
\begin{tikzcd}
\psi_{\tilde \beta}^*\EE_{F_{\tilde \beta}(X\sslash G)} \arrow[r] \arrow[d, "\phi_{F_{\tilde \beta}(X\sslash G)}"] & \EE_{F_{\tilde \beta}^0(X\sslash T)} \arrow[d, "\phi_{F^0_{\tilde \beta}(X\sslash T)}"] \arrow[r] & R\pi_*(q^*\TT_{[X/T]/[X/G]})^\vee \arrow[r] \arrow[d]& {}\\
\psi_{\tilde \beta}^*\LL_{F_{\tilde \beta}(X\sslash G)} \arrow[r] & \LL_{F_{\tilde \beta}^0(X\sslash T)}  \arrow[r] & \LL_{F_{\tilde \beta}(X\sslash G)/F_{\tilde \beta}^0(X\sslash T)} \arrow[r] & {}
\end{tikzcd}
\end{equation}
where the vertical arrows are as defined in \eqref{eq:almost-pot}. Here, $q: \wP{a} \times F_{\tilde \beta}^0(X\sslash T) \rightarrow [Z/T]$ is the universal quasimap and $\TT_{[X/T]/[X/G]}$ is the relative tangent bundle. 

We compute $\psi^*_{\tilde \beta}e_{\CC^*_\lambda}(N^{\vir}_{F_{\tilde \beta}(X\sslash G)})$ using the top row of \eqref{eq:abel-dt} as in the proof of \cite[Cor~5.2.3]{nonab1}. A priori the tags \cite[Tag~0F8G, 0F9F]{tag} cited in that argument only apply to schemes; however the same arguments can be made to work in our context as in Remark \ref{rmk:perfect}. To compute the weights of the virtual normal bundle we use the following fact.

\begin{lemma}\label{lem:pullback-formula}
Let $f:X \rightarrow Y$ be a morphism of schemes and $\phi: H \rightarrow G$ a morphism of algebraic groups with $H$ acting on $X$ and $G$ acting on $Y$, such that for $h \in H$ and $x \in X$ we have $f(hx)=\phi(h)f(x)$. If $V$ is a $G$-representation, then the fiber product of $(f, \phi): [X/H] \rightarrow [Y/G]$ and $p_2:[(Y\times V)/G] \rightarrow [Y/G]$ is $[(X\times V)/H]$ where $V$ is an $H$-representation via $\phi$.
\end{lemma}
\begin{proof}
It is straightforward to check that there is a fiber product of the prestacks defined in \cite[Prop~2.6]{romagny}. 
\end{proof}

From the description of the universal family in \eqref{eq:ufam1}, the universal map $q: \wP{a} \times F_{\tilde \beta}^0(X\sslash T) \rightarrow [X/T]$ is given as a $\CC^*_{\lambda^{1/a}}$-equivariant morphism by a morphism as in Lemma \ref{lem:pullback-formula} where the group homomorphism is 
\begin{equation}\label{eq:compute1}T\times \CC^*\times \CC^*_{\lambda^{1/a}} \xrightarrow{(t,s, \mu) \mapsto t \tau_{\intalpha}(s)^{-1}\tau_{\intalpha}(\mu)^{-1}} T.\end{equation} 
Hence the $\CC^*_{\lambda^{1/a}}$-equivariant vector bundle $q^*\TT_{[X/T]/[X/G]}$ is given by subspace of the Lie algebra $\mathfrak{g}$ of $G$ with nontrivial weights, as a $T\times\CC^*\times \CC^*_{\lambda^{1/a}}$-representation via the homomorphism \eqref{eq:compute1} and the adjoint representation of $T$ on $\mathfrak{g}$.

As in \cite[Cor~5.2.3]{nonab1} we get the following equality of $\CC^*_{\lambda^{1/a}}$-equivariant bundles:
\[
R^j\pi_*(q^*\TT_{[X/T]/[X/G]}) = \bigoplus_{i=1}^m \L_{\rho_i} \otimes H^j(\wP{a}, \OO_{\wP{a}}(a\tilde \beta(\rho_i)))\otimes \CC_{\mu^{a\tilde \beta(\rho_i)}}
\]
Here we've used $\mu := \lambda^{1/a}$ for the parameter on $\CC^*_{\lambda^{1/a}}$, and $\CC_{\mu^{a\tilde \beta(\rho_i)}}$ is the one-dimensional representation of $\CC^*_{\lambda^{1/a}}$ where $\mu$ acts by the character ${\mu^{a\tilde \beta(\rho_i)}}.$
Now we use Example \ref{ex:cohomology}: for $i=0,1$ the vector space $H^j(\wP{a}, \OO_{\wP{a}}(a\tilde \beta(\rho_i)))$ has a basis of monomials $u^mv^n$ with $m, n\in \ZZ$ and $m+n/a = \tilde \beta(\rho_i)$. If $i=0$ then we require $m,n \geq 0$ and if $i=1$ then we require $m,n<0$. The weight of the monomial $u^mv^n$, \textit{with respect to $\CC^*_\lambda$}, is $\tilde \beta(\rho_i)-m=n/a$, which satisfies $n/a-\tilde \beta(\rho_i)=m \in \ZZ$. This (plus the additivity of $e_{\CC^*_{\lambda}}$) completes the computation of $\psi^*_{\tilde \beta}e_{\CC^*_\lambda}(N^{\vir}_{F_{\tilde \beta}(X\sslash G)})$.

To compute $\psi^*_{\tilde \beta}[F_{\tilde \beta}(X\sslash G)]^{\vir}$ we apply $\fix$ to \eqref{eq:abel-dt} (it affects only the top triangle). Then \cite[Lem~A.3.5]{cjw} implies that the left and middle vertical arrows of the resulting diagram are the canonical perfect obstruction theories for $F_{\tilde \beta}(X\sslash G)$ and $F_{\tilde \beta}^0(X\sslash T)$, respectively. The right vertical arrow is a quasi-isomorphism: this is because it is already an obstruction theory and $R^1\pi_*(q^*\TT_{[X/T]/[X/G]})^{\fix}$ vanishes, as can be seen from the above computation. Now the desired equality $\psi^*_{\tilde \beta}[F_{\tilde \beta}(X\sslash G)]^{\vir} = [F^0_{\tilde \beta}(X\sslash T)]^{\vir} $ follows from \cite[Cor~4.9]{manolache} (see the proof of \cite[Cor~5.2.3]{nonab1}).
\end{proof}

\begin{proof}[Proof of Theorem \ref{thm:main}]
We first show that to prove Theorem \ref{thm:main} it suffices to prove the following equality in $A_*(I_\mu(X\sslash_G T))[z,z^{-1}]$:
\begin{equation}\label{eq:main2}
\varphi^* (ev_{\star})_*Res^{X\sslash G}_\beta =  \sum_{\tilde \beta \rightarrow \beta} \left( \prod_{i=1}^m C(\tilde \beta, \rho_i)^{-1}\right)j^*(ev_{\star})_*\Res^{X\sslash T}_{\tilde \beta}.
\end{equation}
Recall that $\widetilde{ev}_\star = a\circ\iota \circ \varpi \circ ev_\star,$ where $a$ denotes multiplication by $a$. Given \eqref{eq:main2} we obtain Theorem \ref{thm:main} for unrigidified (resp. rigidified) $I$-functions by applying $\iota_*$ (resp. $(a \circ \varpi \circ \iota)_*$) to both sides. We explain what happens when we apply $(a \circ \varpi \circ \iota)_*$. One uses the fibered squares
\[
\begin{tikzcd}
I_\mu(X\sslash G) \arrow[d, "\varpi"] & \arrow[l, "\varphi"] I_\mu(X\sslash_G T) \arrow[d, "\varpi"] \arrow[r, "j"] & I_\mu(X\sslash T) \arrow[d, "\varpi"]\\
\overline{I}_\mu(X\sslash G) & \arrow[l, "\varphi"]\arrow[r, "j"] \overline{I}_\mu(X\sslash_G T) & \overline{I}_\mu(X\sslash T)
\end{tikzcd}
\]
where the rigidification maps $\varpi$ are proper (and smooth) and $\varphi$ and $j$ are flat. These claims may be checked on each component $\overline{I}_{\mu_a}(X\sslash G)$ of the cyclotomic inertia stack.
Now when we apply ${a_*}\varpi_*\iota_*$ we use the equalities ${a}_* \varpi_*\iota_*\varphi^*=\varphi^*{a}\varpi_*\iota_*$ and ${a_*} \varpi_*\iota_*j^*=j^*{a_*}\varpi_*\iota_*$ (from \cite[Lem~3.9]{vistoli}) on the left and right hand sides, respectively. On the right hand side, we also use the projection formula and the facts that $c_1(\L_{\rho_i}) = \iota^*\varpi^*c_1(\L_{\rho_i})$ and $z = \iota^*\varpi^*z$, using the rule in Lemma \ref{lem:pullback-formula} for pulling back line bundles and the convention in Example \ref{ex:chern-classes}. This proves Theorem \ref{thm:main} assuming \eqref{eq:main2} holds.

To prove \eqref{eq:main2} we have two cases: $F_\beta(X\sslash G)$ is either empty or nonempty. If $F_\beta(X\sslash G)$ is empty then the left hand side of \eqref{eq:main2} is defined to be zero. Moreover we claim that $F^0_{\tilde \beta}(X\sslash T)$ is empty for any $\tilde \beta$ mapping to $\beta$: Indeed, a point of $F^0_{\tilde \beta}(X\sslash T)$ would correspond to a fixed quasimap $q=(\wP{a}, \T_{\intalpha}, \sigma)$ to $X\sslash T$ of degree $\tilde \beta$ such that $ev_\star(q) \in I_\mu(X\sslash_G T)$. Then we would have that $q:\wP{a} \to [X/T]$ meets the open substack $X\sslash_G T$, so the associated quasimap $(\wP{a}, G\times_T\T_{\intalpha}, \sigma)$ would be a fixed quasimap to $X\sslash G$ of degree $\rpic(\tilde \beta) = \beta$, contrary to assumption that $F_\beta(X\sslash G)$ is empty. This shows that the composition $j^*(ev_\star)_*$ on the right hand side of \eqref{eq:main2} is zero, since the closed image of $ev_\star$ is disjoint from the open set that is the domain of $j$.

To prove \eqref{eq:main2} when $F_\beta(X\sslash G)$ is not empty, we compute as follows. Note that the left hand side equals $\varphi^*\iota_*I^{X\sslash G}_\beta$. Writing $I^{X\sslash G}_{\beta}$ as a sum over the open and closed substacks $F_{\tilde \beta_i}(X\sslash G)$ of $F_{\beta}(X\sslash G)$, we have
\begin{equation}\label{eq:step1}
\varphi^*\iota_*I^{X\sslash G}_\beta = \sum_{\tilde \beta_i \rightarrow \beta} \varphi^*(ev_\star)_*\Res^{X\sslash G}_{\tilde \beta_i}
\end{equation}
Now we use \eqref{eq:inertia5} to compute $\varphi^*$ with a different formula on each component $F_{\tilde \beta_i}(X\sslash G)$. Namely, from \eqref{eq:inertia5} and \eqref{eq:inertia6}, the formula \eqref{eq:step1} becomes
\begin{equation}\label{eq:step2}
\sum_{\tilde \beta_i \rightarrow \beta} \;\;\sum_{w_1 \in W/W_{Z(g_i)}} (w_1^{-1})^*\varphi_{g_i}^*\eta_G^*(ev_\star)_*\Res^{X\sslash G}_{\tilde \beta_i},
\end{equation}
where we have written $g_i$ for $g_{\tilde \beta_i}$ and $Z(g_i)$ for $Z_G(g_{\tilde \beta_i})$. But $\eta_G^*(ev_\star)_*=\eta_G^*(\eta_G)_*f_*i_*$ by Proposition \ref{prop:diagram}, so \eqref{eq:step2} becomes
\begin{equation}\label{eq:step3}
\sum_{\tilde \beta_i \rightarrow \beta} \;\;\sum_{w_1 \in W/W_{Z(g_i)}} (w_1^{-1})^*\varphi_{g_i}^*f_*i_*\Res^{X\sslash G}_{\tilde \beta_i}.
\end{equation}

The next step is to argue as in \cite[Lem~5.3.1]{nonab1}; namely, for any $\delta \in A_*(X^{g_i}\sslash_G P_{\tilde \alpha_i})$, we have 
\begin{equation}\label{eq:brion}
\varphi_{g_i}^*f_*\delta = \sum_{w\in W_{Z(g_i)}/W_{\tilde \alpha}} (w^{-1})^*\left[ \frac{{p}^*\delta}{\prod_{\rho_i\in R^-_{\tilde\alpha}} c_1(\L_{\rho_i})}\right]
\end{equation}
where $R^-_{\tilde \alpha}$ is the set of roots of $Z_G(g_i)$ whose inner product with the dual character $\intalpha$ is negative. To prove \eqref{eq:brion}, since the Kresch Chow group of a global quotient is the same as the Edidin-Graham equivariant Chow group of the cover, it suffices to prove \eqref{eq:brion} for $G$-equivariant Chow groups. This follows from the definitions and the original statement of \cite[Lem~5.3.1]{nonab1}. 

Formula \eqref{eq:step3}, combined with \eqref{eq:brion}, becomes
\[
\sum_{\tilde \beta_i \rightarrow \beta} \;\;\sum_{w_1 \in W/W_{Z(g_i)}} (w_1^{-1})^*\sum_{w_2 \in W_{Z(g_i)}/W_{\tilde \alpha_i}} (w_2^{-1})^*\left( \frac{p^*i_*\Res^{X\sslash G}_{\tilde \beta_i}}{\prod_{\rho_i \in R_{\tilde \alpha_i}^-} c_1(\L_{\rho_i})}\right)
\]
Combining the two Weyl-group summations into one, we have
\[
\sum_{\tilde \beta_i \rightarrow \beta} \;\;\sum_{w \in W/W_{\tilde \alpha_i}} (w^{-1})^*\left( \frac{p^*i_*\Res^{X\sslash G}_{\tilde \beta_i}}{\prod_{\rho_i \in R_{\tilde \alpha_i}^-} c_1(\L_{\rho_i})}\right),
\]
which is the analog of \cite[(62)]{nonab1}. The analog of \cite[(63)]{nonab1} is a commuting diagram
\[
\begin{tikzcd}
F^0_{w\tilde \beta_i}(X\sslash T) \arrow[r, "w^{-1}"] \arrow[d,hook, "ev_\star"]& F^0_{\tilde \beta_i}(X\sslash T) \arrow[r, "\psi_{\tilde \beta_i}"] \arrow[d,hook, "ev_\star"]& F_{\tilde \beta_i}(X\sslash G) \arrow[d,hook, "i"]\\
X^{g_{w\tilde \beta_i}}\sslash_G T \arrow[r,"w^{-1}"] & X^{g_i}\sslash_G T \arrow[r,"p_{\tilde \alpha_i}"] & \bigsqcup X^{g_i}\sslash_G P_{\intalpha_i}
\end{tikzcd}
\]
The square on the right is fibered by Proposition \ref{prop:diagram} and the horizontal maps are flat. We have $(w^{-1})^*p_{\tilde \alpha_i}^*i_* = (ev_\star)_*(w^{-1})^*\psi_{\tilde \beta_i}^* = (ev_\star)_*\psi_{w\tilde \beta_i}^*$ by \cite[Lem~3.9]{vistoli}. 

From here we apply Lemma \ref{lem:group-action}, arguing as in Section \cite[Sec~5.3]{nonab1}. As the analog of \cite[(66)]{nonab1} we obtain a formula
\begin{equation}\label{eq:step7}
\varphi^* \iota_*I^{X\sslash G}_\beta = \sum_{\tilde \beta \rightarrow \beta}  \frac{(ev_\star)_*\psi_{\tilde \beta}^*\Res^{X\sslash G}_{\tilde \beta}}{\prod_{\rho_i \in R_{\tilde \alpha}^-} c_1(\L_{\rho_i})}.
\end{equation}
Applying Lemma \ref{lem:computation} yields
\[
\varphi^* \iota_*I^{X\sslash G}_\beta =  \sum_{\tilde \beta \rightarrow \beta}  j^*\mathscr{R}_{\tilde \beta}(ev_\star)_*\Res^{X\sslash T}_{\tilde \beta}.
\]
where 
\[
\mathscr{R}_{\tilde \beta} = \frac{1}{\prod_{\rho_i \in R^-_{\tilde \alpha}} c_1(\L_{\rho_i})} \frac{\prod_{i \;\mathrm{s.t.}\; \tilde \beta(\rho_i)>0}\prod_{k-\tilde \beta(\rho_i)\in\ZZ,\,0 < k \leq \tilde \beta(\rho_i)}^{}(c_1(\L_{\rho_i}) + kz)}{\prod_{i \;\mathrm{s.t.}\;\tilde \beta(\rho_i)<0}\prod_{k-\tilde \beta(\rho_i)\in\ZZ,\,\tilde \beta(\rho_i)+1 \leq k <0}^{}(c_1(\L_{\rho_i}) + kz)}.
\]
Recall that $R^-_{\tilde \alpha}$ is the set of roots of $Z_G(g_{\tilde \beta})$ with $\intalpha(\rho_i)=a\tilde \beta(\rho_i)<0$. To calculate the roots of $Z_G(g_{\tilde \beta})$, note that $Z_G(g_{\tilde \beta})$ is the tangent space to the fixed locus of the group generated by $g_{\tilde \beta}$ acting on $G$ by conjugation. So the tangent space to $Z_G(g_{\tilde \beta})$ at the identity is the fixed part of the tangent space to $G$ at the identity; in other words, the roots of the Lie algebra of $Z_G(g_{\tilde \beta})$ are precisely those roots $\rho_i$ of $G$ with $\rho_i(g_{\tilde \beta})=1$. Since 
\[\rho_i(g_{\tilde \beta}) = \rho_i(\tau_{\intalpha}(e^{2\pi i/a})^{-1}) = e^{2\pi i \tilde \alpha(\rho_i)},\]
we see that $\rho_i$ is a root of $Z_G(g_{\tilde \beta})$ if and only if $\tilde \alpha(\rho_i)=\tilde \beta(\rho_i)\in \ZZ$. So $R^-_{\tilde \alpha}$ is the set of roots of $G$ with $\tilde \alpha(\rho_i)=\tilde \beta(\rho_i) \in \ZZ_{<0}.$
By checking the cases $\tilde \beta(\rho_i)<0, \tilde \beta(\rho_i)=0,$ and $\tilde \beta(\rho_i)>0$ each in turn, one may check that this coefficient may also be written as $\mathscr{R}_{\tilde \beta} = \prod_{\rho_i}C(\beta, \rho_i)^{-1}.$
This completes the proof of \eqref{eq:main2}.
\end{proof}

\begin{remark}
If $R$ is not trivial, Lemma \ref{lem:computation} still holds as an equality of classes in $A_*([F^0_{\tilde \beta}(X\sslash T)/(R\times \CC^*_\lambda)])$ after replacing all virtual and characteristic classes with their $R$-equivariant counterparts, and replacing $\rho_i$ by the character of $R \times T$ given by the composition $R \times T \xrightarrow{pr_2} T \xrightarrow{\rho_i} \CC^*$.

Indeed, by \cite[Lem~A.3.3]{cjw} the diagram \eqref{eq:abel-dt} holds $R$-equivariantly, and in place of \eqref{eq:compute1} we have the homomorphism
\begin{equation*}R \times T\times \CC^*\times \CC^*_{\lambda^{1/a}} \xrightarrow{(r, t,s, \mu) \mapsto (r, t \tau_{\intalpha}(s)^{-1}\tau_{\intalpha}(\mu)^{-1})} R \times T.\end{equation*} 
The remainder of the proof of Theorem \ref{thm:main} is routine.
\end{remark}

\section{Quantum Lefschetz for $I$-functions}
We let $Y \subset X, G, \theta, E,$ and $s$ be as in Section \ref{sec:ql-setup}. Let $T\subset G$ be a maximal torus, and denote the weights of $E$ with respect to $T$ by $\epsilon_j$ for $j=1, \ldots, r$. If $Z$ is a scheme with a $G$-action and $F$ is any $G$-representation, set $F_Z := F \times Z$ (the $G$-equivariant trivial bundle), and if $W = [Z/G]$ let $F_W := [F_Z/G]$ be the induced vector bundle on $W$. 
The inclusion $\bi: Y \hookrightarrow X$ induces a map $\bi_*$ making the following diagram commute:
\[
\begin{tikzcd}
\Hom(\Pic^T(Y), \QQ) \arrow[r, "\bi_*"] \arrow[d]& \Hom(\Pic^T(X), \QQ) \arrow[d] \\
\Hom(\Pic^G(Y), \QQ) \arrow[r, "\bi_*"] & \Hom(\Pic^G(X), \QQ)
\end{tikzcd}
\]
\begin{notation}\label{not:ql-degrees}
We will hereafter use these maps implicitly, using symbols $\tilde \beta, \tilde \delta, \beta,$ and $\delta$ for elements of the top left, top right, bottom left, and bottom right corners, respectively. As an example, given $\delta \in \Hom(\Pic^G(X), \QQ)$ we will write $\tilde \delta \mapsto \delta$ for the preimage of $\delta$ under the right vertical map.
\end{notation}

\subsection{Preparation}
We check that our assumptions (1)-(3) in Sections \ref{sec:abel-setup}-\ref{sec:ql-setup} imply that quasimap theory is defined for the complete intersection $(Y, G)$. Recall that $X$ is an affine variety, $G$ is a reductive group acting on $X$, and $\theta$ is a character defining stable and semi-stable loci. The proof of this lemma was explained to me by Yang Zhou.

\begin{lemma}\label{lem:stable-locus}
If $X^{ss}(G) = X^s(G)$ then $Y^{ss}(G) = Y^{s}(G) = X^s(G)\cap Y$.
\end{lemma}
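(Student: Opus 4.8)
The plan is to prove the two equalities $Y^{ss}(G) = Y^{s}(G)$ and $Y^{s}(G) = X^{s}(G) \cap Y$ by relating stability for the $G$-action on $Y$ to stability for the $G$-action on $X$, using that $Y$ is a $G$-invariant closed subvariety of $X$ cut out by the section $s$. The key point is that the Hilbert--Mumford-type criterion of \cite[Def~2.1]{king} (and the associated description of (semi)stable points via the character $\theta$) is intrinsic: a point $y \in Y$ is $\theta$-(semi)stable for the $G$-action on $Y$ if and only if a certain condition on $G$-orbits and $\theta$-weights holds, and this condition depends only on the orbit $G \cdot y$ and the stabilizer, which are the same whether we regard $y$ as a point of $Y$ or of $X$.

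First I would recall the characterization of $X^{s}(G)$ and $X^{ss}(G)$ from \cite{king}: writing $A = \CC[X]$, a point $x$ is $\theta$-semistable iff there is a $G$-semiinvariant function $f \in A_{n\theta}$ (for some $n > 0$) with $f(x) \neq 0$, and $\theta$-stable iff moreover the stabilizer of $x$ is finite and the $G$-action on $\{f \neq 0\}$ is closed. Because $Y \hookrightarrow X$ is a closed $G$-immersion, restriction gives a surjection $A_{n\theta} \twoheadrightarrow \CC[Y]_{n\theta}$ on semiinvariants (semiinvariants on $Y$ extend to $X$, as $\CC[Y]$ is a quotient $G$-algebra of $A$ and $G$ is reductive so taking $n\theta$-isotypic components is exact). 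From this, the semistable locus of $Y$ is exactly $\{y \in Y : f(y) \neq 0 \text{ for some } f \in A_{n\theta}, n>0\} = X^{ss}(G) \cap Y$. Next, since $Y$ is $G$-invariant and closed in $X$, a $G$-orbit in $Y$ is closed in $\{f \neq 0\} \cap Y$ iff it is closed in $\{f \neq 0\} \subset X$; likewise the stabilizer of $y$ in $G$ is the same computed in $Y$ or in $X$. Therefore $Y^{s}(G) = X^{s}(G) \cap Y$.

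Granting these two identifications, the chain $Y^{ss}(G) = X^{ss}(G) \cap Y = X^{s}(G) \cap Y = Y^{s}(G)$ follows immediately from the hypothesis $X^{ss}(G) = X^{s}(G)$, which proves the lemma. The main obstacle I anticipate is purely bookkeeping: making sure the (semi)stability criterion really is the one in \cite[Def~2.1]{king} and that the restriction map on semiinvariants is surjective with the isotypic decomposition behaving well, i.e. that no semistable point of $Y$ can fail to be semistable in $X$ and vice versa. Since $G$ is reductive and $Y \subset X$ is a closed $G$-subscheme, reductivity gives the needed exactness, so this should be routine; I would only need to be slightly careful that ``stable'' in \cite{king} includes the finite-stabilizer and closed-orbit conditions and that both transfer verbatim between $Y$ and $X$ because orbits of points of $Y$ lie in $Y$.
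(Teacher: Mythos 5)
Your proof is correct and uses the same key mechanism as the paper's: lifting a $\theta$-semi-invariant function from $\CC[Y]$ to $\CC[X]$ using reductivity of $G$ (the paper invokes Schur's lemma to pick out the weight-$k\theta$ summand of a lift; you invoke exactness of taking isotypic components), then transferring the closed-orbit and finite-stabilizer conditions using that $Y$ is a $G$-invariant closed subvariety of $X$. The only cosmetic difference is that you package the argument as two unconditional identifications $Y^{ss}(G)=X^{ss}(G)\cap Y$ and $Y^{s}(G)=X^{s}(G)\cap Y$ before invoking the hypothesis $X^{ss}(G)=X^{s}(G)$, whereas the paper proves the chain of inclusions $X^{s}(G)\cap Y\subset Y^{s}(G)\subset Y^{ss}(G)\subset X^{ss}(G)\cap Y$ and closes it with the hypothesis; these are logically equivalent and rely on the same observations.
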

\begin{proof}
Clearly $X^s(G)\cap Y \subset Y^s(G) \subset Y^{ss}(G)$. Let $y \in Y^{ss}(G)$. Then there is a function $f$ of weight $k\theta$ such that $f(y) \neq 0$. Since the map of coordinate rings $\CC[X] \rightarrow \CC[Y]$ is surjective, there is a function $g$ on $X$ that restricts to $f$. A priori $g$ may not be a function of weight $k\theta$, but by Schur's lemma it has a summand $g'$ of weight $k\theta$ and by $g'$ restricts to $f$. So $y \in X^{ss}(G)$. Since $X^{ss}(G) = X^{s}(G)$ the orbit $Gy$ is closed in the open subset of $X$ where $g' \neq 0$, and hence it is closed in the open subset of $Y$ where $f \neq 0$.
\end{proof}

The closed embedding $Y \rightarrow X$ induces a closed embedding ${I}_\mu(Y\sslash_G T) \rightarrow {I}_\mu(X\sslash_G T)$. A key observation is that since $s$ is a regular section on $X^{s}(G)$, it defines a canonical $G$-equivariant isomorphism between the normal bundle $N_{Y^s(G)/X^s(G)}$ and $E \times Y^s$. We likewise get an explicit description of the normal bundle of inertia stacks as follows.

\begin{lemma}\label{lem:ql-inertia}
The map $ {I}_\mu(Y\sslash_G T) \rightarrow {I}_\mu(X\sslash_G T)$ is a regular local immersion in the sense of \cite[Sec~3.1]{kresch}. If for $g \in T$ we write $(I_\mu(Y\sslash_G T))_{g} = Y^g\sslash_G T,$ then the normal bundle to $({I}_\mu(Y\sslash_G T))_{g} $ in  $({I}_\mu(X\sslash_G T))_{g}$ is given by the following sub $T$-representation of $E$:
\begin{equation}\label{eq:iso1}
E^g := \bigoplus_{\epsilon_j \;\text{s.t.}\; \epsilon_j(g) = 1} \CC_{\epsilon_j}.
\end{equation}
\end{lemma}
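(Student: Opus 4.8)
The plan is to reduce everything to components and to the concrete model $I_\mu(\,\cdot\,\sslash_G T) = \bigsqcup_{g\in T}(\,\cdot\,)^g\sslash_G T$ from Section~\ref{sec:inertia1}. Fix $g\in T$ with $Y^g$ nonempty (for empty components there is nothing to prove). Since $X\sslash_G T$ is Deligne--Mumford, the stabilizer $\Stab_T(x)$ of any $x\in X^g$ is finite, so $g$ has finite order, say $r$, and $\langle g\rangle\cong\bmu_r$ is linearly reductive over $\CC$. By Lemma~\ref{lem:stable-locus} we have $Y^s(G)=X^s(G)\cap Y$, hence $Y^g=(Y^s(G))^g=X^g\cap Y$ is a closed subscheme of $X^g$, and on this component the map in the lemma is the quotient by $T$ of the closed embedding $Y^g\hookrightarrow X^g$.

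First I would record that $X^g$ and $Y^g$ are smooth, being the fixed loci of the finite group $\langle g\rangle$ acting over $\CC$ on the smooth schemes $X^s(G)$ and $Y^s(G)=X^s(G)\cap Y$ (smooth by assumption (3)); fixed loci of finite groups on smooth schemes in characteristic zero are smooth. A closed embedding of smooth schemes is automatically a regular embedding, and here it is $T$-equivariant (in fact $Z_G(g)$-equivariant). Passing to the quotient by $T$, the morphism $Y^g\sslash_G T\hookrightarrow X^g\sslash_G T$ becomes, smooth-locally on the target (pull back along the atlas $X^g\to[X^g/T]$), a regular closed immersion; this is exactly what it means to be a regular local immersion in the sense of \cite[Sec~3.1]{kresch}, and its normal bundle is the descent of the $T$-equivariant normal bundle $N_{Y^g/X^g}$.

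It then remains to identify $N_{Y^g/X^g}$ with the bundle on $Y^g\sslash_G T$ attached to the $T$-subrepresentation $E^g=\bigoplus_{\epsilon_j(g)=1}\CC_{\epsilon_j}$ of $E$. For this I would use the canonical $G$-equivariant isomorphism $ds\colon N_{Y^s(G)/X^s(G)}\xrightarrow{\ \sim\ }E\times Y^s(G)$ from regularity of $s$ (recalled just above the lemma), which at $y\in Y^s(G)$ is the surjection $T_yX^s(G)\twoheadrightarrow E$ with kernel $T_yY^s(G)$ appearing in the tangent sequence. Restricting to $y\in Y^g$ and taking $\langle g\rangle$-invariants --- exact, since $\langle g\rangle$ is linearly reductive --- gives a surjection $T_yX^g=(T_yX^s(G))^g\twoheadrightarrow E^g$ with kernel $(T_yY^s(G))^g=T_yY^g$, where $E^g$ is the $g$-fixed subspace of $E$, i.e.\ $\bigoplus_{\epsilon_j(g)=1}\CC_{\epsilon_j}$ because $g\in T$ acts on $\CC_{\epsilon_j}$ by $\epsilon_j(g)$. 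Hence $N_{Y^g/X^g,y}=(N_{Y^s(G)/X^s(G),y})^g\cong E^g$, compatibly with the $T$-action (as $g\in T$ makes $E^g$ a $T$-subrepresentation of $E$), and this identification is constant along $Y^g$, so $N_{Y^g/X^g}\cong E^g\times Y^g$ as $T$-equivariant bundles; descending to $X^g\sslash_G T$ finishes the proof.

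The main obstacle I anticipate is the bookkeeping around these fixed-part computations --- verifying that $N_{Y^g/X^g}$ is genuinely the $\langle g\rangle$-fixed subbundle of $N_{Y^s(G)/X^s(G)}|_{Y^g}$, i.e.\ that forming $\langle g\rangle$-invariants commutes with forming the normal bundle in this setting. This rests on the linear reductivity of $\langle g\rangle$ together with the smoothness of $X^s(G)$, $Y^s(G)$, $X^g$ and $Y^g$; once that smoothness is in hand (and once one confirms $g$ is torsion so that $\langle g\rangle$ is finite), the remaining steps are formal.
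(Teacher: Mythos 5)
Your proposal is correct and takes essentially the same approach as the paper: reduce to the $g$-component, use smoothness of the fixed loci $X^g$ and $Y^g$ to get a regular embedding, and compute the normal bundle by taking $\langle g\rangle$-invariants of the normal-bundle sequence for $Y^s(G)\subset X^s(G)$ using the identification $N_{Y^s(G)/X^s(G)}\cong E_{Y^s(G)}$ supplied by $s$. The only cosmetic difference is that you work with tangent spaces while the paper works with the dual cotangent sequence (citing \cite[Prop~3.2]{edix} for $\Omega_{(Y^s(G))^g}\cong(\Omega_{Y^s(G)})^{\langle g\rangle}$ in place of your direct tangent-space argument).
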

\begin{proof}
Observe that $(I_\mu(X\sslash_G T))_{g} = X^g\sslash_G T$, so the embedding  $ {I}_\mu(Y\sslash_G T) \rightarrow {I}_\mu(X\sslash_G T)$ is regular if its smooth cover $(Y^s(G))^g \to (X^s(G))^g$ is regular. Since $X^{s}(G)$ and $Y^{s}(G)$ are smooth varieties, for any $g \in T$ the fixed loci $(X^{s}(G))^g $ and $(Y^{s}(G))^g$ are also smooth. It follows from \cite[Tag~069M, 069G]{tag} that $(Y^{s}(G))^g \rightarrow (X^{s}(G))^g$ is a regular embedding. 

The desired normal bundle lifts to the $T$-equivariant normal bundle of $Y^s(G)$ in $X^s(G)$.
Let $\langle g \rangle \subset T$ be the cyclic subgroup generated by $g$. To compute the normal bundle to $(Y^{s}(G))^g \rightarrow (X^{s}(G))^g$, we use \cite[Prop~3.2]{edix} to identify the sheaf of differentials $\Omega_{(Y^s(G))^g}$ with the $\langle g\rangle$-invariant part of $\Omega_{Y^s(G)}$, and we take the $\langle g\rangle$-fixed part of this exact sequence (see \cite[Tag~06BJ]{tag}):
\[
0 \rightarrow E_{Y^s(G)}^\vee \rightarrow \Omega_{X^{s}(G)}|_{Y^s(G)} \rightarrow \Omega_{Y^{s}(G)} \rightarrow 0.
\]
Here we used the identification $N_{Y^s(G)/X^s(G)}\simeq E_{Y^s(G)}$ given by the regular section $s$. 
\end{proof}

\begin{remark}\label{rmk:reason}
The reason for the name ``I-nonnegative'' is as follows. Choose $\tilde \delta \in \Hom(\Pic^T(X), \QQ)$ mapping to an $I$-effective class, and let $E^{g_{\tilde \delta}}_{[X/T]}$ be the vector bundle induced by \eqref{eq:iso1} (so $E^{g_{\tilde \delta}}_{[X/T]}$ pulls back to the normal bundle of the regular embedding $(I_{\mu}(Y\sslash_G T))_{g_{\tilde \delta}}\to (I_\mu(X\sslash_G T))_{g_{\tilde \delta}}$). The class $\tilde \delta$ is I-nonnegative if and only if for every morphism $q: \P^1_{a, 1} \to [X/T]$ of class $\tilde \delta$ the bundle $q^*E^{g_{\tilde \delta}}_{[X/T]}$ is equal to $\bigoplus_j\OO_{\wP{a}}(k_j)$ for some $k_j\geq 0$.

\end{remark}

\begin{remark}
If the torus $R$ is not trivial, then our assumptions in Section \ref{sec:intro-equivariance} imply that $Y \subset X$ is an $R$-invariant subset. Since the $R$ action commutes with the $G$ action, the isomorphism \eqref{eq:iso1} holds $R$-equivariantly.
\end{remark}

Let $\delta \in \Hom(\Pic^G(X), \QQ)$ and define
\[
F_\delta(Y\sslash G) := \bigsqcup_{\beta \mapsto \delta} F_\beta(Y\sslash G).
\]
Similarly, for $\tilde \delta \in \Hom(\Pic^T(X), \QQ)$ such that $\tilde \delta \mapsto \delta$, set 
\[
F_{\tilde \delta}(Y\sslash G) := \bigsqcup_{\tilde \beta\mapsto \tilde \delta} F_{\tilde \beta}(Y\sslash G) \quad \quad \quad F_{\tilde \delta}^0(Y\sslash T) := \bigsqcup_{\tilde \beta\mapsto \tilde \delta} F_{\tilde \beta}^0(Y\sslash T).
\]
where $F_{\tilde \beta}(Y\sslash G)$ is the closed substack of $F_{\rpic(\tilde \beta)}(Y\sslash G)$ and $F_{\tilde \beta}^0(Y\sslash T)$ is the open substack of $F_{\tilde \beta}(Y\sslash T)$ defined in Proposition \ref{prop:diagram}.
For $\tilde \beta \in \Hom(\Pic^T(Y), 
\QQ)$ such that $\tilde \beta \mapsto \tilde \delta$, we have $g_{\tilde \beta} = g_{\tilde \delta}$. We have the following extension of Proposition \ref{prop:diagram}.
\begin{proposition}\label{prop:diagram2}
There is a closed embedding $\bj_G: F_{\tilde \delta}(Y\sslash G) \rightarrow F_{\tilde \delta}(X\sslash G)$ fitting into the following commuting diagram.
\begin{equation}\label{eq:bigmess}
\begin{tikzcd}
F_{\tilde \delta}(Y\sslash G) \arrow[ddr, "ev_\star"']  & \arrow[rrr, bend left, "\bj_T"] F^0_{\tilde \delta}(Y\sslash T) \arrow[l,"\psi_{\tilde \delta}"'] \arrow[d,hook, "ev_\star"] & & F_{\tilde \delta}(X\sslash G)  \arrow[from=lll, crossing over, bend left, "\bj_G"] & F^0_{\tilde \delta}(X\sslash T) \arrow[l,"\psi_{\tilde \delta}"'] \arrow[d,hook, "ev_\star"]\\
& Y^{g_{\tilde \delta}} \sslash_G T \arrow[d, "\varphi_{g}"] & & & X^{g_{\tilde \delta}} \sslash_G T \arrow[d, "\varphi_{g}"]\arrow[from=lll,  "\bi"]\\
& Y^{g_{\tilde \delta}} \sslash_G Z_G(g_{\tilde \delta}) \arrow[rrr, "\bi_G"]& & & X^{g_{\tilde \delta}} \sslash_G Z_G(g_{\tilde \delta}) \arrow[from=uul, crossing over, near end, "ev_\star"']
\end{tikzcd}
\end{equation}
In fact, the triangle on the left is the pullback of the triangle on the right along the map $\bi$. (One of the arrows is drawn dashed to clarify the diagram.)
\end{proposition}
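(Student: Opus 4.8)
The plan is to realize the diagram \eqref{eq:bigmess} for $Y$ as the base change, along $\bi$, of the diagram \eqref{eq:big_diagram} for $X$ (with $\tilde\delta$ in the role played there by $\tilde\beta$). Two preliminary observations make such a statement meaningful. First, since $s$ is a regular section of the $G$-equivariant bundle $E_X$ with zero locus $Y$, the closed immersion $[Y/G]\hookrightarrow[X/G]$ — and likewise $[Y/T]\hookrightarrow[X/T]$ — is cut out by the pullback of $s$; and, by Lemma \ref{lem:stable-locus}, $Y^s(G)=X^s(G)\cap Y$, so the $\theta$-unstable locus of $Y$ is that of $X$ intersected with $Y$, and the notions of $\theta$-(semi)stability and of basepoint are literally the same for $Y$ as for $X$. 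Second, the recipe producing $g_{\tilde\delta}$, the parabolic $P_{\tilde\alpha}$, its Levi, the cocharacter $\tau_{\intalpha}$ and the integer $a$ depends only on $\tilde\alpha=\bdtilde(\tilde\delta)\in\Hom(\chi(T),\QQ)$ and on $G$, not on the variety, so all of these coincide for $Y$ and for $X$.

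First I would record that a family of quasimaps to $X\sslash G$ whose associated map to $[X/G]$ factors through $[Y/G]$ is the same datum as a family of quasimaps to $Y\sslash G$, and that the locus of such families is a closed substack (factoring through a closed immersion is a closed condition and $\wP{a}$ is proper); all of this is $\CC^*_\lambda$-equivariant and preserves basepoints, so after passing to $\CC^*_\lambda$-fixed loci and to the component with basepoints over $[0:1]$, $F_\delta(Y\sslash G)$ is the closed substack of $F_\delta(X\sslash G)$ of fixed quasimaps with image in $[Y/G]$, and similarly one level down over $X\sslash_G T$. The $ev_\star$-targets $X^{g}\sslash_G T$ and $X^{g}\sslash_G Z_G(g)$ are merely fixed-point substacks of inertia stacks (Section \ref{sec:inertia1}), and for these the analogous base-change identities over the $G$-stable loci are elementary.

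The one genuinely new point is to transport the condition ``image in $[Y/T]$'' from the whole curve $\wP{a}$ to the orbifold point $\star$. For a $\CC^*_\lambda$-fixed quasimap $q$ over $S$, Lemma \ref{lem:factors}(2) gives that the restriction of $q$ to $[V/\bmu_a]\times S$ factors through its restriction $ev_\star(q)$ to $\star\times S$; hence if $ev_\star(q)$ factors through $[Y/T]$ so does $q|_{[V/\bmu_a]\times S}$, and then so does $q|_{U\times S}$, since the open $[\torus/\bmu_a]\cong\CC^*\subset U\cong\AA^1$ is schematically dense (the ring map $\OO_S[u]\hookrightarrow\OO_S[u,u^{-1}]$ is injective) and factoring through the closed immersion $[Y/T]\hookrightarrow[X/T]$ is local on the open cover $\wP{a}=U\cup[V/\bmu_a]$; thus $q$ factors through $[Y/T]$. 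The converse is trivial, and the same applies verbatim with $[Y/G]$, $[X/G]$ in place of $[Y/T]$, $[X/T]$. Consequently $F^0_{\tilde\delta}(X\sslash T)\times_{X^{g}\sslash_G T}Y^{g}\sslash_G T$ is exactly the closed substack of those $q\in F^0_{\tilde\delta}(X\sslash T)$ with image in $[Y/T]$; decomposing by the (locally constant, and finite by Remark \ref{rmk:finite}) class in $\Hom(\Pic^T(Y),\QQ)$ identifies it with $\bigsqcup_{\tilde\beta\mapsto\tilde\delta}F^0_{\tilde\beta}(Y\sslash T)=F^0_{\tilde\delta}(Y\sslash T)$ — the left square of \eqref{eq:bigmess}, Cartesian by construction. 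Writing the explicit models $F^0_{\tilde\delta}(X\sslash T)=[X^0_{\tilde\delta}/T]$, $F_{\tilde\delta}(X\sslash G)=[X^0_{\tilde\delta}/P_{\tilde\alpha}]$ of Proposition \ref{prop:diagram} and Lemma \ref{lem:nearly} (with $X^0_{\tilde\delta}$ a $P_{\tilde\alpha}$-invariant closed subscheme of $X^{g}\cap X^s(G)$ and $\psi_{\tilde\delta}$ the descent of $[X^0_{\tilde\delta}/T]\to[X^0_{\tilde\delta}/P_{\tilde\alpha}]$), the same analysis for $Y$ yields $F^0_{\tilde\delta}(Y\sslash T)=[(X^0_{\tilde\delta}\cap Y)/T]$ and $F_{\tilde\delta}(Y\sslash G)=[(X^0_{\tilde\delta}\cap Y)/P_{\tilde\alpha}]$; I would then define $\bj_T,\bj_G$ as the closed immersions induced by the equivariant closed immersion $X^0_{\tilde\delta}\cap Y\hookrightarrow X^0_{\tilde\delta}$ and take the $Y$-versions of $\psi_{\tilde\delta},\varphi_{g},ev_\star$ to be the evident restrictions of the $X$-versions.

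Finally I would verify commutativity of \eqref{eq:bigmess} and that every square is Cartesian. Commutativity, and the identification of the $Y$-maps as restrictions of the $X$-maps — in particular that $\eta_G\circ f\circ i$ computes $ev_\star$ for $Y$ — are immediate from the explicit models and Proposition \ref{prop:diagram}. For the fibered squares one repeatedly invokes the change-of-group base-change identity, proved exactly as Lemma \ref{lem:pullback-formula} via the prestack presentation of \cite[Prop~2.6]{romagny}: for $H\subset K$, a $K$-scheme $B$, an $H$-invariant closed $A\subset B$ and a $K$-invariant closed $C\subset B$, the fiber product of $[A/H]\to[B/K]$ with $[C/K]\hookrightarrow[B/K]$ is $[(A\cap C)/H]$. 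Taking $(H,K)$ to be $(T,Z_G(g))$, $(P_{\tilde\alpha},Z_G(g))$ and $(T,P_{\tilde\alpha})$ with the appropriate $A,C$ inside $X^{g}\cap X^s(G)$ (resp.\ $X^0_{\tilde\delta}$) gives precisely that both lower squares and the square built from the two copies of $\psi_{\tilde\delta}$ are Cartesian, i.e.\ that the left triangle is the pullback of the right one along $\bi$; and $\bj_G,\bj_T$ are closed embeddings because $X^0_{\tilde\delta}\cap Y\hookrightarrow X^0_{\tilde\delta}$ is. The main obstacle I anticipate is the family version of the third paragraph — upgrading ``$ev_\star(q)\in Y^{g}$'' to a scheme-theoretic equality of two closed substacks of a parameter stack — together with the bookkeeping subtlety that $(Y,T)$ need not be a valid quasimap target (Remark \ref{rmk:need-ql}), so that the symbols $F_{\tilde\beta}(Y\sslash T)$ must be handled either by running only the parts of Proposition \ref{prop:diagram}'s construction that use the validity of $(Y,G)$ alone, or by taking the fiber-product characterization of \eqref{eq:key-ql-diagram} as primary and matching it afterwards. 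The remaining steps are routine quotient-stack manipulations.
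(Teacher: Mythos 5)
Your proof is correct and rests on the same key input as the paper's, namely Lemma~\ref{lem:factors}: a fixed quasimap whose evaluation at $\star$ lands in $Y$ must have all of $[V/\bmu_a]$ mapping into $Y$, and then the whole curve by the schematic density of $[V/\bmu_a]\times S$ in $(\wP{a})_S$ (which the paper leaves implicit but you make explicit). The principal difference is \emph{which} square you verify directly: the paper checks the outermost square, $F_{\tilde\delta}(Y\sslash G)=F_{\tilde\delta}(X\sslash G)\times_{X^{g}\sslash_G Z_G(g)}Y^{g}\sslash_G Z_G(g)$, by testing $S$-points of $F_{\tilde\delta}(X\sslash G)$, and then lets the remaining squares follow by chasing the already-established fibered squares of Proposition~\ref{prop:diagram}, whereas you check the top square, $F^0_{\tilde\delta}(Y\sslash T)=F^0_{\tilde\delta}(X\sslash T)\times_{X^{g}\sslash_G T}Y^{g}\sslash_G T$, and then appeal to the explicit global-quotient models $[X^0_{\tilde\delta}/T]$, $[X^0_{\tilde\delta}/P_{\tilde\alpha}]$ together with a change-of-group base-change identity (à la Lemma~\ref{lem:pullback-formula}) to propagate to the other squares. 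Both diagram chases work; the paper's choice produces a slightly shorter argument, while yours gives the more tangible model $F_{\tilde\delta}(Y\sslash G)=[(X^0_{\tilde\delta}\cap Y)/P_{\tilde\alpha}]$ that may be more useful downstream. One small point: the ``main obstacle'' you flag at the end (a family-level version of the factorization and the handling of $(Y,T)$ not being a valid quasimap target) is already resolved by what you wrote — the schematic density argument is the family version, and your explicit models use only $Y^s(G)$-points, which is exactly how the paper dispatches the $(Y,T)$ issue — so you should not treat it as an open gap.
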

\begin{proof}
 The commuting triangles come from Proposition \ref{prop:diagram} (applied once to $X$ for the degree $\tilde \delta$ and several times to $Y$ for each $\tilde \beta$ mapping to $\tilde \delta$). Note that this diagram uses only $G$-stable points, so we do not need any smoothness or stable=semistable hypotheses on the $T$-stable locus.
 
 We check that the square with $\bj_G$ and $\bi_G$ is fibered. Observe that the square commutes and that $\bi_G$ and $\bj_G$ are closed embeddings, so it suffices to check that the fiber product is contained in $F_{\tilde \delta}(Y\sslash G)$. Let $S$ be a scheme and let  let $q=((\wP{a})_S, \P, \sigma)$ be an $S$-point of $F_{\tilde \delta}(X\sslash G)$, so the underlying principal $T$-bundle of $\P$ is determined by $\tilde \delta$ and we know that $\sigma$ sends $[V/\bmu_a]$ into $X^s(G)$. If $ev_\star(q)$ is an $S$-point of $X^{g_{\tilde \delta}}\sslash_G Z_G(g_{\tilde \delta}) = (I_\mu(Y\sslash G))_{(g_{\tilde \delta})}$, then $\sigma$ sends $B\bmu_a$ into $Y^s(G)$, and by Lemma \ref{lem:factors} we see that $\sigma$ sends $[V/\bmu_a]$ into $Y^s(G)$ and $q$ is an $S$-point of $F_{\tilde \delta}(Y\sslash G)$. 

\end{proof}

Finally, we define the map $0^!_{\tilde \delta}$ appearing in Theorem \ref{thm:ql}. To do so we define a sub $T$-representation $E^{\tilde \delta \geq 0} \subset E$ by
 \[
E^{\tilde \delta\geq 0} := \bigoplus_{\epsilon_j \, s.t.\, \tilde \delta(\epsilon_j) \in \ZZ_{\geq 0}} \CC_{\epsilon_j}.
\]
We let $E^{\tilde \delta \geq 0}_{F^0_{\tilde \delta}(X\sslash T)}$ denote the induced vector bundle on ${F^0_{\tilde \delta}(X\sslash T)}$, and $s_{\tilde \delta \geq 0}$ is the section induced via pullback by $s$ and the projection $E \to E^{\tilde \delta \geq 0}$. We will show that the square
\begin{equation}\label{eq:fiber-fixed}
 \begin{tikzcd}
 F_{\tilde \delta}^0(Y\sslash T) \arrow[r, "\bj_T"] \arrow[d] & F^0_{\tilde \delta}(X\sslash T) \arrow[d, "s_{\tilde \delta \geq 0}"]\\
 F^0_{\tilde \delta}(X\sslash T) \arrow[r, "0_{\tilde \delta}"] & E^{\tilde \delta\geq 0}_{ F^0_{\tilde \delta}(X\sslash T)}
 \end{tikzcd}
\end{equation}
is fibered, where $0_{\tilde \delta}$ is the zero section. Then we define $0^!_{\tilde \delta}$ to be the refined Gysin map associated to this square. 

\begin{lemma}\label{lem:fiber-fixed}
The square \eqref{eq:fiber-fixed} is fibered.
\end{lemma}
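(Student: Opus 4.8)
The plan is to identify the fibre product defined by \eqref{eq:fiber-fixed} with the scheme-theoretic zero locus of $s_{\tilde\delta\geq 0}$ inside $F^0_{\tilde\delta}(X\sslash T)$, and then to show that this zero locus equals $F^0_{\tilde\delta}(Y\sslash T)$. Write $Z(\sigma)$ for the zero scheme of a section $\sigma$ of a vector bundle. The first identification is formal: since $0_{\tilde\delta}$ is a zero section, the pullback of $0_{\tilde\delta}$ along $s_{\tilde\delta\geq 0}$ is by definition $Z(s_{\tilde\delta\geq 0})$, with both projections equal to its inclusion into $F^0_{\tilde\delta}(X\sslash T)$ --- and that inclusion is the map $\bj_T$ we must match. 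For the second, Proposition \ref{prop:diagram2} realizes $\bj_T$ as the inclusion of $F^0_{\tilde\delta}(X\sslash T)\cap(Y^{g_{\tilde\delta}}\sslash_G T)$ into $F^0_{\tilde\delta}(X\sslash T)$, the intersection taken inside $X^{g_{\tilde\delta}}\sslash_G T$ via the evaluation map $ev_\star$. Since $s$ is regular with $Y=\{s=0\}$, since $Y^s(G)=Y\cap X^s(G)$ (Lemma \ref{lem:stable-locus}), and since $s(X^{g_{\tilde\delta}})\subset E^{g_{\tilde\delta}}$ (because $s$ is $\langle g_{\tilde\delta}\rangle$-equivariant and $X^{g_{\tilde\delta}}$ is $g_{\tilde\delta}$-fixed), the inclusion $Y^{g_{\tilde\delta}}\sslash_G T\hookrightarrow X^{g_{\tilde\delta}}\sslash_G T$ is the zero scheme of the section $\bar s$ of $E^{g_{\tilde\delta}}_{X^{g_{\tilde\delta}}\sslash_G T}$ induced by $s$ (this is essentially Lemma \ref{lem:ql-inertia}). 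Hence $F^0_{\tilde\delta}(Y\sslash T)=Z(ev_\star^*\bar s)$ on $F^0_{\tilde\delta}(X\sslash T)$. Using that $\epsilon_j(g_{\tilde\delta})=e^{2\pi i\tilde\delta(\epsilon_j)}$, so $E^{g_{\tilde\delta}}=\bigoplus_{\tilde\delta(\epsilon_j)\in\ZZ}\CC_{\epsilon_j}$, while $E^{\tilde\delta\geq 0}=\bigoplus_{\tilde\delta(\epsilon_j)\in\ZZ_{\geq 0}}\CC_{\epsilon_j}$ and $s_{\tilde\delta\geq 0}$ is the $E^{\tilde\delta\geq 0}$-part of $ev_\star^*\bar s$, the whole statement reduces to the claim that the components $ev_\star^* s_j$ with $\tilde\delta(\epsilon_j)\in\ZZ_{<0}$ vanish identically on $F^0_{\tilde\delta}(X\sslash T)$.

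This last vanishing is where the rigidity of fixed quasimaps enters, and I would prove it from the explicit universal family of Proposition \ref{prop:ufam}. On the locus $u\neq 0$ of $\wP{a}$ the universal section is $\csigma=(x,u,v,u^{-1}\cdot_{rig}x)$ in the coordinates of \eqref{eq:ufam1}. Decomposing $E=\bigoplus_j\CC_{\epsilon_j}$ into $T$-weight spaces and composing $\csigma$ with the $T$-equivariant morphism $s\colon X\to E$, the description of $\cdot_{rig}$ in \eqref{eq:rig} together with $\xi(\tau_{\intalpha}(t))=t^{-\intalpha(\xi)}$ shows that the $\epsilon_j$-component of the section $s\circ\csigma$ of $q^*E_{[X/T]}$ equals $u^{\tilde\delta(\epsilon_j)}\cdot ev_\star^* s_j$ on $\{u\neq 0\}$. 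On the affine chart $[u:1]\cong\AA^1_u$, specializing \eqref{eq:ufam1} to $v=1$ removes the $\CC^*$-identification, so $q^*E_{[X/T]}$ is the trivial bundle there and this expression is the honest coordinate function of $s\circ\csigma$ on the chart. Since $\csigma$, hence $s\circ\csigma$, is a genuine (everywhere regular) section, and $u^{\tilde\delta(\epsilon_j)}$ has a pole at $u=0$ whenever $\tilde\delta(\epsilon_j)<0$, we must have $ev_\star^* s_j=0$ on $F^0_{\tilde\delta}(X\sslash T)$ for every such $j$. (For $\tilde\delta(\epsilon_j)\in\ZZ_{\geq 0}$ the same computation is consistent with the rest of the argument: $u^{\tilde\delta(\epsilon_j)}$ is then regular, and $\pi_*$ sends it to a nonzero multiple of the monomial $u^{\tilde\delta(\epsilon_j)}v^0$ spanning the relevant line in $H^0(\wP{a},\OO_{\wP{a}}(a\tilde\delta(\epsilon_j)))$, by Example \ref{ex:cohomology}, so the corresponding component of $s\circ\csigma$ cuts out exactly $Z(ev_\star^*s_j)$.)

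Combining the two steps, $F^0_{\tilde\delta}(Y\sslash T)=\bigcap_{\tilde\delta(\epsilon_j)\in\ZZ}Z(ev_\star^* s_j)=\bigcap_{\tilde\delta(\epsilon_j)\in\ZZ_{\geq 0}}Z(ev_\star^* s_j)=Z(s_{\tilde\delta\geq 0})$ as closed substacks of $F^0_{\tilde\delta}(X\sslash T)$, compatibly with $\bj_T$, so \eqref{eq:fiber-fixed} is Cartesian. The main obstacle is precisely the identical vanishing of the negative-weight components of $ev_\star^*\bar s$: this is the only place where one must use that a $\CC^*_\lambda$-fixed quasimap's section is determined by its value at the orbifold point and scales as a fixed power of $u$ (Lemma \ref{lem:factors} and Proposition \ref{prop:ufam}), and it is what makes the statement hold with no $I$-nonnegativity hypothesis. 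A secondary point to watch is keeping all of these identifications scheme-theoretic rather than merely set-theoretic, which is why I would argue throughout in the explicit $U$-chart trivialization, where the section is literally its coordinate functions.
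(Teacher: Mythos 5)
Your proof is correct and takes essentially the same route as the paper's: both reduce, via Proposition \ref{prop:diagram2} and Lemma \ref{lem:ql-inertia}, to showing that the negatively-weighted components of the pulled-back section of $E^{g_{\tilde\delta}}$ vanish identically on $F^0_{\tilde\delta}(X\sslash T)$, and both establish this by examining the section of the universal family from Proposition \ref{prop:ufam}. Your explicit pole computation in the $U$-chart is precisely the concrete content of the paper's appeal to $H^0(\wP{a}, \OO_{\wP{a}}(d)) = 0$ for $d<0$ via Example \ref{ex:cohomology}, so the two arguments differ only in presentation.
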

\begin{proof}
Let $E^{\tilde \delta} := E^{g_{\tilde \delta}}$ be the subrepresentation of $E$ defined in \eqref{eq:iso1}, and note that $E^{\tilde \delta}$ has weights $\epsilon_j$ were $\tilde \delta(\epsilon_j) \in \ZZ$. It follows from the proof of Lemma \ref{lem:ql-inertia} that if $s_{\tilde \delta}$ is the section of $E^{\tilde \delta}_{I_\mu(X\sslash_G T)_{g_{\tilde \delta}}}$ induced by $s$, then $I_\mu(Y\sslash_G T)_{g_{\tilde \delta}}$ is the zero locus of $s_{\tilde \delta}$.
Combined with the fiber diagram in  Proposition \ref{prop:diagram2} we deduce a fibered square
\[
\begin{tikzcd}
F_{\tilde \delta}^0(Y\sslash T) \arrow[r] \arrow[d] & F^0_{\tilde \delta}(X\sslash T) \arrow[d, "s_{\tilde \delta}"]\\
F^0_{\tilde \delta}(X\sslash T) \arrow[r, "0"] & E^{\tilde \delta}_{ F^0_{\tilde \delta}(X\sslash T)}
\end{tikzcd}
\]
By \cite[Tag~02XB(3)]{tag}, to prove our lemma it is enough to show that $s_{\tilde \delta}$ factors as
\[
F^0_{\tilde \delta}(X\sslash T) \xrightarrow{s_{\tilde \delta \geq 0}} E^{\tilde \delta \geq 0}_{F^0_{\tilde \delta}(X\sslash T)} \hookrightarrow E^{\tilde \delta}_{F^0_{\tilde \delta}(X\sslash T)}.
\]

By Lemma \ref{lem:abelian}, the section $s_{\tilde \delta}$ of $E^{\tilde \delta}_{F^0_{\tilde \delta}(X\sslash T)}$ (resp. $s_{\tilde \delta \geq 0}$ of $E^{\tilde \delta \geq 0}_{F^0_{\tilde \delta}(X\sslash T)}$) is the pullback of the corresponding section of $E^{\tilde \delta}_{[X/T]}$ (resp. $E^{\tilde \delta \geq 0}_{[X/T]}$) along the map
\begin{equation}\label{eq:fiber-fixed2}
F^0_{\tilde \delta}(X\sslash T) \xrightarrow{ev_\star} (I_{\mu}(X\sslash_G T))_{g_{\tilde \delta}}  \hookrightarrow [X/T].\end{equation}
Let $\pi: \wP{a} \times F^0_{\tilde \delta}(X\sslash T)$ be the universal curve and let $q:\wP{a} \times F^0_{\tilde \delta}(X\sslash T) \to [X/T] $ be the universal quasimap. The composition in \eqref{eq:fiber-fixed2} factors as
\[
F^0_{\tilde \delta}(X\sslash T) \to \wP{a} \times F^0_{\tilde \delta}(X\sslash T)  \xrightarrow{q} [X/T]
\]
where the first arrow is the canonical section of the trivial gerbe $B\bmu_a \times F^0_{\tilde \delta}(X\sslash T) \to F^0_{\tilde \delta}(X\sslash T)$.
So the lemma follows if we can show that $q^*s_{\tilde \delta}$ factors through $q^*s_{\tilde \delta\geq 0}$. 
But $q^*s_{\tilde \delta}$ is a section of $q^*E^{\tilde \delta}$, and we can compute this bundle explicitly using Proposition \ref{prop:ufam}: its restriction to each fiber of the universal curve is isomorphic to $\oplus_{j\,s.t.\,\tilde \delta(\epsilon_j) \in \ZZ} \OO(a\tilde \delta(\epsilon_j)$. A global section of this bundle factors through the subbundle with nonnegative weights, as desired.
\end{proof}

From Lemma \ref{lem:fiber-fixed} we obtain the following. (Recall that $\bi$ was defined in \eqref{eq:bigmess}.)
\begin{lemma}\label{lem:compare-gysin}
If $\tilde \delta$ is I-nonnegative, then the refined Gysin maps $0^!_{\tilde \delta}$ and $\bi^!$ agree.
\end{lemma}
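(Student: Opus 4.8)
The plan is to show that when $\tilde\delta$ is I-nonnegative, the subrepresentation $E^{\tilde\delta\geq 0}\subset E$ used to define $0^!_{\tilde\delta}$ coincides with $E^{\tilde\delta} = E^{g_{\tilde\delta}}$, the subrepresentation governing the regular embedding $\bi$. Once this equality of bundles is established, the two refined Gysin maps are literally the refined Gysin maps attached to the \emph{same} fibered square (the one displayed in the proof of Lemma \ref{lem:fiber-fixed}), hence they agree.

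First I would recall the two subrepresentations. By definition, $E^{\tilde\delta\geq 0} = \bigoplus_{\epsilon_j \,:\, \tilde\delta(\epsilon_j)\in\ZZ_{\geq 0}} \CC_{\epsilon_j}$, while from Lemma \ref{lem:ql-inertia} (and the identification $E^{\tilde\delta} := E^{g_{\tilde\delta}}$ made in the proof of Lemma \ref{lem:fiber-fixed}) we have $E^{\tilde\delta} = \bigoplus_{\epsilon_j\,:\,\epsilon_j(g_{\tilde\delta})=1}\CC_{\epsilon_j}$. The key arithmetic observation, already used in the proof of Theorem \ref{thm:main}, is that $\epsilon_j(g_{\tilde\delta}) = \epsilon_j(\tau_{\intalpha}(e^{2\pi i/a})^{-1}) = e^{2\pi i\,\tilde\alpha(\epsilon_j)} = e^{2\pi i\,\tilde\delta(\epsilon_j)}$, so $\epsilon_j(g_{\tilde\delta}) = 1$ if and only if $\tilde\delta(\epsilon_j)\in\ZZ$. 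Hence in general $E^{\tilde\delta\geq 0}\subseteq E^{\tilde\delta}$, with the extra summands of $E^{\tilde\delta}$ being exactly those $\CC_{\epsilon_j}$ with $\tilde\delta(\epsilon_j)\in\ZZ_{<0}$. But I-nonnegativity of $\tilde\delta$ is precisely the statement that there are no weights $\epsilon_j$ with $\tilde\delta(\epsilon_j)\in\ZZ_{<0}$; therefore $E^{\tilde\delta\geq 0} = E^{\tilde\delta}$ and consequently $s_{\tilde\delta\geq 0} = s_{\tilde\delta}$ as sections over $F^0_{\tilde\delta}(X\sslash T)$.

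It then remains to match up the definitions of the two Gysin maps. The map $0^!_{\tilde\delta}$ is the refined Gysin map of the square \eqref{eq:fiber-fixed}, which under the identification above is the square
\[
\begin{tikzcd}
F_{\tilde \delta}^0(Y\sslash T) \arrow[r] \arrow[d] & F^0_{\tilde \delta}(X\sslash T) \arrow[d, "s_{\tilde \delta}"]\\
F^0_{\tilde \delta}(X\sslash T) \arrow[r, "0"] & E^{\tilde \delta}_{ F^0_{\tilde \delta}(X\sslash T)}
\end{tikzcd}
\]
appearing in the proof of Lemma \ref{lem:fiber-fixed}. On the other hand, $\bi^!$ is the Gysin map of the regular local immersion $\bi: I_\mu(Y\sslash_G T)\to I_\mu(X\sslash_G T)$, restricted to the component $(I_\mu(X\sslash_G T))_{g_{\tilde\delta}} = X^{g_{\tilde\delta}}\sslash_G T$; by Lemma \ref{lem:ql-inertia} its normal bundle is $E^{\tilde\delta}_{[X/T]}$ and $I_\mu(Y\sslash_G T)_{g_{\tilde\delta}}$ is the zero scheme of the section $s_{\tilde\delta}$ of that bundle. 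Pulling this back along $ev_\star: F^0_{\tilde\delta}(X\sslash T)\to (I_\mu(X\sslash_G T))_{g_{\tilde\delta}}$ and using the fiber diagram of Proposition \ref{prop:diagram2} (together with the last assertion there that the left triangle is the pullback of the right along $\bi$) identifies the restriction of $\bi$ to these fixed loci with exactly the same data: normal bundle $E^{\tilde\delta}_{F^0_{\tilde\delta}(X\sslash T)}$, regular section $s_{\tilde\delta}$, zero scheme $F^0_{\tilde\delta}(Y\sslash T)$. Since refined Gysin maps are determined by the regular embedding (equivalently, by the bundle-plus-section data) together with the ambient map, compatibility of refined Gysin maps with flat pullback and base change gives $0^!_{\tilde\delta} = \bi^!$ on $A_*(F^0_{\tilde\delta}(X\sslash T))_\QQ$.

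I expect the only real content to be the arithmetic identity $\epsilon_j(g_{\tilde\delta}) = e^{2\pi i\,\tilde\delta(\epsilon_j)}$ and the resulting equality $E^{\tilde\delta\geq 0} = E^{\tilde\delta}$ under the I-nonnegativity hypothesis; everything after that is a bookkeeping comparison of two descriptions of the same refined Gysin map, using functoriality of Gysin maps (\cite[Sec~3.1]{kresch}) and the base-change statement already recorded in Proposition \ref{prop:diagram2}. The mildly delicate point to be careful about is that $\bi^!$ a priori lives on the inertia stacks while $0^!_{\tilde\delta}$ lives on the $F$-spaces, so one must invoke the fibered square of Proposition \ref{prop:diagram2} to transport $\bi^!$ to $F^0_{\tilde\delta}(X\sslash T)$ before the comparison makes sense; this is routine given that the relevant horizontal maps ($ev_\star$, $\psi_{\tilde\delta}$, and $\varphi_{g}$) behave well with respect to the Gysin formalism.
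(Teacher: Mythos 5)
Your argument isolates exactly the same key observation as the paper: under the I-nonnegativity hypothesis, the arithmetic identity $\epsilon_j(g_{\tilde\delta}) = e^{2\pi i\,\tilde\delta(\epsilon_j)}$ forces $E^{\tilde\delta\geq 0} = E^{\tilde\delta}$, after which the two Gysin maps agree because their associated bundle-plus-section data coincide. The paper's proof makes the second step precise by building two tower fiber diagrams, both with common bottom row $I_\mu(X\sslash_G T)\xrightarrow{\,0_{\tilde\delta}\,}E^{\tilde\delta\geq 0}_{I_\mu(X\sslash_G T)}$, checking that each middle square has trivial excess bundle (exactly where I-nonnegativity enters on the right diagram), and then invoking Corollary \ref{cor:excess}; your closing sentence appeals instead to the informal principle that a refined Gysin map is determined by the ``bundle-plus-section data together with the ambient map.'' That principle is essentially Corollary \ref{cor:excess} (or the standard fact that a regular embedding realized as the zero locus of a regular section has its Gysin map computed by intersecting the normal cone with the zero section of that bundle), so the argument is morally complete even if the final step is phrased loosely. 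The one phrase I'd flag is the claim that the two maps become ``literally the refined Gysin maps attached to the same fibered square'': they do not, since $0^!_{\tilde\delta}$ lives on the $F$-squares and $\bi^!$ on the inertia-level square, and what you actually need is that they sit in a common tower with trivial excess (which you correctly set up a sentence later via Proposition \ref{prop:diagram2}). So: correct, and essentially the paper's approach, with the excess-bundle comparison left implicit rather than cited explicitly.
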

\begin{proof}
The statement follows from Corollary \ref{cor:excess} once we demonstrate the existence of two fiber diagrams such that the lower squares have trivial excess bundles (defined in \eqref{eq:def-excess}):
\begin{equation}\label{eq:two-diagrams}
\begin{tikzcd}
F_{\tilde \delta}^0(Y\sslash T) \arrow[r] \arrow[d] & F^0_{\tilde \delta}(X\sslash T) \arrow[d, "s_{\tilde \delta \geq 0}"]\\
F^0_{\tilde \delta}(X\sslash T) \arrow[r, "0_{\tilde \delta}"]\arrow[d] & E^{\tilde \delta\geq 0}_{ F^0_{\tilde \delta}(X\sslash T)}\arrow[d]\\
I_\mu(X\sslash_G T) \arrow[r, "0_{\tilde \delta}"] & E^{\tilde \delta \geq 0}_{I_\mu(X\sslash_G T)}
\end{tikzcd}
 \quad \quad \quad \quad \quad
\begin{tikzcd}
F_{\tilde \delta}^0(Y\sslash T) \arrow[r] \arrow[d] & F^0_{\tilde \delta}(X\sslash T) \arrow[d, "ev_\star"]\\
I_\mu(Y\sslash_G T) \arrow[r, "\bi"]\arrow[d] &I_\mu(X\sslash_G T)\arrow[d, "s_{\tilde \delta \geq 0}"]\\
I_\mu(X\sslash_G T) \arrow[r, "0_{\tilde \delta}"] & E^{\tilde \delta \geq 0}_{I_\mu(X\sslash_G T)}
\end{tikzcd}
\end{equation}
For the diagram on the left, some of these properties follow from Lemma \ref{lem:fiber-fixed} and the rest are straigtforward. On the right, the top square is fibered by Proposition \ref{prop:diagram2}. The bottom square is in general fibered (with trivial excess bundle) only if we replace $E^{\tilde \delta \geq 0}$ with $E^{\tilde \delta}$ (this follows from Lemma \ref{lem:ql-inertia}). The condition that $\tilde \delta$ is I-nonnegative precisely guarantees that $E^{\tilde \delta} = E^{\tilde \delta \geq 0}$. 
\end{proof}

\subsection{Main lemma}
We have the following analog of Lemma \ref{lem:computation}. (The map $\tilde \bi$ was defined in \eqref{eq:bigmess}.)

\begin{lemma}\label{lem:computation2}
We have the following relationship in $A_*([F^0_{\tilde \delta}(Y\sslash T)/\CC^*_{\lambda}])$.
\begin{equation}\label{eq:thisone}
0_{\tilde \delta}^!\psi_{\tilde \delta}^*\frac{[F_{\tilde \delta}(X\sslash G)]^\vir}{ e_{\CC^*_\lambda}(N^\vir_{F_{\tilde \delta}(X\sslash G)})} = \left(\prod_{j=1}^rC^\circ(\tilde \delta, \epsilon_j)\right)\psi_{\tilde \delta}^*\frac{[F_{\tilde \delta}(Y\sslash G)]^\vir}{ e_{\CC^*_\lambda}(N^\vir_{F_{\tilde \delta}(Y\sslash G)})}
\end{equation}
Here, $z$ is the Euler class of the line bundle on $[F^0_{\tilde \delta}(Y\sslash T)/\CC^*_{\lambda}]$ determined by the identity character of $\CC^*_\lambda$.
\end{lemma}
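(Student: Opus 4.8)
The plan is to run the proof of Lemma~\ref{lem:computation} with the regular closed embedding $[Y/G]\hookrightarrow[X/G]$ in place of the abelianization morphism $[X/T]\to[X/G]$; recall that the constructions of the preceding sections apply verbatim with $Y$ in place of $X$, since $(Y,G)$ also satisfies (1)--(3). First I would apply \cite[Lem~A.2.3]{cjw} to the tower $\wP{a}\times[Y/G]\to\wP{a}\times[X/G]$ with $\mu_B=\bj_G\colon F_{\tilde\delta}(Y\sslash G)\to F_{\tilde\delta}(X\sslash G)$, exactly as in the proof of Lemma~\ref{lem:computation}. This yields a $\CC^*_\lambda$-equivariant morphism of distinguished triangles comparing $\bj_G^*\EE_{F_{\tilde\delta}(X\sslash G)}$, $\EE_{F_{\tilde\delta}(Y\sslash G)}$ and $R\pi_*(q^*\TT_{[Y/G]/[X/G]})^\vee$, compatible with the associated triangle of (relative) cotangent complexes, where $q$ is the universal quasimap on $\wP{a}\times F_{\tilde\delta}(Y\sslash G)$. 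Since $s$ is regular, $[Y/G]\hookrightarrow[X/G]$ is a regular closed embedding with normal bundle $E_{[Y/G]}$ (via the canonical identification induced by $s$), so $\TT_{[Y/G]/[X/G]}\simeq E_{[Y/G]}[-1]$ and the third term equals $\mathcal{G}^\vee[1]$ with $\mathcal{G}:=R\pi_*q^*E_{[Y/G]}$; in particular $\mathcal{G}^\vee[1]$ is a relative perfect obstruction theory for $\bj_G$.

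Next I would pull this triangle back along $\psi_{\tilde\delta}\colon F^0_{\tilde\delta}(Y\sslash T)\to F_{\tilde\delta}(Y\sslash G)$ and compute the third term explicitly from the universal family of Proposition~\ref{prop:ufam} (for $Y$): by Lemma~\ref{lem:pullback-formula} and Example~\ref{ex:cohomology}, exactly as in the proof of Lemma~\ref{lem:computation},
\[
\psi_{\tilde\delta}^*R^i\pi_*q^*E_{[Y/G]}\;\simeq\;\bigoplus_{j=1}^r\L_{\epsilon_j}\otimes H^i\bigl(\wP{a},\OO_{\wP{a}}(a\tilde\delta(\epsilon_j))\bigr)\otimes\CC_{\mu^{a\tilde\delta(\epsilon_j)}},
\]
where the monomial $u^mv^n$ carries $\CC^*_\lambda$-weight $n/a=\tilde\delta(\epsilon_j)-m$. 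Decomposing into fixed and moving parts as in Remark~\ref{rmk:perfect}: $R^1\pi_*q^*E_{[Y/G]}$ is entirely moving with negative weights, while the only weight-zero part of $R^0\pi_*q^*E_{[Y/G]}$ is $\bj_T^*E^{\tilde\delta\geq0}_{F^0_{\tilde\delta}(X\sslash T)}$ (the monomials $u^{\tilde\delta(\epsilon_j)}v^0$ with $\tilde\delta(\epsilon_j)\in\ZZ_{\geq0}$). Hence, from the triangle, in $K$-theory
\[
[\psi_{\tilde\delta}^*N^\vir_{F_{\tilde\delta}(Y\sslash G)}]=\bj_T^*[\psi_{\tilde\delta}^*N^\vir_{F_{\tilde\delta}(X\sslash G)}]+[\psi_{\tilde\delta}^*R^1\pi_*q^*E_{[Y/G]}]-[(\psi_{\tilde\delta}^*R^0\pi_*q^*E_{[Y/G]})^{\mov}],
\]
so reading off the weights from Example~\ref{ex:cohomology} and using $\psi^X_{\tilde\delta}\circ\bj_T=\bj_G\circ\psi^Y_{\tilde\delta}$ from \eqref{eq:bigmess},
\[
e_{\CC^*_\lambda}\bigl(\psi_{\tilde\delta}^*N^\vir_{F_{\tilde\delta}(Y\sslash G)}\bigr)=\bj_T^*\psi_{\tilde\delta}^*e_{\CC^*_\lambda}\bigl(N^\vir_{F_{\tilde\delta}(X\sslash G)}\bigr)\cdot\prod_{j=1}^r C^\circ(\tilde\delta,\epsilon_j),
\]
using $e_{\CC^*_\lambda}(R^1\pi_*q^*E_{[Y/G]})=\prod_{j\,:\,\tilde\delta(\epsilon_j)<0}C^\circ(\tilde\delta,\epsilon_j)$ and $e_{\CC^*_\lambda}((R^0\pi_*q^*E_{[Y/G]})^{\mov})=\prod_{j\,:\,\tilde\delta(\epsilon_j)>0}C^\circ(\tilde\delta,\epsilon_j)^{-1}$ (and $C^\circ(\tilde\delta,\epsilon_j)=1$ when $\tilde\delta(\epsilon_j)=0$).

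For the virtual classes I would take the $\fix$-part of the pulled-back triangle; its third term is $(\bj_T^*E^{\tilde\delta\geq0}_{F^0_{\tilde\delta}(X\sslash T)})^\vee[1]\to\LL_{F^0_{\tilde\delta}(Y\sslash T)/F^0_{\tilde\delta}(X\sslash T)}$. Together with Lemma~\ref{lem:computation} (which gives $\psi_{\tilde\delta}^*[F_{\tilde\delta}(X\sslash G)]^\vir=[F^0_{\tilde\delta}(X\sslash T)]^\vir$, and likewise for $Y$ after summing over $\tilde\beta\mapsto\tilde\delta$), this exhibits the canonical perfect obstruction theory of $F^0_{\tilde\delta}(Y\sslash T)$ as part of a compatible triple over $\bj_T\colon F^0_{\tilde\delta}(Y\sslash T)\to F^0_{\tilde\delta}(X\sslash T)$. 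By Lemma~\ref{lem:fiber-fixed} the relative term of this triple is the section-zero-locus obstruction theory attached to the fibered square \eqref{eq:fiber-fixed}, so its associated virtual pullback \cite{manolache} is the refined Gysin map $0^!_{\tilde\delta}$; hence $0^!_{\tilde\delta}[F^0_{\tilde\delta}(X\sslash T)]^\vir=[F^0_{\tilde\delta}(Y\sslash T)]^\vir$. Finally, combining this with the displayed virtual-normal-bundle identity, and using the standard compatibilities of $\psi_{\tilde\delta}^*$ (as in Lemma~\ref{lem:computation}) and of the refined Gysin map $0^!_{\tilde\delta}$ with operational classes (see \cite[Sec~3]{kresch}), one checks that both sides of \eqref{eq:thisone} equal $\bigl(\bj_T^*\psi_{\tilde\delta}^*e_{\CC^*_\lambda}(N^\vir_{F_{\tilde\delta}(X\sslash G)})\bigr)^{-1}\cap[F^0_{\tilde\delta}(Y\sslash T)]^\vir$, proving the lemma.

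The hard part will be the identification in the previous paragraph: verifying that the relative obstruction theory for $\bj_T$ produced by \cite[Lem~A.2.3]{cjw} (after passing to fixed parts) coincides with the canonical one defining the refined Gysin map $0^!_{\tilde\delta}$ of \eqref{eq:fiber-fixed}, so that the virtual pullback formalism of \cite{manolache} applies. This is the analogue of the appeal to \cite[Cor~4.9]{manolache} in the proof of Lemma~\ref{lem:computation}; there the extra term was merely a quasi-isomorphism, whereas here it is a genuine vector bundle placed in cohomological degree $-1$, which is precisely why the refined Gysin map $0^!_{\tilde\delta}$ rather than an ordinary pullback appears. Once these obstruction theories are matched, the remainder is bookkeeping with $e_{\CC^*_\lambda}$ and Example~\ref{ex:cohomology} as in Lemma~\ref{lem:computation}.
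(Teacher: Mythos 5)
Your outline matches the paper's proof almost step for step: apply \cite[Lem~A.2.3]{cjw} to $[Y/G]\hookrightarrow[X/G]$ to get the distinguished triangle relating $\bj_G^*\EE_{F_{\tilde\delta}(X\sslash G)}$ and $\EE_{F_{\tilde\delta}(Y\sslash G)}$ with third term $(R\pi_*q^*E_{[Y/G]}[-1])^\vee$, compute the moving part via Proposition~\ref{prop:ufam} and Example~\ref{ex:cohomology} to get the factor $\prod_{j}C^\circ(\tilde\delta,\epsilon_j)$ in the Euler class, and handle the virtual classes by a virtual pullback along $\bj_T$ combined with Lemma~\ref{lem:computation}. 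The Euler-class bookkeeping you carry out (weights of monomials $u^mv^n$, $(R^1)^{\fix}=0$, identification of $(R^0)^{\fix}$ with the bundle on $E^{\tilde\delta\geq0}$) is correct and is the paper's computation.

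However, the step you defer to the end---``verifying that the relative obstruction theory for $\bj_T$ produced by \cite[Lem~A.2.3]{cjw} coincides with the canonical one defining $0^!_{\tilde\delta}$''---is not bookkeeping; it is the actual difficulty, and in the paper it is an entire separate statement (Lemma~\ref{lem:compare}) whose proof is roughly as long as everything else in this argument combined. Note also that you cannot deduce it ``by Lemma~\ref{lem:fiber-fixed}'': that lemma only shows the square \eqref{eq:fiber-fixed} is Cartesian, and says nothing about which map $(\bj_T^*E^{\tilde\delta\geq0})^\vee[1]\to\LL_{\bj_T}$ arises from the quasimap obstruction theory. What must be checked is that the morphism coming from $R\pi_*(q^*\LL_{[Y/T]/[X/T]}\otimes\omega^\bullet)^{\fix}$ agrees, not merely up to some automorphism of the locally free sheaf, with the one induced by the section $s_{\tilde\delta\geq0}$; two perfect obstruction theories with isomorphic underlying complexes can define different virtual pullbacks. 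The paper's Lemma~\ref{lem:compare} does this by working at the level of the universal curve (diagram \eqref{eq:compare1}), resolving a genuine non-commutativity via a rigidification $\varpi:Y^g\sslash T\to Y^g\sslash\bar T$ to make a 2-morphism strict, reducing to $H^{-1}$, and then writing out the arrow explicitly via the monomial bases of Example~\ref{ex:cohomology}. So: same route as the paper, correct Euler-class computation, but the key identification you name as ``hard'' is the bulk of the proof and remains unproved in your sketch.
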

\begin{proof}
Let $\pi: \wP{a}\times F_{\tilde \delta}(Y\sslash G) \rightarrow F_{\tilde \delta}(Y\sslash G)$ and $q: \wP{a}\times F_{\tilde \delta}(Y\sslash G) \rightarrow [Y/G]$ be the universal curve and universal quasimap on the fixed locus (more precisely, on the disjoint union of fixed loci). 
To compute \eqref{eq:thisone} we obtain, as in the proof of Lemma \ref{lem:computation}, a (equivariant) distinguished triangle in the derived category of $F_{\tilde \delta}(Y\sslash G)$
\begin{equation}\label{eq:euler}
\begin{tikzcd}
L\bj_G^*(\EE_{F_{\tilde \delta}(X\sslash G)})\arrow[r]  & \EE_{F_{\tilde \delta}(Y\sslash G)} \arrow[r]  & (R\pi_*q^*E_{[Y/G]}[-1])^\vee \arrow[r]& {}
\end{tikzcd}
\end{equation}
The fixed part of the analog of \eqref{eq:euler} for $\bj_T: F^0_{\tilde \delta}(Y\sslash T) \to F^0_{\tilde \delta}(X\sslash T)$ fits into a morphism of triangles
\begin{equation}\label{eq:dts2}
\begin{tikzcd}
L\bj_T^*(\EE_{F_{\tilde \delta}(X\sslash T)}^{\fix})\arrow[r] \arrow[d] & \EE_{F_{\tilde \delta}(Y\sslash T)}^{\fix} \arrow[r] \arrow[d] & (R\pi_*(q^*\LL_{[Y/T]/[X/T]}\otimes \omegabul))^{\fix} \arrow[d, "{\phi_{\bj_T}}"] \arrow[r]& {}\\
L \bj_T^*(\LL_{F_{\tilde \delta}(X\sslash T)})\arrow[r] & \LL_{F_{\tilde \delta}(Y\sslash T)} \arrow[r] &\LL_{\bj_T} \arrow[r]&{}
\end{tikzcd}
\end{equation}
where the first two vertical arrows are defined in \eqref{eq:almost-pot} and the last vertical arrow is the fixed part of the canonical morphism $\phi_{\bj_T}:=\phi_{F_{\tilde \delta}(Y\sslash T)/F_{\tilde \delta}(X\sslash T)}$ of \cite[(55)]{cjw}. 
We have used that $Y \hookrightarrow X$ is a regular embedding so we have $\LL_{[Y/G]/[X/G]} \simeq E_{[Y/G]}^\vee$.

By Lemma \ref{lem:Gysin-euler}, the left hand side of \eqref{eq:thisone} is equal to
\[
0^!_{\tilde \delta}\psi_{\tilde \delta}^*[F_{\tilde \delta}(X\sslash G)]^\vir \cap (e_{\CC^*_\lambda}(\bj_T^*\psi_{\tilde \delta}^*N^{\vir}_{F_{\tilde \delta}(X\sslash G}))^{-1}
\]
and we have moreover that $\bj_T^*\psi_{\tilde \delta}^* = \psi_{\tilde \delta}^*\bj_G^*.$
To compute $e_{\CC^*_\lambda}(\psi_{\tilde \delta}^*\bj_G^*N^\vir_{F_{\tilde \delta}(X\sslash G)})$ we take the dual of \eqref{eq:euler} and then apply $\psi_{\tilde \delta}^*$. By the additivity of Euler classes (see the proof of Lemma \ref{lem:computation}) we have
\[
\psi_{\tilde \delta}^*e_{\CC^*_\lambda}(N^{\vir}_{F_{\tilde \delta}(Y\sslash G)}) = e_{\CC^*_\lambda}(\psi_{\tilde \delta}^*\bj_G^* N^{\vir}_{F_{\tilde\delta}(X\sslash G)})e_{\CC^*_\lambda}((\psi_{\tilde \delta}^*R\pi_*q^*E_{[Y/G]}[-1])^{\mov}).
\]
By base change for $R\pi_*$, we have $\psi^*_{\tilde \delta}R\pi_*q^*E_{[Y/G]} = R\pi_*q^*E_{[Y/T]},$ where now $q: \wP{a}\times F^0_{\tilde \delta}(Y\sslash T) \rightarrow [Y/T]$ is the universal map.
A computation using Example \ref{ex:cohomology} shows that $e_{\CC^*_\lambda}((R\pi_*q^*E_{[Y/T]}[-1])^{\mov})$ is equal to the product $\prod_{j=1}^r C^\circ(\tilde \delta, \epsilon_j)$.

Since the class $[F_{\tilde \delta}(X\sslash G)]^\vir$ is nonequivariant (meaning that the class in $A_*([F_{\tilde \delta}(X\sslash G)/\CC^*_\lambda])$ is pulled back from $A_*(F_{\tilde \delta}(X\sslash G))$, or that the corresponding polynomial in $A_*(F_{\tilde \delta}(X\sslash G))[z]$ is constant), it suffices to compute $0_{\tilde \delta}^!\psi_{\tilde \delta}^*[F_{\tilde \delta}(X\sslash G)]^\vir$ non-equivariantly. To do so, we first define a virtual pullback $\bj_T^!: A_*(F_{\tilde\delta}(X\sslash T)) \rightarrow A_*(F_{\tilde\delta}({Y\sslash T}))$ using the rightmost column of \eqref{eq:dts2}. This requires us to check that $((R\pi_*q^*E_{[Y/T]}[-1])^{\fix})^\vee$ is perfect of amplitude [-1,0]. The complex $R\pi_*q^*E_{[Y/G]}[-1]$ is perfect in [1,2], with the part in degree 2 equal to $R^1\pi_*q^*E_{[Y/T]}$. The fixed part $(R^1\pi_*q^*E_{[Y/G]})^{\fix}$ vanishes 
by a now-standard computation.

The virtual pullback $\bj_T^!$ and the refined Gysin pullback $0_{\tilde \delta}^!$ are both maps $ A_*(F^0_{\tilde\delta}({X\sslash T})) \rightarrow A_*(F^0_{\tilde \delta}({Y\sslash T}))$; in Lemma \ref{lem:compare} we prove that they are equal.
Granting this, we have
\[
\bj_T^! \psi_{\tilde \delta}^*[F_{\tilde \delta}(X\sslash G)]^{\vir} = \bj_T^![F^0_{\tilde \delta}(X\sslash T)]^{\vir} = [F^0_{\tilde \delta}(Y\sslash T)]^{\vir} = \psi_{\tilde \delta}^*[F_{\tilde \delta}(Y\sslash G)]^{\vir}
\]
The first and third equalities use Lemma \ref{lem:computation} and the second is \cite[Cor~4.9]{manolache}.

\end{proof}

\begin{lemma}\label{lem:compare}
For $\alpha \in A_*(F^0_{\tilde\delta}(X\sslash T))$, we have $0_{\tilde \delta}^!\alpha =   \bj^!_T \alpha.$
\end{lemma}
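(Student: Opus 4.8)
The plan is to identify the two maps by exhibiting both of them as the refined Gysin homomorphism (localized Euler class) attached to the section $s_{\tilde\delta\geq 0}$ of the vector bundle $E^{\tilde\delta\geq 0}_{F^0_{\tilde\delta}(X\sslash T)}$ on $F^0_{\tilde\delta}(X\sslash T)$, whose zero locus is $F^0_{\tilde\delta}(Y\sslash T)$ by Lemma \ref{lem:fiber-fixed}. For $0^!_{\tilde\delta}$ this is its very definition: it is the refined Gysin homomorphism of the Cartesian square \eqref{eq:fiber-fixed}, i.e.\ refined intersection with the zero section of that bundle in the sense of \cite[Sec~3.1]{kresch}. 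So the content of the lemma is to show that the virtual pullback $\bj_T^!$, defined via the fixed part of the canonical obstruction theory $\phi_{\bj_T}\colon R\pi_*(q^*\LL_{[Y/T]/[X/T]}\otimes\omegabul)\to\LL_{\bj_T}$ of \cite[(55)]{cjw}, agrees with the virtual pullback for the bundle obstruction theory $(E^{\tilde\delta\geq 0}_{F^0_{\tilde\delta}(X\sslash T)})^\vee|_{F^0_{\tilde\delta}(Y\sslash T)}\to\LL_{\bj_T}$ associated to that same section, after which I would quote the standard compatibility that the latter virtual pullback equals the refined Gysin map.

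First I would make $\phi_{\bj_T}^{\fix}$ explicit. By the discussion following \eqref{eq:dts2}, and using $\LL_{[Y/T]/[X/T]}\simeq E_{[Y/T]}^\vee$ (as in the proof of Lemma \ref{lem:computation2}), the source of $\phi_{\bj_T}^{\fix}$ is the complex $((R\pi_* q^* E_{[Y/T]}[-1])^{\fix})^\vee$ for the universal quasimap $q\colon \wP{a}\times F^0_{\tilde\delta}(Y\sslash T)\to[Y/T]$. By Proposition \ref{prop:ufam} the bundle $q^* E_{[Y/T]}$ restricts on each fibre of $\pi$ to $\bigoplus_j\OO_{\wP{a}}(a\tilde\delta(\epsilon_j))$, and the $\CC^*_\lambda$-weight bookkeeping of Example \ref{ex:cohomology}, exactly as in the proof of Lemma \ref{lem:computation}, shows that $(R^0\pi_* q^* E_{[Y/T]})^{\fix}$ is spanned fibrewise by the monomials $u^{\tilde\delta(\epsilon_j)}$ with $\tilde\delta(\epsilon_j)\in\ZZ_{\geq 0}$, hence is identified by evaluation at $\star$ with $E^{\tilde\delta\geq 0}_{F^0_{\tilde\delta}(Y\sslash T)}$, while $(R^1\pi_* q^* E_{[Y/T]})^{\fix}=0$ (the vanishing already invoked in Lemma \ref{lem:computation2}). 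Thus $\phi_{\bj_T}^{\fix}$ has source the vector bundle $(E^{\tilde\delta\geq 0}_{F^0_{\tilde\delta}(X\sslash T)})^\vee$ restricted to $F^0_{\tilde\delta}(Y\sslash T)$, in cohomological degree $-1$.

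Next I would check that the \emph{morphism} $\phi_{\bj_T}^{\fix}\to\LL_{\bj_T}$ is the canonical one determined by the section $s_{\tilde\delta\geq 0}$: under the identification of the previous paragraph, $ev_\star$ carries the universal section $q^* s$ to $s_{\tilde\delta\geq 0}$ (this was already the input to Lemma \ref{lem:fiber-fixed}), and the construction of $\phi$ in \cite[App.~A]{cjw} is functorial, so the induced map to $\LL_{\bj_T}$ factors through $\LL_{F^0_{\tilde\delta}(Y\sslash T)/F^0_{\tilde\delta}(X\sslash T)}$ as the dual conormal map "$d s_{\tilde\delta\geq 0}$''. I expect this step---matching the obstruction-theory morphism, not merely its source complex---to be the main obstacle: everything else is either the cohomology bookkeeping already carried out in Lemmas \ref{lem:computation} and \ref{lem:computation2} or purely formal, but pinning down the morphism requires unwinding the functorial definition of the obstruction theory in \cite{cjw} and its compatibility with the universal section and with $ev_\star$.

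Finally, with $\phi_{\bj_T}^{\fix}$ identified with the obstruction theory attached to the section $s_{\tilde\delta\geq 0}$ of $E^{\tilde\delta\geq 0}_{F^0_{\tilde\delta}(X\sslash T)}$, I would invoke the compatibility of virtual pullbacks with refined Gysin maps for sections of vector bundles---a special case of \cite[Cor~4.9]{manolache} together with Fulton's construction of refined intersection with a zero section---to conclude that $\bj_T^!$ equals the refined Gysin homomorphism of the Cartesian square cutting out $F^0_{\tilde\delta}(Y\sslash T)$ inside $F^0_{\tilde\delta}(X\sslash T)$. By Lemma \ref{lem:fiber-fixed} that square is \eqref{eq:fiber-fixed}, whose refined Gysin map is $0^!_{\tilde\delta}$ by definition, giving $\bj_T^!\alpha = 0^!_{\tilde\delta}\alpha$ for every $\alpha\in A_*(F^0_{\tilde\delta}(X\sslash T))$, as claimed.
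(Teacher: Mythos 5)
Your plan coincides with the paper's: express both $0^!_{\tilde\delta}$ and $\bj_T^!$ as virtual pullbacks for relative obstruction theories on $\bj_T$, show that the two obstruction theories are isomorphic, and conclude via \cite[Cor~4.9]{manolache}. The cohomology bookkeeping you describe---identifying the source of $\phi_{\bj_T}^{\fix}$ with $(E^{\tilde\delta\geq 0})^\vee$ restricted to $F^0_{\tilde\delta}(Y\sslash T)$ via Example \ref{ex:cohomology}, with vanishing of the $R^1$ fixed part---matches the paper's computation. You also correctly flag that the crux is matching the \emph{morphism} to $\LL_{\bj_T}$, not merely the source complex.

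That, however, is exactly where the proposal has a genuine gap, and "functoriality of $\phi$ in \cite[App.~A]{cjw}" is not enough on its own. Hidden behind "$ev_\star$ carries $q^*s$ to $s_{\tilde\delta\geq 0}$" is the need to compare the two maps $q|_{B\bmu_a}$ and $ev_\star\circ\pi$ from $B\bmu_a\times F^0_{\tilde\delta}(Y\sslash T)$ to $[Y/T]$. These are not strictly equal: as the paper points out at \eqref{eq:heehee}, the underlying group homomorphisms $\bmu_a\to T$ differ (one is $\tau_{\intalpha}$, the other trivial), so a priori there is not even a candidate map between $q^*\LL_{[Y/T]/[X/T]}$ and $\pi^*ev_\star^*\LL_{Y^{g}\sslash T/E^{\tilde\delta\geq 0}_{Y^{g}\sslash T}}$ on the universal curve. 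The paper resolves this by rigidifying to $Y^{g_{\tilde\delta}}\sslash\bar T$ with $\bar T = T/\langle\tau_{\intalpha}(\bmu_a)\rangle$, after which $\varpi\circ q_j$ and $\varpi\circ ev_\star\circ\pi$ agree strictly, and by checking that the resulting $2$-automorphism of $(E^{\tilde\delta\geq 0})^\vee$ is trivial over $[V/\bmu_a]\times F^0_{\tilde\delta}(Y\sslash T)$. This produces the dashed arrow in \eqref{eq:compare1} only over $[\torus/\bmu_a]\times F^0_{\tilde\delta}(Y\sslash T)$; that this partial comparison suffices is then extracted by passing to $H^{-1}$ and using the description of $R^1\pi_*$ from Example \ref{ex:cohomology}, which only sees data over $[\torus/\bmu_a]$. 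These steps are what actually produce the commuting square over $\LL_{\bj_T}$ needed to invoke \cite[Cor~4.9]{manolache}; without them the argument does not close.
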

\begin{proof}
 To simplify notation we omit the degree $\tilde \delta$ from the quasimap fixed loci and write $g=g_{\tilde \delta}$. Also, because it simplifies notation without changing the proof, we will assume $G=T$ so that $X\sslash_G T = X\sslash T$ and $F^0_{\tilde \delta}(X\sslash T) = F_{\tilde \delta}(X\sslash T)$. The remainder of this lengthy argument is divided into sections by subheadings.\\

\noindent
\textit{Reduction to comparison of two virtual pullbacks.}
To put the virtual pullback $\bj_T^!$ and the refined Gysin map $0_{\tilde \delta}^!$ on equal footing, 
we note that $0^!_{\tilde \delta}$ is equivalent to the virtual pullback defined by the perfect obstruction theory \[
\phi_{0}: ev_\star^* \LL_{Y^{g}\sslash T/E^{\tilde \delta \geq 0}_{Y^g\sslash T}}\rightarrow \LL_{F(Y\sslash T)/F(X\sslash T)}
\]
where the morphism is the canonical map of cotangent complexes. This follows from Corollary \ref{cor:excess} and a fiber diagram similar to the left one in \eqref{eq:two-diagrams} (see also Remark \ref{rmk:gysin-virtual}).
Hence our goal is to construct an isomorphism fitting into a commuting diagram 
\begin{equation}\label{eq:compare-pot}
\begin{tikzcd}
\LL_{F(Y\sslash T)/F(X\sslash T)} & \arrow[l, "\phi_{\bj_T}"'] (R\pi_*q^*\LL_{[Y/T]/[X/T]}\otimes \omegabul)^{\fix} \arrow[d,  "\sim"]  \\
&{ev}_\star^*\LL_{Y^{g}\sslash T/E^{\tilde \delta \geq 0}_{Y^g\sslash T}} \arrow[ul, "\phi_{0}"] 
\end{tikzcd}
\end{equation}
where $\phi_{\bj_T}$ is the fixed part of the canonical map \cite[(55)]{cjw} for $F(Y\sslash T) \rightarrow F(X\sslash T)$.

\begin{remark}Using the fact that $\LL_{Y^g\sslash T/E^{\tilde \delta \geq 0}_{Y^g\sslash T}}$ is represented by $E^{\tilde \delta \geq 0}_{Y^g\sslash T}$ in degree -1, it is straitforward to check that an isomorphism exists (see the last step below). We will carefully check that the ``obvious" isomorphism makes the diagram \eqref{eq:compare-pot} commute.
\end{remark}

\noindent
\textit{Defining \eqref{eq:compare-pot} at the level of universal curves.}
There are canonical morphisms of cotangent complexes given by the solid arrows in the diagram
\begin{equation}\label{eq:compare1}
\begin{tikzcd}
\pi^*\LL_{F(Y\sslash T)/F(X\sslash T)} \arrow[r, "\sim"] & \LL_{\PP^1_{F^Y}/\PP^1_{F^X}} & q^*\LL_{[Y/T]/[X/T]} \arrow[l] \arrow[d, dashrightarrow]\\
&&\pi^*{ev}_\star^*\LL_{Y^{g}\sslash T/E^{\tilde \delta \geq 0}_{Y^g\sslash T}}  \arrow[ull]
\end{tikzcd}
\end{equation}
where we have written $\PP^1_{F^X}$ (resp. $\PP^1_{F^Y}$) for $\wP{a}\times F(X\sslash T)$ (resp. $\wP{a}\times F(Y\sslash T)$).
This is a diagram of complexes on $\wP{a}\times F(Y\sslash T)$. 

We claim that after the complexes are pulled back to $[\torus/\bmu_a] \times F(Y\sslash T)$, there exists a dashed arrow in \eqref{eq:compare1} making the diagram commute. This may be defined as follows. Consider the diagram
\begin{equation}\label{eq:heehee}
\begin{tikzcd}
{[V/\bmu_a]} \times F(Y\sslash T) \arrow[r]  \arrow[rr, bend left=20, "q|_{[V/\bmu_a]}"] &B\bmu_a \times  F(Y\sslash T)\arrow[d, "\pi"] \arrow[r, "q|_{B\bmu_a}"] & {[Y/T]}\\
&F(Y\sslash T) \arrow[ur, "ev_\star"']
\end{tikzcd}
\end{equation}
By the description of $q$ in Proposition \ref{prop:ufam}, the top cell commutes---in fact, it strictly commutes.
The bottom cell, however, does not commute: the map of groups going along the top is given by $\tau_{\intalpha}$ (in particular it is injective) but along the bottom it is trivial. However we know $q$ factors through $Y^g\sslash T$. We define $q_j$ to be the morphism such that the composition
\[[V/\bmu_a] \times F(Y\sslash T) \xrightarrow{q_j} Y^g\sslash T \xrightarrow{j} [Y/T]\]
is equal to $q$, and we define $\varpi$ to be the rigidification $ Y^g\sslash T \rightarrow Y^g\sslash \bar T$ where $\bar T :=(T/\langle \tau_{\intalpha(\bmu_a)}\rangle)$. These definitions give us a strict equality $\varpi \circ q_j|_{B\bmu_a} = \varpi \circ ev_\star \circ \pi$. Combined with the top cell of \eqref{eq:heehee}, this gives a strict equality $\varpi \circ q_j = \varpi \circ ev_\star \circ \pi$ of maps $[V/\bmu_a] \times F(Y\sslash T) \to Y^g\sslash \bar T$.

This leads to the following extension of \eqref{eq:compare1}, where the gray equality is defined and makes the diagram commute after restriction to $[V/\bmu_a]\times F(Y\sslash T)$.
\[
\begin{tikzcd}
\pi^*\LL_{F(Y\sslash T)/F(X\sslash T)} \arrow[r, "\sim"] &[-12pt] \LL_{\PP^1_{F^Y}/\PP^1_{F^X}} &[-15pt] q^*\LL_{[Y/T]/[X/T]} \arrow[l] \arrow[d] \\
&&q_j^*\LL_{Y^{g}\sslash T/E^{\tilde \delta \geq 0}_{Y^g\sslash T}} \arrow[ul ] &[-10pt] \arrow[l, "\sim"'] \arrow[d, equals, gray] q_j^*\varpi^*\LL_{Y^{g}\sslash \bar T/E^{\tilde \delta \geq 0}_{Y^g\sslash \bar T}} \\ 
&&\pi^*{ev}_\star^*\LL_{Y^{g}\sslash T/E^{\tilde \delta \geq 0}_{Y^g\sslash T}}  \arrow[uull] & \arrow[l, "\sim"'] \pi^*ev_\star^*\varpi^*\LL_{Y^{g}\sslash \bar T/E^{\tilde \delta \geq 0}_{Y^g\sslash \bar T}}
\end{tikzcd}
\]
From this we see how to define the dashed arrow in \eqref{eq:compare1}: it is equal to the canonical projection $E^\vee_{[\torus/\bmu_a] \times F(Y\sslash T)} \rightarrow (E^{\tilde \delta \geq 0})^\vee_{[\torus/\bmu_a] \times F(Y\sslash T)}$ followed by the automorphism of $(E^{\tilde \delta \geq 0})^\vee_{[\torus/\bmu_a] \times F^0(Y\sslash T)}$ coming from the 2-isomorphism between $\varpi \circ q_j$ and $\varpi \circ ev_\star \circ \pi$---but we have checked that this 2-isomorphism is trivial on the domain $[V/\bmu_a]\times F(Y\sslash T)$, so the stated automorphism is determined by the identity automorphism of $(E^{\tilde \delta \geq 0})^\vee_{[V/\bmu_a]\times {F(Y\sslash T)}}$. 
\\

\noindent
\textit{Taking cohomology of \eqref{eq:compare1}.}
Applying $R\pi_*(- \otimes \omegabul)^{\fix}$ to the solid part of \eqref{eq:compare1} yields the right part of the following commuting diagram.
\begin{equation}\label{eq:compare2}
\begin{tikzcd}
\LL_{F(Y\sslash T)/F(X\sslash T)} &[-14pt] \arrow[l]  R\pi_*(\pi^*\LL_{F(Y\sslash T)/F(X\sslash T)} \otimes \omegabul)^{\fix} &[-14pt] R\pi_*(q^*\LL_{[Y/T]/[X/T]}\otimes \omegabul)^{\fix} \arrow[l]\arrow[dl, dashrightarrow] \\
{ev}_\star^*\LL_{Y^{g}\sslash_G T/E^{\tilde \delta \geq 0}_{Y^g\sslash T}} \arrow[u, "\phi_{0}"] & \arrow[l, "\sim"']R\pi_*(\pi^*{ev}_\star^*\LL_{Y^{g}\sslash_G T/E^{\tilde \delta \geq 0}_{Y^g\sslash T}}\otimes \omegabul)^{\fix} \arrow[u] 
\end{tikzcd}
\end{equation}
The square on the left comes from applying the inverse of the projection isomorphism and the trace map as in the definition of the adjunction-like morphism in \cite[Sec~A.2.1]{cjw}. The composition of the top two arrows is $\phi_{\bj_T}$ in \eqref{eq:compare-pot} and the left vertical arrow is $\phi_{0}$. A routine computation (see e.g. below) shows that the fixed part of the trace map $R\pi_*( \omegabul)^{\fix} \rightarrow \OO_{F(Y\sslash T)}$ is an isomorphism, so the bottom horizontal arrow in \eqref{eq:compare2} is an isomorphism. Hence, to construct \eqref{eq:compare-pot}, it suffices to construct a dashed injective arrow as in \eqref{eq:compare2} making the triangle commute. At the moment, it is not clear how this desired arrow relates to the one in \eqref{eq:compare1}.

Because $R\pi_*(q^*\LL_{[X/T]/[Y/T]}\otimes \omegabul)^{\fix}$ and $R\pi_*(\pi^*{ev}_\star^*\LL_{Y^{g}\sslash_G T/E^{\tilde \delta \geq 0}_{Y^g\sslash T}}\otimes \omegabul)^{\fix}$ are represented by locally free sheaves in degree -1, it suffices to consider the diagram of cohomology sheaves that arises from applying the cohomology functor $H^{-1}$ to \eqref{eq:compare2}. Noting the equality of functors $H^{-1}(R\pi_*(\bullet \otimes \omegabul))^{\fix} = R^1\pi_*(H^{-1}(\bullet)\otimes \omega)^{\fix}$ on complexes supported in degrees $-1$ and below, we see by applying the latter functor to \eqref{eq:compare1} that it suffices to construct an isomorphism so that this diagram commutes:
\begin{equation}\label{eq:compare3}
\begin{tikzcd}
 R^1\pi_*((H^{-1}(\pi^*\LL_{F(Y\sslash T)/F(X\sslash T)})) \otimes \omega) & {(R^1\pi_*(q^*E_{[Y/T]}^\vee\otimes \omega))}^{\fix} \arrow[l]\arrow[d, "\sim"] \\
&(R^1\pi_*(\pi^*{ev}_\star^*(E^{\tilde \delta \geq 0}_{Y^g\sslash T})^\vee\otimes \omega))^{\fix} \arrow[ul] 
\end{tikzcd}
\end{equation}

Now it is clear how the desired dashed arrow (in \eqref{eq:compare3}) can be obtained from the dashed arrow in \eqref{eq:compare1}: since $\pi: \wP{a} \times F^0(Y\sslash T) \rightarrow F^0(Y\sslash T)$ is a trivial $\wP{a}$-bundle, we can compute $R^1\pi_*$ as in Example \ref{ex:cohomology}. In particular, a local section of $R^1\pi_*(q^*E_{[Y/T]}^\vee\otimes \omega)$ is a local section of  $q^*E^\vee_{[Y/T]}\otimes \omega$ on $[\torus/\bmu_a]\times F(Y\sslash T)$. So the dashed arrow in \eqref{eq:compare1} defines a dashed arrow in \eqref{eq:compare3} making the diagram commute. \\

\noindent
\textit{Explicit description of the dashed arrow in \eqref{eq:compare3}.}
It remains to check that the dashed arrow is an isomorphism. Recall that it is induced by the projection $E^\vee \rightarrow (E^{\tilde \delta \geq 0})^\vee$ (and the ``identity'' on sections written in the coordinate $v$ on $[V/\bmu_a]$).
As in the proof of Lemma \ref{lem:computation} we have an isomorphism of $\CC^*_{\lambda^{1/a}}$-representations
\begin{align}
R^1\pi_*(q^* E_{[Y/T]}^\vee \otimes \omega) &= R^1\pi_*\left( \bigoplus_{j} (\pi^*\L^\vee_{\epsilon_j} \otimes \OO_{\wP{a}\times{F(Y\sslash T)}}(-a\tilde \delta(\epsilon_j)-a-1))\otimes \CC_{\mu^{-a\tilde \delta(\epsilon_j)-a}}\right) \nonumber\\
&= \bigoplus_{j} \left(\L^\vee_{\epsilon_j} \otimes H^1(\wP{a}, \OO(-a\tilde \delta(\epsilon_j)-a-1))\otimes \CC_{\mu^{-a\tilde \delta(\epsilon_j)-a}}\right) \label{eq:compare5}
\end{align}
where $\mu:= \lambda^{1/a}$ is the coordinate on $\CC^*_{\lambda^{1/a}}$, and where we have used the fact that $\omega$ on $\wP{a} \times F(Y\sslash T)$ is given by the 1-dimensional $T \times \CC^*\times \CC^*_{\lambda^{1/a}}$-representation $(t,s, \mu) \mapsto s^{-a-1}\mu^{-a}$, for $t \in T$, $s \in \CC^*$, and $\mu \in \CC^*_{\lambda^{1/a}}$.

Analogously, we have
\begin{equation}\label{eq:compare6}
R^1\pi_*(\pi^*ev^*_\star (E^{\tilde \delta\geq 0}_{Y^{g}\sslash T})^\vee \otimes \omega)  = \bigoplus_{j \;\text{s.t.}\; \tilde \delta(\epsilon_j) \in \ZZ_{\geq 0}} \left( \L_{\epsilon_j}^\vee \otimes H^1(\wP{a}, \OO(-a-1))\otimes \CC_{\mu^{-a}}\right).
\end{equation}
This uses the fact that as a $T \times \CC^* \times \CC^*_{\lambda^{1/a}}$-representation, $\pi^*ev_\star^*(E^{\tilde \delta \geq 0}_{Y^{g}\sslash_G T})^\vee$ is trivial with respect to the $\CC^*\times \CC^*_{\lambda^{1/a}}$-factor, being pulled back from the base.

Recall from Example \ref{ex:cohomology} that $H^1(\wP{a}, \OO(-a\tilde \delta(\epsilon)-a-1))$ has a basis of monomials $u^mv^n$ with $am+n = -a\tilde \delta(\epsilon)-a-1$ and $m,n<0$. The weight with respect to $\CC^*$ of a section $u^mv^n$ of \eqref{eq:compare5} is 
 $(-\tilde \delta(\epsilon)-1)-m$, so fixed sections are of the form $u^{-\tilde \delta(\epsilon)-1}v^{-1}$---in particular, $H^1(\wP{a}, \OO(-a\tilde \delta(\epsilon)-a-1)$ has at most a 1-dimensional subspace fixed by $\CC^*_\lambda$, and this space is positive dimensional if and only if $\tilde \delta(\epsilon)=-m-1$ and hence is a nonnegative integer. Likewise $H^1(\wP{a}, \OO(-a-1))$ is 1-dimensional with basis $u^{-1}v^{-1}$, and as a section of \eqref{eq:compare6} this monomial has weight $-1-(-1)=0$ for $\CC^*_\lambda$. Hence the vector space projection $H^1(\wP{a}, \OO(a\tilde \delta(\epsilon)-a-1)\rightarrow H^1(\wP{a}, \OO(-a-1))$ sending $u^{-\tilde \delta(\epsilon)-1}v^{-1}$ to $u^{-1}v^{-1}$ (note that this is the identity after setting $u=1$) induces an isomorphism
\begin{equation}\label{eq:split}(R^1\pi_*(q^*E_{[Y/T]}^\vee\otimes \omega))^{\fix} \xrightarrow{\sim} (R^1\pi_*(\pi^*ev^*_\star(E_{Y^{g}\sslash_G T}^{\tilde \delta \geq 0})^\vee\otimes \omega))^{\fix}.\end{equation}

\end{proof}

\subsection{Proofs of the Quantum Lefschetz results}

\begin{proof}[Proof of Theorem \ref{thm:ql}]
We first observe that to prove Theorem \ref{thm:ql} it suffices to show that the following equality holds in $A_*(I_\mu(Y\sslash_G T))[z,z^{-1}]$:
\begin{equation}\label{eq:ql1}
\sum_{\beta \mapsto \delta} \varphi^*\iota_*I^{Y\sslash G}_\beta =
\sum_{\tilde \delta \mapsto \delta} 
\left( \prod_{i=1}^m C(\tilde \delta, \rho_i)^{-1}\right)\left(
\prod_{j=1}^r C^\circ(\tilde \delta, \epsilon_j)^{-1}\right)(ev_{\star})_*0^!_{\tilde \delta}j_F^*\Res^{X\sslash T}_{\tilde \delta}(z).
\end{equation}

To prove \eqref{eq:ql1} there are two cases: either there does or there does not exist some $\beta$ mapping to $\delta$ such that $F_\beta(Y\sslash G)$ is nonempty. If $F_\beta(Y\sslash G)$ is emtpy for all $\beta$ mapping to $\delta$, then the left side of \eqref{eq:ql1} is zero by definition. By the proof of Theorem \ref{thm:main} we have that $F^0_{\tilde \beta}(Y\sslash T)$ is empty for all $\tilde \beta$ mapping to $\beta$, and hence $F_{\tilde \delta}(Y\sslash T) = \bigsqcup_{\tilde \beta \to \tilde \delta} F^0_{\tilde \beta}(Y\sslash T)$ is empty. Then the right side of \eqref{eq:ql1} is zero by definition.

If some $F_\beta(Y\sslash G)$ is nonempty then we use \eqref{eq:step7} to compute the left hand side of \eqref{eq:ql1}:
\begin{equation}\label{eq:ql10}
\sum_{\beta \mapsto \delta} \varphi^* \iota_*I^{Y\sslash G}_\beta = \sum_{\tilde \beta \mapsto \delta}
\frac{(ev_\star)_*\psi_{\tilde \beta}^*\Res^{Y\sslash G}_{\tilde \beta}}{\prod_{\tilde \beta(\rho_i) < 0} c_1(\L_{\rho_i})}.
\end{equation}

 Recall that $F_{\tilde \delta}(Y\sslash G)$ is equal to the disjoint union $\bigsqcup_{\tilde \beta \mapsto \tilde \delta} F_{\tilde \beta}(Y\sslash G)$, so $(ev_\star)_*[F_{\tilde \delta}(Y\sslash G)]^{\vir} = \sum_{\tilde \beta \mapsto \tilde \delta} (ev_\star)_*[F_{\tilde \beta}(Y\sslash G)]^{\vir}$. Using the fact that if $\tilde \beta$ maps to $\tilde \delta$ then $\tilde \delta(\epsilon) = \tilde \beta(\epsilon)$, we reorganize the sum on the right side of \eqref{eq:ql1} to get
 \[
 \sum_{\tilde \delta \mapsto \delta}
\frac{(ev_\star)_*\psi_{\tilde \delta}^*\Res^{Y\sslash G}_{\tilde \delta}}{\prod_{\tilde \delta(\rho_i) < 0} c_1(\L_{\rho_i})}.
 \]
 By Lemma \ref{lem:computation2} and the projection formula, this equals
 \[
  \sum_{\tilde \delta \mapsto \delta}
  \prod_{j=1}^r C^\circ(\tilde \delta, \epsilon_j)^{-1}
\frac{(ev_\star)_*0^!_{\tilde \delta}\psi_{\tilde \delta}^*\Res^{X\sslash G}_{\tilde \delta}}{\prod_{\tilde \delta(\rho_i) < 0} c_1(\L_{\rho_i})}.
 \]
 Finally, applying Lemma \ref{lem:computation} as at the end of the proof Theorem \ref{thm:main} (from \eqref{eq:step7} onwards) yields the right hand side of \eqref{eq:ql1}.

\end{proof}

\begin{remark}
If $R$ is not trivial, then Lemma \ref{lem:computation2} can be lifted to the analogous equality in $A_*([F^0_{\tilde \delta}(Y\sslash T)/R\times \CC^*_\lambda])$ by replacing all virtual and characteristic classes with their $R$-equivariant counterparts, and replacing the $\epsilon_j$ with the weights of $E$ as a $T\times R$-representation. For the (non-$\CC^*_\lambda$-equivariant) computation of $\bi^!\psi^*_{\tilde \delta}[F_{\tilde \delta}(X\sslash G)]^\vir_R$ we use the description of $R$-equivariant Chow groups in Lemma \ref{lem:equivariant-chow}. The remainder of the proof of Theorem \ref{thm:ql} is routine.

\end{remark}

\begin{proof}[Proof of Corollary \ref{cor:ql-convex}]
If $\tilde \delta$ is I-nonnegative, then by Lemma \ref{lem:compare-gysin} the refined Gysin maps $0^!_{\tilde \delta}$ and $\bi^!$ agree. By \cite[Thm~2.1.12.ix]{kresch} we can commute the closed embedding $(ev_\star)_*$ and the Gysin map $\bi^!$. This proves Corollary \ref{cor:ql-convex} for unrigidified $I$-functions. To prove it for rigidified $I$-functions, we can argue as in the proof of Theorem \ref{thm:main}, using the additional fact that the Gysin map $0^!_{\tilde \delta}$ commutes with proper pushforward by $\varpi$ (see \cite[Prop~B.18]{BS}).
\end{proof}

\begin{proof}[Proof of Corollary \ref{cor:ql}]
Let $X$ be a vector space with weights $\xi_1, \ldots, \xi_n$. As usual we prove Corollary \ref{cor:ql} for nonrigidified $I$-functions. To obtain the rigidified statement, we apply $(a\circ\varpi)_*$, where $a$ denotes multiplication by the integer $a$ determined by $\beta$. Recall that $ev_\star: F_\beta \rightarrow I_\mu(X\sslash G)$ factors through the component $I_{\mu_a}(X\sslash G)$ of $I_\mu(X\sslash G)$. The degree of $\varpi$ restricted to $\overline{I}_{\mu_a}(\cX)$ is $a^{-1}$, and this cancels with the multiplication by $a$.

For the nonrigidified $I$-function, formula \eqref{eq:ql-cor} follows from Theorem \ref{thm:ql} and the formula for $\Res^{X\sslash T}_{\tilde \delta}$ given in \cite[Prop~5.3]{orb-qmaps}.

If $\tilde \delta$ is I-nonnegative, we derive \eqref{eq:ql-cor-convex} from Corollary \ref{cor:ql-convex}. A formula for the rigidified coefficient $\overline{I}^{X\sslash T}_{\tilde \delta}$ is given in \cite[Thm~5.4]{orb-qmaps}; the following formula can be obtained by minor modifications of that argument:
\begin{equation}\label{eq:toric-ifunc}
{I}^{X\sslash T}_{\tilde \delta}(z) = \prod_{\ell=1}^{n}C(\tilde \delta, \xi_\ell)\one_{g_{\tilde \delta}^{-1}}.
\end{equation}
 In \eqref{eq:toric-ifunc}, the class $\one_{g_{\tilde \delta}^{-1}}$ is the fundamental class of $I_\mu(X\sslash T)_{g_{\tilde \delta}^{-1}}$ (which we define to be 0 if $I_\mu(X\sslash T)_{g_{\tilde \delta}^{-1}}$ is empty). Indeed, this stack has pure dimension: it is isomorphic to $[(X^{g_{\tilde \delta}^{-1}}\cap X^s(T))/T]$, and 
 $X^{g_{\tilde \delta}^{-1}}$ is the subspace of $X$ spanned by those weight spaces with $\xi_i(g_{\tilde \delta}^{-1})=1$.
  A priori \eqref{eq:toric-ifunc} only holds for effective $\tilde \delta$. If $\tilde \delta$ is not effective then the left side of \eqref{eq:toric-ifunc} is zero, and one can check that the right side vanishes as well using the explicit desription of $X^{ss}(T)$ and its cohomology ring (see e.g. \cite[Sec~4]{CIJ}).

Now we compute $\bi^!j^*$ applied to \eqref{eq:toric-ifunc}. Since $j$ is an open embedding, the series $j^*I^{X\sslash T}_{\tilde \delta}$ is given by the same formula, where now $\one_{g_{\tilde \delta}^{-1}}$ is the fundamental class of $I_\mu(X\sslash_G T)_{g_{\tilde \delta}^{-1}}$ ($j^*$ notationally does nothing). To apply $\bi^!$ we momentarily drop the notation $\one_{g_{\tilde \delta}^{-1}}$ and instead write out the fundamental class. For any character $\xi$ of $T$, we have
\begin{equation}\label{eq:toric-ifunc2}\bi^!(c_1(\L_\xi)\cap [I_\mu(X\sslash_G T)_{g_{\tilde \delta}^{-1}}]) = c_1(\bi^*\L_\xi)\cap \bi^![I_\mu(X\sslash_G T)_{g_{\tilde \delta}^{-1}}] = c_1(\L_\xi)\cap [I_\mu(Y\sslash_G T)_{g_{\tilde \delta}^{-1}}].\end{equation}
The first equality is Lemma \ref{lem:Gysin-euler}, and the second is \cite[Example~6.2.1]{Fu98} when $X\sslash_G T$ is a scheme. When $X\sslash_G T$ is a Deligne-Mumford stack, it follows immediately from the construction of $\bi^!$ in \cite[Sec~3.1]{kresch}.
The upshot of \eqref{eq:toric-ifunc2} is that the Gysin pullback $\bi^!$ notationally does nothing, and we obtain \eqref{eq:ql-cor-convex}.

Finally, if $\bj_T$ is regular and the excess bundle \eqref{eq:def-excess} for the fiber square \eqref{eq:fiber-fixed} is trivial, then by Corollary \ref{cor:excess} we have that $0^!_{\tilde \delta} = \bj_T^!$ as maps $A_*(F^0_{\tilde \delta}(X\sslash T)_\QQ \to A_*(F^0_{\tilde \delta}(Y\sslash T)_{\QQ}$. By \cite[Prop~5.3]{orb-qmaps} we know $[F^0_{\tilde \delta}(X\sslash T)]^{\vir} = [F^0_{\tilde \delta}(X\sslash T)]$, and as in the previous paragraph the construction of Gysin pullback tells us that $\bj_T^![F^0_{\tilde \delta}(X\sslash T)] = [F^0_{\tilde \delta}(Y\sslash T)].$

\end{proof}

\appendix
\section{Some results about Chow groups}
In this section, all stacks are finite type and quasi-separated over $\CC$. Recall that our Chow groups are all tensored with $\QQ$ (see Section \ref{sec:conventions}). We use the proper pushforward (for morphisms of DM-type) of \cite{skowera} and \cite[Appendix~B]{BS}.

\subsection{Equivariant Chow groups of Deligne-Mumford stacks}\label{sec:equivariant}
Let $X$ Deligne-Mumford stack with pure dimension and a group action by an algebraic group $G$. Given an integer $k$ (possibly negative), let $V$ be a representation of $G$ such that $G$ acts freely on an open subset $U \subset V$ whose complement has codimension greater than $\dim(X) -k$. Let $v = \dim V$. Define
\[
A^{G}_k(X) := A_{k+v-g}(X\times_G U).
\]
where the right hand side is (as usual) Kresch's integral Chow group, tensored with $\QQ$. The following lemma shows that the group $A^G_k(X)$ is independent of the choices made to define it.
\begin{lemma}\label{lem:equivariant-chow}
There is a canonical isomorphism
\[
A_k^G(X) \rightarrow A_k([X/G]).
\]
commuting with equivariant proper Deligne-Mumford pushforward, flat pullback, and Gysin maps for regular embeddings.
\end{lemma}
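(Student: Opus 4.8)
The plan is to reduce the statement to the case where $X$ is a scheme (rather than a Deligne--Mumford stack), where the corresponding fact is standard by the work of Edidin--Graham, and then to use a double-fibration (Bogomolov-type) argument to show independence of the choices. First I would fix the key input: if $U \subset V$ and $U' \subset V'$ are two representations with the stated codimension hypotheses, then so is $U \times U' \subset V \times V'$, and there are flat projection maps $X \times_G (U\times U') \to X \times_G U$ and $X\times_G(U\times U')\to X\times_G U'$ whose fibers are open subsets of affine spaces. Flat pullback along a vector-bundle-like morphism (more precisely, along a morphism whose fibers are open in affine space, with complement of high codimension) induces an isomorphism on Chow groups in the relevant range by homotopy invariance \cite[Thm~2.1.12]{kresch} together with the localization sequence and the codimension bound; this is exactly the argument of \cite[Def-Prop~1]{edidingraham} carried out for Kresch Chow groups. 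Running this comparison for both $V$ and $V'$ against $V\times V'$ gives the canonical identification $A_{k+v-g}(X\times_G U) \simeq A_{k+v'-g}(X\times_G U')$, so $A^G_k(X)$ is well-defined.

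Next I would construct the map to $A_k([X/G])$. Since $G$ acts freely on $U$, the quotient $X \times_G U$ is a Deligne--Mumford stack admitting a representable smooth morphism $p: X\times_G U \to [X/G]$ whose fibers are $[U/\{e\}] = U$-bundles (in the étale topology), i.e. $X\times_G U = [X/G]\times_{BG} [U/G]$ and $[U/G]\to BG$ is, up to the high-codimension complement, an affine-space bundle. The flat (indeed smooth) pullback $p^*: A_k([X/G]) \to A_{k+v-g}(X\times_G U)$ is then an isomorphism in the stated range, again by homotopy invariance plus the codimension estimate on the complement $[(V\setminus U)/G]$. Inverting $p^*$ gives the canonical isomorphism $A^G_k(X) = A_{k+v-g}(X\times_G U) \xrightarrow{\sim} A_k([X/G])$; one checks it is independent of $V$ by compatibility with the double-fibration identifications of the previous paragraph (the square of pullbacks commutes).

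Finally I would verify compatibility with the three operations. For flat pullback along $f:[X'/G]\to[X/G]$ (equivalently a $G$-equivariant flat $X'\to X$), one picks a single $V$ working for both $X$ and $X'$ simultaneously (take a large enough representation), forms $f_U: X'\times_G U \to X\times_G U$ which is again flat, and uses functoriality of flat pullback in Kresch's theory; the compatibility then follows since $p^*$ commutes with $f_U^*$ and $f^*$. For Gysin maps along a $G$-equivariant regular embedding $\iota: [X'/G]\hookrightarrow[X/G]$, the base change $\iota_U: X'\times_G U \hookrightarrow X\times_G U$ is again a regular embedding with the same normal bundle (pulled back), and refined Gysin pullback commutes with flat (in particular smooth) pullback by \cite[Thm~2.1.12]{kresch}, so the squares identifying the two definitions commute. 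For proper pushforward along a representable proper DM-morphism $g:[X'/G]\to[X/G]$, the induced $g_U: X'\times_G U\to X\times_G U$ is again proper and representable, and proper pushforward commutes with flat pullback in the base (compatibility of $p^*$ with $g_{U,*}$ and $g_*$), using the pushforward of \cite{skowera} and \cite[App~B]{BS}. The main obstacle I anticipate is purely bookkeeping: making sure the codimension bound ``$\mathrm{codim}(V\setminus U) > \dim X - k$'' is the correct one so that all the homotopy-invariance isomorphisms hold in the single degree $k$ even when $k$ is negative, and checking that the various identifications (double fibration, $p^*$, operation-compatibility squares) are mutually coherent — there is no deep difficulty, but the indexing must be handled with care since Kresch Chow groups are graded by dimension and the shift $v-g$ must be tracked consistently.
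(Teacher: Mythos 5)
Your proposal is correct and takes essentially the same route as the paper: both argue that pullback along the bundle $X\times_G V \to [X/G]$ is an isomorphism (homotopy invariance for vector-bundle stacks), then use the codimension estimate on $V\setminus U$ to transfer this to $X\times_G U$, and verify compatibility with the three operations by base change. The one genuine difference in presentation is that the paper does not invert a flat pullback $p^*$ directly; instead it runs the comparison through Kresch's naive Chow groups $A^\circ$ and the structural map $A^\circ_{k+v-g}(X\times_G V) \to A_k([X/G])$ built into his definition, citing \cite[Rmk~2.1.16, Rmk~2.1.17, Cor~2.4.9]{kresch} to produce the inverse. This matters a little for the Gysin compatibility: the paper must argue by hand with normal cones, since refined Gysin pullback is not defined on $A^\circ$; your route avoids that by never leaving the honest Kresch Chow groups, so you can cite commutativity of Gysin and flat pullback directly. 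Your separate double-fibration check of independence of $(V,U)$ is not needed in the paper's presentation (it falls out of the construction), but it is not wrong. One small caution: your crucial claim that $p^*: A_k([X/G]) \to A_{k+v-g}(X\times_G U)$ is an isomorphism requires (a) homotopy invariance for the Artin stack $X\times_G V$ over $[X/G]$ — this is \cite[Cor~2.4.9]{kresch}, not \cite[Thm~2.1.12]{kresch} — and (b) the excision isomorphism $A_{k+v-g}(X\times_G V) \xrightarrow{\sim} A_{k+v-g}(X\times_G U)$ in a single degree, for which the codimension bound kills the Chow group of the closed complement in degree $k+v-g$; be sure to state both.
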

\begin{proof}
Let $u$ be the dimension of the complement of $U$ in $V$. The open embedding $X\times_G U \subset X\times_G V$ has a complement of dimension $\dim(X) + u-g$ induces an isomorphism
\begin{equation}\label{eq:triv1}
A^\circ_{k+v-g}(X\times_G V) \xrightarrow{\sim} A^\circ_{k+v-g}(X\times_G U)
\end{equation}
since $\dim X - g + u < k+v-g$ by assumption on $V$ and $U$. Here, $A^\circ$ is the naive Chow group of \cite[Def~2.1.4]{kresch}. Since $X \times_G U$ is a Deligne-Mumford stack, we have $A^\circ_{k+v-g}(X\times_G U) \simeq A_{k+v-g}(X\times_G U)$ by \cite[Thm~2.1.12(ii)]{kresch}. Composing the inverse of \eqref{eq:triv1} with the canonical map $A^\circ_{k+v-g}(X\times_G V) \rightarrow A_k([X/G])$ gives a morphism $A_k^G(X) \rightarrow A_k([X/G])$. The inverse may be constructed as in \cite[Rmk~2.1.17]{kresch}, using \cite[Rmk~2.1.16, Cor~2.4.9]{kresch}.

Compatibiliy with proper pushforward and flat pullback follows from formal arguments. For example, if $f: Y \rightarrow X$ is flat of relative dimension $d$, then there is a diagram
\[
\begin{tikzcd}
A_{k+v-g}(X\times_G U) \arrow[d, "f^*"] &[-10pt] A^\circ_{k+v-g}(X\times_G U) \arrow[d, "f^*"] \arrow[l, "\sim"]  &[-10pt] \arrow[l, "\sim"] A^\circ_{k+v-g}(X\times_G V) \arrow[r] \arrow[d, "f^*"]&[-12pt] A_k([X/G])\arrow[d, "f^*"] \\
A_{k+v-g+d}(Y\times_G U)  & A^\circ_{k+v-g+d}(Y\times_G U)  \arrow[l, "\sim"]  & \arrow[l, "\sim"] A^\circ_{k+v-g+d}(Y\times_G V) \arrow[r] & A_{k+d}([Y/G]) 
\end{tikzcd}
\]
where the top and bottom rows are the isomorphism constructed in this lemma, and the maps $f^*$ are flat pullback. The diagram commutes by definition of $f^*$ for the Chow groups of \cite{kresch}.

Compatibility with Gysin maps is a shade trickier since we do not have a definition of $f^!$ for the naive groups $A^\circ$. Suppose $f: Y \rightarrow X$ is a regular local immersion. Choose an element $\alpha$ of $A_{k+v-g}(X\times_G U)$ represented by a class $[W]$ in $A^\circ_{k+v_g}(X\times_G U)$. Let $W' = W \times_{X\times_G U} (Y \times_G U)$. From the description of Gysin pullback in \cite[514]{kresch}, the class $f^!\alpha$ is represented by $[C_{W'/W}] \in A^\circ_{k+v-g}((Y\times_G U)\oplus N)$, where $N\rightarrow Y\times_G U$ is the normal bundle to $f$ and $C_{W'/W}$ is the normal cone. If $[\tilde W] \in A^\circ_{k+v-g}(X\times_G U)$ is a lift of $[W]$ and $\tilde W' = \tilde W \times_{X\times_G V} (Y\times_G V)$ is its pullback, then $[\tilde W']$ is a lift of $[W']$, and it follows from \cite[Prop~2.18,~2.33]{manolache} that $[C_{\tilde W'/\tilde W}]$ is a lift of $[C_{W'/W}]$. But if $\beta$ denotes the image of $\alpha$ under the isomorphism of this lemma, then $[C_{\tilde W'/\tilde W}] \in A^\circ_{k+v-g+d}((Y\times_G V)\oplus N)$ computes $f^! \beta$.
\end{proof}

\begin{lemma}\label{lem:almost-trivial}
Let $F$ be a Deligne-Mumford stack of pure dimension with an action by $\CC^*$ that is trivial after a reparametrization of $\CC^*$. Then there is a canonical graded isomorphism (of groups tensored with $\QQ$)
\begin{equation}\label{eq:triv}A_*(F)[z, z^{-1}] \simeq A_*([F/\CC^*])
\end{equation} where $z$ has degree $-1$.
The isomorphism \eqref{eq:triv} identifies multiplication by $z$ with the operational class $c_1(\OO(1))$, where $\OO(1)$ is the line bundle on $[F/\CC^*]$ induced by the identity character of $\CC^*$ This isomorphism commutes with (equivariant) proper representable pushforward, flat pullback, and Gysin maps for regular embeddings.
\end{lemma}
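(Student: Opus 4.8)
The plan is to reduce to the case of a trivial $\CC^*$-action and then obtain the isomorphism from the projective bundle formula, checking independence of choices and functoriality exactly as in Lemma~\ref{lem:equivariant-chow}. First I would use the hypothesis to fix an integer $d\geq 1$ such that precomposing the action with the $d$-th power endomorphism of $\CC^*$ gives the trivial action; this yields a canonical isomorphism of $[F/\CC^*]$ with the quotient of $F$ by the trivial action, and identifies the line bundle $\OO(1)$ of the identity character with the $d$-th tensor power of the corresponding bundle for the reparametrized torus, so that $c_1(\OO(1))$ becomes the rational class $\tfrac{1}{d}c_1$ of the latter in the sense of Example~\ref{ex:chern-classes}. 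So I may assume $[F/\CC^*]=F\times B\CC^*$. Presenting $B\CC^*$ by the standard approximations $\PP^N=(\AA^{N+1}\setminus\{0\})/\CC^*$ exhibits $[F/\CC^*]$ through the trivial projective bundles $F\times\PP^N$; the projective bundle formula \cite[Thm~2.1.12]{kresch} writes each $A_*(F\times\PP^N)$ as the free graded $A_*(F)$-module on $1,h,\dots,h^N$ with $h=c_1(\OO_{\PP^N}(1))$, and --- tracking that the comparison maps between successive approximations carry $h$ to $h$, and that $h$ acts invertibly in the limit (which is exactly what makes the convention \eqref{eq:invert} well posed) --- passing to the limit assembles these into a graded isomorphism $A_*(F)[z,z^{-1}]\xrightarrow{\ \sim\ }A_*([F/\CC^*])$ carrying $z$ to $c_1(\OO(1))$. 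Multiplication by $z$ then corresponds to capping with $c_1(\OO(1))$ by construction, and independence of the choices is checked exactly as in Lemma~\ref{lem:equivariant-chow}.

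For the compatibilities, I would run the argument of Lemma~\ref{lem:equivariant-chow}. For a morphism $F'\to F$, proper representable pushforward, flat pullback, and refined Gysin pullback for regular embeddings all commute with base-changing the approximating bundles $F\times\PP^N$ to $F'\times\PP^N$, and with the projective-bundle decomposition: pushforward and flat pullback commute with $c_1(\OO(1))\cap-$ by the projection formula and functoriality of Chern classes, while Gysin pullback commutes with $c_1(\OO(1))\cap-$ and is compatible with the normal-cone computation underlying the projective bundle formula, by \cite[Cor~4.9]{manolache} together with the commutation statements already invoked in Lemma~\ref{lem:equivariant-chow}. Hence the isomorphism intertwines the coefficientwise operations on $A_*(F)[z,z^{-1}]$ with the corresponding operations on $A_*([F/\CC^*])$.

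The main obstacle will be two interlocking technical points, both variants of difficulties already met in Lemma~\ref{lem:equivariant-chow}: first, making the passage to the limit rigorous as a statement about \emph{graded} groups, so that the negative powers of $z$ genuinely appear --- this requires identifying Kresch's Chow group of $[F/\CC^*]$ with the appropriate stabilized approximation and controlling which cycle degrees stabilize at which stage; and second, the compatibility with Gysin maps, which is delicate precisely because Gysin pullback is not defined on the naive cycle groups used to build the approximations, so the argument must be threaded through the comparison of naive and genuine Chow groups and the normal-cone functoriality of \cite{manolache}, exactly as in the proof of Lemma~\ref{lem:equivariant-chow}.
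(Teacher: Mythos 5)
Your overall strategy matches the paper's: reduce to the trivial $\CC^*$-action by an isogeny $\CC^*_{\lambda^{1/d}}\to\CC^*_\lambda$, build the isomorphism via the projective approximations $F\times\PP^{N}$ together with Lemma~\ref{lem:equivariant-chow}, and check compatibility with pushforward, flat pullback, and Gysin maps by running the argument of Lemma~\ref{lem:equivariant-chow}. A cosmetic difference is that the paper first constructs the isomorphism for a genuinely trivial action and then handles the general case by explicitly composing with $z\mapsto az$ and checking independence of $a$; you instead reduce at the outset, which is logically equivalent. (Minor: the reparametrization factor is inverted in your write-up. Since the identity character of $\CC^*_\lambda$ pulls back along $\lambda^{1/d}\mapsto(\lambda^{1/d})^d$ to the $d$-th power character, one has $c_1(\OO_{\CC^*_\lambda}(1))=d\cdot c_1(\OO_{\CC^*_{\lambda^{1/d}}}(1))$, which is why the paper rescales by $z\mapsto az$, not $z\mapsto z/a$.)

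However, two of your justifications do not hold up. First, the claim that \emph{$h$ acts invertibly in the limit}, invoked to produce the $z^{-1}$, is false: $c_1(\OO(1))$ is a nonzerodivisor on $A_*([F/\CC^*])$ but not a unit, and the group actually produced by the projective-bundle/stabilization argument (and by the paper's own construction) is $A_*(F)[z]$, not $A_*(F)[z,z^{-1}]$. The convention \eqref{eq:invert} does not rescue this — it relies on nilpotence of the operational class $\alpha$ on a finite-type stack, not on invertibility of $z$. You correctly flag this as a worry at the end of your write-up, but the proposed resolution is incorrect; the gap here is in the paper's own phrasing (the displayed isomorphism should be read as landing in the localization of $A_*([F/\CC^*])$ at $z$), and the honest move is to say so rather than assert invertibility. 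Second, for the Gysin compatibility you cite \cite[Cor~4.9]{manolache}, which concerns virtual pullback of virtual fundamental classes and is not the relevant statement. What is needed is that refined Gysin pullback commutes with capping by a Chern/Euler class of a pulled-back bundle; this is precisely Lemma~\ref{lem:Gysin-euler} in the appendix (the stacky analogue of \cite[Prop~6.3]{Fu98}), which is what the paper uses, alongside \cite[Prop~2.4.6(iii)]{kresch} for flat pullback and the projection formula for pushforward.
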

\begin{proof}
First assume that the $\CC^*$-action on $F$ is trivial (without any reparametrization). Define \eqref{eq:triv} in this case as follows. For $\alpha \in A_k(F)$ and $i \in \ZZ_{\geq 0}$ let $\ell$ be an integer satisfying $\ell > (\dim(F) -k+i)$. Let $\sigma: X \times \PP^{\ell - 1} \rightarrow X$ be the projection. Define \eqref{eq:triv} by the rule
\[
\alpha z^i \mapsto c_1(\OO(1))^i \cap \sigma^*\alpha \in A_{k+\ell-i-1}(F \times \PP^{\ell - 1}),
\]
noting that $A_{k+\ell-i-1}(F \times \PP^{\ell - 1}) \simeq A_{k-i}([F/\CC^*])$ by Lemma \ref{lem:equivariant-chow}. One may check that the resulting map $A_{*}(F)[z] \simeq A_{*}([F/\CC^*])$ is independent of $\ell$ by checking that it is compatible with the identifications in the proof of \cite[Def-Prop~1]{EG98}. It is an isomorphism on summands of pure degree by \cite[Prop~2.5.6]{kresch}.

Now we prove the general case. Let $\CC^*_\lambda$ denote the original group acting on $F$ (whose action is only trivial after reparametrization).
For a positive integer $a$, let $\CC^*_{\lambda^{1/a}} \cong \CC^*$ act on $F$ via the isogeny $\CC^*_{\lambda^{1/a}} \rightarrow \CC^*_\lambda$ given by $\lambda^{1/a} \mapsto (\lambda^{1/a})^a$.
By assumption, there is a positive integer $a$ such that the group action of $\CC^*_{\lambda^{1/a}}$ on $F$ is trivial.
The identity $X \rightarrow X$ and isogeny $\CC^*_{\lambda^{1/a}} \xrightarrow{(\lambda^{1/a})^a} \CC^*_\lambda$ induce an isomorphism
\begin{equation}\label{eq:equ1}
[X/\CC^*_{\lambda^{1/a}}] \xrightarrow{\sim} [X/\CC^*_{\lambda}].
\end{equation}
In fact, this is already an isomorphism of the prestacks defined in \cite[Prop~2.6]{romagny}.\footnote{The stacks $[X/\CC^*_{\lambda^{1/a}}] \rightarrow [X/\CC^*_{\lambda}]$ are only isomorphic over $B\CC^*$ if the $\CC^*_\lambda$-action is trivial.} 

Now define \eqref{eq:triv} to be the composition
\begin{equation}\label{eq:almost-triv}
A_*(F)[z] \xrightarrow{z\mapsto az} A_*(F)[z] \xrightarrow{\sim} A_*([F/\CC^*_{\lambda^{1/a}}]) =  A_*([F/\CC^*_\lambda])
\end{equation}
where the middle arrow is the isomorphism determined at the start of this proof and the equality is induced by \eqref{eq:equ1}. Explicitly, we may take it to be the inverse of flat pullback along \eqref{eq:equ1}, so it sends $a c_1(\OO_{[F/\CC^*_{\lambda^{1/a}}]}(1))$ to
$c_1(\OO_{[F/\CC^*_{\lambda}]}(1))$. This shows that \eqref{eq:almost-triv} identifies multiplication by $z$ with
$c_1(\OO_{[F/\CC^*_{\lambda}]}(1))$ as desired. It remains to check that the isomorphism \eqref{eq:almost-triv} is independent of $a$. But we have already seen that the image of $z$ is independent of $a$, and the restriction of the first two morphisms to $A_*(F)$ is also independent of $a$.

Finally we check that \eqref{eq:triv} is compatible with various kinds of morphisms. When the $\CC^*$-action on $F$ is trivial, these compatibilities follow from the compatibility of the isomorphism in Lemma \ref{lem:equivariant-chow} and the projection formula (for pushforward), \cite[Prop~2.4.6(iii)]{kresch} (for flat pullback), and Lemma \ref{lem:Gysin-euler} (for Gysin pullback). In the general case, let $F$ and $G$ be two $\CC^*$-stacks with action that becomes trivial after reparametrization. Since the composition \eqref{eq:almost-triv} is independent of $a$, we may choose $a$ sufficiently large so that the $\CC^*_{\lambda^{1/a}}$-action on both $F$ and $G$ is trivial. Now the desired compatibilities follow easily from the compatibilities in the case of trivial actions.
\end{proof}

\subsection{(Virtual) Gysin pullbacks}

Let $\bi: X \rightarrow Y$ be a regular local immersion of algebraic stacks (see \cite[Sec~3.1]{kresch}). Suppose we have a fiber square
\begin{equation}\label{eq:square}
\begin{tikzcd}
\arrow[d]Y' \arrow[r, "\bi'"] & X' \arrow[d]\\
Y \arrow[r, "\bi"] & X
\end{tikzcd}
\end{equation}
The refined Gysin pullback $\bi^!: A_*(X') \rightarrow A_*(Y')$ was defined in \cite[Sec~3.1]{kresch}.
The following lemma is the analog of \cite[Prop~6.3]{Fu98}.

\begin{lemma}\label{lem:Gysin-euler}
Let $E \rightarrow X'$ be a vector bundle. Then for $\alpha \in A_*(X')$, we have 
\[
\bi^!(e(E) \cap \alpha) = e(\bi'^*E)\cap \bi^!\alpha
\]
If \eqref{eq:square} is $G$-equivariant, the same statement holds for $G$-equivariant Euler classes.
\end{lemma}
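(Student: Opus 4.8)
The plan is to reduce the statement to the analogous compatibility for Kresch's Chow groups by deformation to the normal cone. First I would recall the construction of the refined Gysin map $\bi^!$ from \cite[Sec~3.1]{kresch}: given $\alpha \in A_*(X')$ represented by a cycle on an integral substack $W \subset X'$, one forms $W' = W \times_{X'} Y'$, deforms $W$ to its normal cone $C_{W'/W}$ inside the normal bundle $N = \bi'^*N_{X/Y}$ pulled back to $Y'$, and intersects with the zero section. The key point is that all of the operations involved---flat pullback to the deformation space, specialization, and the final intersection with the zero section of a vector bundle---commute with the operation $e(E) \cap (-)$ (equivalently, $c_1$'s of line bundles, after splitting $E$). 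So the strategy is to check this commutativity step by step in the construction.

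Concretely, the steps I would carry out are: (1) By the splitting principle for Chern classes on Kresch Chow groups (using \cite[Sec~2]{kresch} or a projective bundle argument as in Lemma \ref{lem:equivariant-chow}), reduce to the case where $E = \L$ is a line bundle, so $e(E) = c_1(\L)$. (2) Observe that capping with $c_1(\L)$ is a bivariant/operational operation that commutes with flat pullback and with proper pushforward (projection formula), hence commutes with each step of the deformation-to-the-normal-cone construction: the flat pullback of $\L$ to the total space of the deformation, the restriction to the central fiber (a flat, in fact regular, operation), and the intersection with the zero section. The crucial compatibility is that $c_1$ of the pullback of $\L$ to the normal cone $C_{W'/W}$ agrees with the pullback of $c_1(\bi'^*\L)$ along $C_{W'/W} \to Y'$, which holds because $C_{W'/W} \to Y'$ factors through $W' \to Y'$ and $c_1$ is functorial. (3) Assemble these into the equality $\bi^!(c_1(\L)\cap \alpha) = c_1(\bi'^*\L) \cap \bi^!\alpha$. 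One can also quote \cite[Prop~6.3]{Fu98} directly on a smooth atlas and descend, but the cleanest route in the stacky setting is to invoke that Kresch's Chow groups carry an operational Chow ring with the expected projection-formula-type compatibilities, exactly as used elsewhere in this paper (e.g. in the proof of Theorem \ref{thm:main} via \cite[Thm~2.1.12.ix]{kresch}).

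For the equivariant statement, I would pass to the Borel-style model: by Lemma \ref{lem:equivariant-chow} the $G$-equivariant Chow groups are computed as $A_*(X'\times_G U)$ for a suitable representation $U$, the square \eqref{eq:square} base-changes to a fiber square of the mixed spaces, $\bi^!$ is compatible with this identification (also by Lemma \ref{lem:equivariant-chow}), and $e(E)$ becomes the ordinary Euler class of the induced bundle $E_{X'\times_G U}$. So the equivariant case follows formally from the non-equivariant case applied to the mixed spaces.

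The main obstacle I expect is purely bookkeeping: making sure that the operational/bivariant formalism for Kresch Chow groups of stacks genuinely supports the projection formula and the commutation of $c_1$ with the specialization map in the deformation-to-the-normal-cone construction, since \cite{kresch} develops these in a somewhat indirect way (via the naive groups $A^\circ$ and a system of compatible operations rather than a single bivariant theory). However, this is exactly the same technical input already used repeatedly in the paper, so I would lean on those citations rather than reprove anything; no genuinely new idea is needed.
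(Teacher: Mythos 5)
Your proposal is correct in outline but takes a genuinely different route from the paper's proof. You propose a two-stage argument: first reduce to line bundles via the splitting principle, then trace $c_1(\L)\cap(-)$ through each step of the deformation-to-the-normal-cone construction of $\bi^!$ (flat pullback, specialization, zero-section intersection), acknowledging that the bookkeeping with Kresch's $A^\circ$-based construction is the main source of friction. The paper instead exploits a structural feature of Kresch's theory: the Euler class $e(E)\cap(-)$ from \cite[Def~2.4.2]{kresch} is \emph{itself} defined as a zero-section Gysin pullback, i.e.\ another normal-cone construction. So rather than commuting $c_1$ through $\bi^!$ step by step, the paper represents $\alpha$ by a cycle $[V]$ on $U\to X_0\to X'$ as in Kresch's setup, forms the single fiber diagram whose top row is the zero section of $\bi_0^*U\oplus E_{Y_0}\to U\oplus E_{X_0}$ and whose bottom is $\bi$, and observes that both compositions $\bi^!(e(E)\cap\alpha)$ and $e(\bi'^*E)\cap\bi^!\alpha$ are computed by one and the same normal cone $C_{W/V}$ pushed forward to $A^\circ_*(N_{Y/X}\oplus\bi_0^*U\oplus E_{Y_0})$. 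This sidesteps both the splitting principle and the step-by-step commutativity argument, so there is essentially nothing to check. Your route would also work (it is the stacky analogue of \cite[Prop~6.3]{Fu98}), but it requires establishing the projective-bundle/injectivity input for the splitting principle on $A_*(Y')$ and then verifying compatibility of $c_1$ with the Cartier-divisor specialization inside the deformation space---all true, but more machinery than the direct one-diagram argument. For the equivariant statement, the paper's reduction is also shorter: since Kresch Chow groups of $[X/G]$ literally \emph{are} the equivariant groups, one just runs the nonequivariant argument on the quotient stacks; your Borel-model reduction through Lemma~\ref{lem:equivariant-chow} reaches the same place with an extra layer.
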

\begin{proof}
If \eqref{eq:square} is $G$-equivariant, then the desired result is a statement about Chow groups for the quotient of \eqref{eq:square} by $G$, so it suffices to consider the non-equivariant case.

We can assume $\alpha$ is a cycle, represented by $[V] \in A^\circ_*(U)$ with $U \rightarrow X_0$ a vector bundle and $X_0 \rightarrow X'$ a projective morphism. Let $Y_0 = X_0 \times_{X'} Y'$ with $\bi_0: Y_0 \rightarrow X_0$ the canonical map and $E_{Y_0}, E_{X_0}$ the pullbacks of $E$ to $Y_0$ and $X_0$, respectively. We have a fiber diagram
\begin{equation}\label{eq:Gysin1}
\begin{tikzcd}
\bi_0^*U \oplus E_{Y_0} \arrow[r] & U \oplus E_{X_0}\\
\bi_0^*U \arrow[u, hook, "s"]\arrow[d] \arrow[r] & U \arrow[u, hook, "s"]\arrow[d]\\
Y \arrow[r, "\bi"] & X
\end{tikzcd}
\end{equation}
where the maps labeled $s$ are inclusions via the zero section. Let $W = V \times_X Y$, a priori a closed substack of $\bi_0^*U$. Since $s$ is a closed embedding, the substacks $V$ and $W$ are isomorphic to their images under $s$. From the definition of the Euler class \cite[Def~2.4.2]{kresch} and the concrete description of the refined Gysin map \cite[514]{kresch}, we have that both
$\bi^!(e(E) \cap \alpha)$ and $e(\bi'^*E)\cap \bi^!\alpha$ are represented by the class of the normal cone of $W$ over $V$, pushed forward to $A^\circ_*(N_{Y/X} \oplus \bi_0^*U \oplus E_{Y_0})$.
\end{proof}

Suppose $X \rightarrow Y$ is a closed embedding of Deligne-Mumford stacks. In this case $H^0(\LL_{X/Y})=0$, so any perfect obstruction theory for $X \rightarrow Y$ is perfect in degree -1; i.e., it is a locally free sheaf in degree -1. Hence the associated vector bundle stack is actually a vector bundle: it is representable over $Y$. A special case of the functoriality of virtual pullback \cite[Thm~4.8]{manolache} is the following excess intersection formula.

\begin{lemma}\label{lem:excess}
Let $\bi: Y \rightarrow X$ be a closed embedding of algebraic stacks that are stratified by global quotient stacks, and let $\phi_1: \sE_1[1] \rightarrow \LL_{Y/X}$ and $\phi_2: \sE_2[1] \rightarrow \LL_{Y/X}$ be relative perfect obstruction theories, where $\sE_i$ is a locally free sheaf on $Y$. For $j=1,2$, let $\bi^!_{\sE_j}$ be the associated virtual pullbacks of \cite{manolache}. If there is a surjection $\sE_2 \rightarrow \sE_1$ that commutes with the maps to $\LL_{Y/X}$, such that the kernel of the surjection is $\sF$ (necessarily locally free), then 
\[
\bi^!_{\sE_2}(\alpha) = e(\sF^\vee) \cap \bi^!_{\sE_1}(\alpha)
\]
for $\alpha \in A_*(X)$.
\end{lemma}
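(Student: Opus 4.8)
\emph{Proof plan.} The strategy is to pass to the bundle description of the two virtual pullbacks and reduce the statement to an identity between Gysin maps of vector bundles on $Y$. Since $\bi$ is a closed immersion, each relative obstruction theory $\phi_j\colon\sE_j[1]\to\LL_{Y/X}$ is a locally free sheaf in degree $-1$, and (as in the paragraph preceding the lemma) the associated bundle stack is an honest vector bundle on $Y$, which with the standard conventions is $\mathfrak{E}_j:=\sE_j^{\vee}$. I would first record that Manolache's virtual pullback factors as
\[
\bi^!_{\sE_j}\colon\ A_*(X)\xrightarrow{\ \mathrm{sp}\ }A_*(\mathfrak{C}_{Y/X})\xrightarrow{\ (\jmath_j)_*\ }A_*(\mathfrak{E}_j)\xrightarrow{\ 0^!_{\mathfrak{E}_j}\ }A_*(Y),
\]
where $\mathrm{sp}$ is the specialization to the intrinsic normal cone and depends only on $\bi$, where $\jmath_j\colon\mathfrak{C}_{Y/X}\hookrightarrow\mathfrak{E}_j$ is the closed immersion induced by $\phi_j$, and where $0^!_{\mathfrak{E}_j}$ is the Gysin map of the vector bundle $\mathfrak{E}_j\to Y$. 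The surjection $q\colon\sE_2\twoheadrightarrow\sE_1$ dualizes to a subbundle inclusion $q^{\vee}\colon\mathfrak{E}_1\hookrightarrow\mathfrak{E}_2$ with $\mathfrak{E}_2/\mathfrak{E}_1\cong\sF^{\vee}$, and the hypothesis that $q$ is compatible with $\phi_1,\phi_2$ translates, after dualizing, into $\jmath_2=q^{\vee}\circ\jmath_1$. Consequently, for any $\alpha\in A_*(X)$ one has $\bi^!_{\sE_1}(\alpha)=0^!_{\mathfrak{E}_1}(\bar\alpha)$ and $\bi^!_{\sE_2}(\alpha)=0^!_{\mathfrak{E}_2}(q^{\vee}_*\bar\alpha)$ for one and the same cone class $\bar\alpha\in A_*(\mathfrak{E}_1)$.

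It then suffices to prove the purely bundle-theoretic identity $0^!_{\mathfrak{E}_2}\circ q^{\vee}_*=e(\sF^{\vee})\cap 0^!_{\mathfrak{E}_1}(\,\cdot\,)$ on $A_*(\mathfrak{E}_1)$, and I would do this in three routine steps. First, the zero section $0_{\mathfrak{E}_2}$ factors as $q^{\vee}\circ 0_{\mathfrak{E}_1}$, a composite of regular closed immersions, so functoriality of refined Gysin maps for composites \cite{kresch} gives $0^!_{\mathfrak{E}_2}=0^!_{\mathfrak{E}_1}\circ(q^{\vee})^!$. Second, I would apply the self-intersection formula to $q^{\vee}$, whose normal bundle is $\pi_1^*\sF^{\vee}$ for $\pi_1\colon\mathfrak{E}_1\to Y$, to get $(q^{\vee})^!(q^{\vee}_*\gamma)=e(\pi_1^*\sF^{\vee})\cap\gamma=\pi_1^*e(\sF^{\vee})\cap\gamma$; this self-intersection is a special case of the excess-intersection formula \cite[Thm~6.3]{Fu98}, and in the present stacky setting it can be proved exactly as Lemma~\ref{lem:Gysin-euler} is, by exhibiting $\mathfrak{E}_1\subset\mathfrak{E}_2$ as the transverse zero scheme of the tautological section of $\pi_2^*\sF^{\vee}$ over $\mathfrak{E}_2$. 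Third, since $\pi_1\circ 0_{\mathfrak{E}_1}=\mathrm{id}_Y$, Lemma~\ref{lem:Gysin-euler} moves $0^!_{\mathfrak{E}_1}$ past the Euler class:
\[
0^!_{\mathfrak{E}_1}\bigl(\pi_1^*e(\sF^{\vee})\cap\gamma\bigr)=e\bigl(0_{\mathfrak{E}_1}^*\pi_1^*\sF^{\vee}\bigr)\cap 0^!_{\mathfrak{E}_1}(\gamma)=e(\sF^{\vee})\cap 0^!_{\mathfrak{E}_1}(\gamma).
\]
Chaining these with $\gamma=\bar\alpha$ produces $\bi^!_{\sE_2}(\alpha)=e(\sF^{\vee})\cap\bi^!_{\sE_1}(\alpha)$.

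The one step I expect to require genuine care is the first paragraph: one must check, within Kresch's Chow theory for stacks stratified by global quotients, that Manolache's virtual pullback really does factor through a cone class $\bar\alpha$ independent of the obstruction theory, and that a morphism of obstruction theories induces the compatibility $\jmath_2=q^{\vee}\circ\jmath_1$ of cone immersions. These are essentially the compatibility statements for normal cones in \cite{manolache} together with the functoriality of the bundle stack in the perfect obstruction theory; once granted, steps one through three are formal manipulations licensed by \cite{kresch} and Lemma~\ref{lem:Gysin-euler}. (One can alternatively read the whole statement as the instance of functoriality of virtual pullback \cite[Thm~4.8]{manolache} for the tower $Y\xrightarrow{\bi}X\xrightarrow{\mathrm{id}}X$, equipped with the obstruction theory on $\mathrm{id}_X$ whose restriction to $Y$ is $\sF[1]$; since exhibiting such an obstruction theory globally on $X$ would require extending $\sF$, the argument above via the cone is cleaner.)
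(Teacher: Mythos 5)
Your proof is correct, and it takes a genuinely different route from the paper's. The paper proves the lemma in one stroke by invoking Manolache's functoriality and commutativity theorems for virtual pullbacks (\cite[Thm~4.8, Thm~4.3]{manolache}) on the tower obtained by factoring $\bi$ through the identity, realizing the short exact sequence $0 \to \sF \to \sE_2 \to \sE_1 \to 0$ as a compatible triple; the auxiliary virtual pullback $id^!_{\sF}$ is then computed from the definitions to be $e(\sF^\vee)\cap(-)$. You instead unfold the definition of virtual pullback into the three elementary steps (specialize to the intrinsic normal cone, push to the bundle stack, Gysin), observe that both pullbacks arise from a single cone class $\bar\alpha$ via the subbundle inclusion $q^\vee\colon\mathfrak{E}_1\hookrightarrow\mathfrak{E}_2$, and reduce to the bundle-theoretic identity $0^!_{\mathfrak{E}_2}\circ q^\vee_* = e(\sF^\vee)\cap 0^!_{\mathfrak{E}_1}(-)$, which you prove from functoriality of Gysin maps, the self-intersection formula, and Lemma~\ref{lem:Gysin-euler}. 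Your route is longer but more self-contained: it does not require invoking the compatible-triple machinery, and you correctly identify the one point where care is required (that a morphism of obstruction theories induces the compatibility $\jmath_2 = q^\vee\circ\jmath_1$ of cone embeddings, which is implicit in Manolache's functoriality). The paper's route is shorter precisely because Manolache already packaged all of that into Thm~4.8. Your parenthetical concern about extending $\sF$ to $X$ is a reasonable worry about the tower $Y\to X\to X$, but the same functoriality applied to the tower $Y\xrightarrow{\mathrm{id}}Y\xrightarrow{\bi}X$ keeps all obstruction theories on $Y$ and avoids the issue; this is essentially what the paper's use of \cite[Thm~4.3]{manolache} accomplishes. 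Your cone argument sidesteps this point entirely, which does make it cleaner to write out in full.
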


\begin{proof}
First apply \cite[Thm~4.8]{manolache} with $F=Y$, $G=\mathfrak{M}=X$, $g=id$ and $f: Y \hookrightarrow X$ the inclusion. Because there is a short exact sequence $0 \rightarrow \sF \rightarrow \sE_2 \rightarrow \sE_1 \to 0$ with compatible maps to $\LL_{Y/X}$, we have a compatible triple and hence
\[
\bi^!_{\sE_2} = \bi^!_{\sE_1} \circ id^!_{\sF} = id^!_{\sF} \circ \bi^!_{\sE_1},
\]
where the second equality is \cite[Thm~4.3]{manolache} and $id^!_{\sF}$ is (refined) virtual pullback for the identity map on $X$ and the perfect obstruction theory $\sF[1] \rightarrow \LL_{X/X}$. Since the cone stack associated to $\sF[1]$ is $h^1/h^0(\sF^\vee[-1])$, i.e. the total space of the vector bundle $\sF^\vee$, it follows from the definitions and \cite[Cor~2.4.5]{kresch} that $id^!_{\sF}(\alpha) = e(\sF^\vee) \cap \alpha.$
\end{proof}

We will use the following corollary. Suppose we have a fiber diagram of algebraic stacks stratified by global quotients:
\[
\begin{tikzcd}
Y'' \arrow[d] \arrow[r] & X'' \arrow[d] \\
Y' \arrow[r, "\bi'"] \arrow[d, "f"'] & X' \arrow[d] \\
Y \arrow[r, "\bi"] & X
\end{tikzcd}
\]

\begin{remark}\label{rmk:gysin-virtual}
If $\bi$ is a regular local immersion, the refined Gysin map $\bi^!: A_*(X') \to A_*(Y')$ is identical to the virtual pullback defined by the relative perfect obstruction theory $f^*\LL_\bi \to \LL_{\bi'}$.
\end{remark}

If both $\bi$ and $\bi'$ are regular local immersions, then both $\LL_\bi$ and $\LL_{\bi'}$ are represented by locally free sheaves in degree -1: these are the conormal sheaves $N^\vee_\bi$ and $N^\vee_{\bi'}$, respectively. The \textit{excess bundle} for the fibered square with $\bi$ and $\bi'$ is defined to be
\begin{equation}\label{eq:def-excess}
\ker(f^*N^\vee_{\bi} \to N^\vee_{\bi'}).
\end{equation}

\begin{corollary}\label{cor:excess}
If the excess bundle \eqref{eq:def-excess} is zero, then $\bi^!(\alpha) = \bi'^!(\alpha)$ for $\alpha \in A_*(X'')$.
\end{corollary}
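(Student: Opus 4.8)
The plan is to realize both $\bi^!$ and $\bi'^!$ as virtual pullbacks (in the sense of \cite{manolache}) attached to two relative perfect obstruction theories of the \emph{single} closed embedding $\bi'\colon Y'\to X'$, and then invoke Lemma \ref{lem:excess}, whose excess term is governed precisely by the bundle \eqref{eq:def-excess}.

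First I would assemble the linear-algebra input. Since $\bi$ and $\bi'$ are regular local immersions, $\LL_\bi\simeq N^\vee_\bi[1]$ and $\LL_{\bi'}\simeq N^\vee_{\bi'}[1]$ with $N^\vee_\bi, N^\vee_{\bi'}$ locally free. Because $Y'=Y\times_X X'$, the base-change morphism $f^*\LL_\bi\to\LL_{\bi'}$ is, in cohomological degree $-1$, the canonical surjection $f^*N^\vee_\bi\twoheadrightarrow N^\vee_{\bi'}$ (the pullback of the first map in the conormal sequence of $\bi'$); its kernel is locally free, and it is by definition the excess bundle \eqref{eq:def-excess}, hence zero by hypothesis. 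Consequently $f^*\LL_\bi\to\LL_{\bi'}$ is an isomorphism. Setting $\sE_1:=N^\vee_{\bi'}$ with $\phi_1=\mathrm{id}\colon N^\vee_{\bi'}[1]\xrightarrow{\sim}\LL_{\bi'}$ and $\sE_2:=f^*N^\vee_\bi$ with $\phi_2\colon f^*N^\vee_\bi[1]=f^*\LL_\bi\to\LL_{\bi'}$ the base-change map, the surjection $\sE_2\twoheadrightarrow\sE_1$ is compatible with the maps to $\LL_{\bi'}$ and its kernel $\sF$ is the zero bundle.

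Next I would apply Lemma \ref{lem:excess} to $\bi'$ with these data: since $e(\sF^\vee)=1$, it yields the equality of virtual pullbacks $\bi'^!_{\sE_2}=\bi'^!_{\sE_1}$. By Remark \ref{rmk:gysin-virtual}, the virtual pullback associated to $\phi_2\colon f^*\LL_\bi\to\LL_{\bi'}$ is the refined Gysin map $\bi^!$, while the virtual pullback associated to $\phi_1\colon\LL_{\bi'}\xrightarrow{\mathrm{id}}\LL_{\bi'}$ (the special case $f=\mathrm{id}$ of the same remark) is the refined Gysin map $\bi'^!$. Both identifications, as well as the equality supplied by Lemma \ref{lem:excess} (which rests on the refined functoriality \cite[Thm~4.8]{manolache}), are equalities of \emph{refined} operations, so they persist after the further base change $X''\to X'$; evaluating on $A_*(X'')$ with values in $A_*(Y'')$ then gives $\bi^!(\alpha)=\bi'^!(\alpha)$ for all $\alpha\in A_*(X'')$.

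I expect the only genuine obstacle to be bookkeeping around compatibility and refinement: checking that the perfect obstruction theory $f^*\LL_\bi\to\LL_{\bi'}$ of Remark \ref{rmk:gysin-virtual} is literally the $\phi_2$ above, with $H^{-1}$ equal to the surjection whose kernel is \eqref{eq:def-excess}, so that the hypotheses of Lemma \ref{lem:excess} are met verbatim; and confirming that the passage from $A_*(X')$ to $A_*(X'')$ is legitimate, which it is because Kresch's refined Gysin maps and Manolache's virtual pullbacks are both compatible with arbitrary base change. No step requires a computation.
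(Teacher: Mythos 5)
Your proof is correct and takes the same approach the paper does — the paper's own proof is the terse statement "Follows from Lemma \ref{lem:excess} and Remark \ref{rmk:gysin-virtual}", and your argument is exactly the intended unwinding: use Remark \ref{rmk:gysin-virtual} to express both $\bi^!$ and $\bi'^!$ as virtual pullbacks for two perfect obstruction theories on $\bi'$ (namely $\phi_2\colon f^*\LL_\bi\to\LL_{\bi'}$ and $\phi_1=\mathrm{id}$), note the surjection $f^*N^\vee_\bi\twoheadrightarrow N^\vee_{\bi'}$ has kernel equal to the excess bundle, and invoke Lemma \ref{lem:excess} with $\sF=0$.
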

\begin{proof}
Follows from Lemma \ref{lem:excess} and Remark \ref{rmk:gysin-virtual}.
\end{proof}

\printbibliography
\end{document}